\pdfoutput=1
\documentclass[12pt]{paper}


\usepackage{color}
\usepackage{helvet}         
\usepackage{courier}        
\usepackage{type1cm}        
%
\usepackage{makeidx}         
\usepackage{graphicx}        
\usepackage{multicol}        
\usepackage[bottom]{footmisc}
\usepackage{amsmath}
\usepackage{amssymb}
\usepackage{bbold}
\usepackage{amsthm}
\usepackage{subcaption}
\usepackage{comment}
\newtheorem{theorem}{Theorem}
\newtheorem{corollary}[theorem]{Corollary}

\newtheorem{lemma}[theorem]{Lemma}

\newtheorem{proposition}[theorem]{Proposition}

\newtheorem{remark}[theorem]{Remark}

\usepackage[top=2cm, bottom=2cm, left=2cm, right=2cm]{geometry}
\numberwithin{theorem}{section}
\numberwithin{figure}{section}
\numberwithin{equation}{section}

\DeclareMathOperator{\dist}{dist}
\DeclareMathOperator{\SLE}{SLE}

\DeclareMathOperator{\free}{free}





\begin{document}

\title{Alternating Arm Exponents for the Critical Planar Ising Model}
\author{Hao Wu}


%
%
\maketitle

\abstract{We derive the alternating arm exponents of critical Ising model. We obtain six different patterns of alternating boundary arm exponents which correspond to the boundary conditions $(\ominus\oplus)$, $(\ominus\free)$ and $(\free\free)$, and the alternating interior arm exponents.\\
\noindent\textbf{Keywords:} critical Ising model, Schramm Loewner Evolution, arm exponents. }
\newcommand{\eps}{\epsilon}
\newcommand{\ov}{\overline}
\newcommand{\U}{\mathbb{U}}
\newcommand{\T}{\mathbb{T}}
\newcommand{\HH}{\mathbb{H}}
\newcommand{\LA}{\mathcal{A}}
\newcommand{\LB}{\mathcal{B}}
\newcommand{\LC}{\mathcal{C}}
\newcommand{\LD}{\mathcal{D}}
\newcommand{\LF}{\mathcal{F}}
\newcommand{\LK}{\mathcal{K}}
\newcommand{\LE}{\mathcal{E}}
\newcommand{\LG}{\mathcal{G}}
\newcommand{\LL}{\mathcal{L}}
\newcommand{\LM}{\mathcal{M}}
\newcommand{\LQ}{\mathcal{Q}}
\newcommand{\LU}{\mathcal{U}}
\newcommand{\LV}{\mathcal{V}}
\newcommand{\LZ}{\mathcal{Z}}
\newcommand{\LH}{\mathcal{H}}
\newcommand{\R}{\mathbb{R}}
\newcommand{\C}{\mathbb{C}}
\newcommand{\N}{\mathbb{N}}
\newcommand{\Z}{\mathbb{Z}}
\newcommand{\E}{\mathbb{E}}
\newcommand{\PP}{\mathbb{P}}
\newcommand{\QQ}{\mathbb{Q}}
\newcommand{\A}{\mathbb{A}}
\newcommand{\one}{\mathbb{1}}
\newcommand{\bn}{\mathbf{n}}
\newcommand{\MR}{MR}
\newcommand{\cond}{\,|\,}
\newcommand{\la}{\langle}
\newcommand{\ra}{\rangle}
\newcommand{\tree}{\Upsilon}

\section{Introduction}
The Lenz-Ising model is one of the simplest models in statistical physics. It is a model on the spin configurations. Each vertex $x$ has a spin $\sigma_x$ which is $\oplus$ or $\ominus$. Each configuration of spins $\sigma=(\sigma_x, x\in V)$ has an intrinsic energy---the Hamiltonian:
\[H(\sigma)=-\sum_{x\sim y}\sigma_x\sigma_y.\] 
A natural way to sample the random configuration is the Boltzman measure: 
\[\mu[\sigma]\propto \exp\left(-\frac{1}{T}H(\sigma)\right),\]
where $T$ is the temperature. This measure favors configurations with low energy. Due to recent celebrated work of Chelkak and Smirnov \cite{ChelkakSmirnovIsing, CDCHKSConvergenceIsingSLE}, it is proved that at the critical temperature, the interface of Ising model is conformally invariant and converges to a random curve---Schramm Loewner Evolution ($\SLE_3$). In this paper, we drive the alternating arm exponents of critical Ising model. 

An arm is a simple path of $\oplus$ or of $\ominus$. We are interested in the decay of the probability that there are a certain number of arms of certain pattern in the semi-annulus $A^+(n,N)$ or annulus $A(n,N)$ connecting the inner boundary to the outer boundary. This probability should decay like a power in $N$ as $N\to \infty$, and the exponent in the power is called the critical arm exponents. 

In \cite{LawlerSchrammWernerExponent1, LawlerSchrammWernerExponent2, LawlerSchrammWernerExponent3, LawlerSchrammWernerOneArmExponent, SmirnovWernerCriticalExponents}, the authors derived the value of the arm exponents for critical percolation; in \cite{WuPolychromaticArmFKIsing}, the author derived the value of the arm exponents for critical FK-Ising model. As explained in \cite{SmirnovWernerCriticalExponents}, the strategy to derive the arm exponents is the following: one needs three inputs: (1) the convergence of the interface to $\SLE$; (2) the arm exponents of $\SLE$; and (3) the quasi-multiplicativity. This strategy also works for the critical Ising model. In this paper, we derive the boundary arm exponents and the interior arm exponents of $\SLE_{\kappa}$ and its variant $\SLE_{\kappa}(\rho)$, and then explain how to apply these formulae to get  the alternating arm exponents of critical Ising model.

\begin{theorem}\label{thm::ising_boundary}
For the critical planar Ising model on the square lattice, we have the following six different patterns of the boundary arm exponents (the arm patterns are in clockwise order). 
Fix $j\ge 1$. 
\begin{itemize}
\item Consider the boundary condition $(\oplus\oplus)$ and the arms pattern $(\ominus\oplus\ominus\cdots \oplus\ominus)$ with length $2j-1$. The boundary arm exponents for this pattern is given by 
\begin{equation}\label{eqn::ising_alpha_odd}
\alpha^{+}_{2j-1}=j(4j+1)/3.
\end{equation}
\item Consider the boundary condition $(\ominus\oplus)$ and the arms pattern $(\oplus\ominus\cdots\oplus\ominus)$ with length $2j$. The boundary arm exponents for this pattern is given by 
\begin{equation}\label{eqn::ising_alpha_even}
\alpha^+_{2j}=j(4j+5)/3.
\end{equation}
\item Consider the boundary condition $(\ominus \free)$ and the arms pattern $(\oplus\ominus\oplus\cdots\ominus\oplus)$ with length $2j-1$. The boundary arm exponents for this pattern is given by 
\begin{equation}\label{eqn::ising_beta_odd}
\beta^+_{2j-1}=2j(2j-1)/3.
\end{equation}
\item Consider the boundary condition $(\ominus \free)$ and the arms pattern $(\oplus\ominus\oplus\cdots\oplus\ominus)$ with length $2j$. The boundary arm exponents for this pattern is given by 
\begin{equation}\label{eqn::ising_beta_even}
\beta^+_{2j}=2j(2j+1)/3.
\end{equation}
\item Consider the boundary condition $(\free\free)$ and the arms pattern $(\ominus\oplus\ominus\cdots\ominus)$ with length $2j-1$. The boundary arm exponents for this pattern is given by 
\begin{equation}\label{eqn::ising_gamma_odd}
\gamma^+_{2j-1}=(2j-1)(4j-3)/6.
\end{equation}
\item Consider the boundary condition $(\free\free)$ and the arms pattern $(\ominus\oplus\cdots\ominus\oplus)$ with length $2j$. The boundary arm exponents for this pattern is given by 
\begin{equation}\label{eqn::ising_gamma_even}
\gamma^+_{2j}=j(4j-1)/3.
\end{equation}
\end{itemize}
\end{theorem}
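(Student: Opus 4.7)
The plan is to follow the three-input strategy outlined in the introduction: combine the convergence of Ising interfaces to $\SLE$ \cite{CDCHKSConvergenceIsingSLE}, the boundary arm exponents of $\SLE_\kappa$ and $\SLE_\kappa(\rho)$ derived elsewhere in the paper, and the quasi-multiplicativity of the critical Ising model. For each of the six boundary conditions, I would first reinterpret the alternating arm event in $A^+(n,N)$ as an event involving a collection of interfaces in the discrete model. Under $(\oplus\oplus)$ or $(\ominus\oplus)$ boundary data the relevant interfaces, after taking the scaling limit, become $\SLE_3$ curves or $\SLE_3(\rho)$ variants whose $\rho$-values are dictated by the positions of $\oplus$/$\ominus$/$\free$ boundary arcs; the mixed $(\ominus\free)$ case introduces one extra force point per free arc, and $(\free\free)$ requires two force points, in agreement with the standard Ising $\SLE_3(\rho)$ dictionary. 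The parity of the number of arms controls whether the innermost/outermost interface is a ``full'' interface or a ``half'' one, which is what will ultimately separate the odd and even formulas.

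Granting the SLE identification, the core computation is the boundary arm exponents for $\SLE_\kappa(\rho)$. The approach I would take is iterative: after the Loewner process has traced the first interface from the outer scale, the domain Markov property and conformal invariance identify the law of the remaining configuration as an $\SLE_\kappa(\rho')$ in a new domain with updated force-point weights. Thus the $j$-arm event decomposes into a sequence of single-interface crossing events, and each such event reduces to a Bessel-type SDE/ODE computation whose decay exponent can be read off as a one-arm exponent for $\SLE_\kappa(\rho')$. Summing the single-arm contributions telescopes into a closed form that is quadratic in $j$; substituting $\kappa=3$ together with the $\rho$ values corresponding to the six boundary data should match exactly the six formulas \eqref{eqn::ising_alpha_odd}--\eqref{eqn::ising_gamma_even}.

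The last step transfers the SLE computations to the lattice. Convergence of Ising interfaces supplies bounds, up to multiplicative constants, for the arm-event probability in a fixed semi-annulus $A^+(n,N)$ with $n$ of order $N$; to upgrade this to power-law decay for all $n\ll N$, one invokes quasi-multiplicativity together with a separation-of-arms estimate. For Ising these can be produced using RSW-type estimates for the FK representation, the monotone coupling between the spin and FK models, and a conditioning/resampling argument along the lines of the one carried out for FK-Ising in \cite{WuPolychromaticArmFKIsing}. The main obstacle I expect is precisely this quasi-multiplicativity/separation step for \emph{polychromatic alternating} arms, especially in the mixed $(\ominus\free)$ and $(\free\free)$ cases, since one must simultaneously control the spins along the arms and the behaviour of interfaces near the free boundary, where the standard FK arguments are more delicate. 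Once the separation lemma is in place, however, the remaining assembly of the theorem is a routine combination of the discrete/continuum comparison with the SLE exponent computation of Step~2.
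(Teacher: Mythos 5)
Your overall three-input scaffolding (convergence to $\SLE_3$/$\SLE_3(\rho)$, SLE arm exponents, quasi-multiplicativity plus separation) is indeed the strategy of the paper, and your treatment of the lattice side --- deferring quasi-multiplicativity and well-separation to an FK-Ising-style argument --- is essentially what the paper does (Propositions \ref{prop::qm_interior_alternating}, \ref{prop::qm_boundary_alternating} and Lemmas \ref{lem::wellseparated_comparable}, \ref{lem::ws_boundary_comparable} are stated without proof for exactly that reason). The genuine gap is in your core SLE computation. You propose to decompose the $j$-arm event into successive single-interface crossings, re-identify the remaining configuration after each crossing as an $\SLE_\kappa(\rho')$ ``with updated force-point weights,'' and then \emph{sum} one-arm exponents. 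This does not work as stated. Under the actual domain Markov property the continuation of the curve is the same $\SLE_\kappa(\rho)$ --- the force-point weight does not update; weights only change under a Girsanov reweighting, which is a computational device, not a description of the conditional law. More importantly, the conditional probability of the next crossing given the curve so far is \emph{random}: the small ball $B(x,\eps)$ is distorted to a ball of radius comparable to $\eps g_\tau'(x)$, so the induction must control moments $\E\bigl[g_\tau'(x)^{\lambda}\one_{\cdot}\bigr]$ where $\lambda$ is the cumulative exponent accumulated so far. This is precisely the content of Lemmas \ref{lem::sle_boundary_derivative1}, \ref{lem::sle_boundary_derivative2} and \ref{lem::sle_boundary_derivative_gamma}, and it produces the nonlinear recursions $\alpha^+_{2j+1}=\alpha^+_{2j}+u_1(\alpha^+_{2j})$, $\alpha^+_{2j}=\alpha^+_{2j-1}+u_2(\alpha^+_{2j-1})$, with the previous exponent appearing inside a square root. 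No fixed list of ``one-arm exponents of remaining SLEs'' adds up to these values (already $\alpha^+_3=6\neq 3\alpha^+_1=5$ at $\kappa=3$, $\rho=0$), so the telescoping you invoke is not available without the derivative-moment bookkeeping; in addition, the matching lower bounds require the positive-probability configuration lemmas (Lemmas \ref{lem::sle_positivechance}, \ref{lem::sle_positivechance2}) and the constrained events $\LF$, which your outline does not supply.

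A secondary, fixable discrepancy: for the $(\free\free)$ patterns you plan to use a two-force-point $\SLE_3(-3/2;-3/2)$ limit, which would require arm-exponent estimates for SLE with two force points that are not derived in Section \ref{sec::sle_boundary}. The paper avoids this by realizing the $(\free\free)$ local pattern inside a global $(\ominus\free)$ Dobrushin domain whose free arc covers the arm-event window, so that the relevant continuum input is Proposition \ref{prop::sle_boundary_gamma} for a single $\SLE_3(-3/2)$ with force point at $0^+$ (far from the arm location), together with the RSW-built events $\LC_i$ forcing the interface to realize the arms. If you keep your route you must either prove the two-force-point analogues of Propositions \ref{prop::sle_boundary_alpha}--\ref{prop::sle_boundary_gamma} or adopt this reduction.
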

\begin{figure}[ht!]
\begin{subfigure}[b]{0.3\textwidth}
\begin{center}
\includegraphics[width=\textwidth]{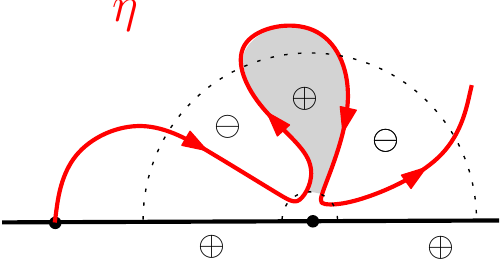}
\end{center}
\caption{$\alpha^+_3$: $(\ominus\oplus\ominus)$ with boundary condition $(\oplus\oplus)$.}
\end{subfigure}
$\quad$
\begin{subfigure}[b]{0.3\textwidth}
\begin{center}\includegraphics[width=\textwidth]{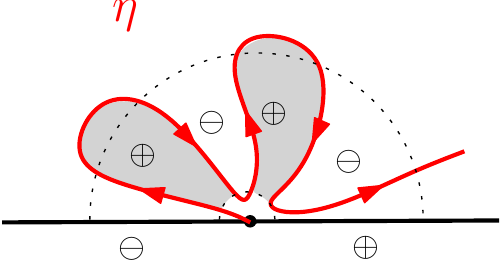}
\end{center}
\caption{$\alpha^+_4$: $(\oplus\ominus\oplus\ominus)$ with boundary condition $(\ominus\oplus)$.}
\end{subfigure}
$\quad$
\begin{subfigure}[b]{0.3\textwidth}
\begin{center}
\includegraphics[width=\textwidth]{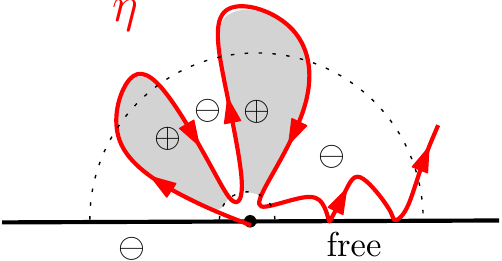}
\end{center}
\caption{$\beta^+_4$: $(\oplus\ominus\oplus\ominus)$ with boundary condition $(\ominus\free)$.}
\end{subfigure}
\begin{subfigure}[b]{0.3\textwidth}
\begin{center}
\includegraphics[width=\textwidth]{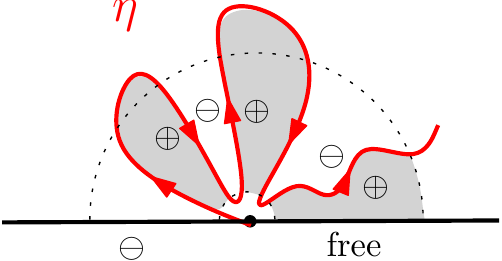}
\end{center}
\caption{$\beta^+_5$: $(\oplus\ominus\oplus\ominus\oplus)$ with boundary condition $(\ominus\free)$.}
\end{subfigure}
$\quad$
\begin{subfigure}[b]{0.3\textwidth}
\begin{center}\includegraphics[width=\textwidth]{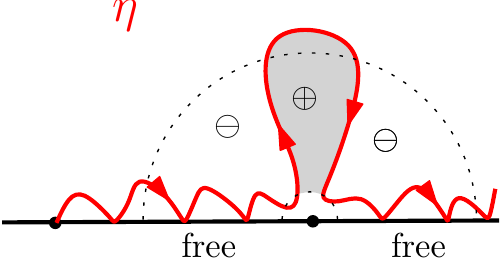}
\end{center}
\caption{$\gamma^+_3$: $(\ominus\oplus\ominus)$ with boundary condition $(\free\free)$.}
\end{subfigure}
$\quad$
\begin{subfigure}[b]{0.3\textwidth}
\begin{center}
\includegraphics[width=\textwidth]{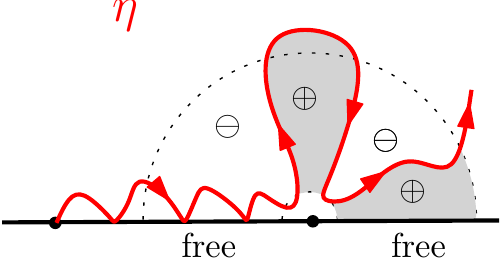}
\end{center}
\caption{$\gamma^+_4$: $(\ominus\oplus\ominus\oplus)$ with boundary condition $(\free\free)$.}
\end{subfigure}
\caption{\label{fig::boundary_arms} The six different patterns of boundary arm exponents in Theorem \ref{thm::ising_boundary}.}
\end{figure}

\begin{theorem}\label{thm::ising_interior}
For the critical planar Ising model on the square lattice, the alternating interior arm exponents with length $2j$ for $j\ge 1$ is given by 
\begin{equation}\label{eqn::ising_interior}
\alpha_{2j}=(16j^2-1)/24. 
\end{equation}
\end{theorem}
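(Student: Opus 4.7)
The plan is to apply the three-input strategy of Smirnov--Werner outlined in the introduction: convergence of interfaces to $\SLE_3$, computation of the corresponding $\SLE$ arm exponent, and quasi-multiplicativity. The target formula $(16j^2-1)/24$ is exactly $(16j^2-(\kappa-4)^2)/(8\kappa)$ evaluated at $\kappa=3$, which is the general form of the polychromatic interior arm exponent for $\SLE_\kappa$, so the computation on the $\SLE$ side should match the one already carried out (earlier in the paper) for the variants of $\SLE_\kappa(\rho)$.

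First, I would reduce the alternating $2j$-arm event in the annulus $A(n,N)$ to an interface event. In each of the $2j$ pairs of consecutive $\oplus$ and $\ominus$ arms, one finds an Ising interface separating the arms; the $2j$-arm event therefore holds (up to a bounded factor coming from a three-arm style boundary estimate near the inner circle) if and only if $2j$ disjoint interfaces cross the annulus. This step is standard once one has Russo--Seymour--Welsh estimates for critical Ising at one's disposal; the RSW bounds are provided by the work of Chelkak--Duminil-Copin--Hongler.

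Second, I would pass to the scaling limit. Conditioning on appropriate alternating boundary conditions on the inner microscopic circle (so that the innermost arm endpoints become prescribed boundary arcs), the Chelkak--Smirnov convergence result identifies the law of the corresponding interface as $\SLE_3$ in the subdomain, and the joint convergence of several interfaces follows from the usual recursive exploration argument. Combining this with the interior arm exponent formula for $\SLE_\kappa$ derived earlier in the paper gives the bulk probability decay $(n/N)^{(16j^2-1)/24}$ for the SLE analogue of the $2j$-arm event.

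Third, I would transfer the SLE estimate to the discrete model via quasi-multiplicativity. This requires showing that the probability of the $2j$-arm event in $A(n,N)$ decouples, up to bounded constants, into contributions from dyadic sub-annuli. The proof of quasi-multiplicativity mirrors the percolation case: one uses the Ising RSW estimates together with a separation-of-arms argument (forcing the $2j$ arms to be well-separated when they exit each sub-annulus), which then lets one glue arm configurations from different scales. Once quasi-multiplicativity is in hand, the scale $n$ can be pushed to a macroscopic one at which the SLE exponent is applicable, and Theorem \ref{thm::ising_interior} follows.

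The main obstacle I expect is the separation-of-arms lemma in the alternating setting: one must show that, conditionally on the $2j$-arm event, the arms exit each dyadic annulus at well-separated landing points with positive probability, \emph{uniformly} in the scale. This is what allows the quasi-multiplicativity to be combined with the SLE computation in the limit. In the Ising model (as opposed to percolation), arms of opposite sign are not independent, but the second-moment / coupling estimates of Chelkak--Smirnov and the monotonicity of $+/-$ boundary conditions should suffice. All remaining steps, including the SLE computation, are either already established earlier in the paper or reduce to an application of the above ingredients.
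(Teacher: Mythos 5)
Your overall scaffolding (RSW, separation of arms, quasi-multiplicativity, spatial mixing, then an SLE input) is the right one and matches the paper, but the central step — the one that lets the SLE computation of Section \ref{sec::sle_interior} be applied at all — is missing, and the route you propose in its place does not work as stated. The exponent $\alpha_{2j}=(16j^2-(\kappa-4)^2)/(8\kappa)$ established in Proposition 4.1 is \emph{not} the probability that $2j$ distinct $\SLE_3$ curves cross an annulus; it is the probability of the event $\LE_{2j}(\eps,z,y,r)$ that a \emph{single} chordal $\SLE_{\kappa}$ curve makes $j$ successive crossings between the small interior ball $B(z,\eps)$ and a far-away ball $B(y,r)$. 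Accordingly, the paper never passes to a multi-interface picture: it imposes Dobrushin $(\ominus\oplus)$ boundary conditions on $\Lambda_{2N}$ (harmless by the mixing estimate of Corollary \ref{cor::ising_spacemixing}), and uses RSW to build the auxiliary connection events $\LC_1^{\oplus},\LC_2^{\ominus},\LC_3^{\oplus},\LC_4^{\ominus}$ in explicit rectangles, which force the single Dobrushin interface from $a_{2N}$ to $b_{2N}$ to realize the well-separated alternating arm event precisely as the crossing event $\LE_{2j}$; comparability of probabilities comes from (\ref{eqn::ising_boundary_comparison}), Proposition \ref{prop::ising_rsw} and Corollary \ref{cor::ising_spacemixing}, and then Theorem \ref{thm::ising_cvg_minusplus} (a single interface, simply connected Dobrushin domain) plus (\ref{eqn::sle_interior}) give the macroscopic estimate, which quasi-multiplicativity propagates down in scale.

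Your replacement step — ``condition on alternating boundary conditions on the inner circle, obtain joint convergence of the $2j$ interfaces by recursive exploration, and then apply the interior arm exponent formula'' — has two concrete problems. First, the arm event does not prescribe spins on the inner circle, so conditioning there changes the measure and is not justified by anything in the paper; moreover the convergence input available (Theorems \ref{thm::ising_cvg_minusplus} and \ref{thm::ising_cvg_minusfree}, i.e.\ \cite{CDCHKSConvergenceIsingSLE}) is for one interface in a simply connected Dobrushin domain, not for several interfaces in an annular geometry with marked points on the inner boundary. Second, even granting such a joint scaling limit, you would then need the annulus-crossing exponent for that multiple-SLE system, which is not what Proposition 4.1 computes; plugging the single-curve exponent into a $2j$-curve event is a category mismatch. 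To repair the argument you should replace your first two steps by the paper's gluing construction: use RSW to show that, up to uniformly bounded constants, the well-separated alternating $2j$-arm event coincides with the event that the single Dobrushin interface performs $j$ crossings between $\Lambda_{\eps N}$ and the outer region, and only then invoke Theorem \ref{thm::ising_cvg_minusplus} together with (\ref{eqn::sle_interior}). Your third step (quasi-multiplicativity with a separation-of-arms lemma, Proposition \ref{prop::qm_interior_alternating} and Lemma \ref{lem::wellseparated_comparable}) is consistent with the paper, which defers those proofs to the FK-Ising arguments of \cite{ChelkakDuminilHonglerCrossingprobaFKIsing}.
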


\begin{remark}
In Theorem \ref{thm::ising_boundary}, the arm exponent $\gamma_2^+=1$ is a universal arm exponent of critical Ising model. In other words, the fact that $\gamma_2^+=1$ can be obtained by standard proof of universal arm exponents using RSW---Proposition \ref{prop::ising_rsw}. 
\end{remark}

\begin{remark}
For the critical planar Ising model (on the square lattice) in a topological rectangle $(\Omega, a, b, c, d)$ with free boundary conditions, consider the probability that there exists a path of $\oplus$ connecting $(ab)$ to $(cd)$. It is proved in \cite{BenoistDuminilHonglerIsingFree} that, as the mesh-size goes to zero, this probability converges to a function $f$ which maps topological rectangles to $[0,1]$ and it is conformal invariant. Therefore, the limit of this probability only depends on the extremal distance of the rectangle. Whereas, the exact formula for $f$ is unknown. As a consequence of Theorem \ref{thm::ising_boundary}, we could give the asymptotics of this function $f$. Consider the rectangle $[0,\pi L]\times [0,1]$ and let $f(L)$ be the limit of the probability that the Ising model with free boundary conditions has a $\oplus$ horizontal crossing of the rectangle. Then we have 
\[f(L)=\exp(-L(1/6+o(1))).\]
\end{remark}

\smallbreak
\noindent\textbf{Relation to previous works.} The formulae (\ref{eqn::ising_alpha_odd}) and (\ref{eqn::ising_alpha_even}) are also obtained in \cite{WuZhanSLEBoundaryArmExponents}. In \cite{WuPolychromaticArmFKIsing}, the author derived the arm exponents of $\SLE_{\kappa}$ for $\kappa\in (4,8)$. In this paper, we derive the arm exponents for $\SLE_{\kappa}$ and $\SLE_{\kappa}(\rho)$ for $\kappa\in (0,4)$. The difficulty in this paper is that, when one estimates the arm events of $\SLE_{\kappa}(\rho)$, one has two more variables to take care of. The idea of the proof is similar to the one presented in \cite{WuZhanSLEBoundaryArmExponents, WuPolychromaticArmFKIsing}, but the increase in the number of variables causes certain technical difficulty. This difficulty is treated in Section \ref{sec::sle_boundary}. The boundary $1$-arm exponent $\gamma^+_1$ is related to the Hausdorff dimension of the intersection of $\SLE_{\kappa}(\rho)$ with the boundary which is $1-\gamma^+_1$. This dimension was obtained in \cite{WernerWuCLEtoSLE, MillerWuSLEIntersection}. Moreover, the formulae (\ref{eqn::ising_alpha_odd}) and (\ref{eqn::ising_interior}) were predicted by KPZ in \cite[Equations (11.42), (11.43)]{DuplantierFractalGeometry}.
\smallbreak
\noindent\textbf{Outline.} We give preliminaries on SLE in Section \ref{sec::sle_pre}. We derive the boundary arm exponents of $\SLE_{\kappa}(\rho)$ with $\kappa\in (0,4)$ and $\rho\in (-2, 0]$ in Section \ref{sec::sle_boundary}. We derive the interior arm exponents of $\SLE_{\kappa}$ with $\kappa\in (0,4)$ in Section \ref{sec::sle_interior}. Finally, we explain how to apply these formulae to obtain the alternating arm exponents of critical Ising in Section \ref{sec::ising} and complete the proof of Theorems \ref{thm::ising_boundary} and \ref{thm::ising_interior}. 
\smallbreak
\noindent\textbf{Acknowledgment.} The author is supported by the NCCR/SwissMAP, the ERC AG COMPASP, the Swiss NSF. The author acknowledges Hugo Duminil-Copin, Matan Harel, Christophe Garban, Gregory Lawler, Aran Raoufi, Stanislav Smirnov, Vincent Tassion, and David Wilson for helpful discussions. The author acknowledges Dapeng Zhan for helpful comments on the previous version of this paper.  
\section{Preliminaries on SLE}\label{sec::sle_pre}
\noindent\textbf{Notations.} We denote by $f\lesssim g$ if $f/g$ is bounded from above by universal finite constant, by $f\gtrsim g$ if $f/g$ is bounded from below by universal positive constant, and by $f\asymp g$ if $f\lesssim g$ and $f\gtrsim g$.

\noindent We denote by  
\[f(\eps)=g(\eps)^{1+o(1)}\quad \text{if}\quad \lim_{\eps\to 0}\frac{\log f(\eps)}{\log g(\eps)}=1.\]

\noindent For $z\in\C, r>0$, we denote
$B(z, r)=\{w\in\C: |w-z|<r\}$.

\noindent For two subsets $A, B\subset\C$, we denote $ \dist(A, B)=\inf\{|x-y|: x\in A, y\in B\}$. We assume that $\dist(A, \emptyset)=\infty$.

\noindent Let $\Omega$ be an open set and let $V_1, V_2$ be two sets such that $V_1\cap\overline{\Omega}\neq\emptyset$ and $V_2\cap\overline{\Omega}\neq\emptyset$. We denote the extremal distance between $V_1$ and $V_2$ in $\Omega$ by $d_{\Omega}(V_1, V_2)$, see \cite[Section 4]{AhlforsConformalInvariants} for the definition.

\subsection{$\HH$-hull and Loewner chain} 
We call a compact subset $K$ of $\overline{\HH}$ an $\HH$-hull if $\HH\setminus K$ is simply connected. Riemann's Mapping Theorem asserts that there exists a unique conformal map $g_K$ from $\HH\setminus K$ onto $\HH$ such that
\[\lim_{|z|\to\infty}|g_K(z)-z|=0.\]
We call such $g_K$ the conformal map from $\HH\setminus K$ onto $\HH$ normalized at $\infty$.
\begin{lemma}\label{lem::extremallength_argument}
Fix $x>0$ and $\eps>0$.
Let $K$ be an $\HH$-hull and let $g_K$ be the conformal map from $\HH\setminus K$ onto $\HH$ normalized at $\infty$. Assume that
\[x>\max(K\cap\R).\]
Denote by $\gamma$ the connected component of $\HH\cap (\partial B(x,\eps)\setminus K)$ whose closure contains $x+\eps$. Then $g_K(\gamma)$ is contained in the ball with center $g_K(x+\eps)$ and radius $3(g_K(x+3\eps)-g_K(x+\eps))$, hence it is also contained in the ball with center $g_K(x+3\eps)$ and radius $8\eps g_K'(x+3\eps)$.
\end{lemma}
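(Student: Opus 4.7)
Let me set $g := g_K$, $a := g(x+\eps)$, $b := g(x+3\eps)$, and $L := b-a>0$. I plan to establish the first inclusion $g(\gamma)\subset B(a,3L)$ via a conformal-invariance argument on the extremal length of a well-chosen curve family, and then obtain the second inclusion using the triangle inequality together with the monotonicity of $g'$ on $(\max(K\cap\R),\infty)$.

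For the first inclusion, set $\sigma := g(\gamma)$. The hypothesis $x>\max(K\cap\R)$ ensures that $\gamma$ is a simple arc of $\partial B(x,\eps)\cap\HH$ starting at $x+\eps$ and ending on $\R\cup\partial K$ to the left of $x+\eps$; consequently, $\sigma$ is a simple curve in $\overline\HH$ whose right endpoint is $a$ and whose other endpoint $a'\in\R$ satisfies $a'\le a$. Let $\gamma'$ denote the analogous component of $\partial B(x,3\eps)\cap\HH\cap(\HH\setminus K)$ containing $x+3\eps$ in its closure, and set $\sigma' := g(\gamma')$. Consider the family $\Gamma$ of curves in $\HH\setminus K$ joining $\gamma$ to $\gamma'$. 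The admissible metric $\rho(z)=|z-x|^{-1}\one_{\{\eps<|z-x|<3\eps\}\cap(\HH\setminus K)}$ (admissible because the right half of the semi-annulus lies in $\HH\setminus K$) together with the standard length-area estimate gives the extremal-distance lower bound $d_{\HH\setminus K}(\gamma,\gamma')\ge(\log 3)/\pi$, and by conformal invariance the same bound holds for $d_\HH(\sigma,\sigma')$. The inclusion $\sigma\subset B(a,3L)$ then follows from a direct geometric analysis of two curves $\sigma,\sigma'$ emanating from the nearby real points $a$ and $b=a+L$: an explicit conformal map of $\HH$ to a horizontal strip via $w\mapsto\log(w-a)$ converts half-annuli centered at $a$ into vertical rectangles, so that any point of $\sigma$ lying outside $B(a,3L)$ would contribute a rectangular region of aspect ratio strictly greater than $(\log 3)/\pi$, contradicting the extremal-distance bound.

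For the second inclusion, any $w\in g(\gamma)\subset B(a,3L)$ satisfies $|w-b|\le 3L+|a-b|=4L$ by the triangle inequality, so it suffices to prove $L\le 2\eps g'(x+3\eps)$. Writing $L=\int_{x+\eps}^{x+3\eps}g'(t)\,dt$, this is the monotonicity $g'(t)\le g'(x+3\eps)$ on $(\max(K\cap\R),x+3\eps]$, i.e., the convexity of $g_K$ on $(\max(K\cap\R),\infty)$. The latter is a standard property of $\HH$-hulls: approximate $K$ by a Loewner chain $K_t$ with driving function $\xi_t$ and differentiate Loewner's equation $\partial_tg_t(z)=2/(g_t(z)-\xi_t)$ twice in the real variable to obtain $\partial_tg_t''(y)=-2g_t''(y)/(g_t(y)-\xi_t)^2+4g_t'(y)^2/(g_t(y)-\xi_t)^3$, which together with $g_0''\equiv 0$ implies $g_t''(y)\ge0$ for $y>\max(K_t\cap\R)$ by an integrating-factor argument. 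The main obstacle is the last step of the first inclusion: extracting the precise geometric factor $3$ from the extremal-distance lower bound requires either an explicit conformal mapping of the image region or a carefully chosen admissible metric around $a$, and this is where the bulk of the technical work sits; the rest of the proof is routine manipulation of conformal invariants and the Loewner ODE.
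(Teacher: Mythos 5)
Your lower bound $d_{\HH\setminus K}(\gamma,\gamma')\ge (\log 3)/\pi$ is fine, and your treatment of the second inclusion (convexity of $g_K$ to the right of the hull, so that $g_K(x+3\eps)-g_K(x+\eps)\le 2\eps g_K'(x+3\eps)$ and then $3L+L\le 8\eps g_K'(x+3\eps)$) is correct in substance, even if the Loewner-approximation justification is left vague. The genuine gap is in the crucial step of the first inclusion. You want to conclude that if some point of $\sigma=g_K(\gamma)$ lies outside $B(a,3L)$ then the extremal-distance lower bound is violated; but to contradict a \emph{lower} bound on $d_{\HH}(\sigma,\sigma')$ you must produce an \emph{upper} bound smaller than $(\log 3)/\pi$, and no such upper bound is available from what you know about $\sigma'$. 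All you know is that $\sigma'$ is connected and contains $b$; the extremal distance from a large set to a small set (in the extreme case, a point) is large or infinite no matter how far $\sigma$ extends, so ``$\sigma$ exits $B(a,3L)$'' by itself forces nothing. The ``rectangle of aspect ratio strictly greater than $(\log 3)/\pi$'' assertion is not a valid deduction. Even if you repaired it by showing that $\sigma$ and $\sigma'$ both cross a common semi-annulus centered at $a$ and invoking the standard estimate $d\le \pi/\log(R/r)$, you would need $R/r>e^{\pi^2/\log 3}\approx 8\times 10^3$ to beat $(\log 3)/\pi$, so at best you would get a huge constant in place of $3$ --- and the stated radius $8\eps g_K'(x+3\eps)$ in the second inclusion is computed from the factor $3$ (via $3L+L=4L\le 8\eps g_K'(x+3\eps)$), so the lemma as stated would not follow.

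The paper avoids this entirely by never introducing $\gamma'$: it measures the extremal distance between $g_K(\gamma)$ and the real ray $[g_K(x+3\eps),\infty)$, whose preimage is the explicitly known ray $[x+3\eps,\infty)$. The lower bound then comes from the comparison principle in $\HH\setminus B(x,\eps)$, giving exactly $d_{\HH}([-1,0],[1/3,\infty))$, and the upper bound uses Ahlfors' monotonicity result that among connected sets $S$ with $a\in\overline S\subset\overline{B(a,r^*)}$ the segment $[a-r^*,a]$ maximizes the extremal distance to a ray $[b,\infty)$. Both bounds are thus reduced to the same segment--ray configuration, which depends only on the ratio (segment length)/(gap), and comparing ratios yields $r^*\le 3L$ exactly. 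If you want to salvage your route you would need, at minimum, a quantitative lower bound on how $\sigma'$ spreads around $a$ (e.g.\ that it crosses a definite annulus), plus a sharp two-crosscut comparison; the ray-plus-Ahlfors argument is both shorter and the only way I see to obtain the constant $3$.
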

\begin{proof}
This lemma is proved in \cite[Lemma 2.1]{WuZhanSLEBoundaryArmExponents}. To be self-contained, we repeat the proof here. 
Define $r^*=\sup\{|z-g_K(x+\eps)|: z\in g_K(\gamma)\}$.
It is sufficient to show
\begin{equation}\label{eqn::extremal_aux}
r^*\le 3(g_K(x+3\eps)-g_K(x+\eps)).
\end{equation}
We will prove (\ref{eqn::extremal_aux}) by estimates on the extremal distance:
\[d_{\HH}(g_K(\gamma), [g_K(x+3\eps), \infty)).\]
By the conformal invariance and the comparison principle \cite[Section 4.3]{AhlforsConformalInvariants}, we can obtain the following lower bound.
\begin{align*}
d_{\HH}(g_K(\gamma), [g_K(x+3\eps), \infty))&=d_{\HH\setminus K}(\gamma, [x+3\eps, \infty))\\
&\ge d_{\HH\setminus B(x,\eps)}(B(x,\eps), [x+3\eps, \infty))\\
&=d_{\HH\setminus \U}(\U, [3,\infty))=d_{\HH}([-1,0], [1/3,\infty)).
\end{align*}
On the other hand, we will give an upper bound. Recall a fact for extremal distance: for $x<y$ and $r>0$, the extremal distance in $\HH$ between $[y,\infty)$ and a connected set $S\subset\overline{\HH}$ with $x\in\overline{S}\subset\overline{B(x,r)}$ is maximized when $S=[x-r,x]$, see \cite[Chapter I-E, Chapter III-A]{AhlforsQuasiconformal}. Since $g_K(\gamma)$ is connected and $g_K(x+\eps)\in \R\cap\overline{g_K(\gamma)}$, by the above fact, we have the following upper bound.
\begin{align*}
d_{\HH}(g_K(\gamma), [g_K(x+3\eps), \infty))&\le d_{\HH}([g_K(x+\eps)-r^*, g_K(x+\eps)], [g_K(x+3\eps), \infty))\\
&=d_{\HH}\left([-r^*,0], \left[g_K(x+3\eps)-g_K(x+\eps),\infty\right)\right).
\end{align*}
Combining the lower bound with the upper bound, we have
\[d_{\HH}([-1,0], [1/3,\infty))\le d_{\HH}\left([-r^*,0], \left[g_K(x+3\eps)-g_K(x+\eps),\infty\right)\right).\]
This implies (\ref{eqn::extremal_aux}) and completes the proof.

\end{proof}
\begin{lemma}\label{lem::image_insideball}
Fix $z\in\overline{\HH}$ and $\eps>0$. Let $K$ be an $\HH$-hull and let $g_K$ be the conformal map from $\HH\setminus K$ onto $\HH$ normalized at $\infty$. Assume that
\[\dist(K, z)\ge 16\eps.\]
Then $g_K(B(z,\eps))$ is contained in the ball with center $g_K(z)$ and radius $4\eps |g_K'(z)|$. 
\end{lemma}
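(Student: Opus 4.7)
The plan is to reduce the statement to Koebe's distortion theorem, after extending $g_K$ past the real axis via Schwarz reflection. Since $z$ is allowed to lie on (or within $16\eps$ of) the real line, the disk $B(z,\eps)$ need not sit inside $\HH\setminus K$, so one first has to make sense of $g_K$ on a genuine Euclidean disk surrounding $z$. Once that is done, the desired estimate is a routine distortion bound.

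I would set $K^{\ast}=K\cup\ov K$, where $\ov K$ is the complex conjugate of $K$. Because $g_K$ sends any segment of $\R\setminus K$ into $\R$ (as a boundary map between two copies of the upper half plane), Schwarz reflection extends it to a holomorphic univalent map $G$ on $\C\setminus K^{\ast}$, with $G(w)=\ov{g_K(\ov w)}$ on the lower half plane. Univalence of $G$ is automatic since upper-half-plane points map to $\HH$, real points to $\R$, and lower to lower, with $g_K$ already injective on each piece. Next I would verify $B(z,16\eps)\subset\C\setminus K^{\ast}$: disjointness from $K$ is the hypothesis, and for disjointness from $\ov K$, given any $w\in K\subset\ov\HH$ and $z\in\ov\HH$,
\[|z-\ov w|^2-|z-w|^2=4\,\mathrm{Im}(z)\,\mathrm{Im}(w)\ge 0,\]
so $|z-\ov w|\ge|z-w|\ge 16\eps$, as required.

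Having located a Euclidean disk on which $G$ is univalent, I would apply Koebe's distortion theorem on $B(z,16\eps)$: for $w\in B(z,\eps)$,
\[|G(w)-G(z)|\le\frac{|w-z|}{(1-|w-z|/(16\eps))^2}\,|G'(z)|\le\frac{\eps}{(15/16)^2}\,|g_K'(z)|<4\eps\,|g_K'(z)|,\]
using $|w-z|/(16\eps)\le 1/16$ and $|G'(z)|=|g_K'(z)|$ because $G$ agrees with $g_K$ on $\ov\HH$. Restricting $w$ to $B(z,\eps)\cap\ov\HH$, on which $G(w)=g_K(w)$, gives the claimed inclusion. The only real subtlety is the near-boundary case $\mathrm{Im}(z)<16\eps$, which the reflection step handles; the generous factor $16$ in the hypothesis keeps the Koebe distortion constant comfortably below $4$, so no genuine obstacle appears beyond this bookkeeping.
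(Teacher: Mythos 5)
Your argument is correct, but it runs along a genuinely different track than the paper's. The paper never writes down the reflected extension and never invokes the growth/distortion theorem: it applies the Koebe $1/4$ theorem twice, first to conclude $\dist(g_K(K),g_K(z))\ge d:=4\eps|g_K'(z)|$, and then to the inverse map $h=g_K^{-1}$ restricted to $B(g_K(z),d)$, which yields $h(B(g_K(z),d))\supset B(z,\eps)$ and hence the inclusion $g_K(B(z,\eps))\subset B(g_K(z),d)$. You instead estimate the forward map directly: after making the Schwarz reflection explicit (the paper leaves it implicit, even though its first use of the $1/4$ theorem at a point $z\in\ov{\HH}$ with $\dist(z,K)\ge 16\eps$ also tacitly needs the reflected map to be univalent on the full disk $B(z,16\eps)$), you apply the Koebe growth bound once on $B(z,16\eps)$, and the computation $\eps/(15/16)^2=(256/225)\eps<4\eps$ is right. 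What each route buys: the paper's inverse-map trick uses only the $1/4$ theorem, with no distortion estimate at all; yours gives the sharper radius $(256/225)\eps|g_K'(z)|$ and spells out exactly why a Euclidean disk of univalence exists in the boundary case, via the clean inequality $|z-\ov{w}|\ge|z-w|$ for $z,w\in\ov{\HH}$. One small streamlining: you only need univalence of the reflected map on $B(z,16\eps)$, where the real trace of the ball is a single interval of $\R\setminus K$ and injectivity is immediate from $g_K'>0$ there together with the disjointness of the images of the upper, real and lower pieces; the global injectivity claim on $\C\setminus K^{*}$ is true but can be bypassed.
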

\begin{proof}
By Koebe 1/4 theorem, we know that 
\[\dist(g_K(K), g_K(z))\ge d:=4\eps |g_K'(z)|. \]
Let $h=g_K^{-1}$ restricted to $B(g_K(z), d)$. Applying Koebe 1/4 theorem to $h$, we know that 
\[\dist(z, \partial h(B(g_K(z), d)))\ge d|h'(g_K(z))|/4=\eps.\]
Therefore $h(B(g_K(z), d))$ contains the ball $B(z,\eps)$, and this implies that $B(g_K(z), d)$ contains the ball $g_K(B(z,\eps))$ as desired. 
\end{proof}

Loewner chain is a collection of $\HH$-hulls $(K_{t}, t\ge 0)$ associated with the family of conformal maps $(g_{t}, t\ge 0)$ obtained by solving the Loewner equation: for each $z\in\mathbb{H}$,
\begin{equation}\label{loewner}
\partial_{t}{g}_{t}(z)=\frac{2}{g_{t}(z)-W_{t}}, \quad g_{0}(z)=z,
\end{equation}
where $(W_t, t\ge 0)$ is a one-dimensional continuous function which we call the driving function. Let $T_z$ be the swallowing time of $z$ defined as $\sup\{t\ge 0: \min_{s\in[0,t]}|g_{s}(z)-W_{s}|>0\}$.
Let $K_{t}:=\overline{\{z\in\mathbb{H}: T_{z}\le t\}}$. Then $g_{t}$ is the unique conformal map from $H_{t}:=\mathbb{H}\backslash K_{t}$ onto $\mathbb{H}$ normalized at $\infty$.
\begin{lemma} \label{lem::tip_distance_large}
Suppose that $(K_t, t\ge 0)$ is a Loewner chain which is generated by a continuous curve $(\eta(t), t\ge 0)$. Fix $y\le -4r<0<x$. Let $\sigma$ be the first time that $\eta$ hits $B(y,r)$ and assume that $x$ is not swallowed by $\eta[0,\sigma]$ and that $y-r$ is not swallowed by $\eta[0,\sigma]$. 
Then we have 
\[g_{\sigma}(x)-W_{\sigma}\ge (x-y-2r)/2.\]
\end{lemma}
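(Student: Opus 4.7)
The plan is to combine monotonicity of the Loewner flow along the real line with the extremal-length estimate of Lemma~\ref{lem::extremallength_argument}.

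\smallskip
First, I would set $X_t:=g_t(x)-W_t$ and $Y_t:=g_t(y-r)-W_t$ for $t\in[0,\sigma]$. Both are well-defined and satisfy $X_t>0>Y_t$ because neither $x$ nor $y-r$ is swallowed by $\eta[0,\sigma]$. The Loewner equation gives $dX_t=(2/X_t)dt-dW_t$ and $dY_t=(2/Y_t)dt-dW_t$, so the driving term cancels in the difference and
\[\frac{d(X_t-Y_t)}{dt}=\frac{2}{X_t}-\frac{2}{Y_t}=\frac{2(X_t+|Y_t|)}{X_t\,|Y_t|}>0.\]
Integrating from $0$ to $\sigma$ yields the \emph{separation bound} $g_\sigma(x)-g_\sigma(y-r)=X_\sigma-Y_\sigma\ge X_0-Y_0=x-y+r$.

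\smallskip
Second, I would bound $W_\sigma-g_\sigma(y-r)$ from above. The key geometric observation is that the tip satisfies $\eta(\sigma)\in\overline{B(y,r)}$, hence $|\eta(\sigma)-(y-r)|\le 2r$, so $\eta(\sigma)$ lies inside $\overline{B(y-r,2r)}$. Apply Lemma~\ref{lem::extremallength_argument} to the reflected hull $-K_\sigma$ at the point $-(y-r)=r-y>0$ with scale $\eps=2r$; after reflecting back across the imaginary axis, one obtains
\[g_\sigma(\gamma)\subset B\bigl(g_\sigma(y-3r),\;3(g_\sigma(y-3r)-g_\sigma(y-7r))\bigr),\]
where $\gamma$ is the connected component of $\HH\cap(\partial B(y-r,2r)\setminus K_\sigma)$ whose closure contains $y-3r$. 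Because $\eta(\sigma)$ is strictly inside $B(y-r,2r)$, a topological argument shows that $W_\sigma$ is the image of an ``enclosed'' boundary point, hence lies in the $\R$-projection of the above ball. Combining with the Loewner monotonicity on $(-\infty,L_\sigma)$ (where $L_\sigma=\min(K_\sigma\cap\R)>y-r$), which implies that real-line gaps such as $g_\sigma(y-3r)-g_\sigma(y-7r)$ contract under the flow (bounded by $4r$), together with the assumption $y\le-4r$, one extracts the bound $W_\sigma-g_\sigma(y-r)\le(x-y)/2+2r$.

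\smallskip
Subtracting the two estimates completes the argument:
\[g_\sigma(x)-W_\sigma=\bigl(g_\sigma(x)-g_\sigma(y-r)\bigr)-\bigl(W_\sigma-g_\sigma(y-r)\bigr)\ge(x-y+r)-\bigl(\tfrac{x-y}{2}+2r\bigr)=\tfrac{x-y-2r}{2}.\]

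\smallskip
The main obstacle is the second step. While the qualitative fact that $W_\sigma$ stays close to $g_\sigma(y-r)$ is a direct consequence of Lemma~\ref{lem::extremallength_argument}, extracting the sharp constant requires carefully combining the extremal-length bound with the Loewner monotonicity on the real line, and the topological claim that $W_\sigma$ is indeed enclosed by $g_\sigma(\gamma)$ uses crucially that $y-r$ is accessible from $-\infty$ along the boundary of $\HH\setminus K_\sigma$ up to the left side of the tip. The hypothesis $y\le-4r$ enters precisely to ensure that the ball $B(y-r,2r)$ lies to the left of $0\in K_\sigma$ and has room to enclose the tip without interference.
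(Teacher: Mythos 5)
Your first step is correct, and it is in fact one of the two ingredients of the paper's argument: since $g_t(x)-W_t>0>g_t(y-r)-W_t$ up to time $\sigma$, the difference $g_t(x)-g_t(y-r)$ is increasing, so $g_\sigma(x)-g_\sigma(y-r)\ge x-y+r$. The fatal problem is your second step. There is no upper bound on $W_\sigma-g_\sigma(y-r)$ in terms of $x,y,r$ alone, so the claimed inequality $W_\sigma-g_\sigma(y-r)\le (x-y)/2+2r$ is false. Concretely, let the curve leave $0$, make a huge excursion (say of height and width $H\gg x-y$) that stays away from $B(y,r)$ and from $[x,\infty)$, and only at the very end approach $B(y,r)$ through a long thin ``tunnel'' coming in from far away at small height, hitting the ball near its bottom. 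In the prime-end order on $\partial(\HH\setminus K_\sigma)$, the entire outer face of that excursion then lies \emph{between} the tip and the point $y-r$, so $W_\sigma-g_\sigma(y-r)$ equals the harmonic width (seen from $\infty$) of a boundary arc containing faces of size of order $H$, hence is $\gtrsim H$, which is arbitrarily large while $x,y,r$ stay fixed. For the same reason your ``topological argument'' breaks down: the tip need not be separated from $\infty$ by the component of $\HH\cap(\partial B(y-r,2r)\setminus K_\sigma)$ containing $y-3r$ (in the configuration above that component is a tiny arc near the real line and the tip escapes elsewhere), so $W_\sigma$ need not lie in the ball produced by the reflected Lemma~\ref{lem::extremallength_argument}. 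Even granting the enclosure, the ingredients you cite would give at best $W_\sigma-g_\sigma(y-r)\le 12r$, not $(x-y)/2+2r$, and a bound of order $r$ does not yield the stated constant when $x-y$ is comparable to $r$ (e.g.\ $y=-4r$ and $x$ small).

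The quantity that \emph{can} be controlled uniformly is the ratio, not the difference, and that is the paper's route: by conformal invariance and the comparison principle for extremal distance, comparing $d_{\HH\setminus\eta[0,\sigma]}((-\infty,y-r),\,\gamma\cup[0,x])$ (with $\gamma$ the right side of the curve) to $d_{\HH\setminus B(y,r)}((-\infty,y-r),(y+r,x))$ gives
\[
\frac{g_\sigma(x)-W_\sigma}{W_\sigma-g_\sigma(y-r)}\;\ge\;\frac{x-y}{4r}-\frac12 ,
\]
a scale-invariant bound that is insensitive to large detours of the curve. Combining this ratio bound with your (correct) monotonicity estimate $g_\sigma(x)-g_\sigma(y-r)\ge x-y+r$ gives $g_\sigma(x)-W_\sigma\ge\frac{x-y-2r}{x-y+2r}(x-y+r)\ge(x-y-2r)/2$. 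So the repair is not a sharper constant in your step two but a structural change: replace the absolute bound on $W_\sigma-g_\sigma(y-r)$ by a lower bound on the ratio obtained from an extremal-length comparison.
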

\begin{proof}
Let $\gamma$ be the right side of $\eta[0,\sigma]$.
We prove the conclusion by estimates on the extremal distance 
\[d_{\HH\setminus \eta[0,\sigma]}((-\infty, y-r), \gamma\cup[0,x]).\]
Denote $g_{\sigma}-W_{\sigma}$ by $f$. 
On the one hand, by the conformal invariance of the extremal distance, we have 
\begin{align*}
d_{\HH\setminus \eta[0,\sigma]}((-\infty, y-r), \gamma\cup[0,x])&=d_{\HH}((-\infty, f(y-r)), (0, f(x)))\\
&=d_{\HH}\left((-\infty, 0), \left(1, \frac{f(x)-f(y-r)}{-f(y-r)}\right)\right).
\end{align*}
On the other hand, by the comparison principle of the extremal distance \cite[Section 4.3]{AhlforsConformalInvariants}, we have 
\begin{align*}
d_{\HH\setminus \eta[0,\sigma]}((-\infty, y-r), \gamma\cup[0,x])&\le d_{\HH\setminus B(y,r)}((-\infty, y-r), (y+r, x))\\
&=d_{\HH}\left((-\infty, 0), \left(1, \frac{1}{2}+\frac{x-y}{4r}+\frac{r}{4(x-y)}\right)\right). 
\end{align*}
Comparing these two parts, we have 
\[\frac{f(x)-f(y-r)}{-f(y-r)}\ge \frac{1}{2}+\frac{x-y}{4r}+\frac{r}{4(x-y)}\ge \frac{1}{2}+\frac{x-y}{4r}.\]
Thus 
\[\frac{g_{\sigma}(x)-W_{\sigma}}{W_{\sigma}-g_{\sigma}(y-r)}\ge \frac{x-y}{4r}-\frac{1}{2}.\]
Combining with the following fact (since $g_t(x)-g_t(y-r)$ is increasing in $t$):
\[g_{\sigma}(x)-g_{\sigma}(y-r)\ge x-y+r,\]
we obtain 
\[g_{\sigma}(x)-W_{\sigma}\ge \frac{x-y-2r}{x-y+2r}(x-y+r)\ge (x-y-2r)/2.\]
This completes the proof. 
\end{proof}
Here we discuss a little about the evolution of a point $y\in\R$ under $g_t$. We assume $y\le 0$. There are two possibilities: if $y$ is not swallowed by $K_t$, then we define $Y_t=g_t(y)$; if $y$ is swallowed by $K_t$, then we define $Y_t$ to the be image of the leftmost of point of $K_t\cap\R$ under $g_t$. Suppose that $(K_t, t\ge 0)$ is generated by a continuous path $(\eta(t), t\ge 0)$ and that the Lebesgue measure of $\eta[0,\infty]\cap\R$ is zero. Then the process $Y_t$ is uniquely characterized by the following equation: 
\[Y_t=y+\int_0^t \frac{2ds}{Y_s-W_s},\quad Y_t\le W_t,\quad \forall t\ge 0.\] 
In this paper, we may write $g_t(y)$ for the process $Y_t$. 

\subsection{SLE processes} 
An $\SLE_{\kappa}$ is the random Loewner chain $(K_{t}, t\ge 0)$ driven by $W_t=\sqrt{\kappa}B_t$ where $(B_t, t\ge 0)$ is a standard one-dimensional Brownian motion.
In \cite{RohdeSchrammSLEBasicProperty}, the authors prove that $(K_{t}, t\ge 0)$ is almost surely generated by a continuous transient curve, i.e. there almost surely exists a continuous curve $\eta$ such that for each $t\ge 0$, $H_{t}$ is the unbounded connected component of $\mathbb{H}\backslash\eta[0,t]$ and that $\lim_{t\to\infty}|\eta(t)|=\infty$.

We can define an SLE$_{\kappa}(\underline{\rho}^L;\underline{\rho}^R)$ process with multiple force points $(\underline{x}^L; \underline{x}^R)$ where 
\[\underline{\rho}^L=(\rho^{l, L}, ..., \rho^{1, L}), \quad \underline{\rho}^R=(\rho^{1, R}, ..., \rho^{r, R}) \quad\text{ with }\rho^{i,q}\in \R;\]
\[\underline{x}^L=(x^{l, L}<\cdots<x^{1,L}\le 0),\quad \underline{x}^R=(0\le x^{1, R}<\cdots<x^{r, R}).\]
It is the Loewner chain driven by $W_{t}$ which is the solution to the following systems of SDEs:
\[dW_{t}=\sqrt{\kappa}dB_{t}+\sum_i \frac{\rho^{i, L} dt}{W_{t}-V^{i, L}_{t}}+\sum_i \frac{\rho^{i, R} dt}{W_{t}-V^{i, R}_{t}}, \quad W_{0}=0;\]
\[ dV^{i,L}_{t}=\frac{2dt}{V^{i,L}_{t}-W_{t}}, \quad V^{i, L}_{0}=x^{i, L};\quad 
dV^{i, R}_t=\frac{2dt}{V^{i, R}_t-W_t}, \quad V^{i, R}_0=x^{i,R}.\]
The solution exists and is unique up to the continuation threshold is hit---the first time $t$ that 
\[W_t=V_t^{j,q} \text{ where }\sum_1^j \rho^{i,q}\le -2,\text{ for some }q\in \{L, R\}.\]
Moreover, the corresponding Loewner chain is almost surely generated by a continuous curve (\cite[Section 2]{MillerSheffieldIG1}). 

In fact, in this paper, we only need the definitions and properties of $\SLE$ with three force points: $\SLE_{\kappa}(\rho^L;\rho^{1,R},\rho^{2,R})$ with force points $(x^L; x^{1,R}, x^{2,R})$. To simplify notations, we will focus on these SLE processes in this section. 
From Girsanov Theorem, it follows that the law of an $\SLE_{\kappa}(\underline{\rho})$ process can be constructed by reweighting the law of an ordinary $\SLE_{\kappa}$.
\begin{lemma}\label{lem::sle_mart}
Suppose $x^{L}<0<x^{1,R}<x^{2,R}$ and $\rho^L, \rho^{1,R}, \rho^{2,R}\in \R$. Define 
\begin{align*}
M_t&=g_t'(x^L)^{\rho^L(\rho^L+4-\kappa)/(4\kappa)}(g_t(x^L)-W_t)^{\rho^L/\kappa}\\
&\times g_t'(x^{1,R})^{\rho^{1,R}(\rho^{1,R}+4-\kappa)/(4\kappa)}(g_t(x^{1,R})-W_t)^{\rho^{1,R}/\kappa}\\
&\times g_t'(x^{2,R})^{\rho^{2,R}(\rho^{2,R}+4-\kappa)/(4\kappa)}(g_t(x^{2,R})-W_t)^{\rho^{2,R}/\kappa}\\
&\times (g_t(x^{1,R})-g_t(x^L))^{\rho^L\rho^{1,R}/(2\kappa)}(g_t(x^{2,R})-g_t(x^L))^{\rho^L\rho^{2,R}/(2\kappa)}
(g_t(x^{2,R})-g_t(x^{1,R}))^{\rho^{1,R}\rho^{2,R}/(2\kappa)}.
\end{align*}
Then $M$ is a local martingale for $\SLE_{\kappa}$ and the law of $\SLE_{\kappa}$ weighted by $M$ (up to the first time that $W$ hits one of the force points) is equal to the law of $\SLE_{\kappa}(\rho^{L};\rho^{1,R}, \rho^{2,R})$ with force points $(x^L; x^{1,R}, x^{2,R})$.  
\end{lemma}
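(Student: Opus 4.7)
The plan is the standard Itô-plus-Girsanov argument. Step one is to verify that $M$ is a local martingale for ordinary $\SLE_\kappa$ up to the first time $W$ hits a force point; step two is to apply Girsanov to the martingale $M$ and read off the driving SDE of the reweighted process, which will turn out to be exactly the one defining $\SLE_\kappa(\rho^L;\rho^{1,R},\rho^{2,R})$.

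For step one, I work under the $\SLE_\kappa$ law where $W_t = \sqrt{\kappa}\,B_t$. Writing $V^i_t := g_t(x^i)$ for each of the three force points, the Loewner ODE yields
\[dV^i_t = \frac{2\,dt}{V^i_t - W_t},\qquad d\log g_t'(x^i) = \frac{-2\,dt}{(V^i_t - W_t)^2},\qquad d(V^j_t - V^i_t) = \frac{-2(V^j_t - V^i_t)\,dt}{(V^j_t - W_t)(V^i_t - W_t)}.\]
Applying Itô's formula to $\log M_t$ and then using $dM_t/M_t = d\log M_t + \tfrac12 d\langle\log M\rangle_t$, the only drift contributions are proportional either to $(V^i_t - W_t)^{-2}$ or to $(V^i_t - W_t)^{-1}(V^j_t - W_t)^{-1}$. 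A short calculation shows that the coefficient of $(V^i_t - W_t)^{-2}$ collapses to
\[\frac{\rho^i}{2\kappa}\bigl(-(\rho^i+4-\kappa)+(4-\kappa)+\rho^i\bigr) = 0,\]
while the coefficient of $(V^i_t-W_t)^{-1}(V^j_t-W_t)^{-1}$ collapses to $-\rho^i\rho^j/\kappa + \rho^i\rho^j/\kappa = 0$. The first cancellation is exactly what pins down the $g_t'(x^i)$ exponent $\rho^i(\rho^i+4-\kappa)/(4\kappa)$; the second pins down the cross exponent $\rho^i\rho^j/(2\kappa)$ of $V^j_t - V^i_t$. Hence $M$ is a local martingale up to the first time $W$ hits a force point.

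For step two, the martingale part of $dM_t/M_t$ is $-\sum_i \rho^i/(\sqrt{\kappa}(V^i_t - W_t))\,dB_t$. Girsanov's theorem (localized by stopping at the first swallowing time, which makes $M$ a true martingale) then gives that, under the law reweighted by $M/M_0$, the process
\[\tilde B_t := B_t - \int_0^t \sum_i \frac{\rho^i\,ds}{\sqrt{\kappa}(W_s - V^i_s)}\]
is a standard Brownian motion. Multiplying by $\sqrt{\kappa}$ and rearranging produces
\[dW_t = \sqrt{\kappa}\,d\tilde B_t + \sum_i \frac{\rho^i\,dt}{W_t - V^i_t},\]
which is precisely the driving SDE for $\SLE_\kappa(\rho^L;\rho^{1,R},\rho^{2,R})$ with force points $(x^L;x^{1,R},x^{2,R})$.

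The main obstacle is bookkeeping: the drift cancellation in step one requires the single-point exponents and the pairwise cross exponents to conspire simultaneously, and one must track signs coming from whether a force point lies to the left or right of $W$. With only three force points and three pairwise cross terms the computation stays manageable and mirrors the general multi-force-point calculation already carried out in \cite{MillerSheffieldIG1}; beyond verifying the algebraic identities above, no new ideas are needed.
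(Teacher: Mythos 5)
Your computation is correct, but it follows a genuinely different route from the paper: the paper disposes of this lemma by citing \cite[Theorem 6]{SchrammWilsonSLECoordinatechanges}, whereas you verify the statement directly by the It\^o-plus-Girsanov argument. Your drift cancellation is right: with exponents $a_i=\rho^i(\rho^i+4-\kappa)/(4\kappa)$, $b_i=\rho^i/\kappa$, $c_{ij}=\rho^i\rho^j/(2\kappa)$ the coefficient of $(V^i_t-W_t)^{-2}$ in the drift of $dM_t/M_t$ is $-2a_i+(2-\kappa/2)b_i+(\kappa/2)b_i^2=0$ and the coefficient of $(V^i_t-W_t)^{-1}(V^j_t-W_t)^{-1}$ is $-2c_{ij}+\kappa b_ib_j=0$, which is exactly your cancellation; the Girsanov step then reproduces the defining SDE of $\SLE_{\kappa}(\rho^L;\rho^{1,R},\rho^{2,R})$. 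What the citation buys the paper is brevity and the full strength of the coordinate-change framework; what your argument buys is self-containedness and transparency about where each exponent in $M$ comes from. Two small points to tighten: (i) for the left force point $g_t(x^L)-W_t<0$, so the factors should be read as powers of $|g_t(x^L)-W_t|$ (the It\^o computation for $\log|V^L_t-W_t|$ is unchanged); (ii) your parenthetical that stopping at the first swallowing time "makes $M$ a true martingale" is not quite right as stated, since $M$ need not be bounded up to that time --- the standard fix is to localize with stopping times at which $\min_i|W_t-V^i_t|$ is bounded below (and $t$ and the configuration bounded), apply Girsanov on each stopped interval, and let the localization exhaust the interval up to the first hitting of a force point. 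With these routine adjustments your proof is complete.
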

\begin{proof}
\cite[Theorem 6]{SchrammWilsonSLECoordinatechanges}.
\end{proof}

Suppose $\eta$ is an $\SLE_{\kappa}(\rho^L;\rho^{1,R}, \rho^{2,R})$ process with force points $(x^L; x^{1,R}, x^{2,R})$. 
There are two special values of $\rho$: $\kappa/2-2$ and $\kappa/2-4$. 
If $\rho^{1,R}+\rho^{2,R}\ge \kappa/2-2$, then $\eta$ never hits $[x^{2,R},\infty)$. If $\rho^{1,R}+\rho^{2,R}\le \kappa/2-4$, then $\eta$ almost surely accumulates at $x^{2,R}$ at finite time. 
\begin{lemma}\label{lem::sle_positivechance} Fix $\kappa\in (0,4)$.  
Suppose that $\eta$ is an $\SLE_{\kappa}(\rho, \nu)$ process with force points $(v, x)$ where 
\[0\le v<x,\quad \rho>-2, \quad \rho+\nu< \kappa/2-4.\]
For $\eps>0$, define 
\[\tau=\inf\{t: \eta(t)\in B(x,\eps)\},\quad T=\inf\{t: \eta(t)\in [x,\infty)\}.\]
For $C\ge 4, 1/4\ge c>0$, define 
\[\LF=\{\tau<T, \eta[0,\tau]\subset B(0, Cx), \dist(\eta[0,\tau], [x-\eps, Cx])\ge c\eps\}.\]
Then, there exist constants $c, C, u_0>0$ which are uniform over $v,x, \eps$ such that $\PP[\LF]\ge u_0$.
\end{lemma}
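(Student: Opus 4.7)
First, by the scaling property of $\SLE_\kappa(\rho,\nu)$, I reduce to the case $x=1$, so that $v\in[0,1)$ and $\eps>0$, and seek $c,C,u_0>0$ uniform in these parameters. I will take $C=4$ and let the analysis dictate $c$ and $u_0$ (the case of $\eps$ comparable to $1$ is handled directly by taking $C$ larger and $c$ smaller, so I may focus on $\eps\in(0,\eps_0)$ for a fixed small $\eps_0$).

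The main idea is to use the Girsanov representation of Lemma \ref{lem::sle_mart} to write
\[\PP[\LF]=\E^{\SLE_\kappa}\!\left[\frac{M_\tau}{M_0}\mathbb{1}_\LF\right],\]
where $M$ is the local martingale associated with the right force points $v,1$, and then estimate both factors on a specific sub-event $\tilde\LF\subset\LF$ that is tailored to make both quantities sharply controllable. For $\tilde\LF$ I take a ``thin-tube'' event for plain $\SLE_\kappa$: the curve travels from $0$ through a prescribed corridor of width $\asymp\eps$ approaching $1$ from inside the upper half-plane, stays inside $B(0,4)$, first hits $\partial B(1,\eps)$ at a time $\tau$, and keeps distance $\ge c\eps$ from $[1-\eps,4]$ throughout. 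Since $\SLE_\kappa$ is a simple curve for $\kappa\in(0,4)$, a standard corridor argument (Girsanov on the driving Brownian motion, or a coupling with the Bessel process) yields $\PP^{\SLE_\kappa}[\tilde\LF]\gtrsim\eps^\alpha$, where $\alpha=\alpha(\kappa)>0$ is the corresponding boundary one-arm exponent of $\SLE_\kappa$.

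On $\tilde\LF$, Koebe's $1/4$ theorem together with Lemmas \ref{lem::extremallength_argument} and \ref{lem::image_insideball} give sharp estimates on the factors in $M_\tau$: at time $\tau$ the tip lies at distance $\asymp\eps$ from $1$, so $g_\tau'(1)\asymp\eps^{-1}$ and $g_\tau(1)-W_\tau\asymp\eps$; the factors involving $v$ (namely $g_\tau'(v)$, $g_\tau(v)-W_\tau$, and the cross term with $g_\tau(1)-g_\tau(v)$) are bounded above and below by positive constants, uniformly when $v$ is bounded away from $1$. Inserting these into the explicit expression for $M_\tau$ from Lemma \ref{lem::sle_mart}, the product of the $1$-factors collapses to $M_\tau/M_0\gtrsim\eps^{-\alpha}$ on $\tilde\LF$, with exactly the exponent needed to cancel the decay of $\PP^{\SLE_\kappa}[\tilde\LF]$, so that $\PP[\LF]\ge\PP[\tilde\LF]\gtrsim\eps^\alpha\cdot\eps^{-\alpha}=1$, giving the uniform lower bound.

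The main obstacle is twofold. First, the exact cancellation of the exponents in the final step encodes the identity that in the attractive regime $\rho+\nu<\kappa/2-4$ the effective boundary arm exponent of $\SLE_\kappa(\rho,\nu)$ at $x$ is non-positive; verifying this cancellation rigorously requires careful bookkeeping of the explicit form of $M_\tau/M_0$ against the Koebe estimates. Second, uniformity in $v\in[0,1)$ is delicate at the two degenerate endpoints: at $v=0$, where $M_0$ involves $v^{\rho/\kappa}$ and may vanish or blow up depending on the sign of $\rho$; and at $v\to 1$, where the cross factor $(g_t(1)-g_t(v))^{\rho\nu/(2\kappa)}$ degenerates. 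These cases are handled respectively by a modified choice of $\tilde\LF$ insensitive to the initial configuration, and by a continuity argument with the merged single-force-point process $\SLE_\kappa(\rho+\nu;1)$ (which is a genuine $\SLE_\kappa(\rho')$ with $\rho'<\kappa/2-4<-2$ and hence accumulates at $1$).
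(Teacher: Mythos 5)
Your overall strategy (reweight plain $\SLE_{\kappa}$ by the two-force-point martingale of Lemma \ref{lem::sle_mart} and restrict to a corridor event with prescribed conformal geometry) is genuinely different from the paper's, but as written it has several gaps that are not just bookkeeping. First, the probability that $\SLE_{\kappa}$ traverses a corridor of width $\asymp\eps$ and length $\asymp 1$ is exponentially small in the modulus $\asymp 1/\eps$, not $\gtrsim\eps^{\alpha}$ with the one-arm exponent; and if you widen the corridor to recover a polynomial rate you lose exactly the deterministic control of $M_\tau$ that your cancellation needs. Second, the conformal estimates you assert on $\tilde\LF$ are incorrect: for a boundary point to the right of the hull one always has $g_\tau'(1)\le 1$ (images of ordered points to the right of the hull contract), so $g_\tau'(1)\asymp\eps^{-1}$ is impossible; moreover, when the tip is at distance $\asymp\eps$ from $1$ with $\Im\eta(\tau)\gtrsim\eps$, Koebe gives $g_\tau(1)-W_\tau\asymp\eps\, g_\tau'(1)$, so your two claims are also mutually inconsistent. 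On a genuinely nice hitting event the $1$-factors give $M_\tau\asymp\eps^{\nu/\kappa}\,g_\tau'(1)^{\nu(\nu+8-\kappa)/(4\kappa)}$ times the $v$-factors, with $g_\tau'(1)$ random; the "exact cancellation" against $\eps^{\alpha}$, $\alpha=(8-\kappa)/\kappa$, is an algebraic identity that holds only for special $\nu$ (e.g.\ $\nu=\kappa-8$), not throughout the regime $\rho+\nu<\kappa/2-4$, and controlling the expectation of the resulting (possibly large negative) power of $g_\tau'(1)$ is exactly the derivative-moment problem of Lemmas \ref{lem::sle_boundary_derivative1}--\ref{lem::sle_boundary_integrable}, for which the present lemma is itself an ingredient. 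Third, the lemma requires uniformity over all $0\le v<x$, and your claim that the $v$-factors are comparable to constants fails: the curve may pass within $O(\eps)$ of $v$, and $v$ may be arbitrarily close to $x$ or equal to $0$, where $g_\tau'(v)$, $g_\tau(v)-W_\tau$, $g_\tau(1)-g_\tau(v)$ and $M_0$ all degenerate; the proposed fixes (a modified $\tilde\LF$, a continuity/merging argument) are not carried out.

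For contrast, the paper sidesteps all $\eps$-power bookkeeping with a M\"obius map: after scaling $x=1$, apply $\varphi(z)=\eps z/(1-z)$ sending $(0,1,\infty)$ to $(0,\infty,-\eps)$. The image curve is an $\SLE_{\kappa}(\rho^L;\rho^R)$ with $\rho^L=\kappa-6-\rho-\nu>\kappa/2-2$ and $\rho^R=\rho>-2$ (this is exactly where $\rho+\nu<\kappa/2-4$ enters), so the curve avoids the left boundary with quantitative control. The complement of $\LF$ becomes the event that the image curve hits $B(-\eps,2\eps/C)$ or one of a family of balls covering the image of the $c\eps$-neighborhood of $[1+\eps,C]$, and the one-point boundary estimate (Lemma \ref{lem::onepoint_sle_multiple}, with exponent $A>1$) makes these hitting probabilities summable, giving $1-\PP[\LF]\lesssim q(C)+c^{A-1}$, which is small for $C$ large and $c$ small, uniformly in $\eps$ and $v$. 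If you want to salvage your route, you would need to replace the corridor by an event with only scale-invariant geometric constraints and prove a genuine moment estimate for $g_\tau'(1)$ under plain $\SLE_\kappa$; at that point you have essentially reconstructed Section \ref{sec::sle_boundary} rather than a short proof of this lemma.
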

\begin{proof}
By the scaling invariance of $\SLE$, we may assume $x=1$. Let $\varphi(z)=\eps z/(1-z)$. Then $\varphi$ is the M\"{o}bius transformation of $\HH$ that sends the triplet $(0,1,\infty)$ to $(0,\infty, -\eps)$. Denote the image of $\eta$ under $\varphi$ by $\tilde{\eta}$, and denote its law by $\tilde{\PP}$. Note that $\tilde{\eta}$ is an $\SLE_{\kappa}(\rho^L;\rho^R)$ with force points $(-\eps; \eps v/(1-v))$ where 
\[\rho^L=\kappa-6-\rho-\nu> \kappa/2-2,\quad \rho^R=\rho>-2.\]
For $r\in (0,1/4)$ and $y\in (-1,0)$, 
let $\tilde{T}=\inf\{t: \tilde{\eta}(t)\in\partial B(y, r|y|) \}$ and $\tilde{S}=
\inf\{t: \tilde{\eta}(t)\in \partial B(0,1)\}$. 
Since $\rho^L >\kappa/2-2$, by \cite[Corollary 3.3]{MillerWuSLEIntersection} or Lemma \ref{lem::onepoint_sle_multiple}, there exists $A>1$ depending only on $\kappa, \rho^L, \rho^R$ such that,  
\begin{equation}\label{eqn::MW_cor3.3}
\tilde{\PP}\left[\tilde{T}<\tilde{S}, \Im{\tilde{\eta}(\tilde{T})}\ge r|y|/4\right]\le r^A.
\end{equation}

 Consider the image of $\HH\setminus B(0, C)$ under $\varphi$. It is contained in the ball $B(-\eps, 2\eps/C)$. Since $\rho^L>\kappa/2-2$, there exists a function $q(C)$ such that the probability for $\tilde{\eta}$ to hit $B(-\eps, 2\eps/C)$ is bounded by $q(C)$ and $q(C)$ goes to zero as $C\to \infty$. Consider the image of $c\eps$-neighborhood of $[1+\eps, C]$ under $\varphi$. Since $c\eps$-neighborhood of $[1+\eps, C]$ is contained in the union of the balls $B(1+kc\eps/4, 4c\eps)$ for $4/c \le k\le C/\eps$, its image under $\varphi$ is contained in the union of the following balls
\[B(-4/(ck)-\eps, 256/(ck^2)),\quad \lfloor4/c\rfloor\le k\le \lceil C/\eps\rceil.\]
Define $\tilde{\LF}$ to be the event that $\tilde{\eta}$ exits the unit disc without touching the union of $B(-\eps, 2\eps/C)$ and the image of $c\eps$-neighborhood of $[1+\eps, C]$ under $\varphi$. Then, by (\ref{eqn::MW_cor3.3}), we have
\[1-\PP[\LF]\le 1-\tilde{\PP}[\tilde{\LF}] \lesssim q(C)+\sum_{k=4/c}^{C/\eps}\left(\frac{1}{4k+\eps ck^2}\right)^A\lesssim q(C)+c^{A-1}.\]
This implies the conclusion. 
\end{proof}

\begin{lemma}\label{lem::sle_positivechance2}
Fix $\kappa\in (0,4)$. Suppose that $\eta$ is an $\SLE_{\kappa}(\rho, \nu)$ process with force points $(v,x)$ where 
\[0\le v\le x,\quad \rho>-2, \quad \rho+\nu> \kappa/2-2.\] For $r>0>y$, assume $r<|y|\lesssim r$.  Let $\sigma$ be the first time that $\eta$ hits $B(y,r)$. 
For $C\ge 4, 1/4\ge c>0$, define 
\[\LF=\{\sigma<\infty, \dist(\eta[0,\sigma], x)\ge cx, \eta[0,\sigma]\subset B(0, C|y|), \dist(\eta[0,\sigma], [Cy, y])\ge cr\}.\]
Then, there exist constants $c, C, v_0>0$ which are uniform over $v,x, y$ such that $\PP[\LF]\ge v_0$.
\end{lemma}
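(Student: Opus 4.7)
By the scaling invariance of $\SLE_\kappa(\rho,\nu)$ I may assume $|y|=1$, so $y=-1$ and $r$ lies in a compact subset of $(0,1)$. The plan mirrors that of Lemma~\ref{lem::sle_positivechance}: apply a M\"obius transformation sending the target point $y$ to infinity, then bound the complement of the desired event by summing over forbidden neighborhoods. Concretely, take $\varphi(z)=z/(z+1)$, the automorphism of $\HH$ with $\varphi(0)=0$, $\varphi(-1)=\infty$, $\varphi(\infty)=1$. By SLE coordinate-change and target invariance, $\tilde\eta=\varphi(\eta)$ is, up to time change, an $\SLE_\kappa(\rho,\nu,\tilde\rho)$ process in $\HH$ from $0$ to $\infty$ with force points $(\tilde v,\tilde x,1)$, where $\tilde v=\varphi(v)\le\tilde x=\varphi(x)\in[0,1)$ and $\tilde\rho=\kappa-6-\rho-\nu$. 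The hypothesis $\rho+\nu>\kappa/2-2$ gives $\tilde\rho<\kappa/2-4$, and for $\kappa\in(0,4)$ the total right-weight $\rho+\nu+\tilde\rho=\kappa-6<\kappa/2-4$, so $\tilde\eta$ almost surely attaches to the force point $1$ in finite capacity time, which corresponds to the transience of $\eta$ to $\infty$.

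Using the identity $\varphi(z)-1=-1/(z+1)$, the event $\LF$ translates into the event that $\tilde\eta$ exits $B(1,1/r)$ (the image of $\partial B(y,r)$) before attaching to $1$, while staying at distance $\gtrsim 1/(C-1)$ from $1$, at distance $\gtrsim c\tilde x(1-\tilde x)$ from $\tilde x$, and at an appropriately scaled distance from the ray $\varphi([-C,-1])=[C/(C-1),\infty)$. The complement then decomposes into three ``bad'' possibilities: (i) $\tilde\eta$ attaches to $1$ while still inside $B(1,1/r)$; (ii) $\tilde\eta$ enters the prescribed neighborhood of $\tilde x$; (iii) $\tilde\eta$ enters the prescribed neighborhood of $\varphi([-C,-1])$. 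Each forbidden neighborhood is decomposed dyadically by distance to $1$, and each dyadic contribution is bounded by a one-point hitting estimate of the type \cite[Corollary 3.3]{MillerWuSLEIntersection} or Lemma~\ref{lem::onepoint_sle_multiple}. As in the proof of Lemma~\ref{lem::sle_positivechance}, the dyadic sums collapse to a bound of the form $q(C)+c^{A-1}$ with $A>1$, and choosing first $C$ large and then $c$ small makes the complement probability small, yielding $\PP[\LF]\ge v_0$.

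The main difficulty, absent in Lemma~\ref{lem::sle_positivechance}, is that all three right force points of $\tilde\eta$ sit on the same side of the tip and may be arbitrarily close to one another---for instance when $v,x$ are large, both $\tilde v$ and $\tilde x$ approach $1$. Consequently the one-point estimate must be made uniform in the positions of the force points. I plan to handle this by using Lemma~\ref{lem::sle_mart} to remove the force points at $\tilde v,\tilde x$ via a Girsanov change of measure, controlling the resulting Radon--Nikodym derivative on the good event with the help of Lemma~\ref{lem::tip_distance_large} and Koebe distortion, and then invoking the single-force-point one-point estimate only at the attractor $1$. Verifying these uniform bounds---and, in particular, checking that the exponent $A>1$ produced by the one-point estimate depends only on $\kappa,\rho,\nu$ and not on the force-point locations---is the main technical task.
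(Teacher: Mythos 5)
Your plan breaks down at a structural point, before the technical issues you flag yourself. In Lemma \ref{lem::sle_positivechance}, the weights satisfy $\rho+\nu<\kappa/2-4$, so the curve almost surely accumulates at the marked point and the ``hitting'' part of the event has probability one; after the M\"obius map the only thing left to control is a union of proximity events, and that is why the complement can be bounded by $q(C)+c^{A-1}$ and made small. Here the situation is the opposite: the event $\{\sigma<\infty\}$, i.e.\ that $\eta$ reaches $B(y,r)$ on the force-point-free left side, has probability bounded away from $1$ uniformly (take $\rho=\nu=0$: a plain $\SLE_\kappa$, $\kappa<4$, hits $B(-1,|y|/2)$ only with a constant probability strictly less than $1$). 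In your transformed picture this is exactly your bad event (i), ``$\tilde\eta$ attaches to $1$ before exiting $B(1,1/r)$'', whose probability does not tend to $0$ as $C\to\infty$, $c\to 0$; it is of constant order. So ``make the complement small'' cannot yield the lemma: what is needed is a direct, uniform \emph{lower} bound on the probability of the good hitting event, and your plan supplies no mechanism for that beyond the invalid complement bound. (On top of this, the proposal defers its own ``main technical task''---the uniform one-point estimates after the Girsanov tilt---so even granting the structure it is a plan rather than a proof.)

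The paper's argument is softer and avoids the transformation altogether: first, using $r<|y|\lesssim r$ and scaling, it asserts a uniform positive lower bound $\PP[\LG]\ge v_1$ for the event $\LG$ that $\eta$ hits $B(y,r)$ while staying in $B(0,C|y|)$ and at distance $\ge cr$ from $[Cy,y]$ (no condition at $x$); second, it controls the condition at $x$ separately, noting that by scaling the probability of $\{\dist(\eta,x)\ge\delta x\}$ depends only on $v/x\in[0,1]$ and $\delta$, is continuous in $v/x$ and positive, hence is $\ge 1-f(\delta)$ with $f(\delta)\to 0$ uniformly; then $\PP[\LF]\ge v_1-f(\delta)>0$ for $\delta$ small. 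If you want to keep your dyadic/one-point machinery, it could replace the second step (the avoidance of $x$ and of $[Cy,y]$), but you would still have to prove the uniform positive-chance hitting estimate in the first step by other means (compactness/continuity in the force-point positions, as in Lemma \ref{lem::rareevent_sle_multiple}, or a domination argument), since that is the actual content of the lemma.
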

\begin{proof}
Define 
\[\LG=\{\sigma<\infty, \eta[0,\sigma]\subset B(0, C|y|), \dist(\eta[0,\sigma], [Cy, y])\ge cr\}.\]
Since $r<|y|\lesssim r$, there exist constants $c, C, v_1>0$ which are uniform over $v,x,y$ such that $\PP[\LG]\ge v_1$. 

For $\delta>0$, consider the event $\{\dist(\eta, x)\ge \delta x\}$. By the scaling invariance, we know that the probability of this event only depends on $v/x$ and $\delta$, and we denote its probability by $f(v/x;\delta)$. We may assume $x=1$. By \cite[Section 4.7]{LawlerConformallyInvariantProcesses}, we know that $f(v;\delta)$ is continuous in $v$ and it is positive for any $v\in [0,1]$. Therefore, there is a function $f(\delta)>0$ such that $f(\delta)\to 0$ as $\delta\to 0$ and that $f(v; \delta)\ge 1-f(\delta)$. 
Therefore, $\PP[\LF]\ge v_1-f(\delta)$ where $f(\delta)\to 0$ as $\delta\to 0$. 
This implies the conclusion. 
\end{proof}
\section{SLE Boundary Arm Exponents}
\label{sec::sle_boundary}
\subsection{Definitions and Statements}
Fix $\kappa\in (0,4]$ and $\rho>-2, v>0$. Let $\eta$ be an $\SLE_{\kappa}(\rho)$ with force point $v$. Assume $y\le -4r<0<\eps\le v\le x$ and we consider the crossings of $\eta$ between $B(x,\eps)$ and $B(y,r)$. We have four different types of the crossing events. Let $T_x$ be the first time that $\eta$ swallows $x$. 

Set $\tau_0=\sigma_0=0$. Let $\tau_1$ be the first time that $\eta$ hits $B(x,\eps)$ and let $\sigma_1$ be the first time after $\tau_1$ that $\eta$ hits the connected component of $\partial B(y,r)\setminus \eta[0,\tau_1]$ containing $y-r$. For $j\ge 1$, let $\tau_j$ be the first time after $\sigma_{j-1}$ that $\eta$ hits the connected component of $\partial B(x,\eps)\setminus \eta[0,\sigma_{j-1}]$ containing $x+\eps$, and let $\sigma_j$ be the first time after $\tau_j$ that $\eta$ hits the connected component of $\partial B(y,r)\setminus \eta[0,\tau_j]$ containing $y-r$. Define 
\[\LH^{\alpha}_{2j-1}(\eps, x, y, r; v)=\{\tau_j<T_x\},\quad \LH^{\beta}_{2j}(\eps, x, y, r; v)=\{\sigma_j<T_x\}.\]
If $\rho\ge \kappa/2-2$, then these two events are the same; whereas when $\rho\in (-2, \kappa/2-2)$, these two events are distinct. 
In the definition of $\LH^{\alpha}_{2j-1}$ and $\LH^{\beta}_{2j}$, we are interested in the case when $x, y, r$ are fixed and $\eps>0$ small. Imagine that $\eta$ is the interface of the lattice model, then $\LH^{\alpha}_{2j-1}$ means that there are $2j-1$ arms connecting $B(x,\eps)$ to far away place; and $\LH^{\beta}_{2j}$ means that there are $2j$ arms connecting $B(x,\eps)$ to far away place. 

Next, we define the other two types of crossing events. Attention that we will change the definition of the stopping times. Set $\tau_0=\sigma_0=0$. Let $\sigma_1$ be the first time that $\eta$ hits $B(y,r)$ and $\tau_1$ be the first time after $\sigma_1$ that $\eta$ hits the connected component of $\partial B(x,\eps)\setminus \eta[0,\sigma_1]$ containing $x+\eps$. For $j\ge 1$, let $\sigma_j$ be the first time after $\tau_{j-1}$ that $\eta$ hits the connected component of $\partial B(y,r)\setminus \eta[0,\tau_{j-1}]$ containing $y-r$ and let $\tau_j$ be the first time after $\sigma_j$ that $\eta$ hits the connected component of $\partial B(x,\eps)\setminus \eta[0,\sigma_j]$ containing $x+\eps$. Define 
\[\LH^{\alpha}_{2j}(\eps, x, y, r; v)=\{\tau_j<T_x\},\quad \LH^{\beta}_{2j+1}(\eps, x, y, r; v)=\{\sigma_{j+1}<T_x\}.\]
If $\rho\ge \kappa/2-2$, then these two events are the same; whereas when $\rho\in (-2, \kappa/2-2)$, these two events are distinct. 
In the definition of $\LH^{\alpha}_{2j}$ and $\LH^{\beta}_{2j+1}$, we are interested in the case when $y, r$ are fixed and $x=\eps>0$ small. Imagine that $\eta$ is the interface of the lattice model, then $\LH^{\alpha}_{2j}$ means that there are $2j$ arms connecting $B(x,\eps)$ to far away place; and $\LH^{\beta}_{2j+1}$ means that there are $2j+1$ arms connecting $B(x,\eps)$ to far away place. 
The reason that we wish to change the definition of the stopping times will become clear during the proof. The definition here might be confusing at first sight, but these definitions avoid confusions in the proof. 

Propositions \ref{prop::sle_boundary_alpha} and \ref{prop::sle_boundary_beta} study the probability of $\LH^{\alpha}$ and $\LH^{\beta}$ when the force point $v$ is close to $x$; Proposition \ref{prop::sle_boundary_gamma} studies the probability of $\LH^{\alpha}$ and $\LH^{\beta}$ when the force point $v$ is far from $x$.  
\begin{proposition}\label{prop::sle_boundary_alpha}
Fix $\kappa\in (0,4)$ and $\rho\in (-2,0]$. Set $\alpha_0^+=0$. For $j\ge 1$, define
\[\alpha_{2j-1}^+=2j(2j+\rho+2-\kappa/2)/\kappa, \quad \alpha_{2j}^+=2j(2j+\rho+4-\kappa/2)/\kappa.\]
Suppose $r\ge 1\vee(200\eps)$. For $j\ge 1$, we have 
\begin{align}
\PP\left[\LH^{\alpha}_{2j-1}(\eps, x, y, r; v)\right]\lesssim x^{\alpha_{2j-2}^+-\alpha_{2j-1}^+}\eps^{\alpha^+_{2j-1}},\quad&\text{provided }0\le x-v\lesssim \eps,\text{ and } |y|\ge (40)^{2j-1}r, 
\label{eqn::sle_boundary_alpha_odd_upper}\\
\PP\left[\LH^{\alpha}_{2j}(\eps, x, y, r; v)\right]\lesssim x^{\alpha_{2j}^+-\alpha_{2j-1}^+}\eps^{\alpha^+_{2j-1}},\quad&\text{provided }0\le x-v\lesssim \eps,\text{ and } |y|\ge (40)^{2j}r,
\label{eqn::sle_boundary_alpha_even_upper}
\end{align}
where the constants in $\lesssim$ depend only on $\kappa, \rho, j$ and $r$. We also have 
\begin{align}
\PP\left[\LH^{\alpha}_{2j-1}(\eps, x, y, r; v)\right]\gtrsim x^{\alpha_{2j-2}^+-\alpha_{2j-1}^+}\eps^{\alpha^+_{2j-1}},\quad&\text{provided }0\le x-v\lesssim \eps, \text{ and }x\asymp r\le |y|\lesssim r, 
\label{eqn::sle_boundary_alpha_odd_lower}\\
\PP\left[\LH^{\alpha}_{2j}(\eps, x, y, r; v)\right]\gtrsim x^{\alpha_{2j}^+-\alpha_{2j-1}^+}\eps^{\alpha^+_{2j-1}},\quad&\text{provided }0\le x-v\lesssim \eps, \text{ and }r\le |y|\lesssim r,  
\label{eqn::sle_boundary_alpha_even_lower}
\end{align}
where the constants in $\gtrsim$ depend only on $\kappa, \rho, j$ and $r$. In particular, we have
\begin{align*}
\PP\left[\LH^{\alpha}_{2j-1}(\eps, x, y, r; v)\right]\asymp  \eps^{\alpha^+_{2j-1}},\quad &\text{provided }0\le x-v\lesssim \eps,\text{ and } x\asymp r\le (40)^{2j-1}r\le |y|\lesssim r;\\
\PP\left[\LH^{\alpha}_{2j}(\eps, x, y, r; v)\right]\asymp \eps^{\alpha^+_{2j}},\quad& \text{provided }x\asymp v\asymp \eps, \text{ and } (40)^{2j}r\le |y|\lesssim r.
\end{align*}
\end{proposition}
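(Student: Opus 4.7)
The plan is to induct on $j$, deriving both bounds from one-crossing estimates together with the strong Markov property of $\SLE_{\kappa}(\rho)$. The central technical tool is Lemma \ref{lem::sle_mart}: it writes the Radon--Nikodym derivative of an $\SLE_\kappa$ with several force points against pure $\SLE_\kappa$ as an explicit local martingale in the derivatives $g_t'(x^{i})$, the gaps $g_t(x^{i})-W_t$, and the cross-ratios among images of the force points. The target exponents $\alpha^+_{2j-1}$ and $\alpha^+_{2j}$ should emerge from the coefficients $\rho^i(\rho^i+4-\kappa)/(4\kappa)$ of the derivative factors once we introduce auxiliary force points at $x-\eps$, $x+\eps$, $x+3\eps$ (and their counterparts near $y$) to encode the arm events. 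I would first handle the odd-length event $\LH^{\alpha}_{2j-1}$; the even case $\LH^{\alpha}_{2j}$ follows by the same argument with the roles of $\tau_j$ and $\sigma_j$ interchanged, noting that for $x\asymp\eps$ the prefactor $x^{\alpha^+_{2j}-\alpha^+_{2j-1}}$ combines with $\eps^{\alpha^+_{2j-1}}$ to give $\eps^{\alpha^+_{2j}}$.

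For the base case $j=1$ I would view the $\SLE_\kappa(\rho)$ as a reweighting of $\SLE_\kappa$ by the martingale in Lemma \ref{lem::sle_mart} with force points $(v; x-\eps, x+\eps)$, run it up to $\tau_1$, apply optional stopping, and invoke Lemma \ref{lem::extremallength_argument} to estimate both $g_{\tau_1}(x+\eps)-W_{\tau_1}$ and $g'_{\tau_1}(x+3\eps)$ on the event $\{\tau_1<T_x\}$. For the matching lower bound I combine this with Lemma \ref{lem::sle_positivechance} to realize a ``typical'' crossing of uniformly positive probability on which the Koebe-type estimates are saturated and on which the cross-ratio terms can be bounded below.

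For the inductive step, assume the estimate is known at level $j-1$. I apply the strong Markov property at $\sigma_{j-1}$, map out by $g_{\sigma_{j-1}}$, and use Lemmas \ref{lem::image_insideball} and \ref{lem::tip_distance_large} to show that the image of $B(x,\eps)$ has diameter comparable to $\eps\, g'_{\sigma_{j-1}}(x+3\eps)$ and sits at distance of order $g_{\sigma_{j-1}}(x)-W_{\sigma_{j-1}}$ from the driving point. The remaining single crossing from $g_{\sigma_{j-1}}(B(y,r))$ back to $g_{\sigma_{j-1}}(B(x,\eps))$ is then another one-arm event, for which I apply Lemma \ref{lem::sle_mart} with force-point charges chosen so that the derivative factor $g'_{\sigma_{j-1}}(x+3\eps)$ carries the exponent $\alpha^+_{2j-1}-\alpha^+_{2j-2}$. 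For the lower bound I chain the positive-probability events from Lemmas \ref{lem::sle_positivechance} and \ref{lem::sle_positivechance2} at successive crossings, forcing the curve to travel in well-separated corridors so that the Koebe distortion estimates are both-sided.

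The main obstacle, and the reason the passage from $\SLE_\kappa$ to $\SLE_\kappa(\rho)$ is technically harder than in \cite{WuZhanSLEBoundaryArmExponents}, will be controlling the cross-ratio terms $(g_t(x^{i,R})-g_t(x^{j,R}))^{\rho^{i,R}\rho^{j,R}/(2\kappa)}$ from Lemma \ref{lem::sle_mart}. With $\rho\in(-2,0]$ negative and the auxiliary charges chosen to balance the derivative factors, the cross-ratio exponents can have either sign, so these factors cannot be discarded as $O(1)$ without proof. I will need quantitative two-sided bounds on the separations $g_{\sigma_{j-1}}(x^{i,R})-g_{\sigma_{j-1}}(x^{j,R})$, uniformly over the stopping times $\sigma_j,\tau_j$. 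These should follow by combining Lemma \ref{lem::tip_distance_large}, which keeps $g_t(x)-W_t$ of order $x-y$, with the Koebe distortion from Lemmas \ref{lem::extremallength_argument} and \ref{lem::image_insideball}, which forces the force-point images to remain mutually separated on the scale $\eps\, g'_{\sigma_{j-1}}(x+3\eps)$. Once this separation is proven, the cross-ratio factors contribute only multiplicative constants depending on $\kappa,\rho,j,r$, and the induction closes.
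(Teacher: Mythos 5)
Your high-level skeleton (induction over crossings via the strong Markov property, mapping out at the hitting times, tracking the images of $B(x,\eps)$ and $B(y,r)$ with Lemmas \ref{lem::extremallength_argument}, \ref{lem::image_insideball}, \ref{lem::tip_distance_large}, and chaining the positive-probability events of Lemmas \ref{lem::sle_positivechance} and \ref{lem::sle_positivechance2} for the lower bound) is the same as the paper's. But the quantitative engine you propose is not the one that makes the induction close, and as stated it fails. The paper never encodes the arm event by adding static force points at $x-\eps$, $x+\eps$, $x+3\eps$ and applying Lemma \ref{lem::sle_mart}; the exponents do not come from the coefficients $\rho^i(\rho^i+4-\kappa)/(4\kappa)$ of such a martingale. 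What is actually needed at each inductive step is a moment estimate for the stopped curve, namely two-sided control of $\E\bigl[g_{\tau}'(x)^{\lambda}(g_{\tau}(x)-W_{\tau})^{\mu}\one_{\{\tau<T_x\}}\bigr]$ at the crossing time, with $\lambda$ equal to the exponent accumulated so far ($\alpha^+_{2j-1}$ or $\alpha^+_{2j}$). This is Lemmas \ref{lem::sle_boundary_derivative1} and \ref{lem::sle_boundary_derivative2}, where the auxiliary charge $\nu$ placed at $x$ is a nonlinear function of $\lambda$ (solving a quadratic, producing $u_1(\lambda)$, $u_2(\lambda)$), and the recursion $\alpha^+_{2j+1}=\alpha^+_{2j}+u_1(\alpha^+_{2j})$, $\alpha^+_{2j}=\alpha^+_{2j-1}+u_2(\alpha^+_{2j-1})$ is what generates the stated exponents. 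Your proposal contains no mechanism for proving such moment bounds: optional stopping of the Lemma \ref{lem::sle_mart} martingale gives an identity between two measures, but to extract asymptotics you still must control the stopped factors, and on the bare event $\{\tau_1<T_x\}$ the quantities $g_{\tau_1}(x\pm\eps)-W_{\tau_1}$ are \emph{not} comparable to $\eps g_{\tau_1}'$ (the tip can approach the force points arbitrarily closely), so your base-case step of reading off both $g_{\tau_1}(x+\eps)-W_{\tau_1}$ and $g_{\tau_1}'(x+3\eps)$ from Lemma \ref{lem::extremallength_argument} does not give two-sided bounds.

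Concretely, three ingredients are missing and cannot be bypassed. First, the upper-bound moment estimate requires the change of charge $\rho\to\nu(\lambda)$ at $x$, the rewriting $M_t=g_t'(x)^b(g_t(x)-W_t)^{\lambda-b}\Upsilon_t^{-\beta}J_t^{\beta}$, and the integrability $\E[J_{\sigma(s)}^{-\beta}]\asymp 1$ under the weighted law (Lemma \ref{lem::sle_boundary_integrable}, via the invariant density of the diffusion $\hat J$); nothing in your plan substitutes for this. Second, one must pass from the conformal-radius stopping time to the hitting time, which the paper does by checking that $(U_t)=g_t'(x)^b(g_t(x)-W_t)^{\lambda-b}$ is a supermartingale, i.e.\ verifying the algebraic condition (\ref{eqn::sle_boundary_derivative1_assumption}) for the specific sequence $\alpha^+_k$; you never address this, and it is exactly the step where the admissible range of $(\lambda,b)$ is constrained. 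Third, the lower bounds require restricting to events such as $\{\Im\eta(\tau_\eps)\ge c\eps\}$ so that $g_{\tau_\eps}(x)-W_{\tau_\eps}\asymp\eps g_{\tau_\eps}'(x)$ and the interaction term $g_{\tau}(x)-g_{\tau}(v)\asymp(x-v)g_{\tau}'(x)$ can be evaluated; your worry about cross-ratio factors is legitimate but is resolved in the paper by exactly these Koebe-type comparisons for a \emph{single} auxiliary charge at $x$, not by uniform separation of a family of artificial force points at $x\pm\eps$ (note $x-\eps$ is swallowed the moment the curve enters $B(x,\eps)$, so separation on scale $\eps g'$ cannot hold at $\tau_1$). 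Until you supply the $\lambda$-dependent moment estimates and the supermartingale/hitting-time conversion, the induction does not close.
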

\begin{proposition}\label{prop::sle_boundary_beta}
Fix $\kappa\in (0,4)$ and $\rho\in (-2, \kappa/2-2)$. Set $\beta_0^+=0$. For $j\ge 1$, define
\[\beta_{2j-1}^+=2j(2j+\kappa/2-4-\rho)/\kappa, \quad \beta^+_{2j}=2j(2j+\kappa/2-2-\rho)/\kappa. \]
Suppose $r\ge 1\vee(200\eps)$. For $j\ge 1$, we have 
\begin{align}
\PP\left[\LH^{\beta}_{2j}(\eps, x, y, r; v)\right]\lesssim x^{\beta_{2j-1}^+-\beta_{2j}^+}\eps^{\beta^+_{2j}},\quad&\text{provided }0\le x-v\lesssim \eps,\text{ and } |y|\ge (40)^{2j}r, 
\label{eqn::sle_boundary_beta_even_upper}\\
\PP\left[\LH^{\beta}_{2j-1}(\eps, x, y, r; v)\right]\lesssim x^{\beta_{2j-1}^+-\beta_{2j-2}^+}\eps^{\beta^+_{2j-2}},\quad&\text{provided }0\le x-v\lesssim \eps,\text{ and } |y|\ge (40)^{2j-1}r,
\label{eqn::sle_boundary_beta_odd_upper}
\end{align}
where the constants in $\lesssim$ depend only on $\kappa, \rho, j$ and $r$. We also have 
\begin{align}
\PP\left[\LH^{\beta}_{2j}(\eps, x, y, r; v)\right]\gtrsim x^{\beta_{2j-1}^+-\beta_{2j}^+}\eps^{\beta^+_{2j}},\quad&\text{provided }0\le x-v\lesssim \eps, \text{ and }x\asymp r\le |y|\lesssim r, 
\label{eqn::sle_boundary_beta_even_lower}\\
\PP\left[\LH^{\beta}_{2j-1}(\eps, x, y, r; v)\right]\gtrsim x^{\beta_{2j-1}^+-\beta_{2j-2}^+}\eps^{\beta^+_{2j-2}},\quad&\text{provided }0\le x-v\lesssim \eps, \text{ and }r\le |y|\lesssim r,  
\label{eqn::sle_boundary_beta_odd_lower}
\end{align}
where the constants in $\gtrsim$ depend only on $\kappa, \rho, j$ and $r$. In particular, we have
\begin{align*}
\PP\left[\LH^{\beta}_{2j}(\eps, x, y, r; v)\right]\asymp  \eps^{\beta^+_{2j}},\quad &\text{provided }0\le x-v\lesssim \eps,\text{ and } x\asymp r\le (40)^{2j}r\le |y|\lesssim r;\\
\PP\left[\LH^{\beta}_{2j-1}(\eps, x, y, r; v)\right]\asymp \eps^{\beta^+_{2j-1}},\quad& \text{provided }x\asymp v\asymp \eps, \text{ and } (40)^{2j-1}r\le |y|\lesssim r,
\end{align*}
where the constants in $\asymp$ are uniform over $\eps$. 
\end{proposition}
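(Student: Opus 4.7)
The plan is to mirror the proof of Proposition \ref{prop::sle_boundary_alpha}, proceeding by induction on $j$, with the main technical device being the martingale representation from Lemma \ref{lem::sle_mart}. The guiding observation is that $\beta_k^+$ equals $\alpha_k^+$ with $\rho$ replaced by the dual value $\kappa-6-\rho$: indeed, substituting $\rho \mapsto \kappa-6-\rho$ in $\alpha_{2j}^+ = 2j(2j+\rho+4-\kappa/2)/\kappa$ produces $2j(2j+\kappa/2-2-\rho)/\kappa = \beta_{2j}^+$, and similarly for the odd case. This corresponds at the level of SLE to the fact that a crossing event ending at $B(y,r)$ (a $\beta$-event) for $\SLE_\kappa(\rho)$ should have the same exponent as a crossing event ending at $B(x,\eps)$ (an $\alpha$-event) for $\SLE_\kappa(\kappa-6-\rho)$. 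Concretely, in each inductive step we reweight $\SLE_\kappa$ by the local martingale $M_t$ of Lemma \ref{lem::sle_mart} with force points placed at $x-\eps, x+\eps$ (on the right) and at $v$ (on the left), and we tune the weights $\rho^L,\rho^{1,R},\rho^{2,R}$ so that the resulting $\SLE_\kappa(\rho^L;\rho^{1,R},\rho^{2,R})$ either almost surely avoids $[x,\infty)$ (for the $\alpha$ bookkeeping that sits inside $\LH^\beta$) or almost surely accumulates at $x$ (at the last step of a $\beta$-event), using the threshold values $\kappa/2-2$ and $\kappa/2-4$ highlighted after Lemma \ref{lem::sle_mart}.

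For the upper bound, I would treat $\LH^\beta_{2j-1}$ starting from $\LH^\alpha_{2j-2}$ whose probability is already controlled by Proposition \ref{prop::sle_boundary_alpha}, and likewise $\LH^\beta_{2j}$ starting from $\LH^\alpha_{2j-1}$. Conditional on $\mathcal{F}_{\tau_{j-1}}$ (resp.\ $\mathcal{F}_{\tau_j}$), the map $g_{\tau}-W_\tau$ reduces the remaining problem to a fresh $\SLE_\kappa(\rho)$ problem whose new data are the images of $x-\eps, x+\eps, v$ and of the relevant arc of $\partial B(y,r)$. Lemma \ref{lem::extremallength_argument} controls the image of the arc around $x+\eps$ and shows that the image of $B(x,\eps)$ has diameter comparable to $\eps g_\tau'(x+\eps)$, while Lemma \ref{lem::tip_distance_large} ensures that the image of the tip is well-separated from the image of $x$. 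The additional factor $(\eps/x)^{\beta_{2j}^+-\beta_{2j-1}^+}$ (or the analogous odd version) then drops out of a one-point function for the reweighted SLE, with the $r$-dependent prefactor absorbed into constants since $r$ is fixed.

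For the lower bound, I would construct a sequence of positive-probability events scale by scale. The first excursion from $0$ to $B(y,r)$ is created using Lemma \ref{lem::sle_positivechance2}, and gives the initial factor $x^{\beta_{2j-1}^+-\beta_{2j-2}^+}\eps^{\beta_{2j-2}^+}$ coming from the reweighting martingale evaluated at time $\sigma_1$. After applying $g_{\sigma_1}$, the Koebe distortion theorem (Lemma \ref{lem::image_insideball}) places the images of $x-\eps, x+\eps$ at uniformly separated positions, and $v$ stays within distance $\lesssim \eps g_{\sigma_1}'(x)$ of the image of $x$. I then alternately invoke Lemma \ref{lem::sle_positivechance} to force a return to $B(x,\eps)$ and Lemma \ref{lem::sle_positivechance2} to force a return to $B(y,r)$; each of the $2(j-1)$ subsequent crossings contributes a uniformly positive probability, and multiplying them with the initial factor yields the claimed lower bound.

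The main obstacle, and what is novel compared to \cite{WuZhanSLEBoundaryArmExponents}, is the presence of the force point $v$ close to $x$. In the regime $\rho\in(-2,\kappa/2-2)$ the curve can swallow $v$ before swallowing $x$, which is precisely what separates $\LH^\alpha$ from $\LH^\beta$. Under the martingale reweighting, the factors $g_t'(v)^{\rho(\rho+4-\kappa)/(4\kappa)}$ and $(g_t(v)-W_t)^{\rho/\kappa}$, as well as the cross term $(g_t(x+\eps)-g_t(v))^{\rho\rho^{1,R}/(2\kappa)}$, must be tracked uniformly as $\eps\to 0$ while $x-v\lesssim\eps$. The quantity $g_t(x+\eps)-g_t(v)$ can be as small as $\eps g_t'(x)$, and balancing this against the $\eps$-powers produced by $g_t'(x\pm\eps)$ via Lemma \ref{lem::extremallength_argument} is what forces the asymmetric statement separating the cases $0\le x-v\lesssim\eps$ in the upper bound from $x\asymp v\asymp\eps$ in the sharp asymptotics. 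This three-variable bookkeeping, combined with the careful choice of auxiliary force points so that the reweighted process satisfies the hypotheses of Lemma \ref{lem::onepoint_sle_multiple}, is where the bulk of the technical effort goes.
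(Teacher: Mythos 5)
Your overall framework (induction on the number of crossings, martingale reweighting, mapping out at stopping times, Koebe and extremal-length distortion control) is the right one, but two central steps of your plan are broken. First, the lower bound. You claim that after the initial excursion each of the $2(j-1)$ subsequent crossings, forced via Lemmas \ref{lem::sle_positivechance} and \ref{lem::sle_positivechance2}, ``contributes a uniformly positive probability''. Under the original law of $\SLE_{\kappa}(\rho)$ with $\rho\in(-2,\kappa/2-2)$ this is false: after mapping out by $g_{\sigma}-W_{\sigma}$ the image of $B(x,\eps)$ has diameter of order $\eps g_{\sigma}'(x)$ while the image of $B(y,r)$ is macroscopic, so every return to $B(x,\eps)$ (and every subsequent trip back to $B(y,r)$ before swallowing $x$) costs a definite power of $\eps$. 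Lemmas \ref{lem::sle_positivechance} and \ref{lem::sle_positivechance2} give uniform positive probability only for the \emph{reweighted} processes $\SLE_{\kappa}(\rho,\nu)$ with $\rho+\nu<\kappa/2-4$, resp. $\rho+\nu>\kappa/2-2$, which appear inside the proofs of the derivative estimates, not for the original curve. Your own bookkeeping signals the problem: the factor $x^{\beta^+_{2j-1}-\beta^+_{2j-2}}\eps^{\beta^+_{2j-2}}$ cannot come from ``the reweighting martingale evaluated at time $\sigma_1$'', since at $\sigma_1$ no quantity of scale $\eps$ has yet been created; the $\eps$-powers accumulate precisely through the later crossings, i.e. those crossings are not $O(1)$ events. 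If they were, your argument would produce a $j$-independent $\eps$-exponent, contradicting (\ref{eqn::sle_boundary_beta_even_upper}) for $j\ge 2$. The correct structure is an induction inside the $\beta$-family: condition at the \emph{first} crossing time, apply the inductive hypothesis to the image configuration (this is where the powers of $\eps g'(x)$ enter), and integrate the resulting derivative weight using (\ref{eqn::sle_boundary_derivative1_lower}) and (\ref{eqn::sle_boundary_derivative2_lower}) on the good events $\LF$.

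Second, the upper bound. Reducing $\LH^{\beta}_{2j}$ to $\LH^{\alpha}_{2j-1}$ ``whose probability is already controlled by Proposition \ref{prop::sle_boundary_alpha}'' plus one last crossing cannot reach the exponent $\beta^+_{2j}$. Given the curve up to the last visit to $B(x,\eps)$, the conditional cost of the final crossing to $B(y,r)$ is of order $\bigl(\eps g_{\tau_j}'(x)/|y|\bigr)^{\beta^+_1}$, so you need a derivative-weighted version of the $(2j-1)$-arm estimate, not merely its probability; and the arithmetic does not close: $\beta^+_{2j}=\alpha^+_{2j-1}+2j\,\beta^+_1$, so multiplying $\eps^{\alpha^+_{2j-1}}$ by a single-crossing cost $\eps^{\beta^+_1}$ undershoots the target, while the factor you quote, $(\eps/x)^{\beta^+_{2j}-\beta^+_{2j-1}}$ with $\beta^+_{2j}-\beta^+_{2j-1}=u_1(\beta^+_{2j-1})$, is the increment attached to a crossing toward $B(x,\eps)$, not toward $B(y,r)$. (Your observation that $\beta^+_k$ is $\alpha^+_k$ with $\rho\mapsto\kappa-6-\rho$ is arithmetically correct but is only a heuristic; the paper never converts $\beta$-events into $\alpha$-events for a dual $\rho$.) The paper instead runs the induction directly on the $\beta$-events: the base case $\LH^{\beta}_1$ is Remark \ref{rem::sle_boundary_beta_first}, and the inductive steps are verbatim the proofs of Lemmas \ref{lem::sle_boundary_alpha_eventoodd_upper}, \ref{lem::sle_boundary_alpha_oddtoeven_upper}, \ref{lem::sle_boundary_alpha_eventoodd_lower} and \ref{lem::sle_boundary_alpha_oddtoeven_lower}, with the increments applied in the opposite order, $\beta^+_{2j}=\beta^+_{2j-1}+u_1(\beta^+_{2j-1})$ and $\beta^+_{2j+1}=\beta^+_{2j}+u_2(\beta^+_{2j})$, together with the verification that $(\beta^+_{2j-2},\beta^+_{2j-1})$ satisfy the supermartingale condition (\ref{eqn::sle_boundary_derivative1_assumption}) needed in Lemma \ref{lem::sle_boundary_derivative1}; this last check is absent from your proposal.
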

\begin{proposition}\label{prop::sle_boundary_gamma}
Fix $\kappa\in (0,4)$ and $\rho\in (-2, \kappa/2-2)$. Set $\gamma_0^+=0$. For $j\ge 1$, define 
\[\gamma^+_{2j-1}=(2j+\rho)(2j+\rho+2-\kappa/2)/\kappa,\quad \gamma^+_{2j}=2j(2j+\kappa/2-2)/\kappa.\]
Define the event 
\[\LF=\{\tau_1<T_x, \eta[0,\tau_1]\subset B(0, Cx), \dist(\eta[0,\tau_1], [x-\eps, x+3\eps])\ge c\eps \},\]
where $c, C$ are the constant decided in Lemma \ref{lem::sle_positivechance}. 
For $j\ge 1$, we have 
\begin{align}
\PP\left[\LH^{\alpha}_{2j-1}(\eps, x, y, r; 0^+)\cap\LF\right]\asymp \eps^{\gamma^+_{2j-1}},\quad &\text{provided } Cx\le r\le (40)^{2j-1}r\le |y|\lesssim r,\label{eqn::sle_boundary_gamma_odd}\\
\PP\left[\LH^{\beta}_{2j}(\eps, x, y, r; 0^+)\cap\LF\right]\asymp \eps^{\gamma^+_{2j}},\quad &\text{provided } Cx\le r\le (40)^{2j}r\le |y|\lesssim r,\label{eqn::sle_boundary_gamma_even}
\end{align}
where the constants in $\asymp$ are uniform over $\eps$. 
\end{proposition}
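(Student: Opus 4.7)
I would prove Proposition \ref{prop::sle_boundary_gamma} by applying the strong Markov property of $\SLE$ at the stopping time $\tau_1$, then reducing the residual event, after conformal mapping, to the estimates in Propositions \ref{prop::sle_boundary_alpha} and \ref{prop::sle_boundary_beta}.

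For the base case $j=1$: the event $\LH^\alpha_1 \cap \LF$ equals $\LF$ (since $\LF\subseteq\{\tau_1<T_x\}=\LH^\alpha_1$), and $\PP[\LF]\asymp\eps^{\gamma^+_1}$ follows from the boundary one-point hitting exponent for $\SLE_\kappa(\rho)$ with force point at $0^+$ (as in \cite[Corollary~3.3]{MillerWuSLEIntersection}, cf.~the inequality used in the proof of Lemma~\ref{lem::sle_positivechance}), combined with a conditional positive-probability argument verifying that the constraints defining $\LF$ (namely $\eta[0,\tau_1]\subset B(0,Cx)$ and $\dist(\eta[0,\tau_1],[x-\eps,x+3\eps])\ge c\eps$) hold uniformly in $\eps$ given $\{\tau_1<T_x\}$. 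For $\LH^\beta_2\cap\LF$, apply the Markov property at $\tau_1$ together with the residual one-arm estimate from Proposition~\ref{prop::sle_boundary_beta} (second-type $\LH^\beta_1$) to obtain the $\rho$-independent exponent $\gamma^+_2=1$.

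For general $j\ge 2$, decompose at $\tau_1$: on $\LF$, the residual process $(\eta(\tau_1+t))_{t\ge 0}$ conformally mapped via $g_{\tau_1}(\cdot)-W_{\tau_1}$ is an $\SLE_\kappa(\rho)$ starting at $0$ with force point $\tilde v=V^R_{\tau_1}-W_{\tau_1}$. Using Lemma~\ref{lem::extremallength_argument} and Koebe distortion, the image parameters satisfy $\tilde x\asymp x$, $\tilde\eps\asymp\eps$, $\tilde y\asymp y$, and $\tilde r\asymp r$. The residual alternating arm event in the new frame is the second-type $\LH^\alpha_{2j-2}$ (for the $\gamma^+_{2j-1}$ case) or $\LH^\beta_{2j-1}$ (for the $\gamma^+_{2j}$ case). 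Applying Proposition~\ref{prop::sle_boundary_alpha} or \ref{prop::sle_boundary_beta} to the residual event and multiplying by $\PP[\LF]\asymp\eps^{\gamma^+_1}$ yields the target exponent after an algebraic identity in $\kappa,\rho,j$. The matching lower bound is obtained by constructing a typical scenario in which each successive crossing occurs with positive conditional probability, exploiting Lemma~\ref{lem::sle_positivechance2} between the balls $B(\tilde x,\tilde\eps)$ and $B(\tilde y,\tilde r)$.

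The main obstacle is twofold. First, the image force point $\tilde v$ is only weakly constrained on $\LF$: the separation condition forces $\tilde x-\tilde v\gtrsim\eps$, but not the upper bound $\tilde x-\tilde v\lesssim\tilde\eps$ assumed in Propositions~\ref{prop::sle_boundary_alpha} and \ref{prop::sle_boundary_beta}, so one must either restrict to a sub-event of $\LF$ on which $\tilde v$ falls into the desired range (with uniformly positive conditional probability) or integrate over the law of $\tilde v$ conditionally on $\LF$. Second, the exponent arithmetic is nontrivial: direct addition $\gamma^+_1+\alpha^+_{2j-2}$ misses $\gamma^+_{2j-1}$ by a term $(2j-2)(\rho+2)/\kappa$, which must be recovered from the $\tilde x^{\alpha^+_k-\alpha^+_{k-1}}$ (resp.~$\tilde x^{\beta^+_k-\beta^+_{k-1}}$) prefactors in the general-regime bounds of Propositions~\ref{prop::sle_boundary_alpha}/\ref{prop::sle_boundary_beta}. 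Reconciling these prefactors with $\tilde x\asymp x$ and checking that they combine with $\eps^{\gamma^+_1}$ to produce exactly $\eps^{\gamma^+_{2j-1}}$ (up to constants depending on $\kappa,\rho,j,r$) is the core algebraic step.
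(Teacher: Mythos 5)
Your overall architecture (decompose at $\tau_1$, map by $g_{\tau_1}-W_{\tau_1}$, feed the residual event into Propositions \ref{prop::sle_boundary_alpha} and \ref{prop::sle_boundary_beta}) is indeed the paper's strategy, but the proposal contains a fatal error at its center: the claim that after the map the image parameters satisfy $\tilde x\asymp x$, $\tilde\eps\asymp\eps$. On the event $\LF$, with $f=g_{\tau_1}-W_{\tau_1}$, the separation condition $\dist(\eta[0,\tau_1],[x-\eps,x+3\eps])\ge c\eps$ gives $f(x)\asymp f(x)-f(0^+)\asymp \eps f'(x)$; that is, the new location of the target ball is $\tilde x\asymp\tilde\eps\asymp\eps g_{\tau_1}'(x)$, a small \emph{random} quantity, not anything comparable to the fixed macroscopic $x$. (This also disposes of your ``first obstacle'': the image force point automatically satisfies $0\le\tilde x-\tilde v\lesssim\tilde\eps$ on $\LF$, so no sub-event restriction or integration over $\tilde v$ is needed.) Because $\tilde x\asymp\tilde\eps$, the conditional bound from Proposition \ref{prop::sle_boundary_alpha} (resp.\ \ref{prop::sle_boundary_beta}) is $\asymp(\eps g_{\tau_1}'(x))^{\alpha^+_{2j}}$ (resp.\ $(\eps g_{\tau_1}'(x))^{\beta^+_{2j-1}}$): the prefactor $\tilde x^{\alpha^+_{2j}-\alpha^+_{2j-1}}$ merges with $\tilde\eps^{\alpha^+_{2j-1}}$ into a single random power of $\eps g_{\tau_1}'(x)$.

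This is exactly why your exponent arithmetic cannot be repaired within your scheme: with $\tilde x\asymp x$ the prefactors you invoke are $\eps$-independent constants and cannot supply the missing $\eps^{2(j-1)(\rho+2)/\kappa}$, and multiplying a deterministic residual bound by $\PP[\LF]\asymp\eps^{\gamma^+_1}$ treats the conditional probability as independent of $\eta[0,\tau_1]$, which it is not. The missing ingredient is the weighted derivative moment of Lemma \ref{lem::sle_boundary_derivative_gamma}: via the local martingale $g_t'(x)^{\nu(\nu+4-\kappa)/(4\kappa)}(g_t(x)-W_t)^{\nu/\kappa}(g_t(x)-V_t)^{\nu\rho/(2\kappa)}$ one changes measure to $\SLE_{\kappa}(\rho,\nu)$ with force points $(0^+,x)$ and obtains $\E\left[(\eps g_{\tau_1}'(x))^{\lambda}\one_{\LF}\right]\asymp\eps^{u_3(\lambda)+\lambda}x^{-u_3(\lambda)}$. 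Taking $\lambda=\alpha^+_{2j}$ (resp.\ $\beta^+_{2j-1}$) and using the identities $\gamma^+_{2j+1}=u_3(\alpha^+_{2j})+\alpha^+_{2j}$ and $\gamma^+_{2j}=u_3(\beta^+_{2j-1})+\beta^+_{2j-1}$ closes the argument; note $u_3(\lambda)$ is nonlinear in $\lambda$, which is precisely why the additive guess $\gamma^+_1+\alpha^+_{2j-2}$ fails. Your base case is essentially fine (it is Remark \ref{rem::sle_boundary_gamma_first}, i.e.\ $\lambda=0$ in the same lemma), but the general step as proposed does not produce the stated exponents.
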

The conclusions in Proposition \ref{prop::sle_boundary_gamma} are weaker than the ones in Propositions \ref{prop::sle_boundary_alpha} and \ref{prop::sle_boundary_beta}, but they are sufficient to derive the arm exponents for the critical Ising model. 
\subsection{Estimates on the derivatives}
\begin{lemma}\label{lem::sle_boundary_derivative1}
Fix $\kappa\in (0,4)$ and $\rho>-2$, let $x\ge v> \eps>0$. Let $\eta$ be an $\SLE_{\kappa}(\rho)$ with force point $v$. Let $O_t$ be the image of the rightmost point of $\eta[0,t]\cap\R$ under $g_t$. 
Define 
\[\tau_{\eps}=\inf\{t: \eta(t)\in B(x,\eps)\}, \quad T=\inf\{t: \eta(t)\in [x,\infty)\}.\]
For $\lambda\ge 0$, define 
\[u_1(\lambda)=\frac{1}{\kappa}(\rho+4-\kappa/2)+\frac{1}{\kappa}\sqrt{4\kappa\lambda+(\rho+4-\kappa/2)^2}.\]
For $b\in \R$, assume 
\begin{equation}\label{eqn::sle_boundary_derivative1_assumption}
4b\ge (\lambda-b)(2\rho+\kappa(\lambda-b)+4-\kappa).
\end{equation}
If $x=v$, we have
\begin{equation}\label{eqn::sle_boundary_derivative1_upper}
\E\left[g_{\tau_{\eps}}'(x)^b (g_{\tau_{\eps}}(x)-W_{\tau_{\eps}})^{\lambda-b}\one_{\{\tau_{\eps}<T\}}\right]\asymp x^{-u_1(\lambda)}\eps^{u_1(\lambda)+\lambda-b},
\end{equation}
where the constants in $\asymp$ depend only on $\kappa, \rho, \lambda, b$.  
For $C\ge 4, 1/4\ge c>0$, define 
 \[\LF=\{\tau_{\eps}<T, \Im{\eta(\tau_{\eps})}\ge c\eps, \eta[0,\tau_{\eps}]\subset B(0, Cx), \dist(\eta[0,\tau_{\eps}], [-Cx, y+r])\ge cr\}.\]
There exist constants $C, c$ depending only on $\kappa$ and $\rho$ such that,   
for $0\le x-v\lesssim \eps$ and $x\asymp r\le |y|\lesssim r$, we have
\begin{equation}\label{eqn::sle_boundary_derivative1_lower}
\E\left[g_{\tau_{\eps}}'(x)^{\lambda}\one_{\LF}\right]\asymp x^{-u_1(\lambda)}\eps^{u_1(\lambda)},
\end{equation} 
where the constants in $\asymp$ depend only on $\kappa, \rho, \lambda$.
\end{lemma}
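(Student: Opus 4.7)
The plan is to combine a single SLE local martingale with Girsanov change of measure (Lemma \ref{lem::sle_mart}) and the conformal distortion estimates from Lemma \ref{lem::extremallength_argument} and Koebe's $1/4$ theorem. Setting $u := u_1(\lambda)$, the central object is the local martingale
\[
M_t := g_t'(x)^{\lambda + u}\,(g_t(x) - W_t)^{-u}, \qquad M_0 = x^{-u}.
\]
When $x = v$, the process $Z_t := g_t(x) - W_t$ satisfies $dZ_t = ((2+\rho)/Z_t)\,dt - \sqrt{\kappa}\,dB_t$, and an It\^o computation shows that $M_t$ has vanishing drift precisely because $u_1(\lambda)$ is the positive root of the quadratic $\kappa u^2 - 2u(\rho + 4 - \kappa/2) - 4\lambda = 0$. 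The key algebraic identity is
\[
g_t'(x)^b (g_t(x) - W_t)^{\lambda - b} = M_t \cdot \bigl((g_t(x) - W_t)/g_t'(x)\bigr)^{\lambda - b + u},
\]
which reduces the whole estimate to controlling the ratio $Z_{\tau_\eps}/g_{\tau_\eps}'(x)$ and performing a change-of-measure computation for $M$.

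At time $\tau_\eps$, Lemma \ref{lem::extremallength_argument} and Koebe distortion applied to $g_{\tau_\eps}^{-1}$ give the unconditional upper bound $Z_{\tau_\eps}/g_{\tau_\eps}'(x) \lesssim \eps$; the matching lower bound $\asymp \eps$ holds on $\LF$, where the constraints $\Im \eta(\tau_\eps) \geq c\eps$, $\eta[0,\tau_\eps] \subset B(0, Cx)$, and $\dist(\eta[0,\tau_\eps], [-Cx, y+r]) \geq cr$ prevent the tip from collapsing onto $x$. Rewriting the hypothesis (\ref{eqn::sle_boundary_derivative1_assumption}) in the variable $c := \lambda - b$ as $\kappa c^2 + 2c(\rho + 4 - \kappa/2) - 4\lambda \leq 0$ places $c$ in the interval $[-u, c_+]$ determined by the two roots of the same quadratic that defines $u_1(\lambda)$; in particular $\lambda - b + u = c + u \geq 0$, so the distortion upper bound propagates through the exponent. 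For the change of measure, Lemma \ref{lem::sle_mart} applied with a single force point at $v = x$ identifies the $M$-weighted law of SLE$_\kappa(\rho)$ with the law of SLE$_\kappa(\tilde\rho)$ at $v = x$ for $\tilde\rho := \rho - \kappa u$. After localizing at $\tau_\eps \wedge T_\delta$ with $T_\delta := \inf\{t : Z_t = \delta\}$ to tame the blow-up of $M$ as $Z \to 0$, and passing $\delta \to 0$ by monotone convergence for the upper bound and Fatou for the lower, one obtains
\[
\E[M_{\tau_\eps}\,\one_A] = x^{-u}\,\widetilde{\PP}[A] \quad \text{for every } A \in \sigma(\eta|_{[0,\tau_\eps]}),\ A \subseteq \{\tau_\eps < T\}.
\]
Combining with the distortion identity yields (\ref{eqn::sle_boundary_derivative1_upper}) by taking $A = \{\tau_\eps < T\}$ and using only the one-sided distortion bound, and yields (\ref{eqn::sle_boundary_derivative1_lower}) by taking $A = \LF$ and using the two-sided bound.

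The principal obstacle is verifying the uniform-in-$\eps$ positivity $\widetilde{\PP}[\LF] \asymp 1$ (and its analogue $\widetilde{\PP}[\tau_\eps < T] \asymp 1$ for (\ref{eqn::sle_boundary_derivative1_upper})). Under $\widetilde{\PP}$ the parameter $\tilde\rho = \rho - \kappa u$ lies below the continuation threshold (since $u > 0$), so the SLE$_\kappa(\tilde\rho)$ curve accumulates at $x$ almost surely. One then shows that each geometric constraint defining $\LF$ holds with probability bounded below independently of $\eps$, by applying Lemma \ref{lem::sle_positivechance} (after the M\"obius transformation $z \mapsto \eps z/(x - z)$ sending $(0, x, \infty) \to (0, \infty, -\eps)$, which turns the approach to the force point at $x$ into the approach to $\infty$ through a controlled channel) together with Lemma \ref{lem::sle_positivechance2} (to keep $\eta$ away from the real interval $[-Cx, y+r]$). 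The displacement case $0 \leq x - v \lesssim \eps$ in (\ref{eqn::sle_boundary_derivative1_lower}) is handled by the two-force-point version of $M$ from Lemma \ref{lem::sle_mart} with force points $(v, x)$; the additional cross term $(g_t(x) - g_t(v))^{\tilde\rho_1 \tilde\rho_2/(2\kappa)}$ contributes only a bounded multiplicative factor, because $|g_t(x) - g_t(v)|$ is comparable to $g_t'(x)(x - v) \lesssim \eps\, g_t'(x)$ throughout the run on the event $\LF$.
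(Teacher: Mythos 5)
Your martingale $M_t=g_t'(x)^{\lambda+u}(g_t(x)-W_t)^{-u}$ is exactly the one the paper uses (rewritten), and your treatment of the lower bound (\ref{eqn::sle_boundary_derivative1_lower}) via the two-force-point martingale, the comparabilities on $\LF$, and a Lemma \ref{lem::sle_positivechance}-type positivity estimate is essentially the paper's argument. The genuine gap is in your proof of (\ref{eqn::sle_boundary_derivative1_upper}): the claimed unconditional bound $(g_{\tau_{\eps}}(x)-W_{\tau_{\eps}})/g_{\tau_{\eps}}'(x)\lesssim \eps$ on $\{\tau_{\eps}<T\}$ is false. What Koebe/Lemma \ref{lem::extremallength_argument} give at the hitting time is only $(g_{\tau_{\eps}}(x)-O_{\tau_{\eps}})/g_{\tau_{\eps}}'(x)\asymp\eps$; writing $J_t=(g_t(x)-O_t)/(g_t(x)-W_t)\in[0,1]$, your ratio equals $\eps/J_{\tau_{\eps}}$ up to constants, and $J_{\tau_{\eps}}$ can be arbitrarily small with positive probability --- for instance when the curve sweeps over $x$, creeps back westward at height $\delta\ll\eps$ just above $[x,\infty)$ and enters $B(x,\eps)$ from that side, one gets $J_{\tau_{\eps}}\asymp\delta/\eps$. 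So the factor you want to bound deterministically is an unbounded random variable, and bounding its $\beta$-th negative moment (with $\beta=u_1(\lambda)+\lambda-b$) under the weighted law $\SLE_{\kappa}(\tilde\rho)$ is precisely the technical heart of the paper's proof: Lemma \ref{lem::sle_boundary_integrable}, which rests on the invariant density of the time-changed process $\hat J$ and a coupling argument, and which is finite only under a moment condition on $\beta$ that your hypothesis (\ref{eqn::sle_boundary_derivative1_assumption}) guarantees but that cannot be dispensed with. Your proposal silently bypasses this lemma, so the upper bound does not follow.

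Two further points. First, the paper does not evaluate the expectation directly at the hitting time $\tau_{\eps}$: it works at the conformal-radius time $\hat\tau_{\eps}=\inf\{t:\Upsilon_t=\eps\}$, where $M_{\hat\tau_{\eps}}=g'^{b}Z^{\lambda-b}\eps^{-\beta}J^{\beta}$ gives the exact identity $\E[g'^bZ^{\lambda-b}\one]=\eps^{\beta}x^{-u_1(\lambda)}\E^*[J^{-\beta}]$ (lower bound free since $J\le 1$, upper bound from Lemma \ref{lem::sle_boundary_integrable}), and then transfers to $\tau_{\eps}$ using that $U_t=g_t'(x)^b(g_t(x)-W_t)^{\lambda-b}$ is a supermartingale together with $\hat\tau_{\eps/4}\ge\tau_{\eps}\ge\hat\tau_{4\eps}$; this supermartingale step is where (\ref{eqn::sle_boundary_derivative1_assumption}) is actually used, not merely to make the exponent $\lambda-b+u_1(\lambda)$ nonnegative as in your reading. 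Second, even granting your (false) one-sided distortion bound, (\ref{eqn::sle_boundary_derivative1_upper}) is a two-sided estimate, and taking $A=\{\tau_{\eps}<T\}$ with a one-sided bound on the ratio yields only one direction; your proposal gives no mechanism for the matching bound at the hitting time, which in the paper again comes out of the $\hat\tau_{\eps}$-identity plus the supermartingale comparison.
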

\begin{lemma}\label{lem::sle_boundary_integrable}
Fix $\kappa >0$ and $\nu\le \kappa/2-4$. Let $\eta$ be an $\SLE_{\kappa}(\nu)$ in $\HH$ from 0 to $\infty$ with force point $1$. Denote by $W$ the driving function, $V$ the evolution of the force point.  Let $O_t$ be the image of the rightmost point of $K_t\cap \R$ under $g_t$. Set $\Upsilon_t=(g_t(1)-O_t)/g_t'(1)$ and $\sigma(s)=\inf\{t: \Upsilon_t=e^{-2s}\}$.
Set $J_t=(V_t-O_t)/(V_t-W_t)$. Let $T_0=\inf\{t: \eta(t)\in [1,\infty)\}$. We have, for $\beta\ge 0$,
\begin{equation}\label{eqn::integrable_J_sigma}
\E\left[J_{\sigma(s)}^{-\beta}1_{\{\sigma(s)<T_0\}}\right]\asymp 1,\quad \text{when }8+2\nu+\kappa\beta<2\kappa,
\end{equation}
where the constants in $\asymp$ depend only on $\kappa, \nu, \beta$.
\end{lemma}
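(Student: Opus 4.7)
The plan is to identify $\widetilde J_s:=J_{\sigma(s)}$, after a natural reparametrization of time, as an autonomous one-dimensional Jacobi-type diffusion on $[0,1]$, and to read the condition $8+2\nu+\kappa\beta<2\kappa$ off as the integrability condition of $J^{-\beta}$ against its Beta-type invariant density. The claimed two-sided bound then follows by combining this integrability with the ergodicity of the diffusion.

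First I set $X_t=V_t-W_t$ and $Y_t=V_t-O_t$, so that $J_t=Y_t/X_t\in[0,1]$ and $O_t-W_t=X_t-Y_t$. The driving SDE for $W_t$ together with the Loewner equations for $V_t$, $V_t'$, and $O_t$ gives
\[
dX_t=\frac{2+\nu}{X_t}\,dt-\sqrt{\kappa}\,dB_t,\qquad dY_t=-\frac{2Y_t}{X_t(X_t-Y_t)}\,dt,\qquad d\log V_t'=-\frac{2}{X_t^2}\,dt,
\]
whence $d\log\Upsilon_t=-\tfrac{2J_t}{X_t^2(1-J_t)}\,dt$ is strictly negative, so the time change $s=-\tfrac12\log\Upsilon_t$ is a bijection $[0,T_0)\leftrightarrow[0,\infty)$ with $dt/ds=X_t^2(1-J_t)/J_t$. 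Applying It\^o's formula to $J_t=Y_t/X_t$ and substituting $dt/ds$, the explicit factors of $X_t$ cancel and one obtains the autonomous Jacobi-type SDE
\[
d\widetilde J_s=\bigl[(1-\widetilde J_s)(\kappa-2-\nu)-2\bigr]\,ds+\sqrt{\kappa\widetilde J_s(1-\widetilde J_s)}\,d\widetilde B_s,\qquad \widetilde J_0=1,
\]
on $[0,1]$. Under $\nu\le\kappa/2-4$, the drift at $\widetilde J=0$ equals $\kappa-4-\nu\ge\kappa/2>0$ while the drift at $\widetilde J=1$ equals $-2<0$, and a Feller boundary test identifies both endpoints as entrance boundaries; the diffusion is thus positively recurrent on $(0,1)$ with (Beta-type) invariant density
\[
\pi(J)\propto J^{2(\kappa-4-\nu)/\kappa-1}(1-J)^{4/\kappa-1}.
\]

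A direct integrability check gives $\int_0^1 J^{-\beta}\pi(J)\,dJ<\infty$ iff $\beta<2(\kappa-4-\nu)/\kappa$, which is exactly $8+2\nu+\kappa\beta<2\kappa$. For the upper bound I invoke the exponential ergodicity of this Jacobi-type diffusion: for any fixed $s_0>0$ the transition density at time $s_0$ is bounded above by a constant multiple of $\pi$, so $\E[\widetilde J_s^{-\beta}]\lesssim\int_0^1 J^{-\beta}\pi(J)\,dJ<\infty$ uniformly in $s\ge s_0$, while the short-time regime $s\in[0,s_0]$ is controlled directly using $\widetilde J_0=1$. The matching lower bound is immediate: $\widetilde J_s\in[0,1]$ gives $\widetilde J_s^{-\beta}\ge 1$, and under $\nu\le\kappa/2-4$ the curve almost surely swallows the force point with $\Upsilon_t\to 0$ as $t\to T_0$, so $\PP[\sigma(s)<T_0]=1$ and $\E[\widetilde J_s^{-\beta}\one_{\sigma(s)<T_0}]\ge 1$.

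The main technical obstacle is the derivation of the SDE for $Y_t$: the point $O_t$ is the image under $g_t$ of the rightmost point of $K_t\cap\R$, which can change discontinuously at the (possibly fractal) times when the curve makes fresh contact with $\R$, so the identity $dO_t=2\,dt/(O_t-W_t)$ strictly speaking only holds between contact times. I expect the cleanest workaround is an intrinsic Markov/scaling argument: at each time $\sigma(s)$ the rescaled and translated configuration is conformally equivalent to an $\SLE_\kappa(\nu)$ from a starting point in $[0,1]$ to a force point at $1$, which forces $(\widetilde J_s)_{s\ge 0}$ to be a genuinely time-homogeneous Markov process and pins its generator down to the Jacobi operator computed above.
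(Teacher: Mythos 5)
Your core computation coincides with the paper's proof: the time change via $\Upsilon$, the autonomous SDE $d\hat J_s=\bigl(\kappa-\nu-4-(\kappa-\nu-2)\hat J_s\bigr)ds+\sqrt{\kappa\hat J_s(1-\hat J_s)}\,d\hat B_s$ (your drift $(1-J)(\kappa-2-\nu)-2$ is the same expression), the invariant density $\propto y^{1-(8+2\nu)/\kappa}(1-y)^{4/\kappa-1}$, and the observation that $\int_0^1 y^{-\beta}\pi(y)\,dy<\infty$ exactly when $8+2\nu+\kappa\beta<2\kappa$; the lower bound via $J\le 1$ is also the paper's. The genuine gap is in how you convert integrability against $\pi$ into a bound on $\E[\hat J_s^{-\beta}]$ that is uniform in $s$ and valid for the process started at $\hat J_0=1$. ``Exponential ergodicity'' does not, by itself, give your claim that the time-$s_0$ transition density is dominated by a constant multiple of $\pi$: that is an ultracontractivity-type estimate for the Jacobi semigroup (control of $p_{s_0}(1,\cdot)/\pi(\cdot)$ up to the boundary, where $\pi$ vanishes like $y^{1-(8+2\nu)/\kappa}$ with nonnegative exponent under $\nu\le\kappa/2-4$), and you neither prove it nor cite it; likewise the short-time regime $s\in[0,s_0]$ is only asserted. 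The paper avoids all of this with a one-line monotone coupling: since $\hat J_0=1$ stochastically dominates any initial law, one couples $\hat J$ with the stationary solution $\tilde J$ of the same SDE so that $\hat J_s\ge\tilde J_s$ pathwise, whence $\E[\hat J_s^{-\beta}]\le\E_\pi[J^{-\beta}]<\infty$ for every $s$ simultaneously, with no ergodicity or transition-density estimate needed. Replacing your ergodicity step by this comparison argument closes the gap.

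Two smaller points. Your boundary classification is only partly right: under $\nu\le\kappa/2-4$ the point $J=0$ is indeed inaccessible, but near $J=1$ the process behaves like a squared Bessel process of dimension $8/\kappa$, so $1$ is attained when $\kappa>4$ (the lemma is stated for all $\kappa>0$); this does not affect the integrability at $0$, but the blanket ``both endpoints are entrance boundaries'' is inaccurate. Finally, your worry about $dO_t$ at fresh-contact times is legitimate in principle, but your proposed fix (an unspecified Markov/scaling argument ``pinning down the generator'') is vaguer than what is needed; the paper simply performs the It\^o computation, which is justified since $O_t$ solves the Loewner ODE away from the exceptional contact times (and in the regime relevant to the paper's application the curve is simple).
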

\begin{proof}
Since $0\le J_t\le 1$, we only need to show the upper bound. 
Define $X_t=V_t-W_t$.
We know that
\[dW_t=\sqrt{\kappa}dB_t+\frac{\nu dt}{W_t-V_t},\quad dV_t=\frac{2dt}{V_t-W_t},\]
where $B$ is a standard 1-dimensional Brownian motion.
By It\^o's formula, we have that
\[dJ_t=\frac{J_t}{X_t^2}\left(\kappa-\nu-2-\frac{2}{1-J_t}\right)dt+\frac{J_t}{X_t}\sqrt{\kappa}dB_t,\quad d\Upsilon_t=\Upsilon_t\frac{-2J_t dt}{X_t^2(1-J_t)}.\]
Recall that $\sigma(s)=\inf\{t: \Upsilon_t=e^{-2s}\}$, and denote by $\hat{X}, \hat{J}, \hat{\Upsilon}$ the processes indexed by $\sigma(s)$. Then we have that
\[d\sigma(s)=\hat{X}_s^2\frac{1-\hat{J}_s}{\hat{J}_s}ds,\quad d\hat{J}_s=\left(\kappa-\nu-4-(\kappa-\nu-2)\hat{J}_s\right)ds+\sqrt{\kappa \hat{J}_s(1-\hat{J}_s)}d\hat{B}_s,\]
where $\hat{B}$ is a standard 1-dimensional Brownian motion.
By \cite[Equations (56), (62)]{LawlerMinkowskiSLERealLine} and \cite[Appendix B]{ZhanErgodicityTipSLE}, we know that $\hat{J}$ has an invariant density on $(0,1)$, which is proportional to $y^{1-(8+2\nu)/\kappa}(1-y)^{4/\kappa-1}$.
Moreover, since $\hat J_0=1$, by a standard coupling argument, we may couple $(\hat J_s)$ with the stationary process $(\tilde J_s)$ that satisfies the same equation as $(\hat J_s)$, such that $\hat J_s\ge \tilde J_s$ for all $s\ge 0$. Then we get $\E[\hat{J}_s^{-\beta}]\le \E[\tilde J_s^{-\beta}]$, which is a finite constant if $8+2\nu+\kappa\beta<2\kappa$. This gives the upper bound in (\ref{eqn::integrable_J_sigma}) and completes the proof of (\ref{eqn::integrable_J_sigma}). 
\end{proof}

\begin{proof}[Proof of (\ref{eqn::sle_boundary_derivative1_upper})]
A similar estimate is derived in \cite[Proposition 4.15]{WuZhanSLEBoundaryArmExponents} for $\SLE_{\kappa}$. The proof is similar, to be self-contained, we give a complete proof here. 
Define $\Upsilon_t=(g_t(x)-O_t)/g_t'(x)$, $J_t=(g_t(x)-O_t)/(g_t(x)-W_t)$ and $\hat{\tau}_{\eps}=\inf\{t: \Upsilon_t=\eps\}$. 
Set
\[M_t=g_t'(x)^{(\nu-\rho)(\nu+\rho+4-\kappa)/(4\kappa)}(g_t(x)-W_t)^{(\nu-\rho)/\kappa},\]
where \[\nu=\kappa/2-4-\sqrt{4\kappa\lambda+(\rho+4-\kappa/2)^2}.\]
Then $M$ is a local martingale and the law of $\eta$ weighted by $M$ becomes the law of $\SLE_{\kappa}(\nu)$ with force point $x$. Set $\beta=u_1(\lambda)+\lambda-b$. By the choice of $\nu$, we can rewrite 
\[M_t=g_t'(x)^b(g_t(x)-W_t)^{\lambda-b}\Upsilon_t^{-\beta}J_t^{\beta}. \]
At time $\hat{\tau}_{\eps}<\infty$, we have $\Upsilon_{\hat{\tau}_{\eps}}=\eps$. Thus 
\[\E\left[g_{\hat{\tau}_{\eps}}'(x)^b (g_{\hat{\tau}_{\eps}}(x)-W_{\hat{\tau}_{\eps}})^{\lambda-b}\one_{\{\hat{\tau}_{\eps}<T_x\}}\right]= \eps^{\beta}M_0\E^*\left[\left(J^*_{\hat{\tau}_{\eps}^*}\right)^{-\beta}\right]=\eps^{\beta}x^{-u_1(\lambda)}\E^*\left[\left(J^*_{\hat{\tau}_{\eps}^*}\right)^{-\beta}\right]\asymp \eps^{\beta}x^{-u_1(\lambda)} ,\]
where $\PP^*$ is the law of $\eta$ weighted by $M$ and $\hat{\tau}_{\eps}^*, J^*$ are defined accordingly. The last relation is due to Lemma \ref{lem::sle_boundary_integrable}. Thus we have 
\begin{equation}\label{eqn::sle_boundary_derivative1_upper_conformalradius}
\E\left[g_{\hat{\tau}_{\eps}}'(x)^b (g_{\hat{\tau}_{\eps}}(x)-W_{\hat{\tau}_{\eps}})^{\lambda-b}\one_{\{\hat{\tau}_{\eps}<T_x\}}\right]\asymp x^{-u_1(\lambda)}\eps^{u_1(\lambda)+\lambda-b}.
\end{equation}

Consider the process $(U_t:=g_t'(x)^b(g_t(x)-W_t)^{\lambda-b})_{t\ge 0}$. We can check that it is a super martingale by It\^{o}'s formula when (\ref{eqn::sle_boundary_derivative1_assumption}) holds. Combining with the fact $\hat{\tau}_{\eps/4}\ge \tau_{\eps}\ge \hat{\tau}_{4\eps}$, we have
\[\E\left[U_{\hat{\tau}_{\eps/4}}\one_{\{\hat{\tau}_{\eps/4}<T_x\}}\right]\le \E\left[U_{\tau_{\eps}}\one_{\{\tau_{\eps}<T_x\}}\right]\le \E\left[U_{\hat{\tau}_{4\eps}}\one_{\{\hat{\tau}_{4\eps}<T_x\}}\right].\]
Combining with (\ref{eqn::sle_boundary_derivative1_upper_conformalradius}), we obtain (\ref{eqn::sle_boundary_derivative1_upper}).
\end{proof}

\begin{proof}[Proof of (\ref{eqn::sle_boundary_derivative1_lower})]
We may assume $x>v$. Define 
\[M_t=g_t'(x)^{\nu(\nu+4-\kappa)/(4\kappa)}(g_t(x)-W_t)^{\nu/\kappa}(g_t(x)-g_t(v))^{\nu\rho/(2\kappa)},\quad\text{where }\nu=-\kappa u_1(\lambda).\]
Then $M$ is a local martingale for $\eta$ and the law of $\eta$ weighted by $M$ is an $\SLE_{\kappa}(\rho, \nu)$ with force points $(v, x)$. 
We argue that 
\begin{equation}\label{eqn::sle_boundary_derivative_aux}
g_{\tau_{\eps}}(x)-g_{\tau_{\eps}}(v)\asymp (x-v)g_{\tau_{\eps}}'(x).
\end{equation}
There are two possibilities: $v$ is swallowed by $\eta[0,\tau_{\eps}]$ or not. If $v$ is not swallowed by $\eta[0,\tau_{\eps}]$, then by Koebe 1/4 theorem, we know that $g_{\tau_{\eps}}(x)-g_{\tau_{\eps}}(v)\asymp (x-v)g_{\tau_{\eps}}'(x)$. If $v$ is swallowed by $\eta[0,\tau_{\eps}]$, then we must have $x-v\ge \eps$. By Koebe 1/4 theorem, we have $g_{\tau_{\eps}}(x)-g_{\tau_{\eps}}(v)\asymp \eps g_{\tau_{\eps}}'(x)$. Since $\eps\le x-v\lesssim \eps$, we have $g_{\tau_{\eps}}(x)-g_{\tau_{\eps}}(v)\asymp (x-v)g_{\tau_{\eps}}'(x)$. These complete the proof of (\ref{eqn::sle_boundary_derivative_aux}).

On the event $\{\Im{\eta(\tau_{\eps})}\ge c\eps\}$, we also have $g_{\tau_{\eps}}(x)-W_{\tau_{\eps}}\asymp \eps g_{\tau_{\eps}}'(x)$. 
Combining with (\ref{eqn::sle_boundary_derivative_aux}) and the choice of $\nu$, we have 
\[M_{\tau_{\eps}}\asymp \eps^{\nu/\kappa}(x-v)^{\nu\rho/(2\kappa)}g_{\tau_{\eps}}'(x)^{\lambda},\quad \text{on }\LF.\]
Therefore,
\[\E\left[g_{\tau_{\eps}}'(x)^{\lambda}\one_{\LF}\right]\asymp \eps^{-\nu/\kappa}(x-v)^{-\nu\rho/(2\kappa)}M_0\PP^*[\LF^*]=\eps^{u_1(\lambda)}x^{-u_1(\lambda)}\PP^*[\LF^*],\]
where $\PP^*$ is the law of $\eta$ weighted by $M$ and $\LF^*$ is defined accordingly. Note that 
\[\rho>-2, \quad \rho+\nu<\kappa/2-4.\]
By a similar proof of Lemma \ref{lem::sle_positivechance}, we know that there are constants $C, c$ such that $\PP^*[\LF^*]\asymp 1$. This completes the proof. 
\end{proof}
\begin{remark}
In fact, Equation (\ref{eqn::sle_boundary_derivative1_upper_conformalradius}) is true for all $\kappa>0$ and $\rho\in\R$ as long as
\[\kappa\lambda-\kappa u_1(\lambda)+2\rho+8-2\kappa<\kappa b\le \kappa\lambda+\kappa u_1(\lambda).\]
\end{remark}
\begin{remark}\label{rem::sle_boundary_alpha_first}
Taking $\lambda=b=0$ in Lemma \ref{lem::sle_boundary_derivative1}, we know that  Proposition \ref{prop::sle_boundary_alpha} holds for $\LH^{\alpha}_1$ with  
\[\alpha^+_1=u_1(0)=2(\rho+4-\kappa/2)/\kappa.\]
Precisely, taking $\lambda=0$ in (\ref{eqn::sle_boundary_derivative1_lower}), we have 
\[\PP[\eta\text{ hits }B(x,\eps)]\gtrsim (\eps/x)^{u_1(0)}.\]
Taking $\lambda=b=0$ in (\ref{eqn::sle_boundary_derivative1_upper}), we have 
\[\PP[\eta\text{ hits }B(v,\eps)]\asymp (\eps/v)^{u_1(0)}.\]
Since $0\le x-v\lesssim \eps$, we know that $B(x, \eps)$ is contained in $B(v, \tilde{C}\eps)$ for some constant $\tilde{C}$, thus 
\[\PP[\eta\text{ hits }B(x,\eps)]\le \PP[\eta\text{ hits }B(v, \tilde{C}\eps)]\asymp (\eps/v)^{u_1(0)}\lesssim (\eps/x)^{u_1(0)}.\]
\end{remark}
\begin{lemma}\label{lem::sle_boundary_derivative2}
Fix $\kappa\in (0,4)$ and $\rho\in (-2,0]$, let $v>0$. Let $\eta$ be an $\SLE_{\kappa}(\rho)$ with force point $v$. For $r>0>y$, and $0<v\le x$, define 
\[\sigma=\inf\{t: \eta(t)\in B(y,r)\},\quad T=\inf\{t: \eta(t)\in [x,\infty)\}.\]
For $\lambda\ge 0$, define 
\[u_2(\lambda)=\frac{1}{\kappa}(\kappa/2-2-\rho)+\frac{1}{\kappa}\sqrt{4\kappa\lambda+(\kappa/2-2-\rho)^2}.\]
For $b\le u_2(\lambda)$ and $x\ge v$, we have
\begin{equation}\label{eqn::sle_boundary_derivative2_upper}
\E\left[g_{\sigma}'(x)^{\lambda}(g_{\sigma}(x)-W_{\sigma})^b\one_{\{\sigma<T\}}\right]\lesssim x^{u_2(\lambda)}(x-y-2r)^{b-u_2(\lambda)},
\end{equation}
where the constant in $\lesssim$ depends only on $\kappa, \rho, \lambda, b$. Assume $r<|y|\lesssim r$, define 
\[\LF=\{\sigma<T, \dist(\eta[0,\sigma], x)\ge cx, \eta[0,\sigma]\subset B(0, C|y|), \dist(\eta[0,\sigma], [Cy, y])\ge cr\},\]
where the constants $C,c$ are decided in Lemma \ref{lem::sle_positivechance2}. Then, for $b\le u_2(\lambda)$ and $x\ge v\ge (1-c)x$, we have
\begin{equation}\label{eqn::sle_boundary_derivative2_lower}
\E\left[g_{\sigma}'(x)^{\lambda}(g_{\sigma}(x)-W_{\sigma})^b\one_{\LF}\right]\gtrsim x^{u_2(\lambda)}|y|^{b-u_2(\lambda)},
\end{equation}
where the constant in $\gtrsim$ depends only on $\kappa, \rho, \lambda, b$. 
\end{lemma}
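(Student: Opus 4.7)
The plan mirrors the two-step strategy already used in the proof of Lemma \ref{lem::sle_boundary_derivative1}: a supermartingale argument for the upper bound (\ref{eqn::sle_boundary_derivative2_upper}), and an explicit change of measure for the lower bound (\ref{eqn::sle_boundary_derivative2_lower}).

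For the upper bound, the natural candidate is $U_t := g_t'(x)^{\lambda}(g_t(x)-W_t)^{u_2(\lambda)}$. Applying It\^o's formula under the $\SLE_{\kappa}(\rho)$ dynamics $dW_t=\sqrt{\kappa}\,dB_t+\rho\,dt/(W_t-V_t)$ and using the defining quadratic identity $\kappa u_2(\lambda)^2+(4-\kappa+2\rho)u_2(\lambda)=4\lambda$ to cancel the terms of order $(g_t(x)-W_t)^{-2}$, the drift of $U$ reduces to
\[ \frac{u_2(\lambda)\,\rho\,(g_t(x)-V_t)}{(V_t-W_t)(g_t(x)-W_t)^2}\,dt,\]
which is nonpositive because $u_2(\lambda)\ge 0$, $\rho\le 0$, and $g_t(x)\ge V_t>W_t$ for $t<T$. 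Hence $U$ is a supermartingale up to $T$ and optional stopping yields $\E[U_{\sigma}\one_{\{\sigma<T\}}]\le U_0=x^{u_2(\lambda)}$. To recover the exponent $b$, decompose $g_\sigma'(x)^{\lambda}(g_\sigma(x)-W_\sigma)^{b}=U_\sigma\cdot(g_\sigma(x)-W_\sigma)^{b-u_2(\lambda)}$; since $b-u_2(\lambda)\le 0$, Lemma \ref{lem::tip_distance_large} (applicable in the implicit regime $y\le -4r$ in which Lemma \ref{lem::sle_boundary_derivative2} is used) gives $(g_\sigma(x)-W_\sigma)^{b-u_2(\lambda)}\lesssim(x-y-2r)^{b-u_2(\lambda)}$, and (\ref{eqn::sle_boundary_derivative2_upper}) follows.

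For the lower bound, introduce the local martingale
\[ M_t:=g_t'(x)^{\nu(\nu+4-\kappa)/(4\kappa)}(g_t(x)-W_t)^{\nu/\kappa}(g_t(x)-g_t(v))^{\nu\rho/(2\kappa)},\qquad \nu:=\kappa u_2(\lambda),\]
so that by Lemma \ref{lem::sle_mart} the law of $\eta$ weighted by $M$ becomes $\SLE_{\kappa}(\rho,\nu)$ with force points $(v,x)$; the same quadratic identity for $u_2$ forces $\rho+\nu\ge\kappa/2-2$, so under the weighted law $\PP^*$ the curve a.s.\ avoids $[x,\infty)$ and Lemma \ref{lem::sle_positivechance2} applies to give $\PP^*[\LF]\gtrsim 1$ (with the same constants $c,C$ entering the definition of $\LF$). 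On $\LF$ the curve keeps distance $\ge cx$ from both $x$ and $v$ and is contained in $B(0,C|y|)$, so half-plane Koebe distortion yields $g_\sigma'(x)\asymp 1$, $g_\sigma(x)-W_\sigma\asymp |y|$, and $g_\sigma(x)-g_\sigma(v)\asymp (x-v)g_\sigma'(x)$. Plugging these into $M_\sigma$ (the exponent of $g_t'(x)$ reduces to $\nu(\nu+4-\kappa)/(4\kappa)=\lambda-\rho u_2(\lambda)/2$ via the identity for $u_2$) and into $M_0=x^{u_2(\lambda)}(x-v)^{u_2(\lambda)\rho/2}$, every factor of $g_\sigma'(x)$ and $(x-v)$ cancels in the ratio and, writing $X:=g_\sigma'(x)^{\lambda}(g_\sigma(x)-W_\sigma)^{b}$,
\[ \frac{X\,M_0}{M_\sigma}\asymp x^{u_2(\lambda)}\,|y|^{b-u_2(\lambda)}\quad\text{on }\LF.\]
The change-of-measure identity $\E[X\one_{\LF}]=\E^*[X\,M_0/M_\sigma\,\one_{\LF}]$ combined with $\PP^*[\LF]\gtrsim 1$ then gives (\ref{eqn::sle_boundary_derivative2_lower}).

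The main technical obstacle is the geometric analysis of $M_\sigma$ on $\LF$ for the lower bound: to get genuinely two-sided Koebe estimates for $g_\sigma'(x)$, $g_\sigma(x)-W_\sigma$, and $g_\sigma(x)-g_\sigma(v)$ with constants uniform in $\eps$, $v$, $x$, $y$, $r$, one must verify that the geometric parameters $c,C$ separating the hull from $x$ and $v$ are those furnished by Lemma \ref{lem::sle_positivechance2}, and that $(1-c)x\le v\le x$ indeed places $v$ within a ball to which Koebe distortion applies relative to $x$. A secondary subtlety is the degenerate case $v=x$, where the interaction factor $g_t(x)-g_t(v)$ becomes singular; this is handled as in the proof of (\ref{eqn::sle_boundary_derivative1_lower}) by the reduction $v\nearrow x$.
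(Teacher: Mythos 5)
Your lower bound is the paper's argument verbatim: the same local martingale with $\nu=\kappa u_2(\lambda)$, the same tilting to $\SLE_{\kappa}(\rho,\nu)$ with force points $(v,x)$, the same Koebe estimates for $g_\sigma(x)-g_\sigma(v)$ and $g_\sigma(x)-W_\sigma$ on $\LF$, and the same appeal to Lemma \ref{lem::sle_positivechance2}. Your upper bound takes a slightly different route: the paper also weights by the martingale with $\nu=\kappa u_2(\lambda)$, bounds the interaction factor using $g_t(x)-g_t(v)\le (x-v)g_t'(x)$ together with $\nu\rho\le 0$, and then controls the remaining moment under the tilted law via Lemma \ref{lem::tip_distance_large}; you instead show directly by It\^o's formula that $g_t'(x)^{\lambda}(g_t(x)-W_t)^{u_2(\lambda)}$ is a nonnegative supermartingale (using the quadratic identity for $u_2$ and $\rho\le 0$, exactly as the paper does for $U_t$ in the proof of (\ref{eqn::sle_boundary_derivative1_upper})), apply optional stopping, and then invoke the same Lemma \ref{lem::tip_distance_large}. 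The two mechanisms are equivalent in content (both rest on the identity $\kappa u_2^2+(4+2\rho-\kappa)u_2=4\lambda$ and on $\rho\le 0$), but yours avoids the $(x-v)$ interaction factor altogether and so handles $v=x$ without any limiting argument, while the paper's version keeps everything inside the Girsanov framework it reuses for the lower bound. Two small remarks on your lower bound: the assertion $g_\sigma'(x)\asymp 1$ on $\LF$ is not justified (when $x\ll|y|$ the hull may create a fjord near $x$ even at distance $cx$), but it is also not needed, since, as you note, the $g_\sigma'(x)$ and $(x-v)$ factors cancel exactly in $X\,M_0/M_\sigma$; likewise only the one-sided bound $g_\sigma(x)-W_\sigma\lesssim|y|$ enters, since $b-u_2(\lambda)\le 0$.
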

\begin{proof}[Proof of (\ref{eqn::sle_boundary_derivative2_upper})]
We may assume $x>v$. Set 
\[M_t=g_t'(x)^{\nu(\nu+4-\kappa)/(4\kappa)}(g_t(x)-W_t)^{\nu/\kappa}(g_t(x)-g_t(v))^{\nu\rho/(2\kappa)},\quad \text{where }\nu=\kappa u_2(\lambda)\ge 0.\]
By \cite[Theorem 6]{SchrammWilsonSLECoordinatechanges}, we know that $M$ is a local martingale for $\eta$. Note that $\nu\rho\le 0$ and that
\[g_t(x)-g_t(v)\le (x-v)g_t'(x).\]
Thus, 
\[M_t\ge g_t'(x)^{\lambda}(g_t(x)-W_t)^{u_2(\lambda)}(x-v)^{\nu\rho/(2\kappa)}.\]
Therefore,
\begin{align*}
\E\left[g_{\sigma}'(x)^{\lambda}(g_{\sigma}(x)-W_{\sigma})^b\one_{\{\sigma<T\}}\right]&\le (x-v)^{-\nu\rho/(2\kappa)} M_0\E^*\left[(g^*_{\sigma^*}(x)-W^*_{\sigma^*})^{b-u_2(\lambda)}\one_{\{\sigma^*<T^*\}}\right]\\
&=x^{u_2(\lambda)}\E^*\left[(g^*_{\sigma^*}(x)-W^*_{\sigma^*})^{b-u_2(\lambda)}\one_{\{\sigma^*<T^*\}}\right]\\
&\lesssim x^{u_2(\lambda)}(x-y-2r)^{b-u_2(\lambda)}\tag{by Lemma \ref{lem::tip_distance_large}},
\end{align*}
where $\PP^*$ is the law of $\eta$ weighted by $M$ and $g^*, W^*, \sigma^*, T^*$ are defined accordingly. This implies the conclusion.
\end{proof}
\begin{proof}[Proof of (\ref{eqn::sle_boundary_derivative2_lower})]
Assume the same notations as in the proof of (\ref{eqn::sle_boundary_derivative2_upper}). On the event $\{\dist(\eta[0,\sigma], x)\ge cx\}$, since $0\le x-v\le cx$, by Koebe 1/4 theorem, we have
\[g_t(x)-g_t(v)\ge (x-v)g_t'(x)/4.\]
Thus 
\[M_t\lesssim g_t'(x)^{\lambda}(g_t(x)-W_t)^{u_2(\lambda)}(x-v)^{\nu\rho/(2\kappa)}.\]
On the event $\{\eta[0,\sigma]\subset B(0,C|y|)\}$, we have \[g_{\sigma}(x)-W_{\sigma}\lesssim |y|.\]
Therefore, 
\[\E\left[g_{\sigma}'(x)^{\lambda}(g_{\sigma}(x)-W_{\sigma})^b\one_{\LF}\right]\gtrsim x^{u_2(\lambda)}|y|^{b-u_2(\lambda)}\PP^*[\LF^*],\]
where $\PP^*$ is the law of $\eta$ weighted by $M$ and $\LF^*$ is defined accordingly. By Lemma \ref{lem::sle_positivechance2}, we have $\PP^*[\LF^*]\asymp 1$. This completes the proof. 
\end{proof}
\begin{remark}\label{rem::sle_boundary_beta_first}
Taking $\lambda=b=0$ in Lemma \ref{lem::sle_boundary_derivative2}, we have 
\[\PP[\sigma<T_x]\asymp x^{u_2(0)},\quad \text{where }u_2(0)=2(\kappa/2-2-\rho)/\kappa. \]
This implies that Proposition \ref{prop::sle_boundary_beta} holds for $\LH^{\beta}_1$. 
\end{remark}

\begin{lemma} \label{lem::sle_boundary_derivative_gamma}
Fix $\kappa\in (0,4)$ and $\rho>-2$. Let $\eta$ be an $\SLE_{\kappa}(\rho)$ with force point $0^+$ and denote by $(V_t, t\ge 0)$ the evolution of the force point.
For $x>\eps>0$, define 
\[\tau=\inf\{t: \eta(t)\in B(x,\eps)\},\quad T=\inf\{t: \eta(t)\in [x,\infty)\}. \]
For $\lambda\ge 0$, define 
\[u_3(\lambda)=\frac{(\rho+2)}{2\kappa}\left(\rho+4-\kappa/2+\sqrt{4\kappa\lambda+(\rho+4-\kappa/2)^2}\right).\]
 Define 
\[\LG=\{\tau<T, \Im{\eta(\tau)}\ge c\eps\},\quad \LF=\LG\cap\{\eta[0,\tau]\subset B(0, Cx), \dist(\eta[0,\tau], [x-\eps, x+3\eps])\ge c\eps\}\]
where $c, C$ are the constants decided in Lemma \ref{lem::sle_positivechance}. Then we have
\[\E\left[g_{\tau}'(x)^{\lambda}\one_{\LF}\right]\asymp\E\left[g_{\tau}'(x)^{\lambda}\one_{\LG}\right]\asymp \eps^{u_3(\lambda)}x^{-u_3(\lambda)},\]
where the constants in $\asymp$ depend only on $\kappa, \rho, \lambda$. 
\end{lemma}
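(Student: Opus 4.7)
The proof follows the martingale reweighting strategy of Lemma \ref{lem::sle_boundary_derivative1}, adapted to the regime where the sole force point $0^+$ sits at macroscopic distance $x$ from the target $B(x,\eps)$. The key algebraic input is the factorization $u_3(\lambda)=\tfrac{\rho+2}{2}\,u_1(\lambda)$, which emerges naturally once the Koebe estimate for $g_\tau(x)-V_\tau$ is in place.

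The plan is to introduce the local martingale
\[
M_t = g_t'(x)^{\nu(\nu+4-\kappa)/(4\kappa)}\,(g_t(x)-W_t)^{\nu/\kappa}\,(g_t(x)-V_t)^{\nu\rho/(2\kappa)},\qquad \nu=-\kappa\,u_1(\lambda).
\]
By Lemma \ref{lem::sle_mart} (with no left force point and two right force points, at $0^+$ with weight $\rho$ and at $x$ with weight $\nu$), $M$ is a local martingale for $\eta$, and under $\PP^{*}=(M/M_0)\cdot\PP$ the process becomes an $\SLE_\kappa(\rho,\nu)$ with force points $(0^+,x)$. Since $u_1(\lambda)\ge u_1(0)=2(\rho+4-\kappa/2)/\kappa>0$ for the given parameters, one has $\rho+\nu\le \kappa-\rho-8<\kappa/2-4$ (using $\rho>-2$ and $\kappa<4$), so the hypotheses of Lemma \ref{lem::sle_positivechance} are met with the same constants $c,C$ as fixed in the statement, giving $\PP^{*}[\LF]\gtrsim 1$.

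On $\LF$ the geometric conditions together with $\Im\eta(\tau)\ge c\eps$ put $g_\tau$ in the Koebe regime, so that both $g_\tau(x)-W_\tau\asymp \eps g_\tau'(x)$ (standard tip estimate) and $g_\tau(x)-V_\tau\asymp \eps g_\tau'(x)$ (Koebe distortion at the boundary point $x$, using that the confinement $\eta[0,\tau]\subset B(0,Cx)$ controls the half-plane capacity and keeps $V_\tau$ away from $g_\tau(x)$) hold. Substituting into $M_\tau$ and using the identity $\kappa u_1(\lambda)\bigl(\kappa u_1(\lambda)-2(\rho+4-\kappa/2)\bigr)=4\kappa\lambda$ (which forces the $g_\tau'(x)$-exponent to equal $\lambda$) gives $M_\tau \asymp g_\tau'(x)^\lambda\,\eps^{-u_3(\lambda)}$ on $\LF$. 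Combined with $M_0=x^{\nu(\rho+2)/(2\kappa)}=x^{-u_3(\lambda)}$, reweighting yields
\[
\E\!\left[g_\tau'(x)^\lambda\one_\LF\right]\asymp \eps^{u_3(\lambda)}\,M_0\,\PP^{*}[\LF]\asymp \eps^{u_3(\lambda)} x^{-u_3(\lambda)}.
\]
For the matching upper bound on $\E[g_\tau'(x)^\lambda\one_\LG]$, the tip estimate $h_\tau\asymp \eps g_\tau'(x)$ still holds and $q_\tau=g_\tau(x)-V_\tau\le h_\tau$ is automatic; when $\rho\in(-2,0]$ the exponent $d=\nu\rho/(2\kappa)\ge 0$ yields $M_\tau\lesssim g_\tau'(x)^\lambda\,\eps^{-u_3(\lambda)}$ on $\LG$, after which changing measure and bounding $\PP^{*}[\LG]\le 1$ closes the argument. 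For the remaining range of $\rho$, one would instead introduce a supermartingale $U_t=g_t'(x)^b(g_t(x)-W_t)^{c'}(g_t(x)-V_t)^{d'}$ with $b+c'+d'=\lambda$ and the remaining exponents tuned to make the It\^{o} drift non-positive, evaluated at the conformal-radius stopping time $\hat\tau_{4\eps}$ as in the proof of (\ref{eqn::sle_boundary_derivative1_upper}).

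The main obstacle will be the second Koebe estimate $g_\tau(x)-V_\tau\gtrsim \eps g_\tau'(x)$ on $\LF$: the upper bound $q_\tau\le h_\tau$ is immediate, but the matching lower bound must quantitatively rule out $V_\tau$ being pushed close to $g_\tau(x)$, a configuration that is excluded by the confinement $\eta[0,\tau]\subset B(0,Cx)$ and the separation from $[x-\eps,x+3\eps]$ only through a careful extremal-length argument analogous to Lemma \ref{lem::tip_distance_large}, comparing the extremal distance in $\HH\setminus\eta[0,\tau]$ to that in the simpler enlarged domain $\HH\setminus(B(0,Cx)\cap\overline{\HH})$.
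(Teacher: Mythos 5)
Your strategy is the same as the paper's: the same local martingale $M_t=g_t'(x)^{\nu(\nu+4-\kappa)/(4\kappa)}(g_t(x)-W_t)^{\nu/\kappa}(g_t(x)-V_t)^{\nu\rho/(2\kappa)}$ with $\nu=-\kappa u_1(\lambda)$ (which is exactly the paper's $\kappa/2-4-\rho-\sqrt{4\kappa\lambda+(\kappa/2-4-\rho)^2}$), the same change of measure to $\SLE_{\kappa}(\rho,\nu)$ with force points $(0^+,x)$, and Lemma \ref{lem::sle_positivechance} to get $\PP^*[\LF^*]\asymp 1$; your exponent bookkeeping ($\eps$-exponent $-u_3(\lambda)$, derivative exponent $\lambda$, $M_0=x^{-u_3(\lambda)}$) and the verification $\rho+\nu<\kappa/2-4$ are correct. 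The gap is in the upper bound for $\E[g_{\tau}'(x)^{\lambda}\one_{\LG}]$. To deduce $\E[g_{\tau}'(x)^{\lambda}\one_{\LG}]\lesssim \eps^{u_3(\lambda)}x^{-u_3(\lambda)}$ from $\E[M_{\tau}\one_{\LG}]=M_0\PP^*[\LG^*]\le M_0$ you need the pointwise bound $M_{\tau}\gtrsim \eps^{-u_3(\lambda)}g_{\tau}'(x)^{\lambda}$ on $\LG$; the inequality you actually derive from $g_{\tau}(x)-V_{\tau}\le g_{\tau}(x)-W_{\tau}$ and $\nu\rho/(2\kappa)\ge 0$, namely $M_{\tau}\lesssim \eps^{-u_3(\lambda)}g_{\tau}'(x)^{\lambda}$, points the wrong way and only reproduces the lower bound. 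What is missing is $g_{\tau}(x)-V_{\tau}\gtrsim \eps g_{\tau}'(x)$ on all of $\LG$ --- precisely the estimate you single out as the ``main obstacle'' and propose to extract from the confinement in $\LF$ via an extremal-length argument.

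That obstacle is illusory, and removing it is exactly the paper's one-line observation. Since the force point is $0^+$, $V_t$ coincides with $O_t$, the image under $g_t$ of the rightmost point of $K_t\cap\R$; at the hitting time one has $\dist(x,\eta[0,\tau])=\eps$, so the Koebe $1/4$ theorem applied to $g_{\tau}$ and its inverse (after reflection across the boundary interval containing $x$) gives $g_{\tau}(x)-V_{\tau}\asymp \eps g_{\tau}'(x)$ with universal constants on the whole event $\{\tau<T\}$ --- no confinement in $B(0,Cx)$, no separation from $[x-\eps,x+3\eps]$, and no analogue of Lemma \ref{lem::tip_distance_large} is required; the condition $\Im\eta(\tau)\ge c\eps$ is needed only for the upper bound $g_{\tau}(x)-W_{\tau}\lesssim \eps g_{\tau}'(x)$. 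With $g_{\tau}(x)-W_{\tau}\asymp g_{\tau}(x)-V_{\tau}\asymp \eps g_{\tau}'(x)$ on $\LG$ one gets $M_{\tau}\asymp \eps^{-u_3(\lambda)}g_{\tau}'(x)^{\lambda}$ in both directions, the sign of $\nu\rho/(2\kappa)$ becomes irrelevant, your deferred ``tuned supermartingale'' for $\rho>0$ is unnecessary, and the single martingale argument covers all $\rho>-2$, exactly as in the paper.
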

\begin{proof}
Set 
\[M_t=g_t'(x)^{\nu(\nu+4-\kappa)/(4\kappa)}(g_t(x)-W_t)^{\nu/\kappa}(g_t(x)-V_t)^{\nu\rho/(2\kappa)},\]
where 
\[\nu=\kappa/2-4-\rho-\sqrt{4\kappa\lambda+(\kappa/2-4-\rho)^2}.\]
Then $M$ is a local martingale and the law of $\eta$ weighted by $M$ becomes $\SLE_{\kappa}(\rho, \nu)$ with force points $(0^+, x)$. On the event $\LG$, we have 
\[g_{\tau}(x)-W_{\tau}\asymp g_{\tau}(x)-V_{\tau}\asymp \eps g_{\tau}'(x).\]
Combining with the choice of $\nu$, we have 
\[M_{\tau}\asymp g_{\tau}'(x)^{\lambda}\eps^{-u_3(\lambda)},\quad \text{on }\LG.\]
Therefore,
\[\E\left[g_{\tau}'(x)^{\lambda}\one_{\LG}\right]\asymp \eps^{u_3(\lambda)}M_0\PP^*[\LG^*]=\eps^{u_3(\lambda)}x^{-u_3(\lambda)}\PP^*[\LG^*],\quad \E\left[g_{\tau}'(x)^{\lambda}\one_{\LF}\right]\asymp\eps^{u_3(\lambda)}x^{-u_3(\lambda)}\PP^*[\LF^*],\]
where $\eta^*$ is an $\SLE_{\kappa}(\rho, \nu)$ with force points $(0^+, x)$, and $\PP^*$ denotes its law and $\LG^*, \LF^*$ are defined accordingly. By Lemma \ref{lem::sle_positivechance}, we have $\PP^*[\LF^*]\asymp 1$. This completes the proof. 
\end{proof}
\begin{remark}\label{rem::sle_boundary_gamma_first}
Taking $\lambda=0$ in Lemma \ref{lem::sle_boundary_derivative_gamma}, we have 
\[\PP[\LF]\asymp \eps^{u_3(0)}x^{-u_3(0)},\quad \text{where }u_3(0)=(\rho+2)(\rho+4-\kappa/2)/\kappa.\]
This implies that Proposition \ref{prop::sle_boundary_gamma} holds for $\LH^{\alpha}_1$. 
\end{remark}

\subsection{Proofs of Propositions \ref{prop::sle_boundary_alpha} and \ref{prop::sle_boundary_beta}}

\begin{lemma}\label{lem::sle_boundary_alpha_oddtoeven_upper}
For $j\ge 1$, assume (\ref{eqn::sle_boundary_alpha_odd_upper}) holds for $\LH^{\alpha}_{2j-1}$, then (\ref{eqn::sle_boundary_alpha_even_upper}) holds for $\LH^{\alpha}_{2j}$.
\end{lemma}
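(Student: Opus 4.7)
The plan is to apply the strong Markov property of $\eta$ at the stopping time $\sigma_1$---the first hit of $B(y,r)$---and then conformally map the remaining configuration to $\HH$ in order to reduce the remaining $2j-1$ crossings to an instance of $\LH^{\alpha}_{2j-1}$ to which the induction hypothesis applies. Let $g := g_{\sigma_1}$ and set $\tilde\eta(t) := g(\eta(\sigma_1+t)) - W_{\sigma_1}$; by the domain Markov property of $\SLE_{\kappa}(\rho)$, this is an $\SLE_{\kappa}(\rho)$ in $\HH$ with force point $\tilde v := g(v) - W_{\sigma_1}$. Write $\tilde x := g(x) - W_{\sigma_1}$ and $\tilde\eps := \eps\, g'(x)$. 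Since $\sigma_1$ is the \emph{first} hit of $B(y,r)$, the component of $\partial B(x,\eps)\cap\HH$ containing $x+\eps$ is unaffected by $\eta[0,\sigma_1]$, and by Lemma \ref{lem::extremallength_argument} its $g$-image lies in a ball of radius $\asymp \tilde\eps$ centred near $\tilde x$; similarly, a symmetric version of Lemma \ref{lem::extremallength_argument} localises the image of the component of $\partial B(y,r)\setminus\eta[0,\sigma_1]$ containing $y-r$ in some ball $B(\tilde y, \tilde r)$ with $\tilde y<0$. Furthermore, on $\{\sigma_1<T_x\}$ the force point $v$ is not swallowed (everything swallowed so far lies to the left of $0$), so Koebe distortion gives $0\le \tilde x - \tilde v \asymp (x-v)g'(x)\lesssim \tilde\eps$, as required by the hypothesis for the $\LH^{\alpha}_{2j-1}$ bound.

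Conditioning on $\mathcal{F}_{\sigma_1}$, the residual requirement of $\LH^{\alpha}_{2j}(\eps,x,y,r;v)$ reads precisely as the event $\LH^{\alpha}_{2j-1}(\tilde\eps,\tilde x,\tilde y,\tilde r;\tilde v)$ for $\tilde\eta$. Applying the induction hypothesis (\ref{eqn::sle_boundary_alpha_odd_upper}) to the scaled curve $\tilde\eta/\tilde r$ and absorbing the resulting $\tilde r$-dependence into the $r$-dependent constant in $\lesssim$ yields
\[
\PP\!\left[\LH^{\alpha}_{2j}(\eps,x,y,r;v)\right]
\;\lesssim\;\eps^{\alpha^+_{2j-1}}\,\E\!\left[\one_{\{\sigma_1<T_x\}}\,g'_{\sigma_1}(x)^{\alpha^+_{2j-1}}\,(g_{\sigma_1}(x)-W_{\sigma_1})^{\alpha^+_{2j-2}-\alpha^+_{2j-1}}\right].
\]
Now invoke Lemma \ref{lem::sle_boundary_derivative2} with $\lambda=\alpha^+_{2j-1}$ and $b=\alpha^+_{2j-2}-\alpha^+_{2j-1}$ (note that $b\le 0\le u_2(\lambda)$ by monotonicity of $\alpha^+_k$). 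A direct computation using the identity $4\kappa\,\alpha^+_{2j-1}+(\kappa/2-2-\rho)^2=(4j+\rho+2-\kappa/2)^2$ gives $u_2(\alpha^+_{2j-1})=4j/\kappa=\alpha^+_{2j}-\alpha^+_{2j-1}$, so the lemma produces a bound of the form $x^{\alpha^+_{2j}-\alpha^+_{2j-1}}(x-y-2r)^{\alpha^+_{2j-2}-\alpha^+_{2j}}$. Since $\alpha^+_{2j-2}-\alpha^+_{2j}<0$ for $j\ge 1$ and $x-y-2r\ge |y|-2r\gtrsim r\ge 1$, the tail factor is absorbed into the $r$-dependent constant, yielding (\ref{eqn::sle_boundary_alpha_even_upper}).

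I expect the main obstacle to be the careful geometric bookkeeping of the mapped outer ball $B(\tilde y,\tilde r)$ and the uniformity of the induction constant after conformal distortion. The fact that the hypothesis of this lemma asks for $|y|\ge (40)^{2j}r$ whereas the induction hypothesis only requires $|\tilde y|\ge (40)^{2j-1}\tilde r$ provides one extra factor of $40$ of slack, which is precisely what is needed to absorb the Koebe-type distortion of the outer ball by $g_{\sigma_1}$ and the $\tilde r$-power arising from scale-invariant rescaling of the induction hypothesis. Once these geometric estimates are pinned down, the rest is the algebraic identity above and a direct application of Lemma \ref{lem::sle_boundary_derivative2}.
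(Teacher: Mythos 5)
Your overall plan (strong Markov property near $B(y,r)$, map by $g-W$, invoke the induction hypothesis, then integrate with Lemma \ref{lem::sle_boundary_derivative2}; the algebra $u_2(\alpha^+_{2j-1})=\alpha^+_{2j}-\alpha^+_{2j-1}$ is correct) is the same as the paper's, but your choice of stopping time creates a genuine gap. You stop at $\sigma_1$, the first hit of $B(y,r)$ itself. At that time the tip lies on $\partial B(y,r)$, so the distortion of $f=g_{\sigma_1}-W_{\sigma_1}$ near that ball is unbounded: the component of $\partial B(y,r)\setminus\eta[0,\sigma_1]$ containing $y-r$ can run up to the tip (e.g.\ if $\eta$ first touches near the top of the semicircle), and then its image comes arbitrarily close to $0=f(\eta(\sigma_1))$. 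Consequently, for the image ball $B(\tilde y,\tilde r)$ you can guarantee neither the standing assumption $\tilde y\le -4\tilde r$ nor the separation $|\tilde y|\ge (40)^{2j-1}\tilde r$ required to apply (\ref{eqn::sle_boundary_alpha_odd_upper}); your claim that the ``one extra factor of $40$ of slack'' absorbs the Koebe-type distortion of the outer ball is false, because that distortion is not bounded by $40$ or by any constant. A ``symmetric Lemma \ref{lem::extremallength_argument}'' only localises the image inside \emph{some} ball; it gives no lower bound on $|\tilde y|/\tilde r$.

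The paper avoids this by stopping at the first hit of the \emph{enlarged} ball $B(y,16(40)^{2j-1}r)$: then $\dist(\eta[0,\sigma],y)\ge 16(40)^{2j-1}r$, so Lemma \ref{lem::image_insideball} puts $f(B(y,r))$ inside $B(f(y),4rf'(y))$, and a Koebe $1/4$ argument (the tip stays outside $B(y,16(40)^{2j-1}r)$, so its image $0$ lies outside $B(f(y),4(40)^{2j-1}rf'(y))$) gives exactly $|f(y)|\ge (40)^{2j-1}\cdot 4rf'(y)$; this is where the hypothesis $|y|\ge(40)^{2j}r$, as opposed to the inductive $(40)^{2j-1}r$, is actually spent. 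On the $x$-side the situation is the opposite of what you wrote: before hitting the outer ball the curve may well have entered $B(x,\eps)$ and may have swallowed $v$ (nothing in $\LH^{\alpha}_{2j}$ forbids this), so ``the component containing $x+\eps$ is unaffected'' and ``$v$ is not swallowed'' are unjustified; this is precisely why the paper controls that arc with Lemma \ref{lem::extremallength_argument} at $x+3\eps$, giving the ball $B(f(x+3\eps),8\eps f'(x+3\eps))$ and the one-sided bound $f(x+3\eps)-f(v)\le (x+3\eps-v)f'(x+3\eps)\lesssim \eps f'(x+3\eps)$ (valid whether or not $v$ is swallowed), rather than Koebe distortion at $x$. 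With the enlarged-ball stopping time, your final step (Lemma \ref{lem::sle_boundary_derivative2} with $\lambda=\alpha^+_{2j-1}$, $b=\alpha^+_{2j-2}-\alpha^+_{2j-1}$, applied at $x+3\eps$ using $T_{x+3\eps}\ge T_x$) goes through essentially as you computed.
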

\begin{proof}
Let $\sigma$ be the first time that $\eta$ hits the ball $B(y, 16(40)^{2j-1}r)$. Denote $g_{\sigma}-W_{\sigma}$ by $f$. Let $\tilde{\eta}$ be the image of $\eta[\sigma, \infty)$ under $f$. We know that $\tilde{\eta}$ is an $\SLE_{\kappa}(\rho)$ with force point $f(v)$. Define $\tilde{\LH}^{\alpha}_{2j-1}$ for $\tilde{\eta}$. We have the following observations.
\begin{itemize}
\item Consider the image of $B(y,r)$ under $f$. By Lemma \ref{lem::image_insideball}, we know that $f(B(y,r))$ is contained in the ball with center $f(y)$ and radius $4rf'(y)$. By Koebe 1/4 theorem, we have 
\[|f(y)|\ge 4(40)^{2j-1}rf'(y).\]
\item Consider the image of the connected component of $\partial B(x,\eps)\setminus \eta[0,\sigma]$ containing $x+\eps$ under $f$. By Lemma \ref{lem::extremallength_argument}, we know that it is contained in the ball with center $f(x+3\eps)$ and radius $8\eps f'(x+3\eps)$. Moreover, we have
\[f(x+3\eps)-f(v)\le (x+3\eps-v)f'(x+3\eps)\lesssim \eps f'(x+3\eps). \]
\end{itemize}
Combining these two facts with (\ref{eqn::sle_boundary_alpha_odd_upper}), we have 
\begin{align*}
\PP\left[\LH^{\alpha}_{2j}(\eps, x, y, r; v)\cond \eta[0,\sigma]\right]&\le \PP\left[\tilde{\LH}^{\alpha}_{2j-1}(8\eps f'(x+3\eps), f(x+3\eps), f(y), 4rf'(y); f(v))\right]\\
&\lesssim \left(g_{\sigma}(x+3\eps)-W_{\sigma}\right)^{\alpha^+_{2j-2}-\alpha^+_{2j-1}}\left(\eps g_{\sigma}'(x+3\eps)\right)^{\alpha^+_{2j-1}}.
\end{align*}
By Lemma \ref{lem::sle_boundary_derivative2} and the fact that the swallowing time of $x+3\eps$ is greater than $T_x$, we have 
\begin{align*}
\PP\left[\LH^{\alpha}_{2j}(\eps, x, y, r; v)\right]&\lesssim \E\left[\left(g_{\sigma}(x+3\eps)-W_{\sigma}\right)^{\alpha^+_{2j-2}-\alpha^+_{2j-1}}\left(\eps g_{\sigma}'(x+3\eps)\right)^{\alpha^+_{2j-1}}\one_{\{\sigma<T_x\}}\right]\\
&\lesssim \eps ^{\alpha_{2j-1}^+}(x+3\eps)^{u_2(\alpha^+_{2j-1})}(x-y-32(40)^{2j-1}r)^{\alpha_{2j-2}^+-\alpha_{2j}^+}\\
&\lesssim  x^{\alpha_{2j}^+-\alpha^+_{2j-1}}\eps^{\alpha^+_{2j-1}}. 
\end{align*}
The last line is because $x\ge \eps$ and $|y|\ge (40)^{2j}r$. 
\end{proof}
\begin{lemma}\label{lem::sle_boundary_alpha_eventoodd_upper}
For $j\ge 1$, assume (\ref{eqn::sle_boundary_alpha_even_upper}) holds for $\LH^{\alpha}_{2j}$, then (\ref{eqn::sle_boundary_alpha_odd_upper}) holds for $\LH^{\alpha}_{2j+1}$. 
\end{lemma}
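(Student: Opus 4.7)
The plan is to mirror the proof of Lemma \ref{lem::sle_boundary_alpha_oddtoeven_upper}, but stopping at an enlarged ball around $x$ rather than around $y$. Let $\tau$ be the first time $\eta$ hits $B(x, R\eps)$, with $R = R(j)$ chosen large enough (e.g.\ $R = 16(40)^{2j}$) so that the Koebe-type estimates below apply. Since $\LH^{\alpha}_{2j+1}$ forces $\tau_1 < T_x$ and $\tau \le \tau_1$, one has $\tau < T_x$ on this event. Let $f = g_\tau - W_\tau$ and write $\tilde\eta$ for the image of $\eta[\tau,\infty)$ under $f$; by the domain Markov property, $\tilde\eta$ is an $\SLE_\kappa(\rho)$ with force point $f(v)$.

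By Lemma \ref{lem::extremallength_argument}, the image under $f$ of the component of $\partial B(x,\eps) \setminus \eta[0,\tau]$ containing $x+\eps$ lies in the ball of center $f(x+3\eps)$ and radius $8\eps f'(x+3\eps)$, and by Lemma \ref{lem::image_insideball}, $f(B(y,r)) \subset B(f(y), 4rf'(y))$. Koebe distortion together with $x-v \lesssim \eps$ yields $0 \le f(x+3\eps) - f(v) \lesssim \eps f'(x+3\eps)$, and $|y| \ge (40)^{2j+1} r$ yields $|f(y)| \ge 4(40)^{2j} r f'(y)$. The induction hypothesis (\ref{eqn::sle_boundary_alpha_even_upper}) applied to $\tilde\eta$ then gives
\[
\PP\!\left[\LH^{\alpha}_{2j+1}(\eps,x,y,r;v) \cond \eta[0,\tau]\right] \lesssim (g_\tau(x+3\eps) - W_\tau)^{\alpha^+_{2j} - \alpha^+_{2j-1}} \eps^{\alpha^+_{2j-1}} g_\tau'(x+3\eps)^{\alpha^+_{2j-1}}.
\]
Taking expectation, it remains to bound
\[
\E\!\left[g_\tau'(x+3\eps)^{\alpha^+_{2j-1}} (g_\tau(x+3\eps) - W_\tau)^{\alpha^+_{2j} - \alpha^+_{2j-1}} \one_{\{\tau<T_x\}}\right] \lesssim x^{\alpha^+_{2j} - \alpha^+_{2j+1}} \eps^{\alpha^+_{2j+1} - \alpha^+_{2j-1}}.
\]

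This last display is the heart of the argument and plays the role that Lemma \ref{lem::sle_boundary_derivative2} played in the proof of Lemma \ref{lem::sle_boundary_alpha_oddtoeven_upper}. Setting $\lambda = \alpha^+_{2j}$ and $b = \alpha^+_{2j-1}$, one verifies directly that condition (\ref{eqn::sle_boundary_derivative1_assumption}) holds with equality and that $u_1(\alpha^+_{2j}) = \alpha^+_{2j+1} - \alpha^+_{2j}$, using the factorization $4\kappa\alpha^+_{2j} + (\rho+4-\kappa/2)^2 = (4j + \rho + 4 - \kappa/2)^2$. The main obstacle is that Lemma \ref{lem::sle_boundary_derivative1} (\ref{eqn::sle_boundary_derivative1_upper}) is stated only for $v = x$, whereas here $0 \le x - v \lesssim \eps$. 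I would handle this by redoing the martingale argument of that lemma with the two-force-point local martingale
\[
M_t = g_t'(x+3\eps)^{\nu(\nu+4-\kappa)/(4\kappa)} (g_t(x+3\eps)-W_t)^{\nu/\kappa} (g_t(x+3\eps)-g_t(v))^{\rho\nu/(2\kappa)},
\]
with $\nu$ chosen as in Lemma \ref{lem::sle_boundary_derivative1}; by Lemma \ref{lem::sle_mart} this reweights the law to $\SLE_\kappa(\rho, \nu)$ with force points $(v, x+3\eps)$, and the cross term is harmless since $g_t(x+3\eps) - g_t(v) \le (x+3\eps - v) g_t'(x+3\eps) \lesssim \eps g_t'(x+3\eps)$ contributes only an $O(1)$ factor.
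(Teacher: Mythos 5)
Your overall skeleton (stop on a ball of radius $O(\eps)$ around $x$, map by $f=g_\tau-W_\tau$, apply the induction hypothesis (\ref{eqn::sle_boundary_alpha_even_upper}) to the image, then estimate an expectation of $g_\tau'(\cdot)^{\alpha^+_{2j-1}}(g_\tau(\cdot)-W_\tau)^{\alpha^+_{2j}-\alpha^+_{2j-1}}$) is the paper's, and your exponent checks ((\ref{eqn::sle_boundary_derivative1_assumption}) with equality, $u_1(\alpha^+_{2j})=\alpha^+_{2j+1}-\alpha^+_{2j}$) are correct. But there are two genuine gaps. First, the far ball: at your stopping time $\tau$ nothing prevents $\eta[0,\tau]$ from having already approached or even entered $B(y,r)$, since the event $\LH^{\alpha}_{2j+1}$ only constrains the curve from $\tau_1$ onward. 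Hence the hypothesis $\dist(\eta[0,\tau],y)\ge 16r$ of Lemma \ref{lem::image_insideball} can fail, and both the containment $f(B(y,r))\subset B(f(y),4rf'(y))$ and the Koebe bound $|f(y)|\ge 4(40)^{2j}rf'(y)$ are unjustified. The paper instead controls the connected component of $\partial B(y,r)\setminus\eta[0,\tau]$ containing $y-r$ (which is the set the event actually references) via Lemma \ref{lem::extremallength_argument} applied at $y-3r$, and lower-bounds $|f(y-3r)|$ via Lemma \ref{lem::tip_distance_large}; that application needs $x$ to exceed a fixed multiple of the stopping radius, which is exactly why the paper first disposes of the case $x\le 64\eps$ by $\LH^{\alpha}_{2j+1}\subset\LH^{\alpha}_{2j}$ — a case split you omit (with your radius $16(40)^{2j}\eps$ you would need $x\gtrsim (40)^{2j}\eps$).

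Second, and more seriously, the final expectation for $v<x$. The substance of (\ref{eqn::sle_boundary_derivative1_upper}) is not the algebra of the local martingale but (a) the passage from the hitting time to the conformal-radius time via the supermartingale property coming from (\ref{eqn::sle_boundary_derivative1_assumption}), and (b) the negative-moment bound $\E^*[J^{-\beta}]\lesssim 1$ of Lemma \ref{lem::sle_boundary_integrable}, which is proved only for a single force point, using that $\hat J$ is an autonomous one-dimensional diffusion with explicit invariant density started from $\hat J_0=1$. Your reweighting makes $\PP^*$ an $\SLE_{\kappa}(\rho,\nu)$ with force points $(v,x+3\eps)$, for which $J$ is no longer autonomous, and your sketch supplies no analogue of Lemma \ref{lem::sle_boundary_integrable} in that setting — this is precisely the extra-variable difficulty the paper flags. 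Moreover the cross term is not an $O(1)$ factor: on $[0,\tau]$ it is $\asymp(\eps\, g_t'(x+3\eps))^{\rho\nu/(2\kappa)}$, so it shifts the exponent bookkeeping (one is forced to take $\nu=-\kappa u_1(\lambda)$, not the $\nu$ used in the proof of (\ref{eqn::sle_boundary_derivative1_upper})), and in any case bounding the ratio by constants does not dispose of the $J$-moment issue. The paper avoids all of this with a one-line reduction you should adopt: since $\eps\le v\le x$ and $x-v\lesssim\eps$, one has $B(x,\eps)\subset B(v,\tilde C\eps)$ and $v\asymp x$, hence $\PP[\LH^{\alpha}_{2j+1}(\eps,x,y,r;v)]\le\PP[\LH^{\alpha}_{2j+1}(\tilde C\eps,v,y,r;v)]$, and the $v=x$ case of (\ref{eqn::sle_boundary_derivative1_upper}) applies directly, giving $v^{\alpha^+_{2j}-\alpha^+_{2j+1}}\eps^{\alpha^+_{2j+1}}\lesssim x^{\alpha^+_{2j}-\alpha^+_{2j+1}}\eps^{\alpha^+_{2j+1}}$.
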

\begin{proof}
If $x\le 64\eps$, then 
\[\PP\left[\LH^{\alpha}_{2j+1}(\eps, x, y, r; v)\right]\le \PP\left[\LH^{\alpha}_{2j}(\eps, x, y, r; v)\right].\]
This gives the conclusion. In the following, 
we may assume $x>64\eps$. Let $\tau$ be the first time that $\eta$ hits $B(x,16\eps)$. 
Denote $g_{\tau}-W_{\tau}$ by $f$. Let $\tilde{\eta}$ be the image of $\eta[\tau,\infty)$ under $f$. We know that $\tilde{\eta}$ is an $\SLE_{\kappa}(\rho)$ with force point $f(v)$. Define $\tilde{\LH}^{\alpha}_{2j}$ for $\tilde{\eta}$. We have the following observations.
\begin{itemize}
\item Consider the image of the connected component of $\partial B(y,r)\setminus \eta[0,\tau]$ containing $y-r$ under $f$. By Lemma \ref{lem::extremallength_argument}, we know that it is contained in the ball with center $f(y-3r)$ and radius $8rf'(y-3r)$. 
By Lemma \ref{lem::tip_distance_large}, we have 
\[|f(y-3r)|\ge (x-y+3r-32\eps)/2\ge (40)^{2j} 8r.\]
\item Consider the image of $B(x,\eps)$ under $f$. By Lemma \ref{lem::image_insideball}, we know that $B(x,\eps)$ is contained in the ball with center $f(x)$ and radius $4\eps f'(x)$. Moreover, 
\[f(x)-f(v)\le (x-v)f'(x)\lesssim \eps f'(x).\]
\end{itemize}
Combining these two facts with (\ref{eqn::sle_boundary_alpha_even_upper}), we have 
\begin{align*}
\PP\left[\LH^{\alpha}_{2j+1}(\eps, x, y, r; v)\cond \eta[0,\tau]\right]&\le \PP\left[\tilde{\LH}^{\alpha}_{2j}(4\eps f'(x), f(x), f(y-3r), 8rf'(y-3r); f(v))\right]\\
&\lesssim (g_{\tau}(x)-W_{\tau})^{\alpha_{2j}^+-\alpha^+_{2j-1}}\left(\eps g_{\tau}'(x)\right)^{\alpha_{2j-1}^+}.
\end{align*}
If $x=v$, by Lemma \ref{lem::sle_boundary_derivative1}, since $\alpha_{2j-1}^+$ and $\alpha_{2j}^+$ satisfy (\ref{eqn::sle_boundary_derivative1_assumption}):
\[\kappa\left(\alpha_{2j}^+-\alpha_{2j-1}^+\right)\left(2\rho+4-\kappa+\kappa\left(\alpha_{2j}^+-\alpha_{2j-1}^+\right)\right)=4j(2\rho+4-\kappa+4j)=4\kappa\alpha_{2j-1}^+,\]
we have 
\begin{align*}
\PP\left[\LH^{\alpha}_{2j+1}(\eps, v, y, r; v)\right]&\lesssim \E\left[(g_{\tau}(v)-W_{\tau})^{\alpha_{2j}^+-\alpha^+_{2j-1}}\left(\eps g_{\tau}'(v)\right)^{\alpha_{2j-1}^+}\one_{\{\hat{\tau}<T_v\}}\right]\\
&\lesssim v^{-u_1(\alpha^+_{2j})}\eps^{\alpha_{2j+1}^+}=v^{\alpha^+_{2j}-\alpha^+_{2j+1}}\eps^{\alpha^+_{2j+1}}.
\end{align*}
For $0\le x-v\lesssim \eps$, we know that $B(x,\eps)$ is contained in $B(v, \tilde{C}\eps)$ for some constant $\tilde{C}$, thus 
\[\PP\left[\LH^{\alpha}_{2j+1}(\eps, x, y, r; v)\right]\le \PP\left[\LH^{\alpha}_{2j+1}(\tilde{C}\eps, v, y, r; v)\right]\lesssim v^{\alpha^+_{2j}-\alpha^+_{2j+1}}\eps^{\alpha^+_{2j+1}}\lesssim x^{\alpha^+_{2j}-\alpha^+_{2j+1}}\eps^{\alpha^+_{2j+1}}.\]
This gives the conclusion. 
\end{proof}

\begin{lemma}\label{lem::sle_boundary_alpha_oddtoeven_lower}
For $j\ge 1$, assume (\ref{eqn::sle_boundary_alpha_odd_lower}) holds for $\LH^{\alpha}_{2j-1}$, then (\ref{eqn::sle_boundary_alpha_even_lower}) holds for $\LH^{\alpha}_{2j}$. 
\end{lemma}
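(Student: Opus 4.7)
The approach mirrors Lemma~\ref{lem::sle_boundary_alpha_oddtoeven_upper}, but reversed for a lower bound: rather than upper-bounding the first crossing by enlarging the target, I \emph{force} with uniformly positive probability a geometrically nice first hit on $B(y,r)$, and then apply the inductive hypothesis~(\ref{eqn::sle_boundary_alpha_odd_lower}) to the conformal image of the continuation to capture the remaining $2j-1$ crossings.

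Let $\sigma$ be the first hitting time of $B(y,r)$ by $\eta$. Choose constants $c, C>0$ (depending only on $\kappa, \rho$) and define the good event
\[
\LG = \bigl\{\sigma<T_x,\ \dist(\eta[0,\sigma], x)\ge cx,\ \eta[0,\sigma]\subset B(0, C|y|),\ \dist(\eta[0,\sigma], [Cy, y])\ge cr\bigr\}.
\]
An analogue of Lemma~\ref{lem::sle_positivechance2}, adapted to $\SLE_\kappa(\rho)$ with the single right force point $v$ (reweighting via Lemma~\ref{lem::sle_mart} if necessary to reduce to the hypotheses of that lemma), yields $\PP[\LG]\ge v_0$ for a constant $v_0>0$ uniform in the parameters. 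On $\LG$, set $f := g_\sigma - W_\sigma$ and $\tilde\eta := f(\eta[\sigma, \infty))$, which by the Markov property is an $\SLE_\kappa(\rho)$ from $0$ with force point $\tilde v := f(v)$. Combining Lemma~\ref{lem::tip_distance_large} (lower bound $g_\sigma(x)-W_\sigma\gtrsim r$) with monotonicity of $g_\sigma$ on the real line (to bound $g_\sigma(x)\le g_\sigma(Cr)\lesssim r$ for $x\le Cr$) and Koebe's distortion theorem for the derivatives, I obtain, in the regime $x\lesssim r$,
\[
\tilde x := f(x)\asymp r,\qquad f'(x)\asymp \tilde x/x\asymp r/x,\qquad 0\le \tilde x - \tilde v\lesssim \eps\, f'(x),
\]
and may choose $\tilde\eps\asymp \eps r/x$ with $B(\tilde x,\tilde\eps)\subset f(B(x,\eps))$. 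Since $\dist(\eta[0,\sigma], [Cy, y])\ge cr$, the relevant component of $\partial B(y,r)\setminus\eta[0,\sigma]$ containing $y-r$ lies at distance $\asymp r$ from the hull, and distortion estimates give $\tilde y, \tilde r$ with $\tilde r\asymp|\tilde y|\asymp r$ such that any hit of $\tilde\eta$ on the component of $\partial B(\tilde y, \tilde r)$ containing $\tilde y-\tilde r$ pulls back to a hit of $\eta$ on the desired component of $\partial B(y, r)\setminus\eta[0,\cdot]$.

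With these choices the event $\tilde\LH^{\alpha}_{2j-1}(\tilde\eps, \tilde x, \tilde y, \tilde r; \tilde v)$ for $\tilde\eta$ implies $\LH^{\alpha}_{2j}(\eps, x, y, r; v)$ for $\eta$ on $\LG$. Because the rescaled parameters satisfy $\tilde x\asymp\tilde r$, $\tilde r\le|\tilde y|\lesssim\tilde r$ and $0\le\tilde x-\tilde v\lesssim\tilde\eps$, the inductive hypothesis (\ref{eqn::sle_boundary_alpha_odd_lower}) applied to $\tilde\eta$ yields
\[
\PP\bigl[\tilde\LH^{\alpha}_{2j-1}\bigm\vert \eta[0,\sigma]\bigr]\gtrsim \tilde x^{\alpha^+_{2j-2}-\alpha^+_{2j-1}}\tilde\eps^{\alpha^+_{2j-1}}\asymp r^{\alpha^+_{2j-2}}(\eps/x)^{\alpha^+_{2j-1}}.
\]
Multiplying by $\PP[\LG]\gtrsim 1$, absorbing $r$-powers into the $\gtrsim$-constants (which are permitted to depend on $r$), and using $x^{\alpha^+_{2j}}\lesssim 1$ (from $x\lesssim r\asymp 1$) to convert $(\eps/x)^{\alpha^+_{2j-1}}$ into $x^{\alpha^+_{2j}-\alpha^+_{2j-1}}\eps^{\alpha^+_{2j-1}}$, one obtains (\ref{eqn::sle_boundary_alpha_even_lower}). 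The main technical obstacle is to produce $\LG$ with uniformly positive probability for $\SLE_\kappa(\rho)$ in the full range $\rho\in(-2, 0]$ (in particular in the subcase $\rho\le \kappa/2-2$ not directly covered by Lemma~\ref{lem::sle_positivechance2}) and to pin down $\tilde y, \tilde r$ so that $\tilde\LH^{\alpha}_{2j-1}$ genuinely encodes the remaining arm crossings; the distortion estimates and the exponent arithmetic are routine once $\LG$ is in hand.
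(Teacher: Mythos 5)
Your overall skeleton (stop at the first hitting time $\sigma$ of $B(y,r)$, restrict to a good event, map by $f=g_\sigma-W_\sigma$, apply the inductive hypothesis to the image curve) is the same as the paper's, but the way you quantify the conformal map at $x$ contains a genuine error that kills the argument. You claim that on the good event $f(x)\asymp r$ \emph{and} $f'(x)\asymp f(x)/x\asymp r/x$, and you then treat both as deterministic. The second claim is false: Koebe distortion does not give $f'(x)\asymp f(x)/x$, and in fact $g_\sigma'(x)$ is typically \emph{small}, not large (e.g.\ for a hull resembling a vertical slit of height $\asymp r$ near the origin one has $g_\sigma(x)-W_\sigma\asymp r$ but $g_\sigma'(x)\asymp x/r$). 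A quick sanity check shows your intermediate bound cannot hold: it asserts $\PP[\LH^{\alpha}_{2j}]\gtrsim (\eps/x)^{\alpha^+_{2j-1}}$ up to $r$-dependent constants, which in the relevant regime $x\asymp v\asymp\eps$ would give a probability bounded below by a constant, contradicting the matching upper bound (\ref{eqn::sle_boundary_alpha_even_upper}), which gives $\eps^{\alpha^+_{2j}}$ there. (Your last step, converting $(\eps/x)^{\alpha^+_{2j-1}}$ into $x^{\alpha^+_{2j}-\alpha^+_{2j-1}}\eps^{\alpha^+_{2j-1}}$ via $x\lesssim 1$, only "works" because the bound you start from is falsely strong.)

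The missing ingredient is precisely the one-point moment estimate at time $\sigma$: the paper bounds the conditional probability, given $\eta[0,\sigma]$ and on the event $\LF$, by the \emph{random} quantity $(g_{\sigma}(x)-W_{\sigma})^{\alpha^+_{2j-2}-\alpha^+_{2j-1}}(\eps g_{\sigma}'(x))^{\alpha^+_{2j-1}}$ (using the inductive hypothesis with $\tilde\eps=\eps f'(x)/4$, $\tilde x=f(x)$, $\tilde y=f(y)$, $\tilde r=rf'(y)/4$ kept random), and then takes expectation using the lower bound (\ref{eqn::sle_boundary_derivative2_lower}) of Lemma \ref{lem::sle_boundary_derivative2}. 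That expectation — proved by weighting to an $\SLE_{\kappa}(\rho,\nu)$ with $\nu=\kappa u_2(\lambda)$ and invoking Lemma \ref{lem::sle_positivechance2} under the \emph{weighted} law — is exactly where the factor $x^{u_2(\alpha^+_{2j-1})}=x^{\alpha^+_{2j}-\alpha^+_{2j-1}}$ comes from. Your plan replaces this expectation by a deterministic distortion estimate plus $\PP[\LG]\gtrsim 1$ under the original law, so even after fixing the incorrect claim $f'(x)\asymp r/x$ you would have no control on the power of $x$; and the issue you flag as the main obstacle (positive probability of $\LG$ for $\SLE_\kappa(\rho)$) is not where the real difficulty lies.
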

\begin{proof}
Let $\sigma$ be the first time that $\eta$ hits $B(y,r)$. Define 
\[\LF=\{\sigma<T_x, \dist(\eta[0,\sigma], x)\ge cx, \eta[0,\sigma]\subset B(0,C|y|), \dist(\eta[0,\sigma], [Cy, y])\ge cr\},\]
where $c, C$ are the constants decided in Lemma \ref{lem::sle_positivechance}. Denote $g_{\sigma}-W_{\sigma}$ by $f$. Let $\tilde{\eta}$ be the image of $\eta[\sigma, \infty)$ under $f$, then $\tilde{\eta}$ is an $\SLE_{\kappa}(\rho)$ with force point $f(v)$. 
Given $\eta[0,\sigma]$ and on the event $\LF$, we have the following observations.
\begin{itemize}
\item Consider the image of $B(y,r)$ under $f$. By Koebe 1/4 theorem, it contains the ball with center $f(y)$ and radius $rf'(y)/4$. On the event $\{\dist(\eta[0,\sigma], [Cy, y])\ge cr\}$, we have 
\[rf'(y)/4\le |f(y)|\lesssim rf'(y).\]
\item Consider the image of $B(x,\eps)$ under $f$. On the event $\{\dist(\eta[0,\sigma], x)\ge cx\}$, by Koebe 1/4 theorem, it contains the ball with the center $f(x)$ and radius $c\eps f'(x)/4$. Since $x-v\lesssim \eps$, we have
\[f(x)-f(v)\le (x-v)f'(x)\lesssim \eps f'(x).\]
\item Compare $f(x)$ and $|f(y)|\asymp rf'(y)$. On the event $\{\eta[0,\sigma]\subset B(0, C|y|)\}$, we have $f(x)\lesssim |y|$. On the event $\{\dist(\eta[0,\sigma], [Cy, y])\ge cr\}$, we have $|f(y)|\gtrsim |y|$. Thus, on $\LF$, we have 
\[f(x)\lesssim |y|\lesssim |f(y)|\asymp rf'(y).\]
\end{itemize}
Combining these three facts with (\ref{eqn::sle_boundary_alpha_odd_lower}), we have 
\begin{align*}
\PP\left[\LH^{\alpha}_{2j}(\eps, x, y, r; v)\cond \eta[0,\sigma], \LF\right]&\ge\PP\left[\tilde{\LH}^{\alpha}_{2j-1}(\eps f'(x)/4, f(x), f(y), r f'(y)/4; f(v))\right]\\
&\gtrsim (g_{\sigma}(x)-W_{\sigma})^{\alpha^+_{2j-2}-\alpha^+_{2j-1}}\left(\eps g_{\sigma}'(x)\right)^{\alpha^+_{2j-1}}. 
\end{align*}
By Lemma \ref{lem::sle_boundary_derivative2}, we have 
\begin{align*}
\PP\left[\LH^{\alpha}_{2j}(\eps, x, y, r; v)\cap\LF\right]&\gtrsim \E\left[(g_{\sigma}(x)-W_{\sigma})^{\alpha^+_{2j-2}-\alpha^+_{2j-1}}\left(\eps g_{\sigma}'(x)\right)^{\alpha^+_{2j-1}}\one_{\LF}\right]\\
&\gtrsim x^{u_2(\alpha^+_{2j-1})}\eps^{\alpha_{2j-1}^+}=x^{\alpha^+_{2j}-\alpha^+_{2j-1}}\eps^{\alpha_{2j-1}^+}.
\end{align*}
This gives the conclusion. 
\end{proof}

\begin{lemma}\label{lem::sle_boundary_alpha_eventoodd_lower}
For $j\ge 1$, assume (\ref{eqn::sle_boundary_alpha_even_lower}) holds for $\LH^{\alpha}_{2j}$, then (\ref{eqn::sle_boundary_alpha_odd_lower}) holds for $\LH^{\alpha}_{2j+1}$. 
\end{lemma}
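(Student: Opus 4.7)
The plan is to mirror the strategy of Lemma \ref{lem::sle_boundary_alpha_oddtoeven_lower} but stop the curve at its first entry into $B(x,\eps)$ rather than into $B(y,r)$. Let $\tau$ be the first time $\eta$ hits $B(x,\eps)$, set $f=g_\tau-W_\tau$, and let $\tilde\eta$ be the image of $\eta[\tau,\infty)$ under $f$, which is an $\SLE_\kappa(\rho)$ with force point $f(v)$. Impose the good event
\[\LF=\{\tau<T_x,\ \Im\eta(\tau)\ge c\eps,\ \eta[0,\tau]\subset B(0,Cx),\ \dist(\eta[0,\tau],[-Cx,y+r])\ge cr\},\]
with $c,C$ the constants fixed in Lemma \ref{lem::sle_boundary_derivative1}, so that both the derivative estimate (\ref{eqn::sle_boundary_derivative1_lower}) and a positive-probability bound from (the proof of) Lemma \ref{lem::sle_positivechance} are available.

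On $\LF$, Koebe distortion and the quarter theorem supply three facts needed to recognize an image configuration of the right form: (i) $f(x)\asymp \eps g_\tau'(x)$ and $f(B(x,\eps))\supset B(f(x),c\eps g_\tau'(x)/4)$; (ii) since $0\le x-v\lesssim \eps$, one has $0\le f(x)-f(v)\lesssim \eps g_\tau'(x)$; (iii) because the hull is contained in $B(0,Cx)$ and is separated from $[-Cx,y+r]$ by at least $cr$, one gets $|f(y)|\asymp r$, $g_\tau'(y)\asymp 1$, and $f(B(y,r))\supset B(f(y),rg_\tau'(y)/4)$. Thus the image data $(f(x),f(y),c\eps g_\tau'(x)/4,rg_\tau'(y)/4;f(v))$ fall into the regime $\tilde x\asymp \tilde\eps$, $\tilde r\le |\tilde y|\lesssim \tilde r$, $0\le \tilde x-\tilde v\lesssim \tilde\eps$, exactly where the induction hypothesis (\ref{eqn::sle_boundary_alpha_even_lower}) applies.

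Applying the domain Markov property at $\tau$ and the induction,
\[\PP[\LH^{\alpha}_{2j+1}(\eps,x,y,r;v)\mid \eta[0,\tau]]\,\one_\LF\gtrsim f(x)^{\alpha^+_{2j}-\alpha^+_{2j-1}}(\eps g_\tau'(x))^{\alpha^+_{2j-1}}\asymp (\eps g_\tau'(x))^{\alpha^+_{2j}},\]
using $f(x)\asymp \eps g_\tau'(x)$. Taking expectations and invoking (\ref{eqn::sle_boundary_derivative1_lower}) with $\lambda=\alpha^+_{2j}$ gives
\[\PP[\LH^{\alpha}_{2j+1}(\eps,x,y,r;v)]\gtrsim \eps^{\alpha^+_{2j}}\,\E\bigl[g_\tau'(x)^{\alpha^+_{2j}}\one_\LF\bigr]\asymp \eps^{\alpha^+_{2j}+u_1(\alpha^+_{2j})}\,x^{-u_1(\alpha^+_{2j})}.\]
One concludes by the algebraic identity $u_1(\alpha^+_{2j})=\alpha^+_{2j+1}-\alpha^+_{2j}$: writing $a=\rho+4-\kappa/2$, a direct calculation shows $4\kappa\alpha^+_{2j}+a^2=(4j+a)^2$, hence $u_1(\alpha^+_{2j})=2(2j+a)/\kappa$, which equals $\alpha^+_{2j+1}-\alpha^+_{2j}$. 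Substituting yields the desired lower bound $x^{\alpha^+_{2j}-\alpha^+_{2j+1}}\eps^{\alpha^+_{2j+1}}$.

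The main technical point is the Koebe bookkeeping required to verify that under $f$ the image centers, the shifted force point, and the radii simultaneously satisfy the hypotheses of (\ref{eqn::sle_boundary_alpha_even_lower}); the event $\LF$ is engineered precisely so that all these distortion estimates and the derivative lower bound from Lemma \ref{lem::sle_boundary_derivative1} are available at once.
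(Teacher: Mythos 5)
Your proposal is correct and follows essentially the same route as the paper: stop at the first hitting time of $B(x,\eps)$ on the good event $\LF$ from Lemma \ref{lem::sle_boundary_derivative1}, use the Koebe estimates to see that the image configuration satisfies the hypotheses of (\ref{eqn::sle_boundary_alpha_even_lower}) with $f(x)\asymp \eps g_\tau'(x)$, and then apply (\ref{eqn::sle_boundary_derivative1_lower}) with $\lambda=\alpha^+_{2j}$ together with the identity $\alpha^+_{2j+1}=\alpha^+_{2j}+u_1(\alpha^+_{2j})$. The only difference is that you spell out the algebraic identity explicitly, which the paper records separately in the proof of Proposition \ref{prop::sle_boundary_alpha}.
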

\begin{proof}
Let $\tau$ be the first time that $\eta$ hits $B(x,\eps)$. Define 
\[\LF=\{\tau<T_x, \Im{\eta(\tau)}\ge c\eps, \eta[0,\tau]\subset B(0, Cx), \dist(\eta[0,\tau], [-Cx, y+r])\ge cr\},\]
where $c, C$ are constants decided in Lemma \ref{lem::sle_boundary_derivative1}.  
Denote $g_{\tau}-W_{\tau}$ by $f$. Let $\tilde{\eta}$ be the image of $\eta[\tau, \infty)$ under $f$, then $\tilde{\eta}$ is an $\SLE_{\kappa}(\rho)$ with force point $f(v)$. Define $\tilde{\LH}^{\alpha}_{2j}$ for $\tilde{\eta}$. 
Given $\eta[0,\tau]$ and on the event $\LF$, we have the following observations.
\begin{itemize}
\item Consider the image of $B(y,r)$ under $f$. On the event $\LF$, we know that $f(B(y,r))$ contains the ball with center $f(y)$ and radius $crf'(y)/4$; moreover, we have 
\[crf'(y)/4\le |f(y)|\lesssim rf'(y).\]
\item Consider the image of $B(x,\eps)$ under $f$. By Koebe 1/4 theorem, it contains the ball with center $f(x)$ and radius $\eps f'(x)/4$. On the event $\{\Im{\eta(\tau)}\ge c\eps\}$, we have 
\[f(x)\asymp \eps f'(x). \]
 Since $x-v\lesssim\eps$, we have 
\[f(x)-f(v)\le (x-v)f'(x)\lesssim \eps f'(x).\]
\end{itemize}
Combining these two facts with (\ref{eqn::sle_boundary_alpha_even_lower}), we have 
\[\PP\left[\LH^{\alpha}_{2j+1}(\eps, x, y, r; v)\cond \eta[0,\tau], \LF\right]\ge \PP\left[\tilde{\LH}^{\alpha}_{2j}(\eps f'(x)/4, f(x), f(y), rf'(y)/4; f(v))\right]
\gtrsim (\eps g'_{\tau}(x))^{\alpha^+_{2j}}.\]
By Lemma \ref{lem::sle_boundary_derivative1}, we have 
\[\PP\left[\LH^{\alpha}_{2j+1}(\eps, x, y, r; v)\cap \LF\right]\gtrsim \E\left[ (\eps g'_{\tau}(x))^{\alpha^+_{2j}}\one_{\LF}\right]\asymp x^{-u_1(\alpha^+_{2j})}\eps^{u_1(\alpha^+_{2j})+\alpha^+_{2j}}=x^{\alpha^+_{2j}-\alpha^+_{2j+1}}\eps^{\alpha^+_{2j+1}}. \]
\end{proof}

\begin{proof}[Proof of Proposition \ref{prop::sle_boundary_alpha}]
Combining Remark \ref{rem::sle_boundary_alpha_first} with Lemmas \ref{lem::sle_boundary_alpha_oddtoeven_upper},
\ref{lem::sle_boundary_alpha_eventoodd_upper}, \ref{lem::sle_boundary_alpha_oddtoeven_lower} and  \ref{lem::sle_boundary_alpha_eventoodd_lower}, we obtain the conclusion. Note that 
\[\alpha^+_{2j+1}=\alpha^+_{2j}+u_1(\alpha^+_{2j}),\quad \alpha^+_{2j}=\alpha^+_{2j-1}+u_2(\alpha^+_{2j-1}).\]
\end{proof}
\begin{proof}[Proof of Proposition \ref{prop::sle_boundary_beta}]
By Remark \ref{rem::sle_boundary_beta_first}, we know the conclusion is true for $\LH^{\beta}_1$. Note that 
\[\beta^+_{2j}=\beta^+_{2j-1}+u_1(\beta^+_{2j-1}),\quad \beta^+_{2j+1}=\beta^+_{2j}+u_2(\beta^+_{2j}).\]
Moreover, the exponents $\beta_{2j-2}^+$ and $\beta_{2j-1}^+$ satisfy (\ref{eqn::sle_boundary_derivative1_assumption}): 
\[\kappa\left(\beta_{2j-1}^+-\beta_{2j-2}^+\right)\left(2\rho+4-\kappa+\kappa\left(\beta_{2j-1}^+-\beta_{2j-2}^+\right)\right)=8(j-1)(2j+\kappa/2-4-\rho)=4\kappa\beta_{2j-2}^+.\]

\noindent By the same proof of Lemma \ref{lem::sle_boundary_alpha_eventoodd_upper}, we have that, if (\ref{eqn::sle_boundary_beta_odd_upper}) holds for $\LH^{\beta}_{2j-1}$, then (\ref{eqn::sle_boundary_beta_even_upper}) holds for $\LH^{\beta}_{2j}$. 

\noindent By the same proof of Lemma \ref{lem::sle_boundary_alpha_oddtoeven_upper}, we have that, if (\ref{eqn::sle_boundary_beta_even_upper}) holds for $\LH^{\beta}_{2j}$, then (\ref{eqn::sle_boundary_beta_odd_upper}) holds for $\LH^{\beta}_{2j+1}$. 

\noindent By the same proof of Lemma \ref{lem::sle_boundary_alpha_eventoodd_lower}, we have that, if (\ref{eqn::sle_boundary_beta_odd_lower}) holds for $\LH^{\beta}_{2j-1}$, then (\ref{eqn::sle_boundary_beta_even_lower}) holds for $\LH^{\beta}_{2j}$. 

\noindent By the same proof of Lemma \ref{lem::sle_boundary_alpha_oddtoeven_lower}, we have that, if (\ref{eqn::sle_boundary_beta_even_lower}) holds for $\LH^{\beta}_{2j}$, then (\ref{eqn::sle_boundary_beta_odd_lower}) holds for $\LH^{\beta}_{2j+1}$.

\noindent Combining all these, we complete the proof. 
\end{proof}
\subsection{Proof of Proposition \ref{prop::sle_boundary_gamma}}
\begin{proof}[Proof of (\ref{eqn::sle_boundary_gamma_odd}), Upper Bound] By Remark \ref{rem::sle_boundary_gamma_first}, we know that the conclusion is true for $\LH^{\alpha}_1$. 
We will prove the conclusion for $\LH^{\alpha}_{2j+1}$ for $j\ge 1$. 
Recall that $\eta$ is an $\SLE_{\kappa}(\rho)$ with force point $0^+$. Let $\tau$ be the first time that $\eta$ hits $B(x,\eps)$, and $T$ be the first time that $\eta$ swallows $x$. Recall that 
\[\LF=\{\tau<T, \eta[0,\tau]\subset B(0, Cx), \dist(\eta[0,\tau], [x-\eps, x+3\eps])\ge c\eps\}.\]
Given $\eta[0,\tau]$, denote $g_{\tau}-W_{\tau}$ by $f$. Let $\tilde{\eta}$ be the image of $\eta[\tau, \infty)$ under $f$, then $\tilde{\eta}$ is an $\SLE_{\kappa}(\rho)$ with force point $f(0^+)$. Define $\tilde{\LH}^{\alpha}_{2j}$ for $\tilde{\eta}$. 
We have the following observations.
\begin{itemize}
\item Consider the image of the connected component of $\partial B(x,\eps)\setminus \eta[0,\tau]$ containing $x+\eps$ under $f$. By Lemma \ref{lem::extremallength_argument}, we know that it is contained in the ball with center $f(x+3\eps)$ and radius $8\eps f'(x+3\eps)$. On the event $\{\dist(\eta[0,\tau], [x-\eps, x+3\eps])\ge c\eps\}$, by Koebe distorsion theorem \cite[Chapter I, Theorem 1.3]{Pommerenke}, we know that there exists some universal constant $\tilde{C}$ such that the ball with center $f(x+3\eps)$ and radius $8\eps f'(x+3\eps)$ is contained in the ball with center $f(x)$ and radius $\tilde{C}\eps f'(x)$. Moreover, on the event $\{\dist(\eta[0,\tau], [x-\eps, x+3\eps])\ge c\eps\}$, we have 
\[f(x)\asymp f(x)-f(0^+)\asymp \eps f'(x).\]
\item Consider the image of the connected component of $\partial B(y,r)\setminus \eta[0,\tau]$ containing $y-r$ under $f$. By Lemma \ref{lem::extremallength_argument}, we know that it is contained in the ball with center $f(y-3r)$ and radius $8rf'(y-3r)$. By Lemma \ref{lem::tip_distance_large}, we know that 
\[|f(y-3r)|\ge (x-y+3r-2\eps)/2\ge |y|/2\ge (40)^{2j} 8r.\] 
\end{itemize} 
Combining these two facts with (\ref{eqn::sle_boundary_alpha_even_upper}), we have 
\[\PP\left[\LH^{\alpha}_{2j+1}(\eps, x, y, r; 0^+)\cond \eta[0,\tau], \LF\right]\lesssim (\eps g_{\tau}'(x))^{\alpha^+_{2j}}.\]
By Lemma \ref{lem::sle_boundary_derivative_gamma}, we have 
\[\PP\left[\LH^{\alpha}_{2j+1}(\eps, x, y, r; 0^+)\cap\LF\right]\lesssim\E\left[ (\eps g_{\tau}'(x))^{\alpha^+_{2j}}\one_{\LF}\right]\asymp \eps^{u_3(\alpha^+_{2j})+\alpha^+_{2j}}.\]
Note that 
\[\gamma^+_{2j+1}=u_3(\alpha^+_{2j})+\alpha^+_{2j}.\]
This completes the proof. 
\end{proof}
\begin{proof}[Proof of (\ref{eqn::sle_boundary_gamma_odd}), Lower Bound]
Assume the same notations as in the proof of the upper bound. We have the following observations.
\begin{itemize}
\item Consider the image of $B(x,\eps)$ under $f$. By Koebe 1/4 theorem, it contains the ball with center $f(x)$ and radius $\eps f'(x)/4$. Moreover, on the event $\LF$, we have 
\[f(x)\asymp f(x)-f(0^+)\asymp \eps f'(x).\]
\item Consider the image of $B(y,r)$ under $f$.  Note that $r\ge Cx$ and $|y|\ge (40)^{2j+1}r$. Thus, on the event $\{\eta[0,\tau]\subset B(0, Cx)\}$, we know that $\eta[0,\tau]$ does not hit $B(y,r)$. Thus $f(B(y,r))$ contains the ball with center $f(y)$ and radius $rf'(y)/4$. On the event $\{\eta[0,\tau]\subset B(0, Cx)\}$, we know that 
\[rf'(y)/4\le |f(y)|\le |y|+ (Cx)^2/|y|\le 2|y|\asymp r. \] 
\end{itemize}
Combining these two facts with (\ref{eqn::sle_boundary_alpha_even_lower}), we have 
\[\PP\left[\LH^{\alpha}_{2j+1}(\eps, x, y, r; 0^+)\cond \eta[0,\tau], \LF\right]\gtrsim (\eps g_{\tau}'(x))^{\alpha^+_{2j}}.\]
By Lemma \ref{lem::sle_boundary_derivative_gamma}, we have 
\[\PP\left[\LH^{\alpha}_{2j+1}(\eps, x, y, r; 0^+)\cap\LF\right]\gtrsim\E\left[ (\eps g_{\tau}'(x))^{\alpha^+_{2j}}\one_{\LF}\right]\asymp \eps^{u_3(\alpha^+_{2j})+\alpha^+_{2j}}.\]
This completes the proof. 
\end{proof}
\begin{proof}[Proof of (\ref{eqn::sle_boundary_gamma_even})]
By the same proof of (\ref{eqn::sle_boundary_gamma_odd}), we could prove that 
\[\PP\left[\LH^{\alpha}_{2j}(\eps, x, y, r; 0^+)\cap \LF\right]\asymp \E\left[(\eps g'_{\tau}(x))^{\beta^+_{2j-1}}\right]\asymp \eps^{u_3(\beta^+_{2j-1})+\beta^+_{2j-1}}.\]
Note that 
\[\gamma^+_{2j}=u_3(\beta^+_{2j-1})+\beta^+_{2j-1}.\]
This completes the proof. 
\end{proof}
\section{SLE Interior Arm Exponents}\label{sec::sle_interior}
Fix $\kappa\in (0,4)$ and let $\eta$ be an $\SLE_{\kappa}$ in $\HH$ from 0 to $\infty$. 
Fix $z\in \HH$ with $|z|=1$ and suppose $r>0$ and $ y\le -4r$.
 Let $\tau_1$ be the first time that $\eta$ hits $B(z,\eps)$. Define 
\[\LE_2(\eps, z)=\{\tau_1<\infty\}.\]
Let $\sigma_1$ be the first time after $\tau_1$ that $\eta$ hits the connected component of $\partial B(y,r)\setminus \eta[0,\tau_1]$ containing $y-r$. Define $\LE^g$ to be the event that $z$ is in the unbounded connected component of $\HH\setminus (\eta[0,\sigma_1]\cup B(y,r))$.

Given $\eta[0,\sigma_1]$, we know that $B(z,\eps)\setminus \eta[0,\sigma_1]$ has one connected component that contains $z$, denoted by $C_z$. 
The boundary $\partial C_z$ consists of pieces of $\eta[0,\sigma_1]$ and pieces of $\partial B(z,\eps)$. Consider $\partial C_z\cap \partial B(z,\eps)$, there may be several connected components, but there is only one which can be connected to $\infty$ in $\HH\setminus (\eta[0,\sigma_1]\cup B(z,\eps))$. We denote this connected component by $C_z^b$, oriented it clockwise and denote the end point as $X_z^b$. See Figure \ref{fig::sle_interior_def}. 

Let $\tau_2$ be the first time after $\sigma_1$ that $\eta$ hits $C_z^b$, and let $\sigma_2$ be the first time after $\tau_2$ that $\eta$ hits the connected component of $\partial B(y,r)\setminus \eta[0,\tau_2]$ containing $y-r$. For $j\ge 2$, let $\tau_j$ be the first time after $\sigma_{j-1}$ such that $\eta$ hits the connected component of $C_z^b\setminus \eta[0,\sigma_{j-1}]$ containing $X_z^b$ and let $\sigma_j$ be the first time after $\tau_j$ that $\eta$ hits the connected component of $\partial B(y,r)\setminus \eta[0,\tau_j]$ containing $y-r$. For $j\ge 2$, define 
\[\LE_{2j}(\eps, z, y, r)=\LE^g\cap \{\tau_j<T_z\}.\] We will prove the following estimate on the probability of $\LE_{2j}$.
\begin{proposition}
Fix $\kappa\in (0,4)$ and $z\in\HH$ with $|z|=1$. For $j\ge 1$, define 
\[\alpha_{2j}=(16j^2-(\kappa-4)^2)/(8\kappa).\]
Define 
\[\LF=\{\eta[0,\tau_1]\subset B(0,R)\},\]
where $R$ is a constant decided in Lemma \ref{lem::sle_interior_derivative}. Then we have, for $j\ge 1$,
\begin{equation}\label{eqn::sle_interior}
\PP\left[\LE_{2j}(\eps, z, y, r)\cap \LF\right]= \eps^{\alpha_{2j}+o(1)},\quad \text{provided } R\le r\le (40)^{2j}r\le |y|\lesssim r.
\end{equation} 
\end{proposition}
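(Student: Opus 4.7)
The plan is to induct on $j$, reducing $\LE_{2j}$ to a boundary arm event for the $\SLE_{\kappa}$ obtained by conformally removing the path up to the first hit of $B(z,\eps)$. This parallels the proof of Proposition \ref{prop::sle_boundary_gamma}. The base case is handled by the classical interior one-point Green's function of $\SLE_\kappa$; the inductive step combines a conformal restart argument with the boundary arm estimate of Proposition \ref{prop::sle_boundary_alpha} at $\rho=0$ and with an interior-point derivative moment.

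\textbf{Base case $j=1$.} The event $\LE_2(\eps,z)\cap\LF$ says that $\eta$ hits $B(z,\eps)$ without first leaving $B(0,R)$. Apply the classical interior one-point local martingale
\[M_t^{(z)}=(\Im g_t(z))^{a}\,|g_t(z)-W_t|^{b}\,|g_t'(z)|^{c},\]
with $(a,b,c)$ determined by It\^o's formula so that $M^{(z)}$ is a local martingale for $\SLE_\kappa$. On $\{\tau_1<\infty\}$, Koebe's 1/4 theorem gives $\Im g_{\tau_1}(z)\asymp|g_{\tau_1}(z)-W_{\tau_1}|\asymp \eps\,|g_{\tau_1}'(z)|$, so optional stopping at $\tau_1$ together with a positive-probability lemma (analogue of Lemma \ref{lem::sle_positivechance} chosen to fix the large constant $R$) yields $\PP[\LE_2\cap\LF]\asymp \eps^{1-\kappa/8}=\eps^{\alpha_2}$.

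\textbf{Inductive step.} Suppose the estimate holds for $\LE_{2j}$. Condition on $\eta[0,\tau_1]$ and put $f=g_{\tau_1}-W_{\tau_1}$, so that $\tilde\eta=f(\eta[\tau_1,\infty))$ is a fresh $\SLE_\kappa$ from $0$ to $\infty$ in $\HH$. Write $\tilde z=f(z)$ and $\tilde\eps=\eps|f'(z)|$. Koebe's theorem forces $|\tilde z|\asymp \Im\tilde z\asymp\tilde\eps$; Lemma \ref{lem::extremallength_argument} localizes the image of the component $C_z^b$ inside a ball of radius $O(\tilde\eps)$ near $\tilde z$; and Lemma \ref{lem::tip_distance_large} places the image of the target component of $\partial B(y,r)$ at distance $\gtrsim|y|/2\ge (40)^{2(j+1)}r$. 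Hence the remaining $2j$ crossings of $\tilde\eta$ form a $\LH^{\alpha}_{2j}$ boundary arm event (second definition) for ordinary $\SLE_\kappa$ ($\rho=0$). Proposition \ref{prop::sle_boundary_alpha} yields
\[\PP[\LE_{2(j+1)}\cap\LF\mid\eta[0,\tau_1]]\asymp\bigl(\eps|g_{\tau_1}'(z)|\bigr)^{\alpha^+_{2j}+o(1)},\qquad \alpha^+_{2j}=2j(2j+4-\kappa/2)/\kappa.\]
Taking expectation reduces the problem to estimating the moment $\E[|g_{\tau_1}'(z)|^{\alpha^+_{2j}}\one_{\{\tau_1<\infty\}\cap\LF}]$.

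\textbf{Derivative moment and exponent arithmetic.} This moment is evaluated via the interior martingale
\[\widetilde M_t=(\Im g_t(z))^{A(\lambda)}|g_t(z)-W_t|^{B(\lambda)}|g_t'(z)|^{\lambda},\qquad \lambda=\alpha^+_{2j},\]
with $(A(\lambda),B(\lambda))$ determined by It\^o's formula. On $\LF$ at time $\tau_1$ both $\Im g_{\tau_1}(z)$ and $|g_{\tau_1}(z)-W_{\tau_1}|$ are $\asymp \eps|g_{\tau_1}'(z)|$, so $\widetilde M_{\tau_1}\asymp \eps^{A+B}|g_{\tau_1}'(z)|^{\lambda+A+B}$. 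A Girsanov change of measure (as in the proofs of Lemmas \ref{lem::sle_boundary_derivative1} and \ref{lem::sle_boundary_derivative_gamma}) turns $\SLE_\kappa$ into an auxiliary radial-like process with interior force point $z$, for which the weighted analogue of $\LF$ has positive mass; this yields $\E[|g_{\tau_1}'(z)|^{\lambda}\one_{\{\tau_1<\infty\}\cap\LF}]\asymp \eps^{\alpha_{2(j+1)}-\lambda}$ once one checks the identity $\lambda-A(\lambda)-B(\lambda)=\alpha_{2(j+1)}$ with $\lambda=\alpha^+_{2j}$.

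\textbf{Main obstacle.} The hard part is the derivative moment step. Unlike the one-parameter boundary martingales of Section \ref{sec::sle_boundary}, the interior martingale has a genuinely two-dimensional state space $(\Im g_t(z),|g_t(z)-W_t|)$ and the Girsanov image is a radial-like SLE with an interior (rather than boundary) force point. One must (i) solve the local-martingale PDE for $(A(\lambda),B(\lambda))$; (ii) prove the positive-probability lemma for the weighted law (the analogue of Lemma \ref{lem::sle_positivechance}), ensuring the auxiliary process stays inside $B(0,R)$ while approaching $z$ from the correct side; and (iii) verify the exponent arithmetic $\alpha^+_{2j}-A(\alpha^+_{2j})-B(\alpha^+_{2j})=\alpha_{2(j+1)}=(16(j+1)^2-(\kappa-4)^2)/(8\kappa)$. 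A secondary subtlety is the topological bookkeeping needed to identify the remaining arms of $\tilde\eta$ with precisely $\LH^{\alpha}_{2j}$ (rather than, e.g., a mixed $\alpha/\beta$ event), which relies on the definition of $C_z^b$ and the side of $\partial B(z,\eps)$ first touched by $\eta$.
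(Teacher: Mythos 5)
Your overall strategy---unwind the initial segment up to $\tau_1$, reduce the remaining crossings to the $\rho=0$ boundary arm estimate (\ref{eqn::sle_boundary_rhoequals0}), and pay an interior derivative moment---is the same as the paper's. But the inductive step as written has a genuine gap: you assert that Koebe's theorem gives $|f(z)|\asymp\Im f(z)\asymp\eps|f'(z)|$ after mapping down, and later that on $\LF$ both $\Im g_{\tau_1}(z)$ and $|g_{\tau_1}(z)-W_{\tau_1}|$ are $\asymp\eps|g_{\tau_1}'(z)|$. Koebe controls only the imaginary part, $\Im g_{\tau_1}(z)\asymp\eps|g_{\tau_1}'(z)|$; the modulus $|g_{\tau_1}(z)-W_{\tau_1}|$ exceeds it by the factor $1/\sin\Theta_{\tau_1}$, where $\Theta_t=\arg(g_t(z)-W_t)$, and the event $\LF=\{\eta[0,\tau_1]\subset B(0,R)\}$ puts no restriction whatsoever on this angle. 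This angle is exactly the extra variable that distinguishes the interior problem from Section \ref{sec::sle_boundary}, and the paper's proof is organized around it: the derivative moment (Lemma \ref{lem::sle_interior_derivative}) is proved only on the good-angle event $\{\Theta_{\tau}\in(\delta,\pi-\delta)\}$ and carries $\delta$-dependent constants, so your claimed two-sided estimate $\E[|g_{\tau_1}'(z)|^{\lambda}\one_{\{\tau_1<\infty\}\cap\LF}]\asymp\eps^{\alpha_{2j+2}-\lambda}$ with only $\LF$ is unjustified (and not what the paper proves).

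Two further consequences are not addressed in your plan. First, even on the good-angle event, $f(B(z,\eps))$ is a neighborhood of the interior point $f(z)$, not a ball centered on $\R$, so the remaining crossings do not immediately form an $\LH^{\alpha}_{2j}$ event: for the lower bound the paper continues the curve along a broken line toward a boundary ball (the event $\{S_2<S_1\}$, of positive probability by \cite[Lemma 2.5]{MillerWuSLEIntersection}), after which the image of $B(z,\eps)$ contains a ball centered at a real point; for the upper bound it encloses $f(B(z,\eps))$ in a ball centered at the real point $|f(z)|$ of radius of order $\eps|f'(z)|/\delta$ (Lemma \ref{lem::sle_interior_upper_aux1}). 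Second, the upper bound must also dispose of the scenario in which the angle is bad at the hitting time of every scale; the paper does this with a dyadic decomposition and Lemma \ref{lem::sle_interior_upper_aux2}, producing a term $p(\delta)^n$, and then optimizes over $n$ and $\delta$---which is precisely why the conclusion is $\eps^{\alpha_{2j}+o(1)}$ rather than $\asymp\eps^{\alpha_{2j}}$. Your ``main obstacle'' paragraph flags the derivative moment as the hard step, but the missing ingredient is not only the martingale PDE (the estimate is available, Lemma \ref{lem::sle_interior_derivative}, imported from earlier work); it is the angle control and the accompanying geometric repairs, without which the inductive step fails. The base case via the interior one-point estimate is fine (the paper cites Beffara's estimate for $\LE_2$).
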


A similar conclusion for $\kappa\in (4,8)$ was proved in \cite[Section 2.3]{WuPolychromaticArmFKIsing}, the proof also works here with proper modifications. To be self-contained, we will give a complete proof. 
\begin{figure}[ht!]
\begin{center}
\includegraphics[width=0.5\textwidth]{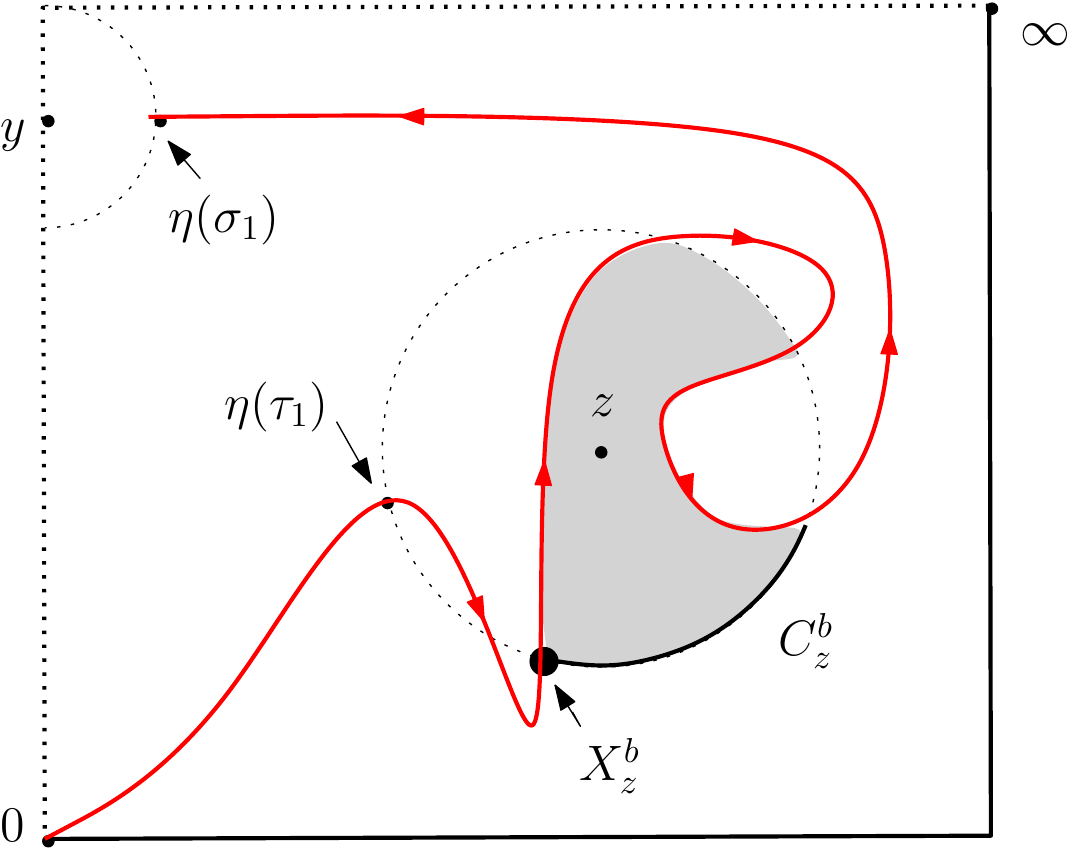}
\end{center}
\caption{\label{fig::sle_interior_def} The gray part is the connected component of $B(z,\eps)\setminus \eta[0,\sigma_1]$ that contains $z$, which is denoted by $C_z$. The bold part of $\partial C_z$ is $C_z^b$. The point $X_z^b$ is denoted in the figure. }
\end{figure}

\begin{lemma}\label{lem::sle_interior_derivative}
Fix $\kappa\in (0,4)$ and let $\eta$ be an $\SLE_{\kappa}$ in $\HH$ from 0 to $\infty$. Fix $z\in \HH$ with $|z|=1$. 
For $\eps>0$, let $\tau$ be the first time that $\eta$ hits $B(z, \eps)$. 
Define $\Theta_t=\arg(g_t(z)-W_t)$. For $\delta\in (0,1/16), R\ge 4$, define the event 
\[\LG=\{\tau<\infty, \Theta_{\tau}\in (\delta, \pi-\delta)\},\quad \LF=\LG\cap\{\eta[0,\tau]\subset B(0,R)\}.\]
For $\lambda\ge 0$, define 
\[\rho=\kappa/2-4-\sqrt{4\kappa\lambda+(\kappa/2-4)^2},\quad v(\lambda)=\frac{1}{2}-\frac{\kappa}{16}-\frac{\lambda}{2}+\frac{1}{8}\sqrt{4\kappa\lambda+(\kappa/2-4)^2}.\]
There exists a constant $R$ depending only on $\kappa$ and $z$ such that the following is true:
\[\eps^{v(\lambda)}\lesssim \E\left[|g'_{\tau}(z)|^{\lambda}\one_{\LF}\right]\le \E\left[|g'_{\tau}(z)|^{\lambda}\one_{\LG}\right]\lesssim \eps^{v(\lambda)}\delta^{-v(\lambda)-\rho^2/(2\kappa)},\]
where the constants in $\asymp$ depend on $\kappa, z$ and are uniform over $\eps, \delta$.
\end{lemma}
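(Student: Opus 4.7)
My plan parallels the Schramm--Wilson change-of-measure argument used for the boundary derivative estimate of Lemma \ref{lem::sle_boundary_derivative1}, now with an interior force point at $z$.  The first step is to introduce an explicit local martingale for $\SLE_\kappa$ of the general form
\[ M_t \;=\; |g'_t(z)|^{a(\rho)} \, \Im(g_t(z))^{b(\rho)} \, \sin(\Theta_t)^{c(\rho)}, \]
with exponents chosen by a direct It\^o calculation so that the drift vanishes and so that reweighting by $M$ converts $\eta$ into an $\SLE_\kappa(\rho)$ with a single \emph{interior} force point at $z$.  Requiring $a(\rho)=\lambda$ picks out exactly the prescribed $\rho=\kappa/2-4-\sqrt{4\kappa\lambda+(\kappa/2-4)^2}$; rewriting the remaining two exponents in terms of the conformal radius $\Upsilon_t=\Im(g_t(z))/|g'_t(z)|$ and using the identity $v(\lambda)=-\lambda/2-\rho/8$ (immediate from the definitions) leads to the clean factorisation
\[ M_t \;=\; |g'_t(z)|^\lambda \, \Upsilon_t^{-v(\lambda)} \, \sin(\Theta_t)^{v(\lambda)+\rho^2/(2\kappa)}. \]

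To extract the estimate, I use that on $\LG$ the curve $\eta$ hits $\partial B(z,\eps)$ at angle $\Theta_\tau\in(\delta,\pi-\delta)$, so Koebe's $1/4$ theorem gives $\Upsilon_\tau\asymp\eps$ with constants depending only on $\kappa$ and $z$.  Substituting and unfolding the Radon--Nikodym derivative,
\[ \E\bigl[|g'_\tau(z)|^\lambda\one_\LG\bigr] \;\asymp\; \eps^{v(\lambda)} \, M_0 \, \E^*\!\Bigl[\sin(\Theta_\tau)^{-v(\lambda)-\rho^2/(2\kappa)}\one_{\LG^*}\Bigr], \]
where $\PP^*$ denotes the interior $\SLE_\kappa(\rho)$ law and $\E^*$ its expectation.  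The upper bound now follows immediately, since $\sin(\Theta_\tau)\ge\sin\delta\asymp\delta$ on $\LG^*$ while $\PP^*[\LG^*]\le 1$.  For the lower bound I restrict to $\LF\subset\LG$: because $\rho<\kappa/2-4$, under $\PP^*$ the curve $\eta^*$ is attracted to $z$ and almost surely accumulates there, so an interior analogue of Lemma \ref{lem::sle_positivechance}, proved by the same M\"obius transformation and continuity argument (send $z$ to a fixed interior point and exploit continuity of the driving SDE in its initial configuration), furnishes constants $c>0$ and $R<\infty$ depending only on $\kappa$ and $z$ such that $\PP^*[\LF^*\cap\{\Theta^*_\tau\in(c,\pi-c)\}]\gtrsim 1$.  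On this event $\sin(\Theta^*_\tau)\asymp 1$, and the matching lower bound $\E[|g'_\tau(z)|^\lambda\one_\LF]\gtrsim \eps^{v(\lambda)}$ drops out.

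The main technical obstacle is twofold.  First, one must produce the exponents $(a,b,c)(\rho)$ satisfying the $\SLE_\kappa$ drift-killing equations at an interior point and verify the factorisation written above; this is routine but bookkeeping-heavy It\^o calculus, an interior-point analogue of the Schramm--Wilson computation invoked in Lemma \ref{lem::sle_mart}.  Second, one must establish the interior analogue of Lemma \ref{lem::sle_positivechance} with a \emph{uniform} positive lower bound on $\PP^*[\LF^*\cap\{\Theta^*_\tau\in(c,\pi-c)\}]$ as $\eps\to 0$; this in turn demands a short compactness argument together with continuity of $\SLE_\kappa(\rho)$ up to the accumulation time at $z$, plus the determination of the constant $R$ so that the curve remains inside $B(0,R)$ with positive probability before hitting $B(z,\eps)$.
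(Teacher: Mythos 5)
The paper does not actually reprove this lemma (its ``proof'' is the citation \cite[Lemma 2.11]{WuPolychromaticArmFKIsing}), and your overall scheme --- weight chordal $\SLE_{\kappa}$ by a local martingale built from $|g_t'(z)|$, the conformal radius $\Upsilon_t=\Im g_t(z)/|g_t'(z)|$ and $\sin\Theta_t$, use Koebe to get $\Upsilon_{\tau}\asymp\eps$, unfold by Girsanov, and treat the angle factor separately in the two bounds --- is the standard route and matches the cited argument in spirit. However, two of your steps are wrong or unsupported. First, your martingale is not a local martingale and your identification of $\rho$ is internally inconsistent. Carrying out the It\^o computation for $M_t=|g_t'(z)|^{a}(\Im g_t(z))^{b}(\sin\Theta_t)^{c}$, the drift vanishes iff $b=a-\kappa c/4$ and $4a=c(\kappa-4)+\kappa c^2/2$, and matching the Girsanov drift to an interior force point of weight $\rho$ at $z$ gives $c=-\rho/\kappa$. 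Requiring the $|g_t'(z)|$-exponent to equal $\lambda$ \emph{after} substituting $\Im g_t(z)=\Upsilon_t|g_t'(z)|$, i.e.\ $a+b=\lambda$, then yields $\lambda=\rho(\rho+8-\kappa)/(4\kappa)$ (exactly the prescribed $\rho$) and $b=-v(\lambda)$; your requirement ``$a(\rho)=\lambda$'' before the substitution instead gives $\lambda=\rho(\rho+8-2\kappa)/(8\kappa)$, a different $\rho$. Moreover, once the prefactor $|g_t'(z)|^{\lambda}\Upsilon_t^{-v(\lambda)}$ is fixed, the drift-free sine exponent is uniquely $4(\lambda+2v(\lambda))/\kappa=-\rho/\kappa$, which differs from your $v(\lambda)+\rho^2/(2\kappa)=\rho(3\rho-8)/(8\kappa)$ unless $\rho=0$. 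The slip is reparable: with $c=-\rho/\kappa$ the unfolding gives $\E\left[|g'_{\tau}(z)|^{\lambda}\one_{\LG}\right]\asymp\eps^{v(\lambda)}\E^*\left[(\sin\Theta_{\tau})^{\rho/\kappa}\one_{\LG^*}\right]\lesssim\eps^{v(\lambda)}\delta^{\rho/\kappa}$, and since $v(\lambda)+\rho^2/(2\kappa)+\rho/\kappa=3\rho^2/(8\kappa)\ge 0$ this implies the stated upper bound; but as written your central identity is false.

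Second, your lower bound rests entirely on the claim $\PP^*\left[\LF^*\cap\{\Theta_{\tau}\in(c,\pi-c)\}\right]\gtrsim 1$ uniformly in $\eps$, where $\PP^*$ is $\SLE_{\kappa}(\rho)$ with an \emph{interior} force point at $z$ and $\rho\le\kappa-8$ (note that the relevant attraction threshold for an interior force point is $\kappa-8$, not the boundary value $\kappa/2-4$ you quote). You propose to obtain this ``by the same M\"obius transformation and continuity argument'' as Lemma \ref{lem::sle_positivechance}; that trick does not transfer: there a boundary force point is sent to infinity and one exploits $\rho^L>\kappa/2-2$ to keep the curve off a boundary ball, whereas here the weighted process is radial-like (two-sided radial when $\rho=\kappa-8$) and the delicate issue is the law of the angle $\Theta_{\tau_{\eps}}$ as $\eps\to 0$. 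A compactness/continuity argument cannot produce a bound uniform in $\eps$, since $\Theta_t$ has no limit as the curve accumulates at $z$; the standard input is the radial time change under which $\Theta$ becomes an ergodic diffusion on $(0,\pi)$ with explicit invariant density (plus a scale-by-scale Markov argument to confine the curve in $B(0,R)$), as in \cite{MillerWuSLEIntersection} or in the arguments surrounding \cite[Lemma 2.11]{WuPolychromaticArmFKIsing}. Without such an input the lower bound, and hence the lemma, is not established.
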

\begin{proof}
\cite[Lemma 2.11]{WuPolychromaticArmFKIsing}.
\end{proof}
Now we have decided the constant $R$ in Lemma \ref{lem::sle_interior_derivative}, and we will fix it in the following of the section. 
The conclusion for $\LE_2$ was proved in \cite[Proposition 4]{BeffaraDimension}, we will prove the conclusion for $\LE_{2j+2}$ for $j\ge 1$. We will need the following conclusion from Section \ref{sec::sle_boundary}. For $j\ge 1$, taking $\rho=0$ in Proposition \ref{prop::sle_boundary_alpha}, we have 
\[\alpha^+_{2j}=2j(2j+4-\kappa/2)/\kappa,\] 
\begin{equation}\label{eqn::sle_boundary_rhoequals0}
\PP\left[\LH^{\alpha}_{2j}(\eps, x, y, r)\right]\asymp x^{\alpha^+_{2j}-\alpha^+_{2j-1}}\eps^{\alpha^+_{2j-1}},\quad\text{provided } (40)^{2j}r\le |y|\lesssim r.
\end{equation}
Note that, since $\rho=0$, we may assume $v=x$ and we eliminate the force point in the definition of $\LH^{\alpha}$. 

\begin{proof}[Proof of (\ref{eqn::sle_interior}), Lower Bound] We will prove the lower bound for the probability of $\LE_{2j+2}$. 
Let $\eta$ be an $\SLE_{\kappa}$ in $\HH$ from 0 to $\infty$. Let $\tau$ be the first time that $\eta$ hits $B(z,\eps)$. 
Denote the centered conformal map $g_t-W_t$ by $f_t$ for $t\ge 0$. Recall that 
\[\LF=\{\eta[0,\tau]\subset B(0,R)\}.\]
Fix some $\delta>0$ and define 
\[\LG=\LF\cap \{\Theta_{\tau}\in (\delta, \pi-\delta)\}.\]

We run $\eta$ until the time $\tau$ and on the event $\LG$, by Koebe 1/4 theorem, we know that $f_{\tau}(B(z,\eps))$ contains the ball with center $w:=f_{\tau}(z)$ and radius $u:=\eps |f_{\tau}'(z)|/4$ and 
\[\arg(w)\in (\delta, \pi-\delta),\quad u\le \Im{w}\le 16u.\] 
We wish to apply (\ref{eqn::sle_boundary_rhoequals0}), however this ball is centered at $w=f_{\tau}(z)$ which does not satisfy the conditions in (\ref{eqn::sle_boundary_rhoequals0}). We will fix this problem by running $\eta$ for a little further and argue that there is positive chance that $\eta$ does the right thing. 

Let $\tilde{\eta}$ be the image of $\eta[\tau, \infty)$ under $f_{\tau}$. Let $\gamma$ be the broken line from 0 to $w$ and then to $-u+ui$ and let $A_u$ be the $u/4$-neighborhood of $\gamma$. Let $S_1$ be the first time that $\tilde{\eta}$ exits $A_u$ and let $S_2$ be the first time that $\tilde{\eta}$ hits the ball with center $-u+ui$ and radius $u/4$. By \cite[Lemma 2.5]{MillerWuSLEIntersection}, we know that $\PP[S_2<S_1]$ is bounded from below by positive constant depending only on $\kappa$ and $\delta$. On the event $\{S_2<S_1\}$, it is clear that there exist constants $x_{\delta}, c_{\delta}>0$ depending only on $\delta$ such that $f_{S_2}(B(z,\eps))$ contains the ball with center $x_{\delta}u$ and radius $c_{\delta}u$. 

Consider the image of $B(y,r)$ under $f_{S_2}$. On the event $\LF\cap \{S_2<S_1\}$, we know that the image of $B(y,r)$ under $f_{S_2}$ contains the ball with center $f_{S_2}(y)$ and radius $rf_{S_2}'(y)/4$ where 
\[2y\le f_{S_2}(y)\le y, \quad f_{S_2}'(y)\asymp 1.\]
Combining with (\ref{eqn::sle_boundary_rhoequals0}), we have
\[\PP\left[\LE_{2j+2}\cond \eta[0,S_2], \LG\cap \{S_2<S_1\}\right]\gtrsim (\eps |g_{\tau}'(z)|)^{\alpha_{2j}^+}.\]
Since $\{S_2<S_1\}$ has positive chance, we have 
\[\PP\left[\LE_{2j+2}\cond \eta[0,\tau], \LG\right]\gtrsim (\eps |g_{\tau}'(z)|)^{\alpha_{2j}^+}.\]
Therefore, by Lemma \ref{lem::sle_interior_derivative}, we have 
\[\PP\left[\LE_{2j+2}\right]\gtrsim \E\left[(\eps |g_{\tau}'(z)|)^{\alpha_{2j}^+}\one_{\LG}\right]\asymp\eps^{v(\alpha^+_{2j})+\alpha^+_{2j}}= \eps^{\alpha_{2j+2}},\]
where the constants in $\gtrsim$ and $\asymp$ depend only on $\kappa, z$ and $\delta$. This completes the proof. 
\end{proof}

\begin{lemma}\label{lem::sle_interior_upper_aux1}
Fix $\kappa\in (0,4)$ and let $\eta$ be an $\SLE_{\kappa}$ in $\HH$ from 0 to $\infty$. Fix $z\in\HH$ with $|z|=1$. Let $\Theta_t=\arg(g_t(z)-W_t)$. 
For $C\ge 16$, let $\xi$ be the first time that $\eta$ hits $\partial B(z, C\eps)$. For $\delta\in (0,1/16)$, define 
\[\LF=\{\xi<\infty, \Theta_{\xi}\in (\delta, \pi-\delta), \eta[0,\xi]\subset B(0,R)\}.\]
Then we have
\[\PP\left[\LE_{2j+2}(\eps, z, y, r)\cap \LF\right]\lesssim C^A\delta^{-B}\eps^{\alpha_{2j+2}},\quad \text{provided }y\le -20r, r\ge R.\]
where $A, B$ are some constants depending on $\kappa, j$, and the constant in $\lesssim$ depends only on $\kappa, j$, and is uniform over $\delta, C, \eps$.  
\end{lemma}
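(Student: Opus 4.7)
The plan is to mirror the lower-bound argument for (\ref{eqn::sle_interior}) just given, but now stopping $\eta$ at $\xi$ (the hit of the larger radius $C\eps$) and applying the \emph{upper} bound in Proposition \ref{prop::sle_boundary_alpha} in place of the lower bound. Conditional on $\eta[0,\xi]$, let $f_\xi := g_\xi - W_\xi$ denote the centered conformal map; by the domain Markov property, $\tilde\eta := f_\xi(\eta[\xi,\cdot))$ is an $\SLE_{\kappa}$ in $\HH$ from $0$ to $\infty$.

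On the event $\LF$, I would first control the $f_\xi$-images of $B(z,\eps)$ and $B(y,r)$. Since $C \ge 16$, Lemma \ref{lem::image_insideball} gives $f_\xi(B(z,\eps)) \subset B(f_\xi(z),\, 4\eps|f'_\xi(z)|)$. The angular condition $\Theta_\xi \in (\delta,\pi-\delta)$ together with the Koebe estimate $\Im f_\xi(z)/|f'_\xi(z)| \asymp C\eps$ gives $|f_\xi(z)| \asymp C\eps|f'_\xi(z)|/\sin\delta$, so writing $p$ for the projection of $f_\xi(z)$ onto $\R$ we obtain
\[f_\xi(B(z,\eps)) \;\subset\; B\bigl(p,\; \tilde C\,\eps|f'_\xi(z)|/\delta\bigr)\]
for a universal $\tilde C$, with $|p| \asymp \tilde C\eps|f'_\xi(z)|/\delta$. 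On the other side, since $\eta[0,\xi] \subset B(0,R)$ is far from $B(y,r)$ (because $|y|\ge 20r$ and $r\ge R$), $f_\xi(B(y,r))$ sits near $y$ at scale $\asymp r$.

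The key step is the reduction: on $\LE_{2j+2}\cap\LF$ the curve $\tilde\eta$ (starting from $0$, much closer to $p$ than to $y$) is forced to make at least $j+1$ visits to $f_\xi(B(z,\eps))$ alternating with $j$ visits to $f_\xi(B(y,r))$, beginning with the inner. This is contained in the odd-length boundary arm event $\LH^{\alpha}_{2j+1}$ for $\tilde\eta$ around the boundary point $p$ at inner scale $\eps'':=\tilde C\,\eps|f'_\xi(z)|/\delta$ (with $p \asymp \eps''$) and outer ball near $y$. Applying the upper bound from Proposition \ref{prop::sle_boundary_alpha} with $\rho=0$, then integrating via Lemma \ref{lem::sle_interior_derivative} with $\lambda = \alpha^{+}_{2j}$ and $\eps$ replaced by $C\eps$, gives
\[\PP[\LE_{2j+2}\cap\LF] \;\lesssim\; (C/\delta)^{\alpha^{+}_{2j}}\,\eps^{\alpha^{+}_{2j}}\cdot (C\eps)^{v(\alpha^{+}_{2j})}\,\delta^{-v(\alpha^{+}_{2j})-\rho^{2}/(2\kappa)} \;=\; C^{A}\,\delta^{-B}\,\eps^{\alpha_{2j+2}},\]
where we use the matching identity $\alpha^{+}_{2j} + v(\alpha^{+}_{2j}) = \alpha_{2j+2}$ already verified in the lower-bound proof.

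The main technical obstacle is justifying the reduction: the event $\LE_{2j+2}$ constrains $\eta$'s crossings to pass through the specific components $C_z^b$ of $\partial B(z,\eps)$ in a prescribed cyclic order, and one must verify that its $f_\xi$-image is dominated by a genuine boundary arm event $\LH^{\alpha}_{2j+1}$ for $\tilde\eta$ around the enlarged boundary ball $B(p,\eps'')$, so that Proposition \ref{prop::sle_boundary_alpha} truly applies. A secondary point is that in the regime $|y|\asymp r$ the outer ball is closer than the $(40)^{2j+1}r$ threshold literally appearing in Proposition \ref{prop::sle_boundary_alpha}; this can be handled either by absorbing the mismatch into the constant (degrading it by some factor depending only on $\kappa, j$) or by running $\tilde\eta$ a little further so that the effective outer radius is suitably small, in the same spirit as the analogous argument for $\kappa \in (4,8)$ carried out in \cite{WuPolychromaticArmFKIsing}.
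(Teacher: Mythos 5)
Your proposal follows essentially the same route as the paper's proof: stop $\eta$ at $\xi$, pass to $f_\xi=g_\xi-W_\xi$, contain $f_\xi(B(z,\eps))$ in a real-centered ball of radius of order $C\eps|f_\xi'(z)|/\delta$ and $f_\xi(B(y,r))$ in a ball of radius of order $r$ near $y$, bound the conditional probability by a $\rho=0$ boundary-arm upper bound to get $\bigl(C\eps|f_\xi'(z)|/\delta\bigr)^{\alpha^+_{2j}}$, and then integrate with Lemma \ref{lem::sle_interior_derivative} at $\lambda=\alpha^+_{2j}$ (with $\eps$ replaced by $C\eps$), using $\alpha^+_{2j}+v(\alpha^+_{2j})=\alpha_{2j+2}$.

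One step as written would fail: you center the containing ball at the real projection $p$ of $f_\xi(z)$ and assert $|p|\asymp \tilde C\eps|f_\xi'(z)|/\delta$. This is false when $\Theta_\xi$ is near $\pi/2$: there $p$ can vanish (or be negative), even though $\Im f_\xi(z)\asymp C\eps|f_\xi'(z)|$. Since you then invoke the odd-index bound (\ref{eqn::sle_boundary_alpha_odd_upper}), whose right-hand side $x^{\alpha^+_{2j}-\alpha^+_{2j+1}}\eps^{\alpha^+_{2j+1}}$ degenerates as $x\to 0$ and whose setup requires the near ball to sit at a positive boundary point with $x\ge\eps$, the estimate does not go through at such angles. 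The paper sidesteps this in two ways at once: it centers the containing ball at $|f_\xi(z)|$, which satisfies $C\eps|f_\xi'(z)|/4\le |f_\xi(z)|\le 8C\eps|f_\xi'(z)|/\delta$, and, more importantly, it discards the first return to the small ball and dominates the remaining crossings by the \emph{even} event $\LH^{\alpha}_{2j}$ via (\ref{eqn::sle_boundary_rhoequals0}); that bound, $x^{\alpha^+_{2j}-\alpha^+_{2j-1}}\eps^{\alpha^+_{2j-1}}$, only needs the center to be $\lesssim$ the radius, so no lower bound on $x$ is required, and it still yields the exponent $\alpha^+_{2j}$ on the inner scale. This relaxation also answers your first flagged obstacle: after $\xi$ the required crossings are far-ball, near-ball, far-ball, \dots, so the containment in a genuine $\LH^{\alpha}_{2j}$-event is immediate without tracking the first visit to $C_z^b$'s image. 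As for the separation of the outer ball, the paper does nothing more careful than you propose: it records $2y\le f_\xi(y)\le y$, $f_\xi'(y)\asymp 1$ and cites (\ref{eqn::sle_boundary_rhoequals0}) directly, with no extra ``run the curve further'' step in the upper bound.
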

\begin{proof}
We run the curve up to time $\xi$ and let $f=g_{\xi}-W_{\xi}$. We have the following observations.
\begin{itemize}
\item By Lemma \ref{lem::image_insideball}, we know that $f(B(z,\eps))$ is contained in the ball with center $f(z)$ and radius $u:=4\eps|f'(z)|$. Applying Koebe 1/4 theorem to $f$, we have 
\begin{equation}\label{eqn::sle_interior_upperaux1_im}
C\eps |f'(z)|/4\le \Im{f(z)}\le 4C\eps|f'(z)|.
\end{equation}
Next, we argue that $f(B(z,\eps))$ is contained in the ball with center $|f(z)|\in \R$ and radius $8Cr/\delta$. Since $f((z,\eps))$ is contained in the ball with center $f(z)$ and radius $u$, it is clear that $f(B(z,\eps))$ is contained in the ball with center $|f(z)|$ with radius $u+2|f(z)|$. By (\ref{eqn::sle_interior_upperaux1_im}), we have 
\[Cu/16\le |f(z)|\sin\Theta_{\xi}\le Cu.\]
Since $\Theta_{\xi}\in (\delta, \pi-\delta)$, we know that, for $\delta>0$ small, we have $\sin\Theta_{\xi}\ge\delta/2$.
Thus, $Cu/16\le |f(z)|\le 2Cu/\delta$. Therefore, $f(B(z,\eps))$ is contained in the ball with center $|f(z)|$ with radius $8Cu/\delta$. In summary, we know that $f(B(z,\eps))$ is contained in the ball with center $|f(z)|$ and radius $32C\eps |f'(z)|/\delta$ where 
\[C\eps |f'(z)|/4\le |f(z)|\le 8C\eps|f'(z)|/\delta.\]
\item Consider $f(B(y,r))$. Since $\{\eta[0,\xi]\subset B(0,R)\}$ and $y\le -20r$ with $r\ge R$, we know that $f(B(y,r))$ is contained in the ball with center $f(y)$ and radius $4rf'(y)$ where 
\[2y\le f(y)\le y, \quad f'(y)\asymp 1.\]
\end{itemize}
Combining these two facts with (\ref{eqn::sle_boundary_rhoequals0}), we have 
\[\PP\left[\LE_{2j+2}(\eps, z, y, r)\cond \eta[0,\xi], \LF\right]\lesssim \left(C\eps|f'(z)|/\delta\right)^{\alpha^+_{2j}},\]
where the constant in $\lesssim$ depends only on $\kappa$ and is independent of $C, \eps, \delta$. 
Thus, by Lemma \ref{lem::sle_interior_derivative}, we have 
\[\PP\left[\LE_{2j+2}(\eps, z, y, r)\cap \LF\right]\lesssim C^A\delta^{-B}\eps^{\alpha_{2j+2}},\]
where $A, B$ are some constants depending on $\kappa, j$. 
This completes the proof. 
\end{proof}

\begin{lemma}\label{lem::sle_interior_upper_aux2}
Fix $\kappa\in (0,8)$ and let $\eta$ be an $\SLE_{\kappa}$ in $\HH$ from 0 to $\infty$. Fix $z\in\HH$ with $|z|=1$. Let $T_z$ be the first time that $\eta$ swallows $z$ and set $\Theta_t=\arg(g_t(z)-W_t)$.
Take $n\in\N$ such that $B(z, 16\eps 2^n)$ is contained in $\HH$. For $1\le m\le n$, let $\xi_m$ be the first time that $\eta$ hits $B(z, 16\eps 2^{n-m+1})$. Note that $\xi_1, ..., \xi_n$ is an increasing sequence of stopping times and $\xi_1$ is the first time that $\eta$ hits $B(z, 16\eps 2^n)$ and $\xi_n$ is the first time that $\eta$ hits $B(z, 32\eps)$. For $1\le m\le n$, for $\delta>0$, define 
\[\LF_m=\{\xi_m<T_z, \Theta_{\xi_m}\not\in (\delta, \pi-\delta)\}\]
There exists a function $p:(0,1)\to [0,1]$ with $p(\delta)\downarrow 0$ as $\delta\downarrow 0$ such that 
\[\PP\left[\cap_{1}^n \LF_m\right]\le p(\delta)^n.\]
\end{lemma}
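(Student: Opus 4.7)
The plan is to reduce the statement to a single-scale bound $\PP[\LF_{m+1}\cond\mathcal{F}_{\xi_m}]\le p(\delta)$ valid on the event $\LF_m$, with a universal $p(\delta)\downarrow 0$ as $\delta\downarrow 0$, and then to iterate via the strong Markov property of $\SLE_\kappa$. Since $\cap_{k=1}^m\LF_k\in\mathcal{F}_{\xi_m}$, the tower property yields $\PP[\cap_{k=1}^{m+1}\LF_k]\le p(\delta)\PP[\cap_{k=1}^m\LF_k]$, and induction on $n$ gives the claim $\PP[\cap_{k=1}^n\LF_k]\le p(\delta)^n$ (after enlarging $p$ to absorb the trivial base-case factor $\PP[\LF_1]\le 1$, which can be done while keeping $p(\delta)\downarrow 0$).

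To prove the single-scale bound, I condition on $\mathcal{F}_{\xi_m}$ and set $f_m:=g_{\xi_m}-W_{\xi_m}$ and $w_m:=f_m(z)$. By the Markov property of $\SLE_\kappa$, $\tilde\eta(t):=f_m(\eta(\xi_m+t))$ is a standard $\SLE_\kappa$ in $\HH$ from $0$ to $\infty$, and $\arg w_m=\Theta_{\xi_m}$. Since $\eta[0,\xi_m]$ is exterior to $B(z,16\eps 2^{n-m+1})$, which has twice the radius of the target ball $B(z,16\eps 2^{n-m})$, Koebe's $1/4$ theorem gives $\Im w_m\ge 4\eps 2^{n-m+1}|f_m'(z)|$, and the Koebe distortion theorem gives $f_m(B(z,16\eps 2^{n-m}))\subset B(w_m,C_1\Im w_m)$ for a universal constant $C_1$. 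Rescaling by $1/|w_m|$, which preserves the law of $\SLE_\kappa$, and writing $\theta_m:=\Theta_{\xi_m}\in(0,\delta)$ (the case $\theta_m\in(\pi-\delta,\pi)$ is symmetric under the reflection $W\mapsto -W$), I obtain
\[\PP[\LF_{m+1}\cond\mathcal{F}_{\xi_m}]\le\PP\bigl[\hat\eta\text{ hits }B(e^{i\theta_m},C_1\sin\theta_m)\bigr],\]
where $\hat\eta$ is a standard $\SLE_\kappa$ from $0$ to $\infty$ in $\HH$. Using $|e^{i\theta_m}-1|\le\theta_m$ and $\sin\theta_m\le\theta_m$, the inclusion $B(e^{i\theta_m},C_1\sin\theta_m)\subset B(1,C_2\theta_m)$ with $C_2:=C_1+1$ further reduces the right-hand side to a hitting probability for a real-axis ball.

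I would then invoke the one-arm boundary hitting estimate for standard $\SLE_\kappa$, namely $\PP[\hat\eta\text{ hits }B(1,r)]\lesssim r^{(8-\kappa)/\kappa}$ as $r\downarrow 0$. This follows from the derivative-martingale argument behind Lemma~\ref{lem::sle_boundary_derivative1} in the special case $\rho=0$, $\lambda=b=0$ (no force points are needed, so Lemma~\ref{lem::sle_boundary_integrable} is not invoked), which extends without modification from $\kappa\in(0,4)$ to all $\kappa\in(0,8)$. Substituting $r=C_2\theta_m\le C_2\delta$ gives
\[\PP[\LF_{m+1}\cond\mathcal{F}_{\xi_m}]\le C_3\,\delta^{(8-\kappa)/\kappa}=:p(\delta)\]
on the event $\LF_m$, and $p(\delta)\downarrow 0$ since $\kappa<8$, completing the single-scale estimate and hence the proof via iteration.

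The main obstacle is justifying the one-arm boundary estimate uniformly for $\kappa\in[4,8)$: Lemma~\ref{lem::sle_boundary_derivative1} is stated only for $\kappa\in(0,4)$, but in the special case $\rho=0$, $\lambda=b=0$ the argument relies only on the super-martingale $(g_t(x)-W_t)^{\lambda-b}g_t'(x)^b$ and the classical exponent $(8-\kappa)/\kappa$ is known for all $\kappa\in(0,8)$ (e.g.\ from \cite{LawlerSchrammWernerExponent3}). A secondary point is to verify that $C_1,C_2,C_3$ depend only on Koebe's universal distortion constant---that is, only on $\kappa$---ensuring uniformity in $m,n,\eps$; this is immediate from the fact that the target ball lies within half the distance from $z$ to $\eta[0,\xi_m]\cup\R$.
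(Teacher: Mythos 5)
Your overall scheme---map down at the stopping time $\xi_m$, use Koebe $1/4$ and distortion to see that on $\LF_m$ the image of the next-scale ball sits inside a ball centered at a real point $x$ of radius $\lesssim \delta x$, bound the conditional probability by the boundary hitting estimate for $\SLE_\kappa$, $\kappa\in(0,8)$, and iterate via the strong Markov property---is sound and is exactly the mechanism of the cited proof (the paper itself only refers to \cite{WuPolychromaticArmFKIsing}, Lemma 2.13, so there is no in-paper argument to compare beyond this). The geometric estimates are correct: for $m\ge 1$ the target ball has radius half of $\dist(z,\eta[0,\xi_m])$ and at most half of $\Im z$, so the constants are universal, and the reflection symmetry handles $\Theta_{\xi_m}\in(\pi-\delta,\pi)$.

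The one genuine gap is the bookkeeping at the first scale. Your iteration yields $\PP[\cap_1^n\LF_m]\le p(\delta)^{n-1}\PP[\LF_1]\le p(\delta)^{n-1}$, and the proposed repair (``enlarge $p$ to absorb the base case'', e.g.\ replace $p$ by $\sqrt{p}$) works for $n\ge 2$ but proves nothing for $n=1$: the lemma with $n=1$ asserts $\PP[\LF_1]\le p(\delta)$, and your single-scale mechanism cannot produce this, because it converts a \emph{bad angle at the start} of a scale into a small probability of reaching the \emph{next} scale, whereas $\PP[\LF_1]\le p(\delta)$ requires an estimate on the angle \emph{at the endpoint} of the first approach, uniformly over $\eps$ and $n$ (equivalently over the radius $16\eps 2^n\in(0,\Im z]$). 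That is a different statement (a one-point estimate with an angular restriction, in the spirit of Lemma \ref{lem::sle_interior_derivative} but for the complementary event $\Theta\notin(\delta,\pi-\delta)$), and it is not supplied. Either prove that endpoint-angle bound, or weaken the conclusion to $p(\delta)^{n-1}$, which is all that the application in Section \ref{sec::sle_interior} actually uses. A secondary, minor point: your justification of the input $\PP[\hat\eta\text{ hits }B(1,r)]\lesssim r^{(8-\kappa)/\kappa}$ is off as stated---with $\rho=0$, $\lambda=b=0$ the change-of-measure argument behind Lemma \ref{lem::sle_boundary_derivative1} does introduce a force point of weight $\nu=\kappa-8$ at $x$ and does invoke Lemma \ref{lem::sle_boundary_integrable} (whose hypothesis $\nu\le\kappa/2-4$ happens to hold for all $\kappa\le 8$); but this is immaterial, since the estimate is classical for all $\kappa\in(0,8)$ (e.g.\ \cite{AlbertsKozdronIntersectionProbaSLEBoundary}, quoted in the appendix of this paper), and in fact any bound tending to $0$ with $\delta$ would do.
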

\begin{proof}
\cite[Lemma 2.13]{WuPolychromaticArmFKIsing}.
\end{proof}
\begin{proof}[Proof of (\ref{eqn::sle_interior}), Upper Bound]
Assume the same notations as in Lemma \ref{lem::sle_interior_upper_aux2}. Recall that 
\[\LF=\{\eta[0,\tau_1]\subset B(0,R)\}.\]
By Lemma \ref{lem::sle_interior_upper_aux1}, we have, for $1\le m\le n$
\[\PP\left[\LE_{2j+2}\cap\LF\cap \LF_m^c\right]\lesssim 2^{nA}\delta^{-B}\eps^{\alpha_{2j+2}},\]
where $A, B$ are some constants depending on $\kappa$. Combining with Lemma \ref{lem::sle_interior_upper_aux2}, we have, for any $n$ and $\delta>0$, 
\[
\PP\left[\LE_{2j+2}(\eps, z, y, r)\cap \LF\right]
\lesssim n2^{nA}\delta^{-B}\eps^{\alpha_{2j+2}} +p(\delta)^n,\]
where $p(\delta)\downarrow 0$ as $\delta\downarrow 0$. This implies the conclusion. 
\end{proof}

\section{Ising Model}\label{sec::ising}
\subsection{Definitions and Properties}\label{subsec::ising_properties}
We focus on the square lattice $\Z^2$. Two vertices $x=(x_1, x_2)$ and $y=(y_1, y_2)$ are neighbors if $|x_1-y_1|+|x_2-y_2|=1$, and we write $x\sim y$.  
We denote by $\Lambda_n(x)$ the box centered at $x$:
\[\Lambda_n(x)=x+[-n,n]^2,\quad \Lambda_n=\Lambda_n(0).\]
Let $\Omega$ be a finite subset of $\Z^2$, and the edge-set of $\Omega$ consists of all edges of $\Z^2$ that links two vertices of $\Omega$. The boundary of $\Omega$ is defined to be $\partial \Omega=\{e=(x,y): x\sim y, x\in \Omega, y\not\in \Omega\}$. We sometimes identify a boundary edge $(x,y)$ with one of its endpoint. Two vertices $x=(x_1, x_2)$ and $y=(y_1, y_2)$ are $\star$-neighbors if $\max\{|x_1-y_1|, |x_2-y_2|\}=1$. With this definition, each vertex has eight $\star$-neighbors instead of four. 

The Ising model with free boundary conditions is a random assignment $\sigma\in \{\ominus, \oplus\}^{\Omega}$ of spins $\sigma_x\in \{\ominus, \oplus\}$, where $\sigma_x$ denotes the spin at the vertex $x$. The Hamiltonian of the Ising model is defined by 
\[H^{\free}_{\Omega}(\sigma)=-\sum_{x\sim y}\sigma_x\sigma_y.\] 
The Ising measure is the Boltzmann measure with Hamiltonian $H^{\free}_{\Omega}$ and inverse-temperature $\beta>0$: 
\[\mu^{\free}_{\beta,\Omega}[\sigma]=\frac{\exp(-\beta H^{\free}_{\Omega}(\sigma))}{Z^{\free}_{\beta, \Omega}},\quad\text{where }Z^{\free}_{\beta, \Omega}=\sum_{\sigma}\exp(-\beta H^{\free}_{\Omega}(\sigma)).\]

For a graph $\Omega$ and $\tau\in \{\ominus, \oplus\}^{\Z^2}$, one may also define the Ising model with boundary conditions $\tau$ by the Hamiltonian
\[H^{\tau}_{\Omega}(\sigma)=-\sum_{x\sim y, \{x,y\}\cap \Omega\neq\emptyset}\sigma_x\sigma_y,\quad \text{if }\sigma_x=\tau_x, \forall x\not\in \Omega.\]

\textit{Dobrushin domains} are discrete analogue of simply connected domains with two marked points on their boundary. Suppose that $(\Omega, a, b)$ is a Dobrushin domain. Assume that $\partial \Omega$ can be divided into two $\star$-connected paths from $a$ to $b$ (counterclockwise) and from $b$ to $a$. Several boundary conditions will be of particular interest in this paper. 
\begin{itemize}
\item We denote by $\mu^{\free}$ for free boundary conditions. We denote by $\mu^{\oplus}$ (resp. $\mu^{\ominus}$) for the boundary conditions that $\tau_x=\oplus$ for all $x$ (resp. $\tau_x=\ominus$ for all $x$). 
\item $(\ominus\oplus)$ boundary conditions: $\oplus$ along $\partial \Omega$ from $a$ to $b$, and $\ominus$ along $\partial \Omega$ from $b$ to $a$. This boundary condition is also called Dobrushin boundary condition, or domain-wall boundary condition. 
\item $(\ominus\free)$ boundary conditions: $\free$ along $\partial \Omega$ from $a$ to $b$, and $\ominus$ along $\partial \Omega$ from $b$ to $a$.
\end{itemize}

\begin{proposition}[Domain Markov Property]\label{prop::ising_domainmarkov}
Let $\Omega\subset \Omega'$ be two finite subsets of $\Z^2$. Let $\tau\in \{\ominus, \oplus\}^{\Z^2}$ and $\beta>0$. Let $X$ be a random variable which is measurable with respect to vertices in $\Omega$. Then we have 
\[\mu^{\tau}_{\beta, \Omega'}[X\cond \sigma_x=\tau_x, \forall x\in \Omega'\setminus \Omega]=\mu^{\tau}_{\beta, \Omega}[X].\] 
\end{proposition}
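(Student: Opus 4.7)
The plan is to verify this by a direct computation from the definition of the Boltzmann measure, exploiting the nearest-neighbor structure of the Hamiltonian. First, I would unfold the conditional expectation: for any configuration $\sigma$ on $\Omega'$ that agrees with $\tau$ on $\Omega' \setminus \Omega$, write
\[
\mu^{\tau}_{\beta,\Omega'}[\sigma \mid \sigma_x = \tau_x, \forall x \in \Omega' \setminus \Omega] = \frac{\exp(-\beta H^{\tau}_{\Omega'}(\sigma))}{\sum_{\sigma'} \exp(-\beta H^{\tau}_{\Omega'}(\sigma'))},
\]
where the sum in the denominator is over configurations $\sigma'$ on $\Omega'$ that agree with $\tau$ on $\Omega' \setminus \Omega$. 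The key observation is that the normalizing factor $Z^{\tau}_{\beta, \Omega'}$ cancels when one takes the conditional measure, so the dependence is purely through the Hamiltonian restricted to configurations in the conditioning event.

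Next I would decompose the Hamiltonian according to where the edges sit. Split the set of edges contributing to $H^{\tau}_{\Omega'}$ into three classes: (i) edges with both endpoints in $\Omega$; (ii) edges with one endpoint in $\Omega$ and the other in $\Omega' \setminus \Omega$ or in $\Z^2 \setminus \Omega'$; (iii) edges with both endpoints outside $\Omega$ (but at least one endpoint in $\Omega'$). On the conditioning event, the class (iii) contribution depends only on $\tau$, not on the free variables in $\Omega$, so it factors out of both numerator and denominator and cancels. The class (ii) contribution is a sum of terms $-\sigma_x \tau_y$ where $x \in \Omega$ and $y$ is a fixed neighbor with $\sigma_y = \tau_y$; this is precisely the boundary interaction appearing in $H^{\tau}_{\Omega}$ as defined in the excerpt. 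The class (i) contribution is exactly the bulk part of $H^{\tau}_{\Omega}$.

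Combining these observations, the numerator of the conditional probability equals $\exp(-\beta H^{\tau}_{\Omega}(\sigma))$ times a constant depending only on $\tau$, and the denominator equals $Z^{\tau}_{\beta, \Omega}$ times the same constant. Therefore the conditional law of $\sigma|_\Omega$ given the outside spins is exactly $\mu^{\tau}_{\beta, \Omega}$, and taking expectation of $X$ against both sides gives the claim. The main subtlety — and essentially the only thing to be careful about — is bookkeeping in the edge decomposition, particularly checking that the convention $\{x,y\} \cap \Omega \neq \emptyset$ in the definition of $H^{\tau}_{\Omega}$ correctly absorbs precisely the class (ii) edges and nothing more, so that no double counting or omission occurs; there is no probabilistic obstacle, only a careful accounting of which edges belong to which class.
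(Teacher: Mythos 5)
Your computation is correct: decomposing $H^{\tau}_{\Omega'}$ into edges meeting $\Omega$ and edges entirely outside $\Omega$, noting that on the conditioning event the latter contribute only a constant factor that cancels between numerator and denominator, and identifying the former with $H^{\tau}_{\Omega}$ (since all spins outside $\Omega$ are frozen to $\tau$) is exactly the standard argument. The paper states this proposition without proof, treating it as a classical fact (cf.\ the reference to Friedli--Velenik used for the FKG inequality), so there is no alternative route in the paper to compare against; your direct bookkeeping is complete and fills in precisely what is omitted.
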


The set $\{\ominus, \oplus\}^{\Omega}$ is equipped with a partial order: $\sigma\le \sigma'$ if $\sigma_x\le \sigma_x'$ for all $x\in \Omega$. A random variable $X$ is increasing if $\sigma\le \sigma'$ implies $X(\sigma)\le X(\sigma')$. An event $\LA$ is increasing if $\one_{\LA}$ is increasing. 
\begin{proposition}[FKG inequality]\label{prop::ising_fkg}
Let $\Omega$ be a finite subset and $\tau$ be boundary conditions, and $\beta>0$. For any two increasing events $\LA$ and $\LB$, we have 
\[\mu^{\tau}_{\beta, \Omega}[\LA\cap\LB]\ge \mu^{\tau}_{\beta, \Omega}[\LA]\mu^{\tau}_{\beta, \Omega}[\LB].\]
\end{proposition}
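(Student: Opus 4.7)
The plan is to verify the FKG lattice condition for the Ising measure and then invoke the classical FKG theorem of Fortuin--Kasteleyn--Ginibre. Identifying $\ominus$ with $-1$ and $\oplus$ with $+1$, the state space $\{\ominus,\oplus\}^{\Omega}$ becomes a finite distributive lattice under coordinate-wise operations $(\sigma\vee\sigma')_x = \max(\sigma_x,\sigma_x')$ and $(\sigma\wedge\sigma')_x = \min(\sigma_x,\sigma_x')$. The FKG theorem says that a strictly positive probability measure $\mu$ on such a lattice satisfies positive association if
\[
\mu(\sigma)\,\mu(\sigma') \;\le\; \mu(\sigma\vee\sigma')\,\mu(\sigma\wedge\sigma')\qquad \text{for all } \sigma,\sigma',
\]
which, since $\mu^{\tau}_{\beta,\Omega}[\sigma] \propto \exp(-\beta H^{\tau}_{\Omega}(\sigma))$, is equivalent to checking
\[
H^{\tau}_{\Omega}(\sigma) + H^{\tau}_{\Omega}(\sigma') \;\ge\; H^{\tau}_{\Omega}(\sigma\vee\sigma') + H^{\tau}_{\Omega}(\sigma\wedge\sigma').
\]

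The Hamiltonian decomposes as a sum over edges, so it suffices to verify the inequality edge by edge. For an interior edge $(x,y)$, setting $a=\sigma_x,\ b=\sigma'_x,\ c=\sigma_y,\ d=\sigma'_y \in\{-1,+1\}$, I would show
\[
ac + bd \;\le\; \max(a,b)\max(c,d) + \min(a,b)\min(c,d).
\]
Without loss of generality assume $a \ge b$; if $c\ge d$ both sides equal $ac+bd$, and if $c\le d$ the difference is $(a-b)(d-c)\ge 0$. For a boundary edge $(x,y)$ with $y\notin\Omega$ fixed to $\tau_y$, the inequality reduces to $(\max(a,b)+\min(a,b))\tau_y = (a+b)\tau_y$, which is an equality. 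Summing over all edges gives the required inequality for $H^{\tau}_{\Omega}$.

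With the lattice condition in hand, the conclusion $\mu^{\tau}_{\beta,\Omega}[\LA\cap\LB]\ge \mu^{\tau}_{\beta,\Omega}[\LA]\,\mu^{\tau}_{\beta,\Omega}[\LB]$ for increasing events $\LA,\LB$ is the content of the FKG inequality of Fortuin--Kasteleyn--Ginibre; its classical proof uses a monotone coupling (Holley's inequality) constructed via a Markov chain whose stationary distribution is $\mu^{\tau}_{\beta,\Omega}$ and which preserves the coordinate-wise order. I would cite this standard result rather than reprove it. There is no real obstacle here: the whole content is the edge-by-edge inequality above, which holds because the product $\sigma_x\sigma_y$ is a supermodular function of $(\sigma_x,\sigma_y)\in\{-1,+1\}^2$; the ferromagnetic sign of the interaction ($-\sigma_x\sigma_y$ in $H$, i.e. alignment is energetically favored) is exactly what turns supermodularity into the FKG lattice condition, and the argument is insensitive to the choice of boundary conditions $\tau$.
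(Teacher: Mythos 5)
Your proposal is correct: the edge-by-edge supermodularity check of $\sigma_x\sigma_y$ (including the boundary edges, where the inequality is an equality) does give the FKG lattice condition for $\mu^{\tau}_{\beta,\Omega}$, and the classical FKG/Holley theorem then yields positive association for increasing events. The paper itself offers no argument beyond citing \cite[Chapter 3, Theorem 3.32]{VelenikFriedliStatisticalMechnics}, which is exactly this standard result, so your write-up is essentially the same route with the underlying verification made explicit rather than outsourced to the reference.
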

\begin{proof}
\cite[Chapter 3, Theorem 3.32]{VelenikFriedliStatisticalMechnics}.
\end{proof}
As a consequence of FKG inequality, we have the comparison between boundary conditions. For boundary conditions $\tau_1\le \tau_2$ and an increasing event $\LA$, we have
\begin{equation}\label{eqn::ising_boundary_comparison}
\mu^{\tau_1}_{\beta, \Omega}[\LA]\le \mu^{\tau_2}_{\beta, \Omega}[\LA]. 
\end{equation}

Ising model with inverse-temperature $\beta>0$ is related to random-cluster model with parameters $(p,2)$ through Edwards-Sokal coupling, thus the critical value $p_c(2)$ for the random-cluster model gives the critical value of $\beta:$
\[\beta_c=\frac{1}{2}\log(1+\sqrt{2}).\] 

A discrete topological rectangle $(\Omega, a, b, c, d)$ is a bounded simply-connected subdomains of $\Z^2$ with four marked boundary points. The four points are in counterclockwise order and $(ab)$ denotes the arc of $\partial \Omega$ from $a$ to $b$. We denote by $d_{\Omega}((ab), (cd))$ the discrete extermal distance between $(ab)$ and $(cd)$ in $\Omega$, see \cite[Section 6]{ChelkakRobustComplexAnalysis}. The discrete extremal distance is uniformly comparable to and converges to its continuous counterpart--- the classical extremal distance. The rectangle $(\Omega, a, b, c, d)$ is crossed by $\oplus$ in an Ising configuration $\sigma$ if there exists a path of $\oplus$ going from $(ab)$ to $(cd)$ in $\Omega$. We denote this event by $(ab)\overset{\oplus}{\longleftrightarrow}(cd)$. We have the following RSW-type estimate on the crossing probability at critical. 
\begin{proposition}
[RSW for topological rectangle]\label{prop::ising_rsw}
For each $L>0$ there exists $c(L)>0$ such that the following holds: for any topological rectangle $(\Omega, a, b, c, d)$ with $d_{\Omega}((ab), (cd))\le L$, 
\[\mu^{\text{mixed}}_{\beta_c, \Omega}\left[(ab)\overset{\oplus}{\longleftrightarrow}(cd)\right]\ge c(L),\]
where the boundary conditions are $\free$ on $(ab)\cup(cd)$ and $\ominus$ on $(bc)\cup(da)$. 
\end{proposition}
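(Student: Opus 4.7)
The plan is to reduce the estimate to the corresponding crossing bound for the FK--Ising model via the Edwards--Sokal coupling, and then invoke RSW for the critical FK--Ising model. First, observe that under the mixed boundary condition ($\free$ on $(ab)\cup(cd)$, $\ominus$ on $(bc)\cup(da)$), the associated FK--Ising representation on $\Omega$ has boundary conditions that are wired on $(bc)\cup(da)$ (identified as a single boundary cluster carrying the spin $\ominus$) and free on $(ab)\cup(cd)$. Sampling the FK configuration $\omega$ first and then assigning spins, the wired cluster receives $\ominus$ by construction, while each remaining FK--cluster is independently labelled $\oplus$ or $\ominus$ with probability $1/2$ each.

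Next, I would translate the crossing event. The event $(ab)\overset{\oplus}{\longleftrightarrow}(cd)$ occurs in the coupling exactly when there exists an FK--cluster $C$ in $\omega$ which touches both arcs $(ab)$ and $(cd)$, is disjoint from $(bc)\cup(da)$, and is assigned spin $\oplus$. Given $\omega$, the latter spin assignment is independent and occurs with probability at least $1/2$. Hence
\[
\mu^{\text{mixed}}_{\beta_c,\Omega}\!\left[(ab)\overset{\oplus}{\longleftrightarrow}(cd)\right]\;\ge\;\frac{1}{2}\,\phi_{\Omega}^{\text{FK-mixed}}\!\left[(ab)\leftrightarrow(cd)\ \text{in}\ \Omega\setminus\bigl((bc)\cup(da)\bigr)\right],
\]
where $\phi$ denotes the critical FK--Ising measure on $\Omega$ with wired $(bc)\cup(da)$ and free $(ab)\cup(cd)$.

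The final step is to apply the RSW theory for the critical FK--Ising model. The results of Duminil-Copin--Hongler--Nolin and Chelkak--Duminil-Copin--Hongler provide strong RSW--type estimates for crossings in topological rectangles with mixed (wired/free) boundary conditions: for any $L>0$ there exists $c'(L)>0$ such that for any $(\Omega,a,b,c,d)$ with $d_{\Omega}((ab),(cd))\le L$, the probability of a primal crossing from $(ab)$ to $(cd)$ avoiding the wired arcs is at least $c'(L)$. Combined with the previous step this gives the desired bound with $c(L)=c'(L)/2$.

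The main obstacle is to have the RSW bound with exactly these mixed boundary conditions (wired on the arcs from which the crossing must stay disjoint, free on the arcs to be connected), rather than in the simpler free or fully wired cases. One either quotes it directly from the cited works, or, if only the free/free version is available off the shelf, recovers the mixed version by combining comparison of boundary conditions for FK--Ising (the analogue of \eqref{eqn::ising_boundary_comparison}) with iterative applications of the FKG inequality to paste crossings in subrectangles; this amounts to a standard gluing construction given the monotonicity properties of the FK--Ising model.
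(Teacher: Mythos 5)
Your derivation is sound in outline, but it takes a different (one-level-deeper) route than the paper: the paper proves this proposition by direct citation---it is verbatim Corollary 1.7 of \cite{ChelkakDuminilHonglerCrossingprobaFKIsing}---whereas you rederive that corollary from the FK--Ising crossing bounds via the Edwards--Sokal coupling. Your reduction is the standard one and is essentially how the cited corollary is obtained, so nothing is wrong in principle; what your route buys is an explicit explanation of why the spin statement follows from FK RSW, while the paper simply outsources it. Three points deserve care. First, ``occurs exactly when'' is too strong: a $\oplus$-crossing may be pieced together from several FK clusters each labelled $\oplus$, so only the implication you actually use (a single crossing cluster avoiding the wired arcs and labelled $\oplus$ forces the event) holds---which suffices for the lower bound. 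Second, the FK input you need is not the plain crossing estimate but a crossing whose cluster avoids $(bc)\cup(da)$; a clean way to get it is to pass to the topological rectangle obtained by deleting the $\ominus$ arcs (its discrete extremal distance between the two remaining arcs is comparable to $d_{\Omega}((ab),(cd))$) and to use comparison of boundary conditions, so that the uniform free-boundary crossing bound of \cite{ChelkakDuminilHonglerCrossingprobaFKIsing} applies in the smaller rectangle. Third, your fallback of recovering the unfavorable-boundary bound ``by iterative FKG gluing in subrectangles'' understates the difficulty: uniformity over arbitrary topological rectangles with free boundary on the arcs to be connected is exactly the main theorem of \cite{ChelkakDuminilHonglerCrossingprobaFKIsing} and is not obtainable by routine RSW gluing---which is precisely why the paper cites that work rather than reproving it.
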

\begin{proof}
\cite[Corollary 1.7]{ChelkakDuminilHonglerCrossingprobaFKIsing}.
\end{proof}

As a consequence of Propositions \ref{prop::ising_domainmarkov} to \ref{prop::ising_rsw}, we have the following space mixing property at critical.
\begin{corollary}\label{cor::ising_spacemixing}
There exists $\alpha>0$ such that for any $2k\le n$, for any event $\LA$ depending only on edges in $\Lambda_k$, and for any boundary conditions $\tau, \xi$, we have 
\[|\mu^{\tau}_{\beta_c, \Lambda_n}[\LA]-\mu^{\xi}_{\beta_c, \Lambda_n}[\LA]|\le \left(\frac{k}{n}\right)^{\alpha}\mu^{\tau}_{\beta_c, \Lambda_n}[\LA].\]
In particular, this implies that, 
for any boundary conditions $\tau$, for any $2k\le n\le m$, for any event $\LA$ depending only on vertices of $\Lambda_k$, and for any event $\LB$ depending only on vertices of $\Lambda_m\setminus\Lambda_n$, we have 
\[|\mu^{\tau}_{\beta_c, \Lambda_m}[\LA\cap\LB]-\mu^{\tau}_{\beta_c, \Lambda_m}[\LA]\mu^{\tau}_{\beta_c, \Lambda_m}[\LB]|\le \left(\frac{k}{n}\right)^{\alpha}\mu^{\tau}_{\beta_c, \Lambda_m}[\LA]\mu^{\tau}_{\beta_c, \Lambda_m}[\LB].\] 
\end{corollary}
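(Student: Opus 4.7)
The plan is to run the standard dyadic circuit coupling for the critical Ising model, combining RSW (Proposition \ref{prop::ising_rsw}), FKG (Proposition \ref{prop::ising_fkg}), boundary-condition monotonicity (\ref{eqn::ising_boundary_comparison}), and the domain Markov property (Proposition \ref{prop::ising_domainmarkov}) across $\asymp\log(n/k)$ scales. First, I would show that for every dyadic annulus $A_j:=\Lambda_{2^{j+1}k}\setminus\Lambda_{2^j k}$ lying inside $\Lambda_n$ and every boundary condition $\tau$,
\[\mu^\tau_{\beta_c,\Lambda_n}\bigl[\LC_j^\oplus\bigr]\ge c_1>0,\qquad \LC_j^\oplus:=\{\exists\ \oplus\text{-circuit around }\Lambda_k\text{ in }A_j\},\]
by covering $A_j$ with four overlapping topological rectangles of bounded aspect ratio, using Proposition \ref{prop::ising_rsw} to lower-bound each $\oplus$-crossing probability under worst-case $\ominus/\free$ boundary data, chaining the crossings via Proposition \ref{prop::ising_fkg}, and lifting to arbitrary $\tau$ via (\ref{eqn::ising_boundary_comparison}). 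Second, for $J=\lfloor\log_2(n/(4k))\rfloor$, I would iterate this bound from the outermost annulus inward: conditioning on the configuration outside $\Lambda_{2^{j+1}k}$, Proposition \ref{prop::ising_domainmarkov} turns the inner measure back into an Ising measure with some boundary data, and the previous bound gives
\[\mu^\tau_{\beta_c,\Lambda_n}\Bigl[\bigcap_{j=1}^J(\LC_j^\oplus)^c\Bigr]\le(1-c_1)^J\le (k/n)^\alpha,\qquad \alpha:=-\log_2(1-c_1)>0.\]

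Third, I would couple via FKG to obtain a joint realisation of $\mu^\ominus_{\beta_c,\Lambda_n}$, $\mu^\tau_{\beta_c,\Lambda_n}$, $\mu^\xi_{\beta_c,\Lambda_n}$, $\mu^\oplus_{\beta_c,\Lambda_n}$ with $\sigma^\ominus\le\sigma^\tau\le\sigma^\oplus$ and $\sigma^\ominus\le\sigma^\xi\le\sigma^\oplus$ pointwise. Let $\LG$ be the event that $\sigma^\ominus$ has a $\oplus$-circuit in some $A_j$; by monotonicity the outermost such circuit $\gamma$ of $\sigma^\ominus$ is also a $\oplus$-circuit in $\sigma^\tau$, $\sigma^\xi$, $\sigma^\oplus$, and Proposition \ref{prop::ising_domainmarkov} then says that conditional on $\gamma$ the interior of each sample is $\mu^\oplus_{\beta_c,\inte(\gamma)}$-distributed independently of the exterior data. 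Resampling all four interiors from a common copy of $\mu^\oplus_{\beta_c,\inte(\gamma)}$ preserves every marginal and yields $\sigma^\tau|_{\Lambda_k}=\sigma^\xi|_{\Lambda_k}$ on $\LG$. Hence for $\LA$ measurable in $\Lambda_k$,
\[\bigl|\mu^\tau_{\beta_c,\Lambda_n}[\LA]-\mu^\xi_{\beta_c,\Lambda_n}[\LA]\bigr|\le \mu^\tau[\LA\cap\LG^c]+\mu^\xi[\LA\cap\LG^c].\]
The second assertion of the corollary will then follow by conditioning on $\sigma|_{\Lambda_m\setminus\Lambda_n}$ and using Proposition \ref{prop::ising_domainmarkov} to rewrite both $\mu^\tau_{\beta_c,\Lambda_m}[\LA\cap\LB]$ and $\mu^\tau_{\beta_c,\Lambda_m}[\LA]\mu^\tau_{\beta_c,\Lambda_m}[\LB]$ as $\mu^\tau_{\beta_c,\Lambda_m}$-expectations of the inner quantity $\mu^{\sigma|_{\partial\Lambda_n}}_{\beta_c,\Lambda_n}[\LA]$ against $\one_\LB$, and substituting the first estimate.

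The main obstacle I anticipate is upgrading the third-step bound from the additive $(k/n)^\alpha$ delivered directly by the coupling to the multiplicative $(k/n)^\alpha\mu^\tau_{\beta_c,\Lambda_n}[\LA]$ claimed in the corollary. For this I would condition first on $\sigma|_{\Lambda_k}$: Proposition \ref{prop::ising_domainmarkov} makes $\sigma|_{\Lambda_n\setminus\Lambda_k}$ an Ising measure on the annular region with mixed boundary data on $\partial\Lambda_k\cup\partial\Lambda_n$, and the two dyadic steps above apply verbatim to this measure, producing $\mu^\tau_{\beta_c,\Lambda_n}[\LG^c\mid\sigma|_{\Lambda_k}]\le (k/n)^\alpha$ uniformly in $\sigma|_{\Lambda_k}$. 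Multiplying by $\one_\LA$ and integrating then gives $\mu^\tau_{\beta_c,\Lambda_n}[\LA\cap\LG^c]\le (k/n)^\alpha\mu^\tau_{\beta_c,\Lambda_n}[\LA]$, completing the first assertion.
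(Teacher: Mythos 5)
The overall architecture you propose (dyadic annuli, RSW plus FKG to produce circuits, a monotone grand coupling with recoupling inside the outermost $\oplus$-circuit of the minimal sample, and a conditional version to upgrade the additive error to the multiplicative one) is exactly the standard route that the paper itself invokes without writing out, and most of your steps are sound modulo routine care (e.g.\ exploring from $\partial\Lambda_n$ so that the outermost circuit is measurable with respect to the revealed exterior before resampling the interiors).

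However, there is one genuine gap, and it sits at the very first step: Proposition \ref{prop::ising_rsw} does \emph{not} give a lower bound on $\oplus$-crossings ``under worst-case boundary data''. It is stated only for the specific mixed boundary condition ($\free$ on the two arcs being connected, $\ominus$ on the other two), and since $\mu^{\ominus}\preceq\mu^{\free}$, this mixed measure stochastically dominates the all-$\ominus$ measure; so the bound transfers in the wrong direction. Your scheme needs crossing (hence circuit) estimates that are uniform over arbitrary \emph{spin} boundary conditions: in the dyadic iteration you condition on the configuration outside $\Lambda_{2^{j+1}k}$ (and later on $\sigma|_{\Lambda_k}$), which induces $\pm$ boundary data on the rectangles, with all-$\ominus$ as the worst case; and in the coupling step you need the sample $\sigma^{\ominus}$, i.e.\ the measure with all-$\ominus$ boundary conditions, to contain a $\oplus$-circuit with probability bounded below. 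The lifting via (\ref{eqn::ising_boundary_comparison}) only takes you \emph{up} from all-$\ominus$ to arbitrary $\tau$, so it is precisely the all-$\ominus$ crossing bound that must be supplied, and it cannot be extracted from Proposition \ref{prop::ising_rsw} by FKG or monotonicity. What is needed is the strong RSW statement uniform in boundary conditions; at criticality this is true, but its proof goes through the FK-Ising crossing bounds of Chelkak--Duminil-Copin--Hongler (uniform in boundary conditions) together with the Edwards--Sokal coupling --- one produces an FK crossing cluster disconnected from the boundary, which receives spin $\oplus$ with probability $1/2$ --- rather than through the spin-Ising corollary quoted in the paper. Once that input is granted, the rest of your argument closes; without it, the circuit probabilities $c_1$ and hence $\alpha$ are not justified by the cited proposition alone.
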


\subsection{Quasi-Multiplicativity}
Fix $n<N$ and the annulus $\Lambda_N\setminus \Lambda_n$, a simple path of $\oplus$ or of $\ominus$ connecting $\partial \Lambda_n$ to $\partial \Lambda_N$ is called an \textit{arm}. Fix an integer $j\ge 1$ and $\omega=(\omega_1, ..., \omega_j)\in \{\ominus, \oplus\}^j$. For $n<N$, define $\LA_{\omega}(n, N)$ to be the event that there are $j$ disjoint arms $(\gamma_k)_{1\le k\le j}$ connecting $\partial \Lambda_n$ to $\partial \Lambda_N$ in the annulus $\Lambda_N\setminus \Lambda_n$ which are of types $(\omega_k)_{1\le k\le j}$, where we identify two sequences $\omega$ and $\omega'$ if they are the same up to cyclic permutation and the arms are indexed in clockwise order. For each $j\ge 1$, there exists a smallest integer $n_0(j)$ such that, for all $N\ge n_0(j)$, we have $\LA_{\omega}(n_0(j), N)\neq\emptyset$.

\begin{proposition}\label{prop::qm_interior_alternating}
Assume that $\omega$ is alternating with even length. 
For all $n_0(j)\le n_1<n_2<n_3\le m/2$, and for all boundary conditions $\tau$, we have 
\[\mu^{\tau}_{\beta_c, \Lambda_m}\left[\LA_{\omega}(n_1, n_3)\right]\asymp \mu^{\tau}_{\beta_c, \Lambda_m}\left[\LA_{\omega}(n_1, n_2)\right]\mu^{\tau}_{\beta_c, \Lambda_m}\left[\LA_{\omega}(n_2, n_3)\right],\]
where the constants in $\asymp$ are uniform over $n_1, n_2, n_3, m$ and $\tau$.
\end{proposition}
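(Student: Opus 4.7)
The plan is to follow the standard quasi-multiplicativity scheme established for critical percolation by Kesten and adapted for FK-Ising in \cite{WuPolychromaticArmFKIsing}, using the three ingredients already at our disposal: the domain Markov property (Proposition \ref{prop::ising_domainmarkov}), the RSW estimate (Proposition \ref{prop::ising_rsw}), and the spatial mixing property (Corollary \ref{cor::ising_spacemixing}). The upper and lower bounds in the target equivalence are proved separately.

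For the upper bound, I would start from the elementary inclusion
\[\LA_{\omega}(n_1, n_3) \subset \LA_{\omega}(n_1, n_2/4) \cap \LA_{\omega}(4n_2, n_3),\]
whose two factors are measurable with respect to disjoint regions $\Lambda_{n_2/4}$ and $\Lambda_m \setminus \Lambda_{4n_2}$. By Corollary \ref{cor::ising_spacemixing} at scale $n_2$, the joint probability is comparable to the product, uniformly in $\tau$. To return from $n_2/4$ to $n_2$ (and symmetrically from $4n_2$ back to $n_2$) I would use Proposition \ref{prop::ising_rsw} combined with FKG to extend each arm across the intermediate annulus with positive probability without creating new arms, yielding
\[\mu^{\tau}_{\beta_c, \Lambda_m}\left[\LA_{\omega}(n_1, n_2/4)\right] \asymp \mu^{\tau}_{\beta_c, \Lambda_m}\left[\LA_{\omega}(n_1, n_2)\right],\]
and likewise for the outer annulus.

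For the lower bound, the central step is an \emph{arm-separation} lemma. For a small $\delta>0$ depending only on $j$, define the well-separated event $\widetilde\LA_{\omega}(n_1,n_2) \subset \LA_{\omega}(n_1,n_2)$ on which the $j$ arms reach $\partial \Lambda_{n_2}$ in $j$ prescribed arcs of length $\delta n_2$ that are pairwise at distance at least $\delta n_2$ on $\partial \Lambda_{n_2}$, with the colors appearing in the correct clockwise order; an analogous event $\widetilde\LA_{\omega}(n_2,n_3)$ is defined at the inner boundary of the outer annulus. An iterative conditioning argument, using FKG and RSW under the least favorable boundary condition for each successive arm, yields
\[\mu^{\tau}_{\beta_c,\Lambda_m}\left[\widetilde\LA_{\omega}(n_1,n_2)\right] \asymp \mu^{\tau}_{\beta_c,\Lambda_m}\left[\LA_{\omega}(n_1,n_2)\right],\]
and similarly for the outer annulus. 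Once both families are well-separated in matching angular positions, $j$ applications of Proposition \ref{prop::ising_rsw} in disjoint topological rectangles spanning the annulus $\Lambda_{2n_2} \setminus \Lambda_{n_2/2}$ glue each inner arm to the outer arm of the same color while preserving the alternating order; this gluing succeeds with probability bounded below. Combining the gluing step with spatial mixing (Corollary \ref{cor::ising_spacemixing}) to decouple the arm events on $\Lambda_{n_2/2}$ and on $\Lambda_m \setminus \Lambda_{2n_2}$ gives the lower bound.

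The main technical obstacle is the arm-separation lemma, since in the polychromatic setting $\LA_{\omega}$ is neither increasing nor decreasing and FKG cannot be applied directly. One circumvents this by processing the arms one at a time: exposing the innermost interface, invoking the domain Markov property, and then applying FKG to the remaining monochromatic subfamilies under the induced boundary conditions, with RSW providing the uniform lower bound on the probability to redirect the tip into the prescribed arc. A subtle point is that the alternating pattern requires RSW to be used both in its $\oplus$-crossing and (via duality/symmetry of the Ising model at criticality) $\ominus$-crossing forms; both are available through Proposition \ref{prop::ising_rsw} applied to the Ising model with mixed boundary conditions. Uniformity of all constants in $\tau$ follows from a final application of Corollary \ref{cor::ising_spacemixing} at scale $n_3 \le m/2$.
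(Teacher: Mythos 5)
Your outline is essentially the argument the paper itself relies on: the paper gives no proof of Proposition \ref{prop::qm_interior_alternating}, stating instead that it is identical to the quasi-multiplicativity proof for the FK-Ising model in \cite{ChelkakDuminilHonglerCrossingprobaFKIsing}, whose ingredients are exactly the ones you invoke (domain Markov property, FKG, RSW of Proposition \ref{prop::ising_rsw}, and the mixing estimate of Corollary \ref{cor::ising_spacemixing}), organized through the same Kesten-style scheme of arm separation, RSW gluing with arms exposed one at a time to handle non-monotonicity, and mixing across a scale gap. So your proposal matches the paper's intended proof; the only caveat is that it remains a sketch, with the separation/extension lemma (which you correctly identify as the crux, and which the paper encapsulates in Lemma \ref{lem::wellseparated_comparable}) left at the level of a plan rather than carried out.
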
   

Proposition \ref{prop::qm_interior_alternating} is called the quasi--multiplicativity. 
We will introduce several auxiliary subevents of $\LA_{\omega}(n,N)$ which are both important for the proof of Proposition \ref{prop::qm_interior_alternating} and also important for us to derive the arm exponents of Ising. Fix $\omega=(\omega_1, ...,\omega_j)\in \{\ominus, \oplus\}^j$. Fix some $\delta>0$ small. Suppose $Q=[-1,1]^2$ is the unit square. A landing sequence $(I_k)_{1\le k\le j}$ is a sequence of disjoint sub-intervals on $\partial Q$ in clockwise order. We denote by $z(I_k)$ the center of $I_k$. We say $(I_k)_{1\le k\le j}$ is \textit{$\delta$-separated} if 
\begin{itemize}
\item the intervals are at distance at least $2\delta$ from each other, and they are at distance at least $2\delta$ from the four corners of $\partial Q$ 
\item  for each $I_k$, the length of $I_k$ is at least $2\delta$.
\end{itemize}
We say that two sets are $\omega_k$-connected if there is a path of type $\omega_k$ connecting them.  
Fix two $\delta$-separated landing sequences $(I_k)_{1\le k\le j}$ and $(I'_k)_{1\le k\le j}$. We say that the arms $(\gamma_k)_{1\le k\le j}$ are \textit{$\delta$-well-separated} with landing sequence $(I_k)_{1\le k\le j}$ on $\partial \Lambda_n$ and landing sequence $(I_k')_{1\le k\le j}$ on $\partial \Lambda_N$ if 
\begin{itemize}
\item for each $k$, the arm $\gamma_k$ connects $nI_k$ to $NI_k'$;
\item for each $k$, the arm $\gamma_k$ can be $\omega_k$-connected to distance $\delta n$ of $\partial \Lambda_n$ inside $\Lambda_{\delta n}(z(I_k))$;
\item for each $k$, the arm $\gamma_k$ can be $\omega_k$-connected to distance $\delta N$ of $\partial \Lambda_N$ inside $\Lambda_{\delta N}(z(I'_k))$.
\end{itemize}
We denote this event by 
\[\LA_{\omega}^{I/I'}(n,N).\]
\begin{lemma}\label{lem::wellseparated_comparable}
Fix $j\ge 1$ and $\delta>0$ and two $\delta$-separated landing sequences $(I_k)_{1\le k\le j}$ and $(I'_k)_{1\le k\le j}$. Assume that $\omega$ is alternating with length $2j$. 
For all $n<N\le m/2$ such that $\LA_{\omega}^{I/I'}(n, N)$ is not empty, and for all boundary conditions $\tau$, we have 
\[\mu^{\tau}_{\beta_c, \Lambda_m}\left[\LA_{\omega}^{I/I'}(n, N)\right]\asymp \mu^{\tau}_{\beta_c,\Lambda_m}\left[\LA_{\omega}(n, N)\right],\]
where the constants in $\asymp$ depend only on $\delta$.
\end{lemma}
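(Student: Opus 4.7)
The easy inclusion is immediate: by definition $\LA_{\omega}^{I/I'}(n,N)\subseteq \LA_{\omega}(n,N)$, which gives $\mu^{\tau}_{\beta_c,\Lambda_m}[\LA_{\omega}^{I/I'}(n,N)]\le \mu^{\tau}_{\beta_c,\Lambda_m}[\LA_{\omega}(n,N)]$. The plan is therefore to establish the reverse comparison
\[
\mu^{\tau}_{\beta_c,\Lambda_m}[\LA_{\omega}(n,N)]\le C(\delta,j)\,\mu^{\tau}_{\beta_c,\Lambda_m}[\LA_{\omega}^{I/I'}(n,N)]
\]
by a standard separation-of-arms argument: we will show that whenever $\LA_{\omega}(n,N)$ occurs, we can locally modify the configuration in the inner shell $\Lambda_{2n}\setminus\Lambda_n$ and in the outer shell $\Lambda_N\setminus\Lambda_{N/2}$ to force the arms into the prescribed landings, paying only a multiplicative constant that depends on $\delta$ and $j$.

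The strategy has three ingredients. First, decompose $\LA_{\omega}(n,N)$ according to which arms of the annulus $\Lambda_{N/2}\setminus\Lambda_{2n}$ realize the crossing; denote by $\LA_{\omega}(2n,N/2)$ the event of having the alternating arms in this "core" annulus. Since any realization of $\LA_{\omega}(n,N)$ implies $\LA_{\omega}(2n,N/2)$, it suffices to build, conditionally on the configuration in $\Lambda_{2n}\setminus\Lambda_n$ being any realization compatible with $\LA_{\omega}(2n,N/2)$, a modified configuration in $\Lambda_{2n}\setminus\Lambda_n$ that extends the arms into the prescribed landings on $\partial \Lambda_n$ (and symmetrically at the outer scale). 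Second, the local extension in $\Lambda_{2n}\setminus\Lambda_n$ is built by prescribing a fixed pattern of $j$ thin alternating corridors of type $\omega_k$ joining the landing intervals $nI_k$ to a fixed collection of disjoint "contact sites" on $\partial \Lambda_{2n}$; each corridor realizes one crossing of a topological rectangle of bounded modulus, and the RSW estimate of Proposition \ref{prop::ising_rsw} bounds each such crossing probability from below by a constant depending only on $\delta$. Third, the local extensions and the core event are combined via the domain Markov property (Proposition \ref{prop::ising_domainmarkov}), the comparison between boundary conditions \eqref{eqn::ising_boundary_comparison} (applied after we fix the worst-case mixed boundary condition induced by the core configuration), and the FKG inequality (Proposition \ref{prop::ising_fkg}) in its conditional form, using an exploration of the outermost arm of each type to reduce to monotone sub-events.

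The main obstacle is precisely the alternating structure of $\omega$: the corridor event is not monotone, so FKG does not apply directly to the joint construction of $\oplus$ and $\ominus$ corridors. The standard remedy, also used in \cite{WuPolychromaticArmFKIsing}, is an \emph{exploration trick}: one discovers the outermost $\ominus$-arm (in the clockwise sense) by an exploration that reveals only $\ominus$-spins along its boundary, then works conditionally in the complementary domain where the remaining event -- the existence of the other arms in a smaller rectangle with now-mixed boundary conditions -- is monotone. Iterating this procedure pair by pair and using RSW together with FKG in each complementary domain produces the alternating pattern with probability bounded below uniformly. The same construction applied at the outer boundary $\partial \Lambda_{N/2}$, together with the core event via the domain Markov property, yields the desired lower bound. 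The constant $C(\delta,j)$ coming out of this argument is uniform in $n,N,m$ and $\tau$, as required.
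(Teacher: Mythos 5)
You should first note that the paper does not actually prove this lemma: it states that the proof is ``exactly the same as the proof of the quasi-multiplicativity for FK-Ising model'' in \cite{ChelkakDuminilHonglerCrossingprobaFKIsing}, using only the tools of Section \ref{subsec::ising_properties}. So the relevant comparison is between your sketch and that standard Kesten-type argument, and there your proposal has a genuine gap. The crucial step you assert -- that conditionally on any realization of the ``core'' arm event $\LA_{\omega}(2n,N/2)$ one can extend the arms through the shell $\Lambda_{2n}\setminus\Lambda_n$ into the prescribed $\delta$-separated landing intervals $nI_k$ ``paying only a multiplicative constant'' from RSW corridors -- is precisely the nontrivial part, and it is false as stated. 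Conditionally on the core event, the points where the $2j$ arms reach $\partial\Lambda_{2n}$ may be arbitrarily close to one another (or interleaved in a complicated way), and connecting such badly entangled arm ends disjointly, with the correct alternating types, to intervals at mutual distance of order $\delta n$ requires crossings of rectangles whose aspect ratio degenerates as the inter-arrival distances shrink; Proposition \ref{prop::ising_rsw} then gives no lower bound that is uniform over the conditioning. In other words, your one-shot corridor construction silently assumes that the arms already arrive well separated at $\partial\Lambda_{2n}$, which is exactly the content of the (nontrivial) arm-separation lemma you are trying to prove.

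The standard remedy, which is what the cited FK-Ising proof does, is a multi-scale argument: one introduces a notion of ``quality'' of the arm configuration at each dyadic scale (roughly, the smallest relative distance between the arm end-points on the corresponding box boundary), shows that conditionally on the arms reaching a given scale with poor quality one can improve the quality at the next scale at a cost which is summable/geometric (this uses RSW, FKG, monotonicity in boundary conditions and the domain Markov property at every scale, not just at the two extreme scales), and then iterates to conclude that the well-separated event has probability comparable to the plain arm event. Your exploration trick for handling the non-monotonicity of the alternating pattern is in the right spirit and does appear in such proofs, but it does not substitute for the scale-by-scale separation argument; without it the constant $C(\delta,j)$ you extract is not uniform in $n$, $N$, $m$ and $\tau$. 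To repair the proposal, either carry out the separation lemma along the lines of \cite{ChelkakDuminilHonglerCrossingprobaFKIsing} (Kesten--Nolin style), or do as the paper does and invoke that proof directly, checking that its only inputs are Propositions \ref{prop::ising_domainmarkov}--\ref{prop::ising_rsw}, the comparison (\ref{eqn::ising_boundary_comparison}) and Corollary \ref{cor::ising_spacemixing}.
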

We have similar results for the boundary arm events. Denote by 
\[\Lambda_n^+(x)=[-n,n]\times [0,n]+x,\quad \Lambda_n^+=\Lambda^+_n(0).\] 
We consider the arm events in the semi-annulus $\Lambda^+_N\setminus \Lambda_n^+$ and extend the definition of arm events and arm events with landing sequences in the obvious way, and denote them as 
\[\LA^{+}_{\omega}(n,N),\quad \LA^{+, I/I}_{\omega}(n,N). \] 

We need to restrict to the cases that the arms together with the boundary conditions are alternating. Precisely, in the statements of Proposition \ref{prop::qm_boundary_alternating} and Lemma \ref{lem::ws_boundary_comparable}, we restrict to the cases where the arm patterns and the boundary conditions are listed in Theorem \ref{thm::ising_boundary}. 
\begin{proposition}\label{prop::qm_boundary_alternating} For all $n_0^+(j)\le n_1<n_2<n_3\le m/2$, we have 
\[\mu^{\tau}_{\beta_c, \Lambda^+_m}\left[\LA^+_{\omega}(n_1, n_3)\right]\asymp \mu^{\tau}_{\beta_c, \Lambda^+_m}\left[\LA^+_{\omega}(n_1, n_2)\right]\mu^{\tau}_{\beta_c, \Lambda^+_m}\left[\LA^+_{\omega}(n_2, n_3)\right],\]
where the constants in $\asymp$ are uniform over $n_1, n_2, n_3$ and $m$. 
\end{proposition}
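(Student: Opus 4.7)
The plan is to follow the Kesten-style quasi-multiplicativity scheme adapted to the half-plane, combining the boundary well-separatedness lemma (Lemma~\ref{lem::ws_boundary_comparable}), the half-plane RSW estimate (Proposition~\ref{prop::ising_rsw}), the FKG inequality (Proposition~\ref{prop::ising_fkg}), the Domain Markov property (Proposition~\ref{prop::ising_domainmarkov}), and the boundary analog of the mixing statement in Corollary~\ref{cor::ising_spacemixing}, which is established by the same argument as its full-plane counterpart using Proposition~\ref{prop::ising_rsw}. Throughout I may assume $n_2\ge 4 n_1$ and $n_3\ge 4 n_2$; otherwise at least one of the two right-hand probabilities is of constant order and the claim reduces, after a bounded number of scale doublings, to the trivial monotonicity bound $\LA^+_\omega(n_1,n_3)\subset \LA^+_\omega(n_i,n_j)$.

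For the upper bound, the deterministic inclusion
\[
\LA^+_\omega(n_1,n_3)\subset \LA^+_\omega(n_1,n_2/2)\cap \LA^+_\omega(2n_2,n_3),
\]
obtained by restricting any realizing arms to sub-annuli, places the two factors in the $\sigma$-algebras of $\Lambda^+_{n_2/2}$ and $\Lambda^+_m\setminus\Lambda^+_{2n_2}$ respectively, with a buffer shell of aspect ratio $\ge 4$. The boundary mixing statement then decouples them up to a multiplicative constant. It remains to absorb the factors of $1/2$ and $2$ by extension estimates of the form $\mu[\LA^+_\omega(n_1,n_2/2)]\lesssim \mu[\LA^+_\omega(n_1,n_2)]$; these are obtained by invoking Lemma~\ref{lem::ws_boundary_comparable} to pass to a well-separated arm event at scale $n_2/2$ and then prolonging each arm individually across the thin shell $\Lambda^+_{n_2}\setminus \Lambda^+_{n_2/2}$ within its own landing box, via repeated application of Proposition~\ref{prop::ising_rsw} together with Proposition~\ref{prop::ising_fkg}.

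For the lower bound, fix $\delta$-separated landing sequences and, via Lemma~\ref{lem::ws_boundary_comparable}, work throughout with well-separated arm events. Introduce three events: $E_1=\LA^{+,I/I'}_\omega(n_1,n_2/2)$, a \emph{gluing} event $E_2$ inside the buffer $\Lambda^+_{2n_2}\setminus\Lambda^+_{n_2/2}$ consisting of $j$ arms of the prescribed colors $\omega_1,\ldots,\omega_j$ joining the outer landing of $E_1$ on $\partial \Lambda^+_{n_2/2}$ to a chosen inner landing sequence on $\partial \Lambda^+_{2n_2}$, and $E_3=\LA^{+,I''/I'''}_\omega(2n_2,n_3)$ with $I''$ aligned with the outer landing of $E_2$. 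Then $E_1\cap E_2\cap E_3\subset \LA^+_\omega(n_1,n_3)$. Boundary mixing yields $\mu[E_1\cap E_3]\gtrsim \mu[E_1]\,\mu[E_3]$; conditioning on $E_1\cap E_3$ reduces the analysis of $E_2$, via Proposition~\ref{prop::ising_domainmarkov} and the boundary comparison \eqref{eqn::ising_boundary_comparison}, to the Ising measure on the buffer with boundary data dominated by a fixed one, and the $\delta$-separation localizes the construction into $j$ disjoint curvilinear corridors, each supporting a monochromatic crossing of the prescribed color with uniform probability by Proposition~\ref{prop::ising_rsw} and Proposition~\ref{prop::ising_fkg}. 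Combining with the extension estimates from the upper-bound step yields $\mu[\LA^+_\omega(n_1,n_3)]\gtrsim \mu[\LA^+_\omega(n_1,n_2)]\,\mu[\LA^+_\omega(n_2,n_3)]$.

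The main obstacle is the gluing step in the lower bound: one must simultaneously realize $j$ arms of alternating colors through the buffer while preventing unwanted monochromatic connections between neighboring sectors of opposite color. The alternating hypothesis together with the listed compatible boundary conditions (as in Theorem~\ref{thm::ising_boundary}) is exactly what permits this: in each corridor one builds a primal arm of the prescribed color, and between corridors one builds dual barriers via further applications of Proposition~\ref{prop::ising_rsw}, so that FKG combines these into the full gluing event with a constant lower bound uniform in $\tau$.
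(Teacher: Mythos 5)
Your overall scheme (separation via Lemma \ref{lem::ws_boundary_comparable}, decoupling via a half-plane analogue of Corollary \ref{cor::ising_spacemixing}, extension across thin shells, and a gluing construction in a buffer annulus) is exactly the Kesten-style argument that the paper has in mind: the paper in fact gives no proof of Proposition \ref{prop::qm_boundary_alternating} at all, but defers to the FK-Ising quasi-multiplicativity of \cite{ChelkakDuminilHonglerCrossingprobaFKIsing}, asserting that the same proof goes through with the ingredients of Section \ref{subsec::ising_properties}. So in outline you are on the intended route.

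There is, however, one genuine flaw in how you execute the gluing and extension steps. You propose to produce the $j$ crossings of prescribed colors, plus ``dual barriers'' of the opposite color, and to combine them ``by FKG''. For the spin Ising model a $\oplus$-crossing is an increasing event while a $\ominus$-crossing is decreasing, so Proposition \ref{prop::ising_fkg} cannot lower-bound their joint probability by the product; applied to an increasing and a decreasing event it gives the \emph{reverse} inequality. FKG is only available within a single color class. The correct mechanism --- and the one used in \cite{ChelkakDuminilHonglerCrossingprobaFKIsing} --- is sequential conditioning: build the crossings one disjoint corridor at a time, condition on the full configuration outside the current corridor, use the Domain Markov property (Proposition \ref{prop::ising_domainmarkov}) together with monotonicity in boundary conditions (\ref{eqn::ising_boundary_comparison}) to dominate the conditional law by the worst boundary condition, and then apply Proposition \ref{prop::ising_rsw} (which is stated precisely with the unfavorable mixed free/$\ominus$ boundary data, and by spin-flip symmetry yields the analogous bound for $\ominus$-crossings). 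This replaces every cross-color use of FKG in your sketch, both in the gluing event $E_2$ and in the shell-extension estimates of the upper-bound step. Note also that the ``main obstacle'' you identify --- preventing unwanted monochromatic connections between sectors --- is not actually an issue: the arm event only requires the existence of arms of the prescribed types in the prescribed cyclic order, so extra connections are harmless and no barriers need to be constructed; what does require care is exactly the non-monotonicity of the mixed-color event, which the sequential-conditioning argument resolves.
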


\begin{lemma}\label{lem::ws_boundary_comparable}
Fix $j\ge 1$, $\delta>0$ and two $\delta$-separated landing sequences $(I_k)_{1\le k\le j}$ and $(I'_k)_{1\le k\le j}$. 
For all $n<N\le m/2$ such that $\LA_{\omega}^{+, I/I'}(n, N)$ is not empty, we have 
\[\mu^{\tau}_{\beta_c, \Lambda^+_m}\left[\LA_{\omega}^{+, I/I'}(n, N)\right]\asymp \mu^{\tau}_{\beta_c,\Lambda^+_m}\left[\LA^+_{\omega}(n, N)\right],\]
where the constants in $\asymp$ depend only on $\delta$.
\end{lemma}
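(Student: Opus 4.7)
The plan is to follow the standard separation-lemma strategy, as in the interior case (Lemma \ref{lem::wellseparated_comparable}), adapted to the half-plane setting. The easy direction
\[\mu^{\tau}_{\beta_c,\Lambda_m^+}\left[\LA^{+,I/I'}_{\omega}(n,N)\right]\lesssim \mu^{\tau}_{\beta_c,\Lambda_m^+}\left[\LA^+_{\omega}(n,N)\right]\]
is immediate from the inclusion $\LA^{+,I/I'}_{\omega}(n,N)\subset\LA^+_{\omega}(n,N)$. The content of the lemma is the reverse inequality: conditionally on the existence of the $\omega$-arms, one can force them, at a cost depending only on $\delta$, to land in the prescribed arcs $(I_k)$ and $(I'_k)$ with a fence of the correct color extending $\delta$-far into the annulus.

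My plan is to work at the two scales $n$ and $N$ independently. In the inner semi-annulus $\Lambda^+_{2n}\setminus \Lambda^+_n$ (and symmetrically in the outer semi-annulus $\Lambda^+_N\setminus \Lambda^+_{N/2}$), I would first apply the interior/boundary RSW input (Proposition \ref{prop::ising_rsw}) to build, with probability bounded below, a family of $\omega_k$-colored ``guides'' --- alternating crossings of thin conformal rectangles --- that connect $nI_k$ to any point on $\partial \Lambda_{2n}$ inside the angular sector $\Lambda_{\delta n}(z(I_k))$, and which come with a $\omega_k$-fence to distance $\delta n$ from $\partial \Lambda_n$. The crucial point is that these guides can be chosen to be measurable with respect to a thin shell, so that combining them by the Domain Markov property (Proposition \ref{prop::ising_domainmarkov}) with the configuration in $\Lambda^+_m\setminus \Lambda^+_{2n}$ which realizes $\LA^+_\omega(2n,N)$, one obtains $\LA^{+,I/I'}_\omega(n,N)$. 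The probability of the guides happening is bounded below by a constant $c(\delta)>0$ uniformly in $n,N,\tau$ by Propositions \ref{prop::ising_rsw} and \ref{prop::ising_fkg}.

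The main technical obstacle is that the arms are alternating, and the boundary conditions are mixed ($(\ominus\oplus)$, $(\ominus\free)$, $(\free\free)$), so one cannot directly invoke FKG for all arms at once. The standard workaround is to encode each $\omega_k$-arm as an increasing (resp.\ decreasing) event depending on the color, and use the fact that disjoint alternating $\oplus$-paths and $\ominus$-paths in a thin shell can be produced by overlaying independent RSW constructions in disjoint sub-rectangles, whose boundary conditions (after revealing parts of the shell) still satisfy the hypotheses of Proposition \ref{prop::ising_rsw} thanks to (\ref{eqn::ising_boundary_comparison}). One iterates such constructions dyadically inside the shell, decoupling scales via Corollary \ref{cor::ising_spacemixing}, to absorb the dependence on $\tau$ and produce the fence while keeping the conditional probability bounded below by a constant depending only on $\delta$.

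Finally, combining the lower-bound gains at scales $n$ and $N$ by the Domain Markov property and FKG, and chaining with the arm event in the middle annulus $\Lambda^+_{N/2}\setminus\Lambda^+_{2n}$, yields
\[\mu^{\tau}_{\beta_c,\Lambda_m^+}\left[\LA^+_{\omega}(n,N)\right]\lesssim c(\delta)^{-2}\,\mu^{\tau}_{\beta_c,\Lambda_m^+}\left[\LA^{+,I/I'}_{\omega}(n,N)\right],\]
which is the desired bound. The argument is structurally identical to the one needed for the interior Lemma \ref{lem::wellseparated_comparable} and for the boundary quasi-multiplicativity Proposition \ref{prop::qm_boundary_alternating}; in practice one proves the separation lemma first and then uses it to derive quasi-multiplicativity.
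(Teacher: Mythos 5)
The paper itself does not prove this lemma: it explicitly defers to the FK-Ising quasi-multiplicativity argument of \cite{ChelkakDuminilHonglerCrossingprobaFKIsing}, noting that all required inputs are those of Section \ref{subsec::ising_properties}. Your sketch correctly identifies those inputs (RSW, FKG/monotonicity in boundary conditions, domain Markov, spatial mixing) and the gluing-by-fans construction, but it omits the step that is the actual core of the cited proof, and without it the argument as written fails. The problem is in the sentence where you combine shell-measurable ``guides'' in $\Lambda^+_{2n}\setminus\Lambda^+_n$ with an arbitrary configuration realizing $\LA^+_\omega(2n,N)$: the arms of that configuration arrive on $\partial\Lambda^+_{2n}$ at uncontrolled locations, possibly all clustered within a distance much smaller than $\delta n$ of one another (or of a corner), and since the arms alternate in color you cannot surround them by a monochromatic circuit to relocate them. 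Any RSW construction that fans $j$ disjoint arms of alternating colors out to the prescribed $\delta$-separated intervals has probability that degenerates as the mutual distances of the landing points go to $0$, so its cost is \emph{not} bounded below by a constant $c(\delta)$ uniformly over the conditioning. The same issue occurs at the outer scale $N/2$, and your final chaining through the middle annulus quietly presupposes it has been resolved.

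What is missing is the separation (\`a la Kesten/Nolin, and the appendix of \cite{ChelkakDuminilHonglerCrossingprobaFKIsing}) lemma: conditionally on $\LA^+_\omega(n,N)$, the arms can be realized with endpoints that are $\delta'$-well-separated on $\partial\Lambda^+_{2n}$ and $\partial\Lambda^+_{N/2}$ with probability comparable to $1$ (uniformly in $n,N,m,\tau$). This is proved by a multi-scale induction over dyadic shells, showing that a badly-separated crossing configuration can be upgraded to a better-separated one at the next scale with uniformly positive conditional probability, so that poorly separated arm events are negligible compared with the arm event itself; only after this reduction do your single-scale RSW fans, FKG, the domain Markov property and Corollary \ref{cor::ising_spacemixing} give the reverse inequality at cost $c(\delta)$. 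So your proposal captures the gluing half of the standard proof but leaves out the separation half, which is exactly the nontrivial content of the lemma; also note that, contrary to your closing remark, your own outline in fact needs the separation statement before any of the gluing can be carried out, rather than as a by-product of it.
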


We do not plan to give the proofs of the quasi-multiplicativity in this paper, because the proof is exactly the same as the proof of the quasi-multiplicativity for FK-Ising model proved in \cite{ChelkakDuminilHonglerCrossingprobaFKIsing} where all the ingredients needed in the proof are the ones listed in Section \ref{subsec::ising_properties}. 
\subsection{Proofs of Theorems \ref{thm::ising_boundary} and \ref{thm::ising_interior}}
The \textit{dual square lattice} $(\Z^2)^*$ is the dual graph of $\Z^2$. The vertex set is $(1/2, 1/2)+\Z^2$ and the edges are given by nearest neighbors.  The vertices and edges of $(\Z^2)^*$ are called dual-vertices and dual-edges. In particular, for each edge $e$ of $\Z^2$, it is associated to a dual edge, denoted by $e^*$, that it crosses $e$ in the middle. 
For a finite subgraph $G$, we define $G^*$ to be the subgraph of $(\Z^2)^*$ with edge-set $E(G^*)=\{e^*: e\in E(G)\}$ and vertex set given by the end-points of these dual-edges. The \textit{medial lattice} $(\Z^2)^{\diamond}$ is the graph with the centers of edges of $\Z^2$ as vertex set, and edges connecting nearest vertices. This lattice is a rotated and rescaled version of $\Z^2$, see Figure \ref{fig::medial_lattice}. The vertices and edges of $(\Z^2)^{\diamond}$ are called medial-vertices and medial-edges. We identify the faces of $(\Z^2)^{\diamond}$ with the vertices of $\Z^2$ and $(\Z^2)^*$. A face of $(\Z^2)^{\diamond}$ is said to be black if it corresponds to a vertex of $\Z^2$ and white if it corresponds to a vertex of $(\Z^2)^*$. 

\begin{figure}[ht!]
\begin{subfigure}[b]{0.33\textwidth}
\begin{center}
\includegraphics[width=0.8\textwidth]{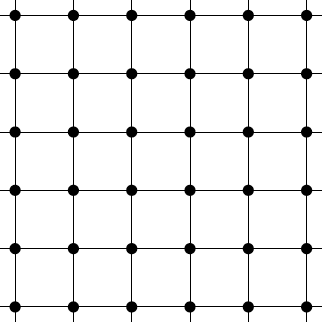}
\end{center}
\caption{The square lattice. }
\end{subfigure}
\begin{subfigure}[b]{0.33\textwidth}
\begin{center}\includegraphics[width=0.8\textwidth]{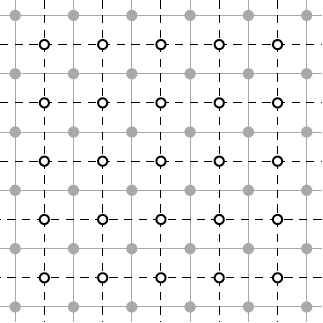}
\end{center}
\caption{The dual square lattice.}
\end{subfigure}
\begin{subfigure}[b]{0.33\textwidth}
\begin{center}
\includegraphics[width=0.8\textwidth]{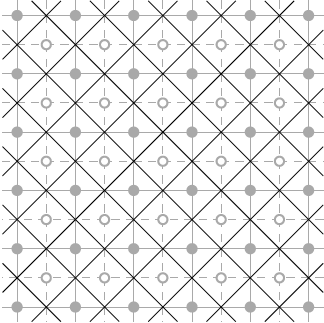}
\end{center}
\caption{The medial lattice. }
\end{subfigure}
\caption{\label{fig::medial_lattice} The lattices. }
\end{figure}

\begin{figure}[ht!]
\begin{center}
\includegraphics[width=0.8\textwidth]{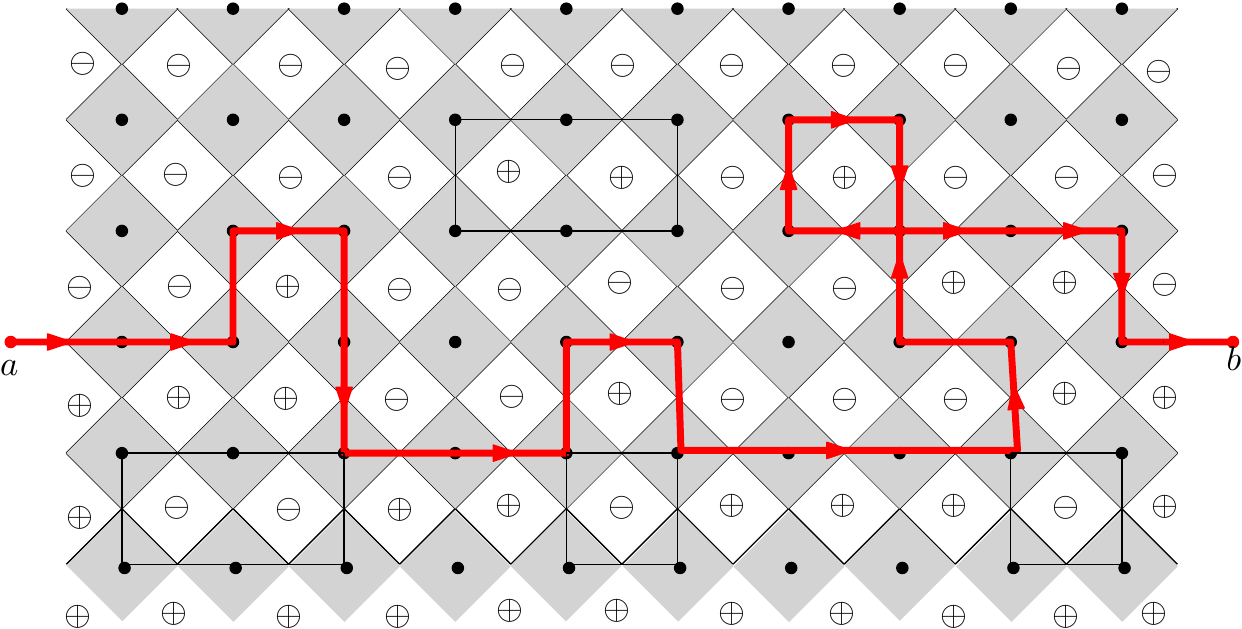}
\end{center}
\caption{\label{fig::ising_interface} The Ising interface. }
\end{figure}

For $u>0$, we consider the rescaled square lattice $u\Z^2$. The definitions of dual lattice, medial lattice and  Dobrushin domains extend to this context, and they will be denoted by $(\Omega_u, a_u, b_u)$, $(\Omega^*_u, a^*_u, b^*_u)$, $(\Omega^{\diamond}_u, a^{\diamond}_u, b^{\diamond}_u)$ respectively. 
Consider the critical Ising model on $(\Omega^*_u, a^*_u, b^*_u)$. The boundary $\partial \Omega^*_u$ is divided into two parts $(a^*_ub^*_u)$ and $(b^*_ua^*_u)$. We fix the boundary conditions to be $\ominus$ on $(b^*_ua^*_u)$ and $\oplus$ on $(a^*_ub^*_u)$, or $\ominus$ on $(b^*_ua^*_u)$ and $\free$ on $(a^*_ub^*_u)$. Define the \textit{interface} as follows. It starts from $a_u^{\diamond}$, lies on the primal lattice and turns at every vertex of $\Omega_u$ is such a way that it has always dual vertices with spin $\ominus$ on its left and $\oplus$ on its right. If there is an indetermination when arriving at a vertex (this may happen on the square lattice), turn left. See Figure \ref{fig::ising_interface}.

Let $(\Omega, a, b)$ be a simply connected domain with two marked points on its boundary. Consider a sequence of Dobrushin domains $(\Omega_u, a_u, b_u)$. We say that $(\Omega_u, a_u, b_u)$ converges to $(\Omega, a, b)$ in the \textit{Carath\'eodory sense} if 
\[f_u\to f\quad\text{on any compact subset }K\subset\HH,\]
where $f_u$ (resp. $f$) is the unique conformal map from $\HH$ to $\Omega_u$ (resp. $\Omega$) satisfying $f_u(0)=a_u, f_u(\infty)=b_u$ and $f_u'(\infty)=1$ (resp. $f(0)=a, f(\infty)=b, f'(\infty)=1$). 

Let $X$ be the set of continuous parameterized curves and $d$ be the distance on $X$ defined for $\eta_1: I\to \C$ and $\eta_2: J\to \C$ by 
\[d(\eta_1, \eta_2)=\min_{\varphi_1: [0,1]\to I, \varphi_2:[0,1]\to J}\sup_{t\in [0,1]}|\eta_1(\varphi_1(t))-\eta_2(\varphi_2(t))|,\]
where the minimization is over increasing bijective functions $\varphi_1, \varphi_2$. Note that $I$ and $J$ can be equal to $\R_+\cup\{\infty\}$. The topology on $(X, d)$ gives rise to a notion of weak convergence for random curves on $X$. 

\begin{theorem}\label{thm::ising_cvg_minusplus}
Let $\Omega$ be a simply connected domain with two marked points $a$ and $b$ on its boundary. Let $(\Omega^{\diamond}_u, a^{\diamond}_u, b^{\diamond}_u)$ be a family of Dobrushin domains converging to $(\Omega, a, b)$ in the Carath\'eodory sense. The interface of the critical Ising model in $(\Omega^*_u, a^*_u, b^*_u)$ with $(\ominus\oplus)$ boundary conditions converges weakly to $\SLE_{3}$ as $u\to 0$.
\end{theorem}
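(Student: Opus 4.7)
This is the celebrated convergence theorem of Chelkak--Smirnov \cite{CDCHKSConvergenceIsingSLE,ChelkakSmirnovIsing}, so the plan is to reconstruct their strategy rather than invent a new proof. The overall architecture has three pieces: (i) establish precompactness of the law of the discrete interface in the metric space $(X,d)$; (ii) identify subsequential limits via a conformally covariant martingale observable; (iii) show that any subsequential limit is driven by $\sqrt{3}B_t$ and invoke Levy's characterization to conclude.

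First I would prove tightness of the family of interfaces as $u\to 0$. The natural route is the Kemppainen--Smirnov condition: it suffices to verify a uniform (in $u$) crossing estimate for discrete annuli stating that the interface cannot make too many unforced crossings of a conformal annulus. This crossing estimate is a consequence of the RSW-type bound recorded in Proposition \ref{prop::ising_rsw} together with the Domain Markov Property and FKG (Propositions \ref{prop::ising_domainmarkov}--\ref{prop::ising_fkg}). Tightness in $(X,d)$ then follows, and every subsequential limit is almost surely a continuous curve in $\overline{\Omega}$ from $a$ to $b$.

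Next I would introduce the Chelkak--Smirnov fermionic observable
\[
F_u(z)\;=\;\mathbb{E}_{\Omega_u^*,a_u^*,b_u^*}^{\ominus\oplus}\!\left[e^{-\frac{i}{2}\operatorname{winding}(\gamma_u, z\to b_u^{\diamond})}\,\mathbf{1}_{z\in\gamma_u}\right],
\]
defined on medial edges, and verify its discrete $s$-holomorphicity together with the Riemann--Hilbert type boundary conditions ($F_u$ has prescribed boundary values along $(a_u^*b_u^*)$ and $(b_u^*a_u^*)$). Using the discrete complex analysis machinery (discrete primitives of $F_u^2$, maximum principle, compactness of bounded $s$-holomorphic families), one shows that $u^{-1/2}F_u$ converges, uniformly on compact subsets of $\Omega$, to the unique holomorphic function $f_\Omega$ satisfying the corresponding continuous boundary value problem, namely $f_\Omega(z)=\sqrt{\phi'(z)}$ for $\phi$ the conformal map to a half-plane sending $(a,b)$ to $(0,\infty)$. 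The main obstacle here is precisely this discrete-to-continuum complex analysis step; it is the deepest input and needs the full strength of the s-holomorphicity theory.

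Finally, given a subsequential limit $\eta$ in the Carath\'eodory sense, one parameterizes $\eta$ by half-plane capacity and lets $W_t$ denote its Loewner driving function (the existence of $W_t$ follows from tightness and the non-self-tracing property of the limit, which the crossing estimates ensure). By the Domain Markov Property of the discrete model together with the conformal covariance of the limiting observable, the quantity $f_{\Omega\setminus\eta[0,t]}(z)$ is a martingale for every $z\in\Omega$. Expanding this martingale in powers of $W_t-\sqrt{\kappa}B_t$ (using It\^o calculus applied to the Loewner flow) forces both the drift equation to match that of SLE$_\kappa$ and the quadratic variation of $W_t$ to be $\kappa\,t$ with $\kappa=3$; this is the point where the specific value $\kappa=3$ emerges, through the exponent $1/2$ in the boundary conditions of the fermionic observable. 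Levy's characterization then identifies $W_t=\sqrt{3}B_t$, proving that every subsequential limit is SLE$_3$ and, combined with tightness, yielding the weak convergence statement.
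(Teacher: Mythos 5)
The paper does not prove this statement itself; it simply cites \cite{CDCHKSConvergenceIsingSLE}, and your outline is an accurate reconstruction of exactly that cited Chelkak--Duminil-Copin--Hongler--Kemppainen--Smirnov strategy (tightness via RSW/crossing conditions, the $s$-holomorphic fermionic observable solving a Riemann--Hilbert problem, and martingale identification of the driving process with $\kappa=3$ via L\'evy's characterization). So your proposal takes essentially the same approach as the paper's source, with the deep discrete complex analysis and tightness steps correctly identified as the main inputs rather than rederived.
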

\begin{proof}
\cite{CDCHKSConvergenceIsingSLE}.
\end{proof}

\begin{theorem}\label{thm::ising_cvg_minusfree}
Let $\Omega$ be a simply connected domain with two marked points $a$ and $b$ on its boundary.  Let $(\Omega^{\diamond}_u, a^{\diamond}_u, b^{\diamond}_u)$ be a family of Dobrushin domains converging to $(\Omega, a, b)$ in the Carath\'eodory sense. The interface of the critical Ising model in $(\Omega^*_u, a^*_u, b^*_u)$ with $(\ominus\free)$ boundary conditions converges weakly to $\SLE_{3}(-3/2)$ as $u\to 0$.
\end{theorem}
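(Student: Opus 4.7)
The plan is to follow the general strategy used for the $(\ominus\oplus)$ case in Theorem \ref{thm::ising_cvg_minusplus}, but with a fermionic observable adapted to the free arc, which is the content of the work of Hongler--Kyt\"ol\"a on Ising interfaces with free boundary conditions. The argument splits into three main steps: (i) precompactness of the interfaces; (ii) identification of any subsequential limit via a discrete holomorphic observable; (iii) extraction of the driving function and recognition of $\SLE_3(-3/2)$.

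First, I would establish tightness of the family of random curves in the space $(X,d)$. The standard route is to verify the Kemppainen--Smirnov condition (G2) on annular crossings, which reduces to a polychromatic RSW-type estimate in the mixed-boundary setup. Here one can combine the RSW bound for topological rectangles (Proposition \ref{prop::ising_rsw}) with the comparison between boundary conditions (\ref{eqn::ising_boundary_comparison}) and the domain Markov property (Proposition \ref{prop::ising_domainmarkov}) to dominate the relevant crossing probabilities in $\Omega^*_u$ with $(\ominus\free)$ conditions by crossing probabilities that are bounded uniformly in the mesh $u$. This yields precompactness of the laws of the interfaces and, by Kemppainen--Smirnov, that any subsequential limit is almost surely a Loewner chain driven by a continuous $W_t$.

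Second, I would introduce a discrete fermionic observable $F_u(z)$ defined on medial vertices of $\Omega^{\diamond}_u$, with a prescribed singularity at the tip $a_u^{\diamond}$ and boundary conditions tailored to the $(\ominus\free)$ problem: the usual s-holomorphic spinor boundary values along the $\ominus$ arc $(b_u^*a_u^*)$, and a modified boundary behavior along the free arc $(a_u^*b_u^*)$ that reflects the absence of spin constraint (this is where the free arc enters and is the source of the $\rho=-3/2$ force point). I would then show that $F_u$ is a discrete martingale for the exploration up to the point where it is evaluated, and establish its convergence, as $u\to 0$, to the unique holomorphic function $F$ on $\Omega\setminus\{a\}$ with matching singularity at $a$ and Riemann--Hilbert type boundary data on $(ba)$ and $(ab)$. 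The convergence follows from standard discrete complex-analytic machinery: uniform boundedness away from the tip, equicontinuity in the bulk, and identification of the limit via its boundary values.

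Third, parametrizing the interface by capacity and evaluating the (conformally covariant version of the) limiting observable at $g_t(z)$, one obtains a continuum martingale $M_t$ for the driving function $W_t$ of any subsequential limit. Comparing $M_t$ to the corresponding $\SLE_{\kappa}(\rho)$ martingale given by Lemma \ref{lem::sle_mart} with $\kappa=3$ and a single force point at $b$ with weight $\rho=-3/2$ (so that $\rho=\kappa/2-3$), It\^o's formula and uniqueness of solutions to the resulting SDE force $W_t$ to have the law of the driving process of $\SLE_3(-3/2)$ with force point at $b$, and the subsequential limit is therefore unique. Combined with tightness, this yields the claimed weak convergence. The main technical obstacle is the construction of the discrete observable and verification of its s-holomorphicity with the correct boundary values on the free arc; once this is in place, the identification $\rho=-3/2$ is essentially forced by the exponent in the singularity of the limiting $F$ at the free endpoint $b$.
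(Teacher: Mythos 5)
Your overall route is the same one the paper relies on: the paper's proof of this theorem is a citation of Hongler--Kyt\"ol\"a and Benoist--Duminil-Copin--Hongler for the $(\free\free)$ case (convergence to $\SLE_3(-3/2;-3/2)$) together with the remark that the same argument handles $(\ominus\free)$, and your three steps (tightness via annulus crossings, an s-holomorphic observable with modified boundary values on the free arc, martingale identification of the driving process) are precisely the strategy of those cited works. Note, however, that your write-up treats the genuinely hard part---the construction of the free-arc observable, its s-holomorphicity, and its convergence---as a black box; this is acceptable here only because the paper itself defers exactly that content to the references.

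There is one concrete error: the location of the force point. For $(\ominus\free)$ boundary conditions the limit is $\SLE_3(-3/2)$ with the force point immediately adjacent to the \emph{starting} point $a$, on the side of the free arc (in half-plane coordinates with $a\mapsto 0$, $b\mapsto\infty$, the force point is $0^+$), not at $b$. This is forced by how the theorem is used later: the proof of (\ref{eqn::ising_beta_even}) applies Proposition \ref{prop::sle_boundary_alpha} with $\rho=-3/2$ and force point $v$ satisfying $x\asymp v\asymp\eps$ near the start, and the proof of (\ref{eqn::ising_gamma_odd}) applies Proposition \ref{prop::sle_boundary_gamma}, whose process has force point $0^+$. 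It is also what the observable produces: the boundary-condition change generating the drift sits at $a$, where the $\ominus$ and $\free$ arcs meet, so the exponent that pins down $\rho=-3/2$ is read off from the behaviour of the limiting observable at $a$, not at the ``free endpoint $b$'' as you write. If the weight $-3/2$ were literally at the target $b$, then after mapping $b\mapsto\infty$ the drift term $\rho\,dt/(W_t-V_t)$ vanishes and your identification would return plain $\SLE_3$; if it were adjacent to $b$, you would get a genuinely different process which near $a$ behaves like plain $\SLE_3$ and hence does not touch the free arc there at all, contradicting the finite exponents $\beta^+_{2j}$, $\gamma^+_{j}$ that the paper derives from this very theorem. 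So in your third step the comparison via Lemma \ref{lem::sle_mart} must be carried out with the single force point at $a^+$ (i.e.\ $x^{1,R}=0^+$ there); with that correction the identification argument is the standard one and the rest of your outline matches the cited proof.
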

\begin{proof}
It is proved in \cite{HonglerKytolaIsingFree, BenoistDuminilHonglerIsingFree} that the interface with $(\free\free)$ boundary conditions converges weakly to $\SLE_3(-3/2;-3/2)$ as $u\to 0$. The same proof works here. 
\end{proof}

\begin{figure}[ht!]
\begin{subfigure}[b]{0.5\textwidth}
\begin{center}
\includegraphics[width=0.8\textwidth]{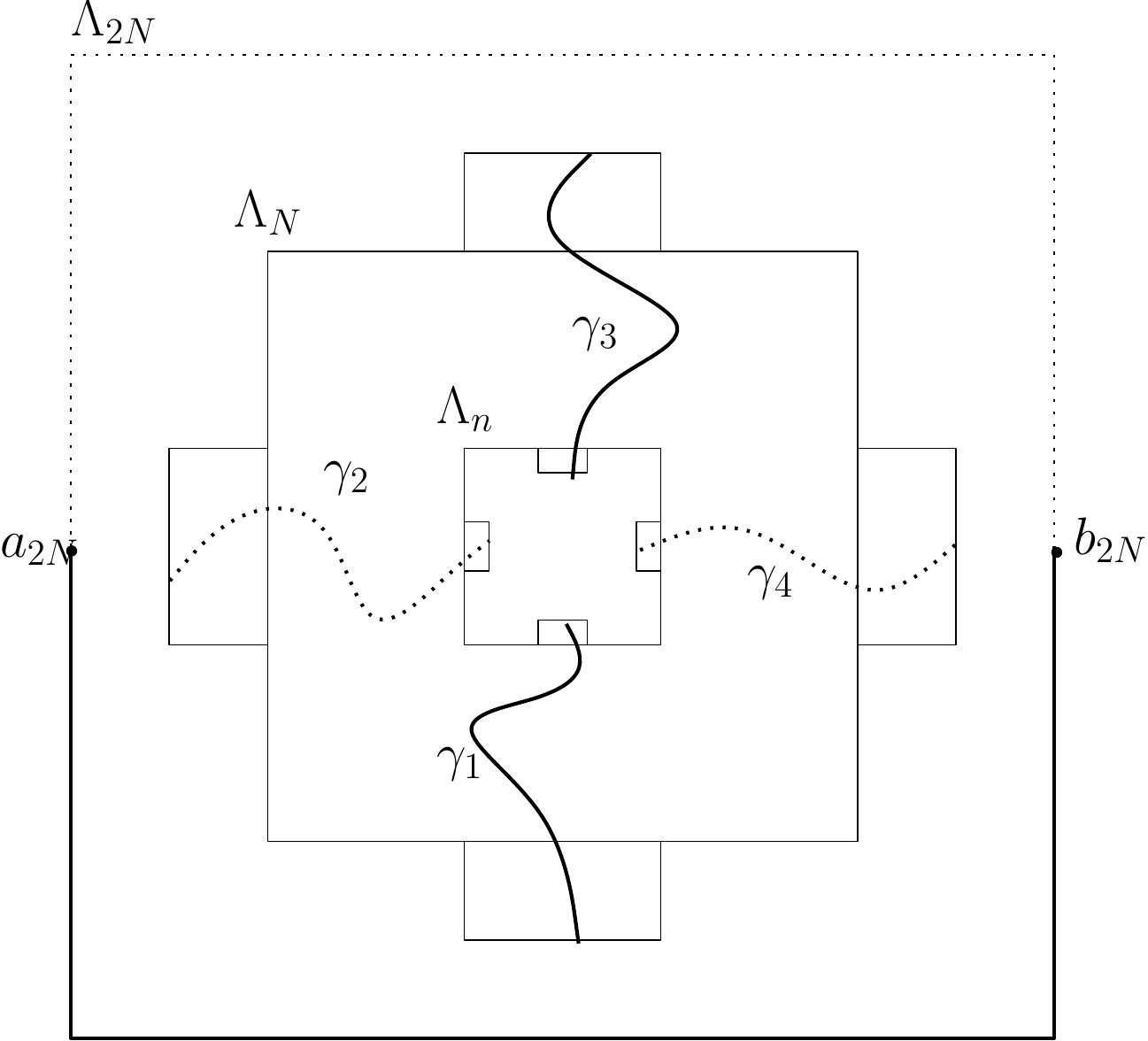}
\end{center}
\caption{$\LA^{I/I}(n,N)$ is the well-separated arm event. }
\end{subfigure}
\begin{subfigure}[b]{0.5\textwidth}
\begin{center}\includegraphics[width=0.8\textwidth]{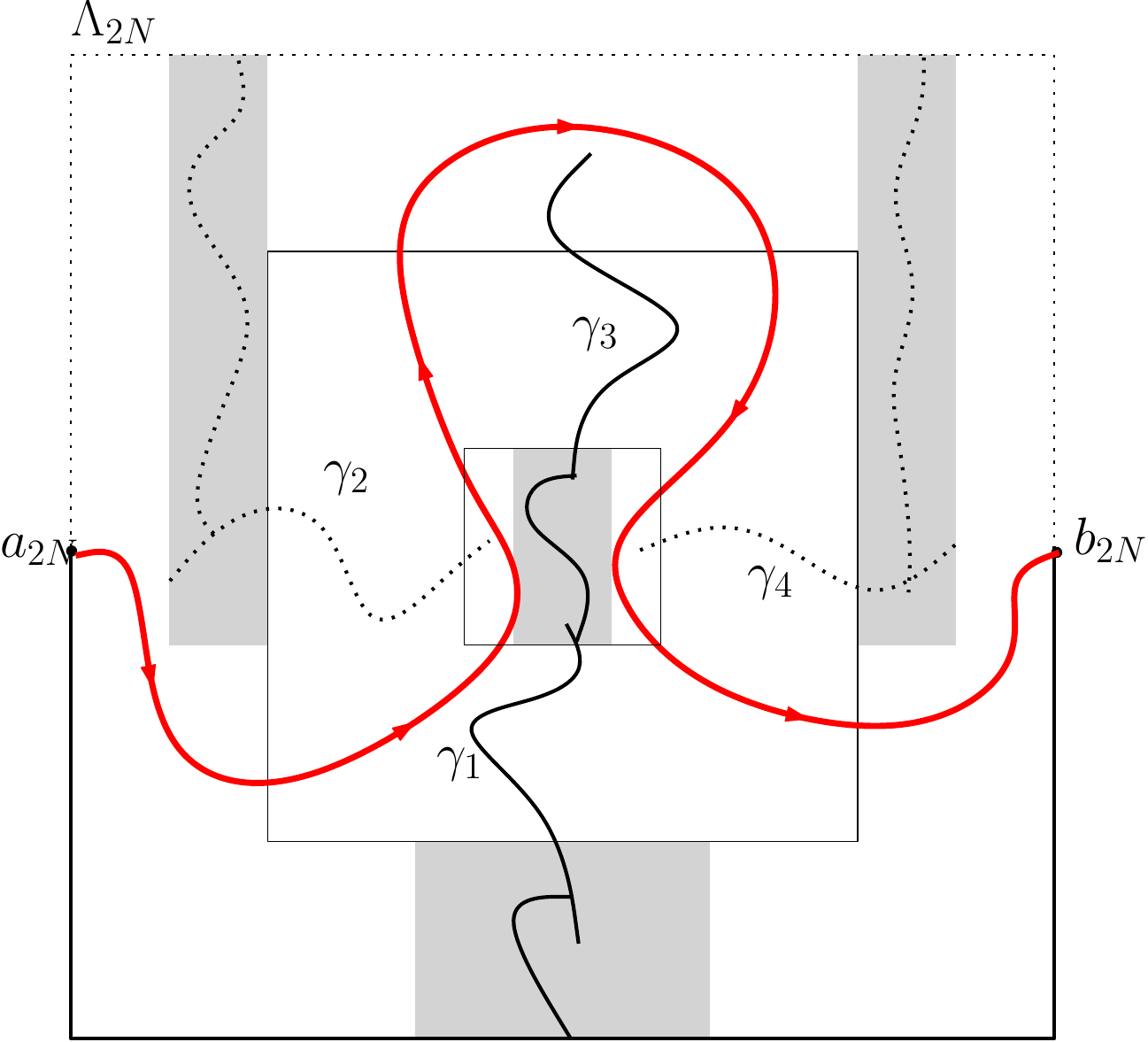}
\end{center}
\caption{The four gray parts are $R_1$ to $R_4$ respectively. }
\end{subfigure}
\caption{\label{fig::ising_interior} The explanation of the proof of Theorem \ref{thm::ising_interior}.}
\end{figure}

\begin{proof}[Proof of Theorem \ref{thm::ising_interior}]
We only give the proof for $\alpha_4$ and the other cases can be proved similarly. Consider $\Lambda_m$ with two boundary points $a_m=(-m,0)$ and $b_m=(m,0)$. Fix the $(\ominus\oplus)$ boundary condition: the vertices along $\partial\Lambda_m$ from $b_m$ to $a_m$ (counterclockwise) are $\oplus$ and the vertices from $a_m$ to $b_m$ are $\ominus$. Since we fix $\beta=\beta_c$ and the boundary conditions, and $\omega=(\oplus\ominus\oplus\ominus)$, we eliminate them from the notations. We will prove that, for $n<N\le m/2$, 
\begin{equation}\label{eqn::ising_4arms}
\mu_{\Lambda_m}[\LA(n,N)]=N^{-\alpha_4+o(1)},\quad \text{as }N\to \infty.
\end{equation}

Fix the landing sequence $I=(I_1, I_2, I_3, I_4)$ where 
\[I_1=[-1/2, 1/2]\times\{-1\}, \quad I_2=\{-1\}\times [-1/2, 1/2],\quad I_3=[-1/2, 1/2]\times\{1\},\quad I_4=\{1\}\times [-1/2, 1/2].\] 
Recall that $\LA^{I/I}(n,N)$ is the $1/8$-well-separated arm events with the landing sequence $nI$ on $\partial \Lambda_n$ and $NI$ on $\partial\Lambda_N$. The four arms in $\LA(n,N)$ are denoted by $(\gamma_1, \gamma_2, \gamma_3, \gamma_4)$ where $\gamma_1$ and $\gamma_3$ are $\oplus$ and $\gamma_2$ and $\gamma_4$ are $\ominus$. Consider critical Ising model in $\Lambda_{2N}$. Let $R_1$ to be the rectangle $[-3N/4, 3N/4]\times [-2N, -N]$, and define $\LC^{\oplus}_1$ to be the event that $\gamma_1$ is connected by path of $\oplus$ in $R_1$ to the bottom of $R_1$. Let $R_2$ to be the rectangle $[-9N/8, -N]\times[-N/2,2N]$, and define $\LC_2^{\ominus}$  to be the event that $\gamma_2$ is connected by path of $\ominus$ in $R_2$ to the top of $R_2$. Let $R_3$ be the rectangle $[-3n/4, 3n/4]\times[-n,n]$, and define $\LC_3^{\oplus}$ to be the event that $\gamma_3$ is connected to $\gamma_1$ by path of $\oplus$ in $R_3$. Let $R_4$ be the rectangle $[N, 9N/8]\times[-N/2,2N]$, and define $\LC_4^{\ominus}$ to be the event that $\gamma_4$ is connected by path of $\ominus$ in $R_4$ to the top of $R_4$. See Figure \ref{fig::ising_interior}.

By (\ref{eqn::ising_boundary_comparison}), Proposition \ref{prop::ising_rsw} and Corollary \ref{cor::ising_spacemixing}, we could prove
\begin{equation}\label{eqn::ising_interior_aux1}
\mu_{\Lambda_{2N}}\left[\LA^{I/I}(n,N)\right]\asymp \mu_{\Lambda_{2N}}\left[\LA^{I/I}(n,N)\cap\LC^{\oplus}_1\cap\LC_2^{\ominus}\cap\LC^{\oplus}_3\cap\LC^{\ominus}_4\right],
\end{equation}
where the constants in $\asymp$ are uniform over $n,N$. 

Let $\PP_N$ be the probability measure $\mu_{\Lambda_{2N}}$ where the square lattice is scaled by $1/N$ and let $\PP_{\infty}$ be the law of $\SLE_{3}$ in $[-2,2]\times[-2,2]$ from $(-2,0)$ to $(2,0)$. On the event $\LA^{I/I}(n,N)\cap\LC^{\oplus}_1\cap\LC_2^{\ominus}\cap\LC^{\oplus}_3\cap\LC^{\ominus}_4$, consider the interface $\eta$ from $a_{2N}$ to $b_{2N}$. Let $\tau_1$ be the first time that $\eta$ hits $\partial\Lambda_n$. The event $\LC_1^{\oplus}\cap\LC_2^{\ominus}$ guarantees that $\eta[0,\tau_1]$ is bounded away from the target $b_{2N}$. The event $\LC^{\oplus}_3$ guarantees that, after $\tau_1$, the path $\eta$ hits the neighborhood of $(0,2N)$ at some time $\sigma_1$. The event $\LC^{\ominus}_4$ guarantees that, after $\sigma_1$, the path $\eta$ hits $\partial\Lambda_n$ again. Therefore, by (\ref{eqn::sle_interior}), we have, for $\eps>0$,
\[\limsup_{N\to \infty}\PP_N\left[\LA^{I/I}(\eps N,N)\cap\LC^{\oplus}_1\cap\LC_2^{\ominus}\cap\LC^{\oplus}_3\cap\LC^{\ominus}_4\right]\le \eps^{\alpha_4+o(1)}\le\liminf_{N\to\infty}\PP_{N}[\LA(\eps N, N)].\]
Combining with Lemma \ref{lem::wellseparated_comparable} and (\ref{eqn::ising_interior_aux1}), we have 
\[\liminf_{N\to\infty}\PP_{N}[\LA(\eps N, N)]\asymp \limsup_{N\to\infty}\PP_{N}[\LA(\eps N, N)]\asymp \eps^{\alpha_4+o(1)}.\]
By Corollary \ref{cor::ising_spacemixing}, we know that 
\begin{equation}\label{eqn::ising_interior_aux2}
\liminf_{N\to \infty}\mu_{\Lambda_m}\left[\LA(\eps N, N)\right]\asymp \limsup_{N\to\infty}\mu_{\Lambda_m}\left[\LA(\eps N, N)\right]\asymp \eps^{\alpha_4+o(1)},
\end{equation} 
where the constants in $\asymp$ are uniform over $\eps$ and $m\ge 2N$. 

Suppose $N=n\eps^{-K}$ for some integer $K$. By Proposition \ref{prop::qm_interior_alternating}, for $m\ge 2N$, we have 
\[\mu_{\Lambda_m}\left[\LA(n, N)\right]\le C^K\Pi_{j=1}^K\mu_{\Lambda_m}\left[\LA(n\eps^{-j+1}, n\eps^{-j})\right],\]
where $C$ is some universal constant. Thus
\[\frac{\log\mu_{\Lambda_m}\left[\LA(n, N)\right]}{\log N}\le \frac{K\log C}{\log N}+\frac{1}{\log N}\sum_{j=1}^K\log \mu_{\Lambda_m}\left[\LA(n\eps^{-j-1}, n\eps^{-j})\right].\]
By (\ref{eqn::ising_interior_aux2}), we have 
\[\limsup_{j\to \infty}\mu_{\Lambda_m}\left[\LA(n\eps^{-j-1}, n\eps^{-j})\right]\asymp \eps^{\alpha_4+o(1)}.\]
Therefore,
\[\limsup_{K\to \infty}\frac{\log\mu_{\Lambda_m}\left[\LA(n, N)\right]}{\log N}\le\frac{\tilde{C}}{\log(1/\eps)}-\alpha_4,\]
where $\tilde{C}$ is some universal constant. Let $\eps\to 0$, we have 
\[\limsup_{N\to \infty}\frac{\log\mu_{\Lambda_m}\left[\LA(n, N)\right]}{\log N}\le-\alpha_4.\]
We could prove the lower bound similarly:
\[\liminf_{N\to \infty}\frac{\log\mu_{\Lambda_m}\left[\LA(n, N)\right]}{\log N}\ge -\alpha_4.\]
These imply (\ref{eqn::ising_4arms}) and complete the proof. 
\end{proof}

To prove Theorem \ref{thm::ising_boundary}, we will show the proof for $\gamma^+_{2j-1}$, and the results for $\alpha^+_{2j-1}, \gamma^+_{2j}$ can be proved similarly; and 
we will show the proof for $\beta^+_{2j}$, and the results for $\alpha^+_{2j}, \beta^+_{2j-1}$ can be proved similarly. 

\begin{figure}[ht!]
\begin{subfigure}[b]{0.5\textwidth}
\begin{center}
\includegraphics[width=\textwidth]{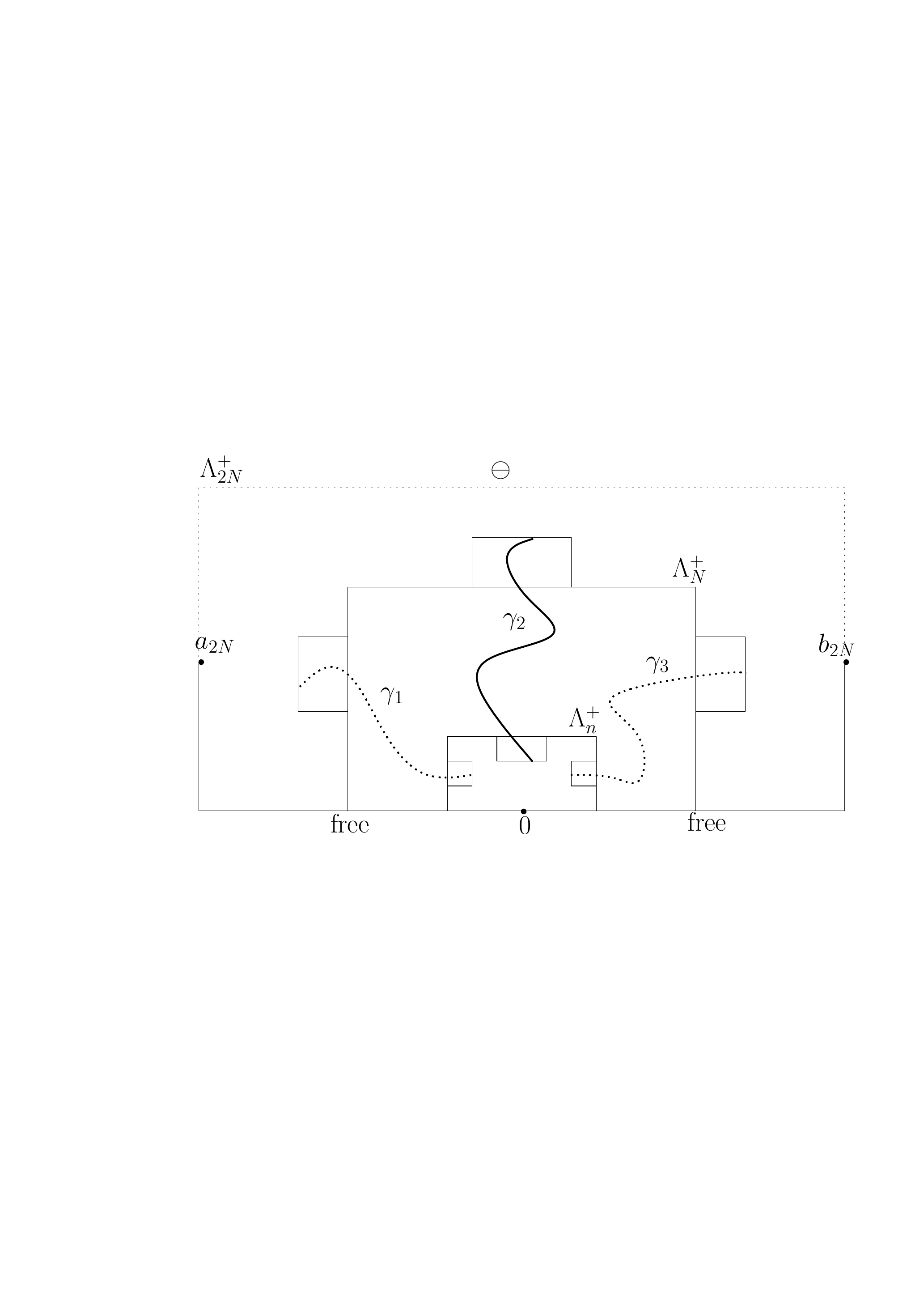}
\end{center}
\caption{$\LA^{^+,I/I}(n,N)$ is the well-separated arm event. }
\end{subfigure}
\begin{subfigure}[b]{0.5\textwidth}
\begin{center}\includegraphics[width=\textwidth]{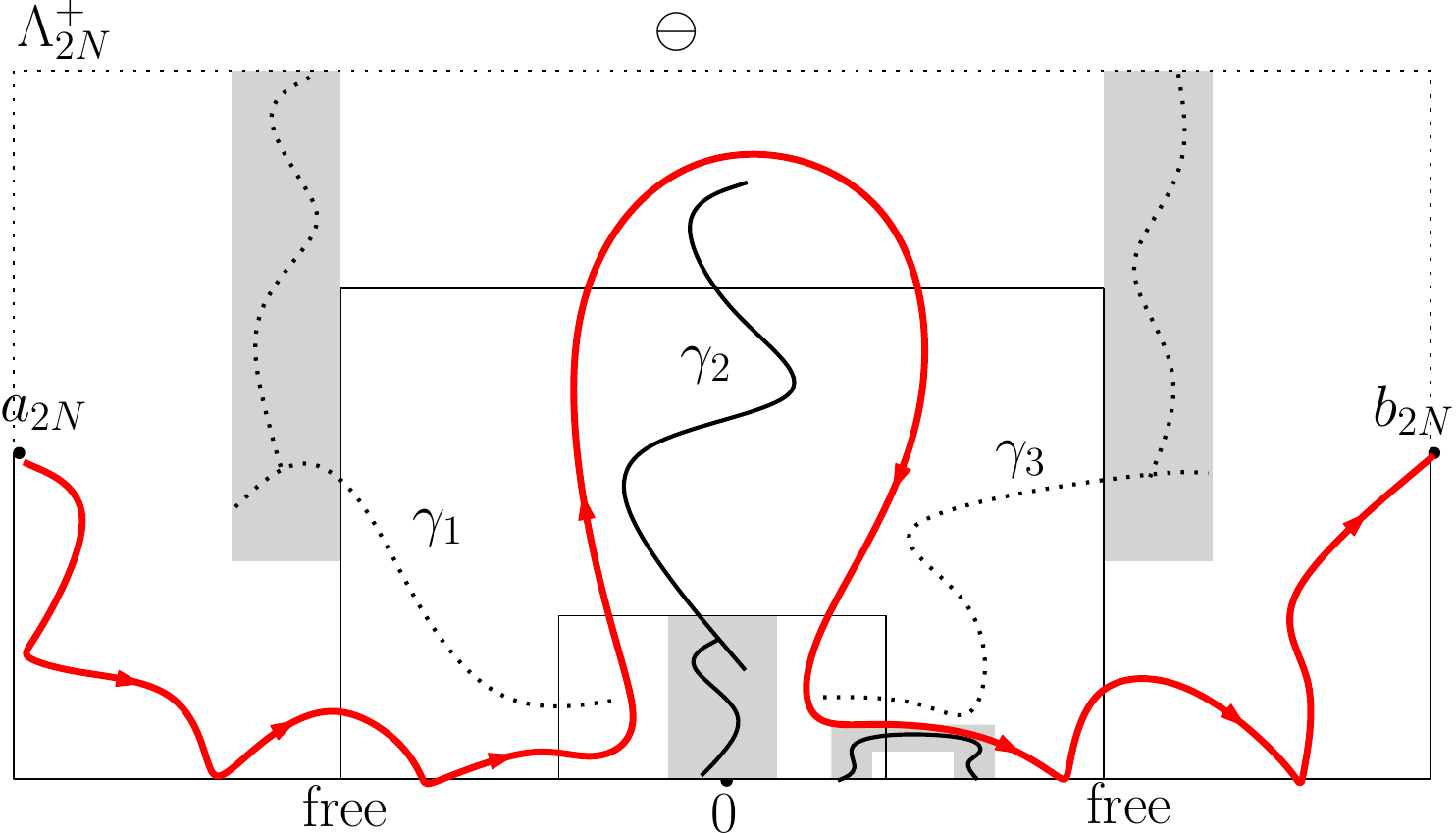}
\end{center}
\caption{The four gray parts are $R_1$ to $R_4$ respectively. }
\end{subfigure}
\caption{\label{fig::ising_gammaodd} The explanation of the proof of (\ref{eqn::ising_gamma_odd}).}
\end{figure}

\begin{proof}[Proof of (\ref{eqn::ising_gamma_odd})]
We will prove the conclusion for $\gamma^+_3$ and the other cases can be proved similarly. Consider $\Lambda_m^+$ with two boundary points $a_m=(-m,m/2)$ and $b_m=(m,m/2)$. Fix the $(\ominus\free)$ boundary condition: the vertices along $\partial\Lambda_m$ from $b_m$ to $a_m$ (counterclockwise) are $\free$ and the vertices from $a_m$ to $b_m$ are $\ominus$. Since we fix $\beta=\beta_c$ and the boundary conditions, and $\omega=(\ominus\oplus\ominus)$, we eliminate them from the notations. We will prove that, for $n<N\le m/2$, 
\begin{equation}\label{eqn::ising_gammaodd}
\mu_{\Lambda^+_m}[\LA^+(n,N)]=N^{-\gamma_3^++o(1)},\quad \text{as }N\to \infty.
\end{equation}

Fix the landing sequence $I=(I_1, I_2, I_3)$ where 
\[I_1=\{-1\}\times[1/2,3/4], \quad I_2=[-1/2, 1/2]\times\{1\},\quad I_3=\{1\}\times [1/2,3/4].\] 
Recall that $\LA^{^+,I/I}(n,N)$ is the $1/8$-well-separated arm events with the landing sequence $nI$ on $\partial \Lambda^+_n$ and $NI$ on $\partial\Lambda^+_N$. The three arms in $\LA^{+, I/I}(n,N)$ are denoted by $(\gamma_1, \gamma_2, \gamma_3)$ where $\gamma_1$ and $\gamma_3$ are $\ominus$ and $\gamma_2$ is $\oplus$.  Consider critical Ising model in $\Lambda^+_{2N}$. Let $R_1$ be the rectangle $[-9N/8, -N]\times[N/2,N]$ and define $\LC_1^{\ominus}$ to be the event that $\gamma_1$ is connected by path of $\ominus$ in $R_1$ to the top of $R_1$. Let $R_2$ be the rectangle $[-3n/4, 3n/4]\times [0,n]$ and define $\LC_2^{\oplus}$ to be the event that $\gamma_2$ is connected by path of $\oplus$ in $R_2$ to the bottom of $R_2$. Let $R_3$ be the rectangle $[N, 9N/8]\times[N/2,N]$ and define $\LC_3^{\ominus}$ to be the event that $\gamma_3$ is connected by path of $\ominus$ in $R_3$ to the top of $R_3$. For $\delta>0$, let $R_4$ be the semi-annulus $[3n/4, 4n]\times[0,n/4]\setminus [n, 3n]\times[0,\delta n]$ and define $\LC_4^{\oplus}(\delta)$ to be the event that there is a path of $\oplus$ in $R_4$ connecting the left bottom to the right bottom.  

By (\ref{eqn::ising_boundary_comparison}), Proposition \ref{prop::ising_rsw} and Corollary \ref{cor::ising_spacemixing}, we could prove, for $\delta>0$ small enough,
\begin{equation}\label{eqn::ising_boundary_gamma_aux1}
\mu_{\Lambda_{2N}}\left[\LA^{+, I/I}(n,N)\right]\asymp\mu_{\Lambda_{2N}}\left[\LA^{+, I/I}(n,N)\cap\LC_1^{\ominus}\cap\LC_2^{\oplus}\cap\LC_3^{\ominus}\cap\LC_4^{\oplus}(\delta)\right],
\end{equation}
where the constants in $\asymp$ are uniform over $n, N$. 

Let $\PP_N$ be the probability measure $\mu_{\Lambda_{2N}}$ where the square lattice is scaled by $1/N$ and let $\PP_{\infty}$ be the law of $\SLE_3(-3/2)$ in $[-2,2]\times[0,2]$ from $(-2,1)$ to $(2,1)$. On the event $\LA^{+, I/I}(n,N)\cap\LC_1^{\ominus}\cap\LC_2^{\oplus}\cap\LC_3^{\ominus}\cap\LC_4^{\oplus}(\delta)$, consider the interface $\eta$ from $a_{2N}$ to $b_{2N}$. Let $\tau_1$ be the first time that $\eta$ hits $\partial\Lambda_n$. The event $\LC_1^{\ominus}$ guarantees that $\eta[0,\tau_1]$ is bounded away from the target $b_{2N}$. The event $\LC_4^{\oplus}(\delta)$ guarantees that $\eta[0,\tau_1]$ is bounded away from the segment $[n,3n]$. The event $\LC_2^{\oplus}$ guarantees that, after $\tau_1$, the interface $\eta$ hits the neighborhood of the point $(0,N)$ at some time $\sigma_1$. The event $\LC_3^{\ominus}$ guarantees that, after $\sigma_1$, the interface $\eta$ hits $\partial\Lambda_n$ again. See Figure \ref{fig::ising_gammaodd}. Therefore, by (\ref{eqn::sle_boundary_gamma_odd}), we have, for $\eps>0$,
\[\limsup_{N\to\infty}\PP_N\left[\LA^{+, I/I}(\eps N,N)\cap\LC_1^{\ominus}\cap\LC_2^{\oplus}\cap\LC_3^{\ominus}\cap\LC_4^{\oplus}(\delta)\right]\lesssim \eps^{\gamma^+_3}\le \liminf_{N\to\infty}\PP_N\left[\LA^+(\eps N, N)\right].\]
Now we can repeat the same proof of Theorem \ref{thm::ising_interior} to obtain (\ref{eqn::ising_gammaodd}).
\end{proof}

\begin{figure}[ht!]
\begin{subfigure}[b]{0.5\textwidth}
\begin{center}
\includegraphics[width=0.8\textwidth]{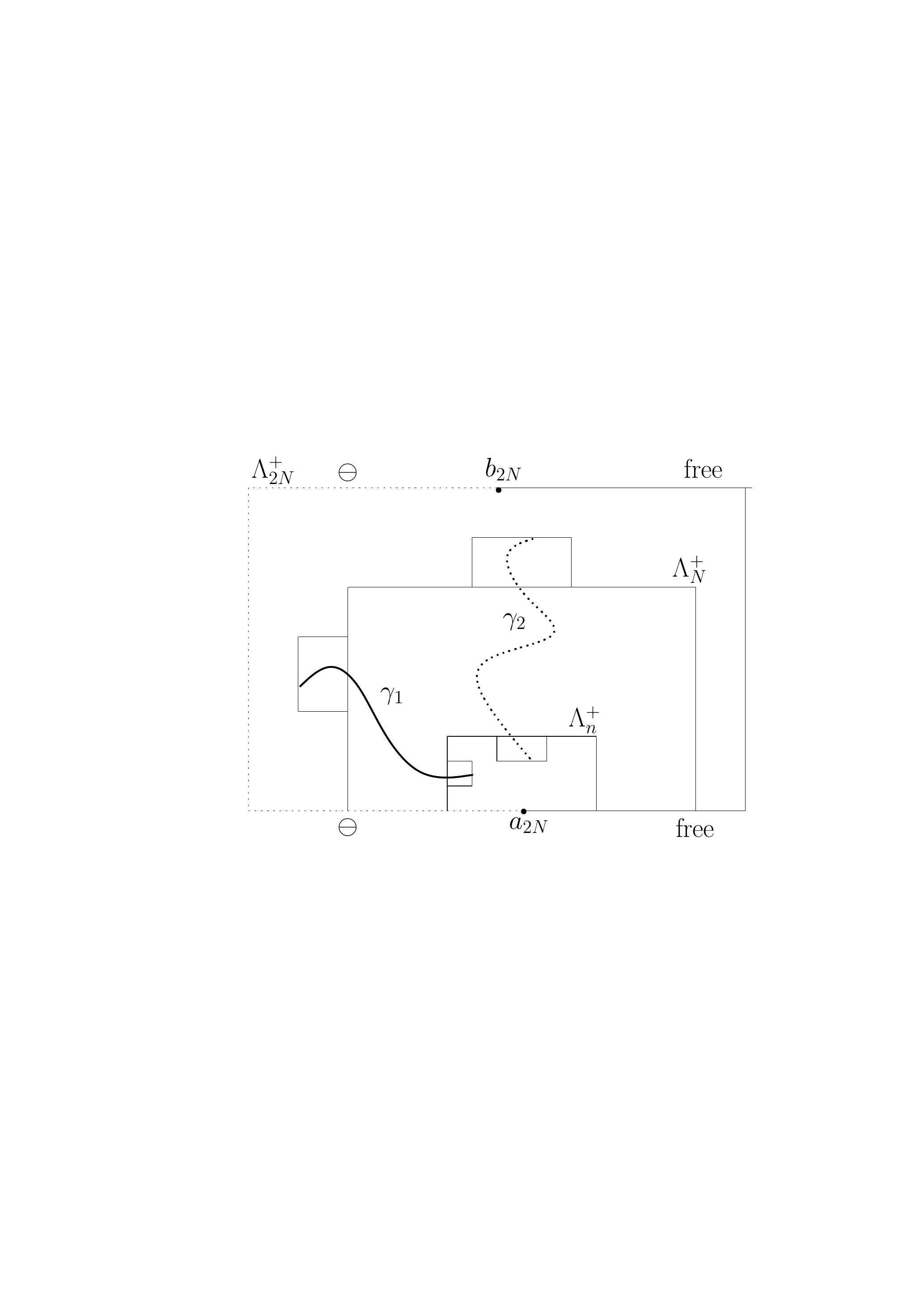}
\end{center}
\caption{$\LA^{^+,I/I}(n,N)$ is the well-separated arm event. }
\end{subfigure}
\begin{subfigure}[b]{0.5\textwidth}
\begin{center}\includegraphics[width=0.8\textwidth]{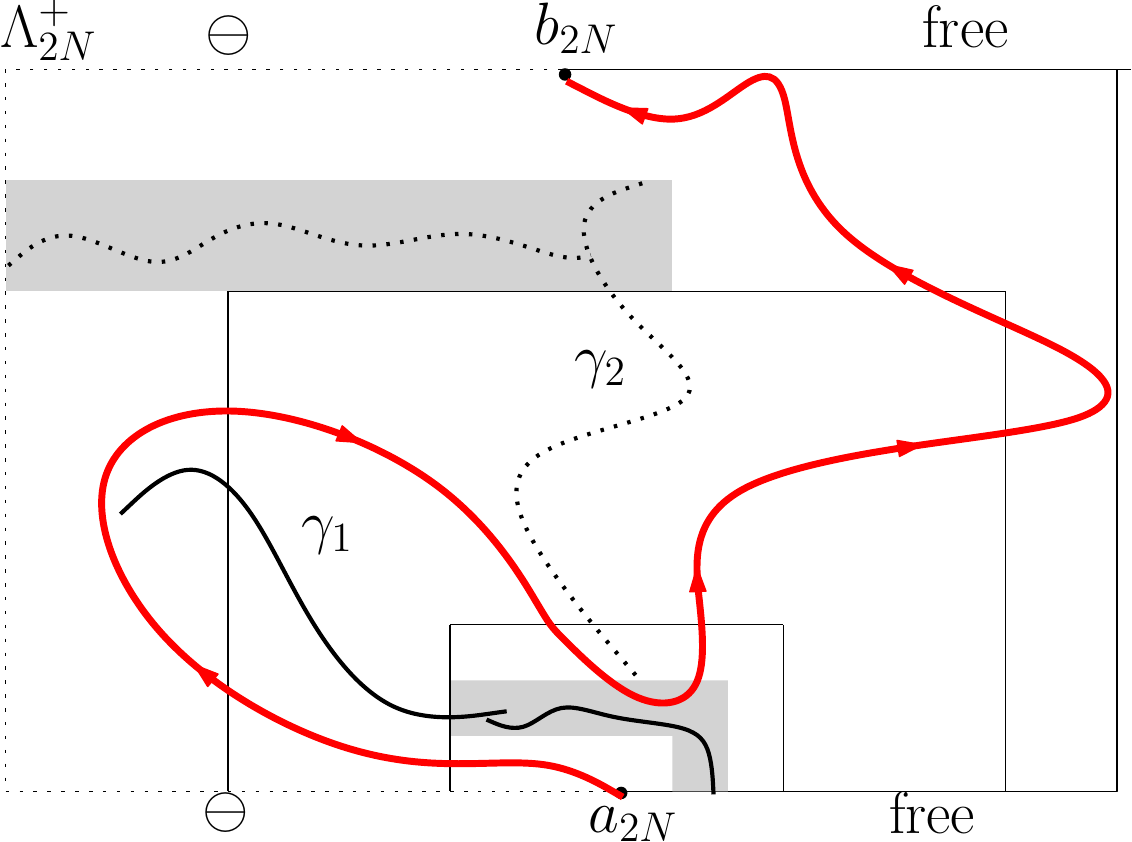}
\end{center}
\caption{The two gray parts are $R_1$ and $R_2$ respectively. }
\end{subfigure}
\caption{\label{fig::ising_betaeven} The explanation of the proof of (\ref{eqn::ising_beta_even}).}
\end{figure}

\begin{proof}[Proof of (\ref{eqn::ising_beta_even})]
We will prove the conclusion for $\beta^+_2$ and the other cases can be proved similarly. Consider $\Lambda_m^+$ with two boundary points $a_m=(0,0)$ and $b_m=(0,m)$. Fix the $(\ominus\free)$ boundary condition: the vertices along $\partial\Lambda_m$ from $b_m$ to $a_m$ (counterclockwise) are $\free$ and the vertices from $a_m$ to $b_m$ are $\ominus$. Since we fix $\beta=\beta_c$ and the boundary conditions, and $\omega=(\oplus\ominus)$, we eliminate them from the notations. We will prove that, for $n<N\le m/2$, 
\begin{equation}\label{eqn::ising_betaeven}
\mu_{\Lambda^+_m}[\LA^+(n,N)]=N^{-\beta_2^++o(1)},\quad \text{as }N\to \infty.
\end{equation}

Fix the landing sequence $I=(I_1, I_2)$ where 
\[I_1=\{-1\}\times[1/2,3/4], \quad I_2=[-1/2, 1/2]\times\{1\}.\] 
Recall that $\LA^{^+,I/I}(n,N)$ is the $1/8$-well-separated arm events with the landing sequence $nI$ on $\partial \Lambda^+_n$ and $NI$ on $\partial\Lambda^+_N$. The two arms in $\LA^{+, I/I}(n,N)$ are denoted by $(\gamma_1, \gamma_2)$ where $\gamma_1$ is $\oplus$ and $\gamma_2$ is $\ominus$.  Consider critical Ising model in $\Lambda^+_{2N}$. Let $R_1$ be the tube $[-n, 3n/4]\times[0, 3n/4]\setminus[-n, n/2]\times[0,n/4]$ and define $\LC_1^{\oplus}$ to be the event that $\gamma_1$ is connected by path of $\oplus$ in $R_1$ to the bottom of $R_1$. Let $R_2$ be the rectangle $[-N, N/2]\times[N/2, 5N/8]$ and define $\LC_2^{\ominus}$ to be the event that $\gamma_2$ is connected by path of $\ominus$ in $R_2$ to the left side of $R_2$. 

By (\ref{eqn::ising_boundary_comparison}), Proposition \ref{prop::ising_rsw} and Corollary \ref{cor::ising_spacemixing}, we could prove, for $\delta>0$ small enough,
\begin{equation}\label{eqn::ising_boundary_beta_aux1}
\mu_{\Lambda_{2N}}\left[\LA^{+, I/I}(n,N)\right]\asymp\mu_{\Lambda_{2N}}\left[\LA^{+, I/I}(n,N)\cap\LC_1^{\oplus}\cap\LC_2^{\ominus}\right],
\end{equation}
where the constants in $\asymp$ are uniform over $n, N$. 

Let $\PP_N$ be the probability measure $\mu_{\Lambda_{2N}}$ where the square lattice is scaled by $1/N$ and let $\PP_{\infty}$ be the law of $\SLE_3(-3/2)$ in $[-2,2]\times[0,2]$ from $(0,0)$ to $(0,1)$. On the event $\LA^{+, I/I}(n,N)\cap\LC_1^{\oplus}\cap\LC_2^{\ominus}$, consider the interface $\eta$ from $a_{2N}$ to $b_{2N}$, the event guarantees that the interface hits the neighborhood of the point $(-N, N/2)$, and then comes back to $\Lambda_n^+$.  
See Figure \ref{fig::ising_betaeven}. Therefore, by (\ref{eqn::sle_boundary_alpha_even_upper}) and (\ref{eqn::sle_boundary_alpha_even_lower}) (taking $\rho=-3/2$), we have, for $\eps>0$,
\[\limsup_{N\to\infty}\PP_N\left[\LA^{+, I/I}(n,N)\cap\LC_1^{\oplus}\cap\LC_2^{\ominus}\right]\lesssim \eps^{\beta_2^+}\le \liminf_{N\to\infty}\PP_N\left[\LA^+(\eps N, N)\right].\]
Now we can repeat the same proof of Theorem \ref{thm::ising_interior} to obtain (\ref{eqn::ising_betaeven}).
\end{proof}

\begin{remark}
Consider Propositions \ref{prop::sle_boundary_alpha} and \ref{prop::sle_boundary_beta}. Suppose $\eta$ is the interface of critical Ising model, then, taking $\kappa=3, \rho=\kappa/2-3$ in Proposition \ref{prop::sle_boundary_alpha}, we know that $\alpha^+_j$ is the arm exponents for the boundary conditions $(\ominus\free)$; moreover, taking $\kappa=3, \rho=-3/2$ in Proposition \ref{prop::sle_boundary_beta}, we have that $\beta^+_j$ should also be the arm exponents for the boundary conditions $(\ominus\free)$. Indeed, we have that these two formulae are the same if and only if $\rho=\kappa/2-3$. This is consistent with what we expect from the critical Ising model. 
\end{remark}

\section{Appendix: One-point estimate of the intersection of $\SLE_{\kappa}(\rho)$ with the boundary}
\begin{theorem}\label{thm::onepoint_sle_multiple}
Suppose $\eta$ is an $\SLE_{\kappa}(\rho^L;\rho^{1,R}, \rho^{2,R})$ process with force points $(x^L; x^R, 1)$ where $x^L\le 0$ and $x^R\in [0,1)$. 
Assume that 
\begin{equation}\label{eqn::assumptions_rho}
\kappa>0, \quad \rho^L>-2, \quad \rho^{1,R}>(-2)\vee(\kappa/2-4), \quad \rho^{1,R}+\rho^{2,R}>\kappa/2-4.
\end{equation}
Define 
\[\alpha=(\rho^{1,R}+2)(\rho^{1,R}+\rho^{2,R}+4-\kappa/2)/\kappa,\quad \beta=2(\rho^{1,R}+\rho^{2,R}+4-\kappa/2)/\kappa.\]
Define, for $\eps\in (0,1/2)$ and $r\ge 4$,
\[\tau_{\eps}=\inf\{t: \eta(t)\in \partial B(1, \eps(1-x^R))\},\quad S_r=\inf\{t: \eta(t)\in \partial B(0,r)\}.\]
Then we have
\[\eps^{\alpha}\left(1-x^R\right)^{\beta}\lesssim \PP[\tau_{\eps}\le S_r]=\eps^{\alpha+o(1)},\]
where the constant in $\lesssim$ depends only on $\kappa, \rho^L, \rho^{1,R}, \rho^{2,R}$ and the $o(1)$ term goes to zero as $\eps\to 0$ which depends only on $\kappa, \rho^L, \rho^{1,R}, \rho^{2,R}$ and $x^R, r$.  
\end{theorem}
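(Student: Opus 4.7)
The strategy follows the martingale-reweighting approach of Section \ref{sec::sle_boundary}: combine the Girsanov identity of Lemma \ref{lem::sle_mart} with the positive-probability estimates of Lemma \ref{lem::sle_positivechance} and Koebe distortion bounds.

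For the equality $\PP[\tau_\eps\le S_r]=\eps^{\alpha+o(1)}$, the upper bound $\PP[\tau_\eps\le S_r]\lesssim \eps^{\alpha+o(1)}$ is obtained by reducing to the case $\rho^L=0$. Lemma \ref{lem::sle_mart} expresses $\SLE_\kappa(\rho^L;\rho^{1,R},\rho^{2,R})$ as $\SLE_\kappa(\rho^{1,R},\rho^{2,R})$ weighted by a local martingale $M^L_t$ involving $g'_t(x^L)$, $g_t(x^L)-W_t$, $g_t(x^R)-g_t(x^L)$ and $g_t(1)-g_t(x^L)$. On $\{t\le S_r\}$ we have $K_t\subset B(0,r)$, so Koebe distortion gives $M^L_t\asymp 1$ with constants depending on $\kappa,\rho,r,x^L,x^R$. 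Hence $\PP^{L,R}[\tau_\eps\le S_r]\asymp \PP^{0,R}[\tau_\eps\le S_r]$, and for the latter the bound $\eps^{\alpha+o(1)}$ follows from \cite[Theorem 3.1]{MillerWuSLEIntersection}. The matching lower bound is subsumed in the sharper quantitative bound below.

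For the lower bound $\PP[\tau_\eps\le S_r]\gtrsim\eps^\alpha(1-x^R)^\beta$, we follow the template of Lemmas \ref{lem::sle_boundary_derivative2} and \ref{lem::sle_boundary_derivative_gamma}. Choose $\nu<0$ with $\rho^{1,R}+\rho^{2,R}+\nu\le\kappa/2-4$, and via Lemma \ref{lem::sle_mart} introduce
\begin{equation*}
M_t=g'_t(1)^{\nu(2\rho^{2,R}+\nu+4-\kappa)/(4\kappa)}(g_t(1)-W_t)^{\nu/\kappa}(g_t(1)-g_t(x^L))^{\nu\rho^L/(2\kappa)}(g_t(1)-g_t(x^R))^{\nu\rho^{1,R}/(2\kappa)},
\end{equation*}
so that the weighted measure $\PP^\ast=M\cdot\PP/M_0$ is the law of $\SLE_\kappa(\rho^L;\rho^{1,R},\rho^{2,R}+\nu)$, a process which almost surely accumulates at $1$. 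Define a good event $\LF\subset\{\tau_\eps<S_r\}$ on which $\eta[0,\tau_\eps]\subset B(0,r/2)$, $\dist(\eta[0,\tau_\eps],x^R)\gtrsim (1-x^R)$ until $\eta$ is close to $1$, and $\eta$ enters $B(1,\eps(1-x^R))$ from a uniformly non-degenerate angle. On $\LF$, Koebe distortion yields
\[
g_{\tau_\eps}(1)-W_{\tau_\eps}\asymp\eps(1-x^R)g'_{\tau_\eps}(1),\qquad g_{\tau_\eps}(1)-g_{\tau_\eps}(x^R)\asymp(1-x^R)g'_{\tau_\eps}(1),
\]
while $g'_{\tau_\eps}(1)$ and $g_{\tau_\eps}(1)-g_{\tau_\eps}(x^L)$ are controlled above and below by constants. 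Substituting into $M_{\tau_\eps}$ and choosing $\nu$ so that the resulting $\eps$- and $(1-x^R)$-exponents equal $-\alpha$ and $-\beta$ respectively gives $M_{\tau_\eps}\asymp\eps^{-\alpha}(1-x^R)^{-\beta}$ on $\LF$. The Radon--Nikodym identity
\[
\PP[\LF]=M_0\,\E^\ast\!\left[M_{\tau_\eps}^{-1}\one_{\LF}\right]\gtrsim \eps^\alpha(1-x^R)^\beta\PP^\ast[\LF],
\]
together with $\PP^\ast[\LF]\gtrsim 1$ (an analogue of Lemma \ref{lem::sle_positivechance} for the reweighted three-force-point process), completes the proof.

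The main obstacle is that the $\eps$- and $(1-x^R)$-exponents of $M_{\tau_\eps}$ give only one equation in the single parameter $\nu$, unless $g'_{\tau_\eps}(1)$ is allowed to appear. Matching both $\alpha$ and $\beta$ simultaneously therefore requires either a precise estimate of $g'_{\tau_\eps}(1)$ on $\LF$ (in the spirit of Lemma \ref{lem::sle_boundary_derivative_gamma}) or the addition of another local-martingale factor at a new marked point, together with the verification that Lemma \ref{lem::sle_positivechance} extends to the resulting reweighted process uniformly in $x^L,x^R,r$. Both of these are of the same nature as the computations carried out in Section \ref{sec::sle_boundary}, so no genuinely new technique is expected to be needed.
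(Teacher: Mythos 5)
Your proposal has two genuine gaps, one in each half of the statement. For the upper bound, the reduction to $\rho^L=0$ does not work as claimed: the Radon--Nikodym factor that converts $\SLE_{\kappa}(\rho^{1,R},\rho^{2,R})$ into $\SLE_{\kappa}(\rho^L;\rho^{1,R},\rho^{2,R})$ contains $g_t'(x^L)$ and $(W_t-g_t(x^L))^{\rho^L/\kappa}$, and on $\{t\le S_r\}$ these admit no deterministic two-sided bounds. The theorem allows every $x^L\le 0$ (including $x^L$ arbitrarily close to $0$) and every $\kappa>0$; the curve can approach, and for $\kappa>4$ even swallow, the left force point while staying inside $B(0,r)$, so $W_t-g_t(x^L)$ has no positive lower bound and $g_t'(x^L)$ no positive lower bound on this event. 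Koebe distortion only controls these quantities when $x^L$ is at definite distance from the hull, e.g.\ $x^L\le -2r$, which is not assumed. Consequently the claimed comparison $\PP^{L,R}[\tau_\eps\le S_r]\asymp\PP^{0,R}[\tau_\eps\le S_r]$ is unjustified, and with it the whole upper bound. The paper's proof replaces this by a multi-scale argument: it first proves a one-point estimate for the event that the curve reaches scale $\eps$ with tip at height at least $\delta\eps(1-x^R)$ (with loss $\delta^{-\beta}r^{B}$), then decomposes $\{\tau_\eps\le S_r\}$ over dyadic scales $2^{n+1-k}\eps(1-x^R)$ according to the first scale at which the tip has macroscopic angle, and bounds the complementary ``always flat'' event by $q(\delta)^n$ by iterating a uniform rare-event estimate for the three-force-point process (Lemma \ref{lem::rareevent_sle_multiple}); optimizing in $n,\delta$ yields $\eps^{\alpha+o(1)}$. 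None of this appears in your plan, and it is exactly the step your reduction was supposed to bypass.

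For the lower bound you use the right martingale, but two assertions are off and the key computation is left open. First, $g_{\tau_\eps}'(1)$ is not ``controlled above and below by constants'' on the good event, and no such control is needed: with the paper's choice $\nu=-\beta\kappa$ the total power of $g_{\tau_\eps}'(1)$ in $M_{\tau_\eps}$ is
\[
\frac{\nu(\nu+2\rho^{2,R}+4-\kappa)}{4\kappa}+\frac{\nu}{\kappa}+\frac{\nu\rho^{1,R}}{2\kappa}
=\frac{\nu}{4\kappa}\bigl(\nu+2(\rho^{1,R}+\rho^{2,R}+4-\kappa/2)\bigr)=0,
\]
so the derivative cancels identically, and the remaining factors give exactly $\eps^{-\alpha}(1-x^R)^{-\beta}$ because $\alpha=(\rho^{1,R}+2)\beta/2$. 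Thus the ``obstacle'' you flag at the end (one parameter $\nu$ versus two exponents) is illusory; $\alpha$ and $\beta$ are not independent, and no estimate of $g_{\tau_\eps}'(1)$ nor an additional marked point is required --- but as written your proof does not close this step. Second, $g_{\tau_\eps}(1)-g_{\tau_\eps}(x^L)$ is not bounded by constants uniformly in $x^L$; the paper normalizes this factor by $(1-x^L)$ and proves $1\le (g_t(1)-V_t^L)/(1-x^L)\le 4r$ on $\{t\le S_r\}$ via a harmonic-measure (Brownian motion) argument, which is what makes the lower bound uniform over $x^L\le 0$. Finally, $\PP^*[\LF]\gtrsim 1$ for the reweighted process with $\rho^{1,R}+\rho^{2,R}+\nu<\kappa/2-4$ must be proved uniformly in $x^L,x^R$; the paper does this by a M\"obius change of coordinates sending $1$ to $\infty$, not by a direct analogue of Lemma \ref{lem::sle_positivechance}, and this uniformity should not be taken for granted.
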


\begin{corollary}\label{cor::onepoint_sle_multiple}
Assume the same notations as in Theorem \ref{thm::onepoint_sle_multiple}.  
Assume that 
\begin{equation}
\kappa>0, \quad \rho^L\in (-2,0], \quad \rho^{1,R}>(-2)\vee(\kappa/2-4), \quad \rho^{1,R}+\rho^{2,R}>\kappa/2-4.
\end{equation}
Then we have
\[\eps^{\alpha}\left(1-x^R\right)^{\beta}\lesssim \PP[\tau_{\eps}<\infty]=\eps^{\alpha+o(1)},\]
where the constant in $\lesssim$ depends only on $\kappa, \rho^L, \rho^{1,R}, \rho^{2,R}$ and the $o(1)$ term goes to zero as $\eps\to 0$ which depends only on $\kappa, \rho^L, \rho^{1,R}, \rho^{2,R}$ and $x^R$.  
\end{corollary}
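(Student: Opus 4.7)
The lower bound $\eps^\alpha(1-x^R)^\beta \lesssim \PP[\tau_\eps<\infty]$ is immediate from Theorem~\ref{thm::onepoint_sle_multiple}: for any fixed $r\ge 4$ one has $\{\tau_\eps\le S_r\}\subseteq\{\tau_\eps<\infty\}$, and the constant in the theorem's lower bound depends only on $\kappa,\rho^L,\rho^{1,R},\rho^{2,R}$.

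For the matching upper bound $\PP[\tau_\eps<\infty]\le \eps^{\alpha+o(1)}$, I decompose at the exit time $S_r$ for a parameter $r=r(\eps)\ge 4$ to be tuned:
\[
\PP[\tau_\eps<\infty]\le\PP[\tau_\eps\le S_r]+\PP[S_r<\tau_\eps<\infty].
\]
Theorem~\ref{thm::onepoint_sle_multiple} bounds the first term by $\eps^{\alpha+o_r(1)}$. For the second term, I condition on $\mathcal{F}_{S_r}$ and apply the strong Markov property. On $\{S_r<\tau_\eps\}$ the point $1$ is not yet swallowed and $\dist(1,K_{S_r})\ge\eps(1-x^R)$; writing $f=g_{S_r}-W_{S_r}$, the process $\tilde\eta = f\circ\eta(S_r+\cdot)$ is an $\SLE_\kappa(\rho^L;\rho^{1,R},\rho^{2,R})$ with force points $f(x^L), f(x^R), f(1)$. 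By Lemma~\ref{lem::image_insideball}, the image $f(B(1,\eps(1-x^R)))$ is contained in $B(f(1), 4\eps(1-x^R)f'(1))$ once $\eps$ is small. After rescaling so the rightmost force point sits at $1$ and applying Theorem~\ref{thm::onepoint_sle_multiple} to $\tilde\eta$, I obtain
\[
\PP[\tau_\eps<\infty\mid\mathcal{F}_{S_r}]\,\one_{\{S_r<\tau_\eps\}}\lesssim \tilde\eps^{\alpha+o(1)},\qquad \tilde\eps := \frac{4\eps(1-x^R)f'(1)}{f(1)-f(x^R)}.
\]

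The final ingredient is a deterministic derivative estimate: using $K_{S_r}\subset\overline{B(0,r)}$, the monotonicity $g_t(1)-g_t(x^R)\ge 1-x^R$ for real points, and the Loewner identity $\partial_t\log g_t'(1) = -2/(g_t(1)-W_t)^2$ with $g_t(1)-W_t\asymp r$ and $S_r\asymp r^2$ at time $S_r$, one bounds $f'(1)/(f(1)-f(x^R))$ by a universal constant, so that $\tilde\eps\lesssim \eps$. Choosing $r=r(\eps)\to\infty$ slowly (for instance $r=|\log\eps|^A$ with $A$ large) absorbs both the $o_r(1)$ error in the theorem and the derivative prefactor into the final $o(1)$. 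The main obstacle is making this derivative estimate quantitative enough to justify letting $r$ depend on $\eps$; the assumption $\rho^L\in(-2,0]$ enters precisely here, as it prevents atypical configurations where the left force point drives the curve to cluster near $\R_-$ and spoils the typical-size estimates $g_t(1)-W_t\asymp r$ and $S_r\asymp r^2$ needed for the Loewner ODE argument.
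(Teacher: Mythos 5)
Your lower bound is fine, but the upper bound has two genuine gaps. First, the strong Markov step is circular: after mapping out $\eta[0,S_r]$ and rescaling, what you need for the second term is a bound on the probability that the image process \emph{ever} hits the image ball, i.e.\ a bound on $\PP[\tau_{\tilde\eps}<\infty]$ for an $\SLE_{\kappa}(\rho^L;\rho^{1,R},\rho^{2,R})$ with general force point configuration --- but Theorem \ref{thm::onepoint_sle_multiple} only controls $\PP[\tau_{\tilde\eps}\le S_{\tilde r}]$, and removing the restriction to $S_{\tilde r}$ is exactly the content of the corollary you are proving. Second, the deterministic estimate $\tilde\eps\lesssim\eps$ is false: on $\{S_r<\tau_\eps\}$ the curve may already have approached $B(1,\eps(1-x^R))$ to within twice its radius before exiting $B(0,r)$, in which case Koebe distortion only gives $f(1)-f(x^R)\gtrsim \eps(1-x^R)f'(1)$ and hence $\tilde\eps\asymp 1$; the quantities $g_t(1)-W_t\asymp r$ and $S_r\asymp r^2$ are typical-size statements, not deterministic facts, and your monotonicity claim has the wrong direction --- for two points on the same side of the hull one has $g_t'\le 1$ there, so $g_t(1)-g_t(x^R)\le 1-x^R$ (the increasing quantity used in the paper is $g_t(1)-V_t^L$, with the two points on \emph{opposite} sides). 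Controlling how often $\tilde\eps$ is atypically large is essentially the original one-point estimate again, so the decomposition at $S_r$ does not reduce the problem.

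You have also misplaced where $\rho^L\in(-2,0]$ enters. In the paper's proof of Theorem \ref{thm::onepoint_sle_multiple}, the only use of the restriction $\{\tau_\eps\le S_r\}$ is in Lemma \ref{lem::onepoint_sle_multiple}, to get the upper bound $(g_t(1)-V_t^L)/(1-x^L)\le 4r$ for the left-force-point factor of the martingale $M$, which produces the factor $r^B$ with $B=0\vee(\beta\rho^L/2)$. When $\rho^L\le 0$ the exponent $-\beta\rho^L/2$ on that factor is nonnegative and the ratio is always $\ge 1$, so the factor is bounded below by $1$ with no reference to $S_r$; hence $B=0$ and the entire argument of the theorem (the martingale bound on each event $\LF_k$ plus the separation estimate of Lemma \ref{lem::rareevent_sle_multiple}) runs verbatim with $\{\tau_\eps<\infty\}$ in place of $\{\tau_\eps\le S_r\}$, giving $\PP[\tau_\eps<\infty]\lesssim 2^{n\alpha}\eps^{\alpha}(1-x^R)^{\beta}\delta^{-\beta}+q(\delta)^n$ and the conclusion directly. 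No splitting at $S_r$, and no derivative estimate at time $S_r$, is needed.
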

We also expect that Corollary \ref{cor::onepoint_sle_multiple} holds for all $\rho^L>-2$, but we do not have a proof yet for $\rho^L\ge 0$. 
Before proving the theorem, we first summarize the existing related results. For standard $\SLE_{\kappa}$ with $\kappa\in (0,8)$, a stronger conclusion is known \cite{AlbertsKozdronIntersectionProbaSLEBoundary}:
\[\PP[\eta\text{ hits }B(1,\eps)]\asymp \eps^{\alpha},\quad \alpha=(8-\kappa)/\kappa.\] 
For $\SLE_{\kappa}(\rho)$ with one force point at $x^R\in [0,1)$, a stronger conclusion is known \cite[Proposition 5.4]{LawlerMinkowskiSLERealLine}:
\[\PP[\tau_{\eps}<\infty]\asymp \eps^{\alpha}(1-x^R)^{\beta},\quad \alpha=(\rho+2)(\rho+4-\kappa/2)/\kappa, \quad \beta=2(\rho+4-\kappa/2)/\kappa.\]
For $\SLE_{\kappa}(\rho^{1,R},\rho^{2,R})$ process, the conclusion in Theorem \ref{thm::onepoint_sle_multiple} is proved in \cite[Theorem 3.1]{MillerWuSLEIntersection}.
\begin{lemma}\label{lem::onepoint_sle_multiple}
Assume the same notations as in Theorem \ref{thm::onepoint_sle_multiple}. For $\delta\in (0,1/4)$ and $r\ge 4$, we have 
\[\eps^{\alpha}(1-x^R)^{\beta}\lesssim \PP[\tau_{\eps}\le S_r, \Im{\eta(\tau_{\eps})}\ge \delta\eps (1-x^R)]\lesssim \eps^{\alpha}(1-x^R)^{\beta}\delta^{-\beta}r^B,\]
where $B=0\vee (\beta\rho^L/2)$ and the constants in $\lesssim$ are uniform over $\eps, \delta, x^L, x^R, r$. 
\end{lemma}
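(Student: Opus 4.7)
The plan is to reduce to the case $\rho^L=0$, which is covered by \cite[Theorem 3.1]{MillerWuSLEIntersection}, via Girsanov reweighting. Denote by $\PP^*$ the law of the $\SLE_{\kappa}(\rho^{1,R}, \rho^{2,R})$ process with force points $(x^R, 1)$. A direct extension of Lemma \ref{lem::sle_mart} shows that
\[
N_t = g_t'(x^L)^{\rho^L(\rho^L+4-\kappa)/(4\kappa)} (W_t - g_t(x^L))^{\rho^L/\kappa} (g_t(x^R) - g_t(x^L))^{\rho^L \rho^{1,R}/(2\kappa)} (g_t(1) - g_t(x^L))^{\rho^L \rho^{2,R}/(2\kappa)}
\]
is a local martingale under $\PP^*$ up to the continuation threshold, and that reweighting $\PP^*$ by $N_t/N_0$ produces the law $\PP$ of the $\SLE_\kappa(\rho^L;\rho^{1,R},\rho^{2,R})$ with force points $(x^L; x^R, 1)$. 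Consequently,
\[
\PP[\tau_\eps \leq S_r,\; \Im\eta(\tau_\eps) \geq \delta\eps(1-x^R)] = \E^*\!\left[\frac{N_{\tau_\eps}}{N_0}\,\one_{\{\tau_\eps \leq S_r,\,\Im\eta(\tau_\eps)\geq \delta\eps(1-x^R)\}}\right].
\]

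The next step is to estimate $N_{\tau_\eps}/N_0$ pointwise on the event. On $\{\eta[0,\tau_\eps]\subset B(0,r)\}$ with $x^L \leq 0$, the Koebe distortion theorem together with Lemmas \ref{lem::extremallength_argument} and \ref{lem::image_insideball} yields $g_{\tau_\eps}'(x^L) \asymp 1$ with constants depending only on $r$. The three positive differences $W_{\tau_\eps} - g_{\tau_\eps}(x^L)$, $g_{\tau_\eps}(x^R) - g_{\tau_\eps}(x^L)$ and $g_{\tau_\eps}(1) - g_{\tau_\eps}(x^L)$ are nondecreasing in $t$, bounded below by $-x^L$, $x^R - x^L$, $1-x^L$ respectively, and bounded above by quantities of order $r+(1-x^L)$ on $\{\tau_\eps \leq S_r\}$ (using Lemma \ref{lem::tip_distance_large} for the first of these). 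Comparing term by term with the initial value $N_0$ gives $c \leq N_{\tau_\eps}/N_0 \leq C\, r^B$ with $B = 0 \vee (\beta\rho^L/2)$; for $\rho^L > 0$ the $r^B$ factor absorbs the net power of $r$ arising from the last three factors, while for $\rho^L \leq 0$ these factors are bounded by constants in both directions so $B = 0$.

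Inserting these bounds into the Girsanov identity and invoking the two-sided analogue of Theorem \ref{thm::onepoint_sle_multiple} at $\rho^L = 0$ from \cite[Theorem 3.1]{MillerWuSLEIntersection}, namely
\[
\PP^*[\tau_\eps \leq S_r,\; \Im\eta(\tau_\eps) \geq \delta\eps(1-x^R)] \asymp \eps^\alpha (1-x^R)^\beta,
\]
with a lower bound uniform in $\delta$ and an upper bound scaling like $\delta^{-\beta}$, yields both the upper and lower bounds claimed in the lemma.

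The main obstacle is the careful accounting of the three cross terms $(W_t - g_t(x^L))^{\rho^L/\kappa}$, $(g_t(x^R) - g_t(x^L))^{\rho^L\rho^{1,R}/(2\kappa)}$ and $(g_t(1) - g_t(x^L))^{\rho^L\rho^{2,R}/(2\kappa)}$ in the regime $\rho^L > 0$, where each can grow with the diameter of the hull. The point is that although each individual factor can grow, Lemma \ref{lem::tip_distance_large} together with the Loewner upper bound forces their product at $\tau_\eps$ to exceed its $t=0$ value by at most a constant times $r^B$ on the event $\{S_r \geq \tau_\eps\}$; this is what produces the $r^B$ dependence in the upper bound. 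In the $\rho^L \leq 0$ regime, the same factors are reciprocated and both directions of the comparison are controlled by $r$-independent constants, which is exactly why $B$ vanishes there.
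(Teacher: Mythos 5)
Your reduction runs the Girsanov argument in the wrong direction for this problem, and the step where you claim a pointwise two-sided bound $c\le N_{\tau_{\eps}}/N_0\le C\,r^B$ on the event is false. Two of your assertions fail: (i) $W_t-g_t(x^L)$ is \emph{not} nondecreasing and is not bounded below by $-x^L$; on the event $\{\tau_{\eps}\le S_r,\ \Im\eta(\tau_{\eps})\ge\delta\eps(1-x^R)\}$ the curve is completely unconstrained near the left force point, so before reaching $B(1,\eps(1-x^R))$ it may approach the left part of the real line (or, under the target law with $\rho^L<\kappa/2-2$, actually hit it), making $W_{\tau_{\eps}}-g_{\tau_{\eps}}(x^L)$ arbitrarily small; (ii) $g'_{\tau_{\eps}}(x^L)\asymp 1$ ``with constants depending only on $r$'' is also false: one always has $g'_t(x^L)\le 1$, but there is no uniform positive lower bound, since $x^L$ may lie well inside $B(0,r)$ and the hull can come arbitrarily close to it; the containment $\eta[0,\tau_{\eps}]\subset B(0,r)$ and Lemmas \ref{lem::extremallength_argument}, \ref{lem::image_insideball}, \ref{lem::tip_distance_large} give no control here. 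Consequently, depending on the signs of $\rho^L/\kappa$ and $\rho^L(\rho^L+4-\kappa)/(4\kappa)$, the Radon--Nikodym factor $N_{\tau_{\eps}}/N_0$ is unbounded above or degenerates to $0$ on the very event you are measuring, so a pointwise comparison of $\PP$ with $\PP^*$ cannot produce matching exponents (salvaging it by H\"older would shift the exponent). A secondary issue: the $\delta$-refined two-sided estimate you quote for $\rho^L=0$, with the condition $\Im\eta(\tau_{\eps})\ge\delta\eps(1-x^R)$ and explicit $\delta^{-\beta}$ dependence, is not literally \cite[Theorem 3.1]{MillerWuSLEIntersection} (which is the $\eps^{\alpha+o(1)}$ hitting estimate); it is essentially the content of the lemma itself.

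The paper's proof avoids exactly this trap by keeping $\rho^L$ in the base measure and reweighting at the point $1$ instead: it uses the local martingale built from $g_t'(1)$, $g_t(1)-W_t$, $g_t(1)-V_t^{R}$ and $g_t(1)-V_t^{L}$ with exponent $\nu=-\beta\kappa$, which turns $\rho^{2,R}$ into $\rho^{2,R}+\nu$ so that the weighted process accumulates at $1$. The point is that every factor of that martingale \emph{is} controlled on the event: $g_t(1)-W_t$ and $g_t(1)-V_t^{R}$ are comparable to $\eps(1-x^R)g'_{\tau_{\eps}}(1)$ (up to the $\delta$ correction coming from $\Im\eta(\tau_{\eps})\ge\delta\eps(1-x^R)$), while $g_t(1)-V_t^{L}$ is monotone increasing and sandwiched between $1-x^L$ and $4r$ by a harmonic-measure estimate using $\tau_{\eps}\le S_r$ --- this is the sole source of the $r^{B}$ factor. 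The left force point never enters through $g_t'(x^L)$ or $W_t-g_t(x^L)$, which is precisely what makes a pointwise bound possible; the lower bound then follows because the weighted process hits $B(1,\eps(1-x^R))$ with probability of order one (after the M\"obius map $z\mapsto z/(1-z)$), and the upper bound because the weighted probability is at most $1$. If you want to keep your strategy, you would have to replace the pointwise control of $N_{\tau_{\eps}}/N_0$ by genuine moment estimates near the left force point, which is considerably harder than the paper's choice of martingale.
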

\begin{proof}
Let $V_t^L$ be the evolution of $x^L$ and $V_t^R$ be the evolution of $x^R$. 
Set
\[M_t=g_t'(1)^{\nu(\nu+2\rho^{2,R}+4-\kappa)/(4\kappa)}(g_t(1)-W_t)^{\nu/\kappa}\left(\frac{g_t(1)-V_t^R}{1-x^R}\right)^{\nu\rho^{1,R}/(2\kappa)}\left(\frac{g_t(1)-V_t^L}{1-x^L}\right)^{\nu\rho^{L}/(2\kappa)},\]
where $\nu=-\beta\kappa\le 0$. From \cite[Theorem 6]{SchrammWilsonSLECoordinatechanges}, we know that $M$ is a local martingale and the law of $\eta$ weighted by $M$ becomes the law of $\SLE_{\kappa}(\rho^L;\rho^{1,R}, \rho^{2,R}+\nu)$ with force points $(x^L; x^R, 1)$.  

On the event $E_{\eps}(\delta, r):=\{\tau_{\eps}\le S_r, \Im{\eta(\tau_{\eps})}\ge \delta\eps (1-x^R)\}$, let us estimate the terms in $M_t$ one by one for $t=\tau_{\eps}$. Let $O_t$ be the image of the rightmost point of $\eta[0,t]\cap \R$ under $g_t$. By Koebe 1/4 theorem, we know that $g_t(1)-O_t\asymp g_t'(1)\eps (1-x^R)$. 
\begin{itemize}
\item Consider the term $g_t(1)-W_t$. Since $\Im{\eta(t)}\ge \delta\eps(1-x^R)$, combining with \cite[Lemma 3.4]{MillerWuSLEIntersection}, we have 
\[g_t'(1)\eps (1-x^R)\asymp g_t(1)-O_t\le g_t(1)-W_t\lesssim (g_t(1)-O_t)/\delta\asymp g_t'(1)\eps (1-x^R)/\delta.\]
\item Consider the term $g_t(1)-V^R_t$. If $x^R$ is swallowed by $\eta[0,t]$, then we have $g_t(1)-V^R_t=g_t(1)-O_t\asymp g_t'(1)\eps (1-x^R)$. If not, by Keobe 1/4 theorem, we have $g_t(1)-V^R_t\asymp g_t'(1)\eps(1-x^R)$. In any case, we have
\[g_t(1)-V^R_t\asymp g_t'(1)\eps(1-x^R).\]
\item Consider the term $g_t(1)-V_t^L$. Since $g_t(1)-V_t^L$ is increasing in $t$, we have $g_t(1)-V_t^L\ge 1-x^L$ for all $t$. Suppose $\LB^{yi}$ is a Brownian motion starting from $yi$, from \cite[Remark 3.50]{LawlerConformallyInvariantProcesses}, we know that 
\[g_t(1)-V^L_t=\lim_{y\to\infty}\pi y\PP[\LB^{yi} \text{ exits } \HH\setminus \eta[0,t] \text{ through the union } [x^L, 0]\cup\eta[0,t]\cup[0,1]].\]
Since $t=\tau_{\eps}\le S_r$, we have 
\[g_t(1)-V^L_t\le \lim_{y\to\infty}\pi y\PP[\LB^{yi} \text{ exits } \HH\setminus B(0,r) \text{ through the union } [x^L, 0]\cup B(0,r)].\]
If $|x^L|\le r$, then $g_t(1)-V^L_t\le 4r$; if $|x^L|\ge r$, we have $g_t(1)-V_t^L\le |x^L|+3r$. Thus we have 
\[1\le \frac{g_t(1)-V_t^L}{1-x^L}\le 4r.\]
\end{itemize} 
Combining the above three parts, on the event $E_{\eps}(\delta, r)$,  we have 
\[\eps^{-\alpha}(1-x^R)^{-\beta}\delta^{-\beta}r^{-B}\lesssim M_{\tau_{\eps}}\lesssim \eps^{-\alpha}(1-x^R)^{-\beta}r^{0\vee(-\beta\rho^L/2)}.\]
Therefore, we have the lower bound:
\[\PP[E_{\eps}(\delta, r)]\ge \PP[E_{\eps}(4,1/4)]\gtrsim \eps^{\alpha}(1-x^R)^{\beta}\E[M_{\tau_{\eps}}1_{E_{\eps}(4,1/4)}]=\eps^{\alpha}(1-x^R)^{\beta}\PP^*[E^*_{\eps}(4,1/4)],\]
where $\eta^*$ is an $\SLE_{\kappa}(\rho^L; \rho^{1,R}, \rho^{2,R}+\nu)$ with force points $(x^L; x^R, 1)$ and $\PP^*$ is its law, and the event $E^*_{\eps}(r,\delta)$ is defined for $\eta^*$. Note that 
\[\rho^{1,R}+\rho^{2,R}+\nu=\kappa-8-\rho^{1,R}-\rho^{2,R}<\kappa/2-4.\]
Thus $\eta^*$ accumulates at the point $1$ at finite time. Let $\phi(z)=z/(1-z)$ be the Mobius transform of $\HH$ that sends $(0,1,\infty)$ to $(0,\infty,-1)$ and let $\hat{\eta}$ be the image of $\eta^*$ under $\phi$. Then $\hat{\eta}$ is an $\SLE_{\kappa}(\rho^{1,R}+\rho^{2,R}+2-\rho^L, \rho^L; \rho^{1,R})$ with force points $(-1, \phi(x^L);\phi(x^R))$. Define $\hat{E}$ to be the event that $\hat{\eta}$ never hits $B(-1,1/3)$ and $\hat{\eta}$ exits the ball of radius $1/(\eps(1-x^R))$ through the angle interval $[\pi/4,3\pi/4]$. It is clear that $\PP^*[E^*_{\eps}(4,1/4)]\ge \hat{\PP}[\hat{E}] \asymp 1$ (see for instance \cite[Lemma 2.3]{MillerWuSLEIntersection}), since $\rho^{1,R}+\rho^{2,R}+2>\kappa/2-2$. This gives the lower bound. 

For the upper bound, we have
\[\PP[E_{\eps}(\delta, r)]\lesssim \eps^{\alpha}(1-x^R)^{\beta}\delta^{-\beta}r^B\E[M_{\tau_{\eps}}1_{E_{\eps}(\delta, r)}]\le \eps^{\alpha}(1-x^R)^{\beta}\delta^{-\beta}r^B,\]
as desired. 
\end{proof}

\begin{lemma}\label{lem::rareevent_sle_multiple}
Suppose that $\eta$ is an $\SLE_{\kappa}(\rho^L; \rho^{1,R}, \rho^{2,R})$ with force points $(x^L; x^{1,R}, x^{2,R})$ where $x^L\le 0$ and $0\le x^{1,R}\le 1$ and $x^{1,R}\le x^{2,R}$. Assume (\ref{eqn::assumptions_rho}) holds. Then there exists a function $p(\delta)\to 0$ as $\delta\to 0$ which depends only on $\kappa, \rho^L, \rho^{1,R}, \rho^{2,R}$ such that 
\[\PP[\eta \text{ hits }B(1,\delta)]\le p(\delta).\]
We emphasize that $p(\delta)$ is uniform over $x^L\le 0\le x^{1,R}\le 1$ and $x^{2,R}\ge x^{1,R}$. 
\end{lemma}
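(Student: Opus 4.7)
The plan is to combine a pointwise decay statement with a compactness-continuity argument. For the pointwise step, I fix an admissible configuration $(x^L,x^{1,R},x^{2,R})$ and show that the $\SLE_\kappa(\rho^L;\rho^{1,R},\rho^{2,R})$ curve $\eta$ almost surely avoids the deterministic point $1\in\R$, so that $\PP[\eta\text{ hits }B(1,\delta)]\to 0$ as $\delta\to 0$. This follows from a positive-power one-point estimate at the scale $\delta$, in the same spirit as Lemma~\ref{lem::onepoint_sle_multiple}: such an estimate can be obtained by combining the Miller--Wu bound \cite[Theorem~3.1]{MillerWuSLEIntersection} for the two-force-point SLE with the Schramm--Wilson Girsanov transform (Lemma~\ref{lem::sle_mart}) to reinstate the left force point $\rho^L$, absorbing the bounded Radon--Nikodym factor into the constant and letting the intermediate stopping radius tend to infinity.

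To upgrade this pointwise decay to a uniform bound, I view
\[f_\delta(x^L,x^{1,R},x^{2,R}):=\PP[\eta\text{ hits }B(1,\delta)]\]
as a function on the parameter space
\[\mathcal{K}=\{(x^L,x^{1,R},x^{2,R}):x^L\in[-\infty,0],\ x^{1,R}\in[0,1],\ x^{2,R}\in[x^{1,R},+\infty]\},\]
which is compact in the product topology on $\overline{\R}^3$. I would then verify that $f_\delta$ is continuous on all of $\mathcal{K}$: at interior points this is standard continuous dependence of the driving SDE on initial data together with the fact that $\{\eta\text{ hits }B(1,\delta)\}$ is a continuity event for the law of the curve. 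On the boundary of $\mathcal{K}$ one checks that pushing $x^L\to-\infty$ or $x^{2,R}\to+\infty$ sends the corresponding drift term in the SDE for $W_t$ to zero, yielding an SLE with one fewer force point, while coalescence $x^{1,R}=x^{2,R}$ merges the two right weights into a single one of weight $\rho^{1,R}+\rho^{2,R}$. In every such limit the assumptions \eqref{eqn::assumptions_rho} survive in a suitably reinterpreted form, so the pointwise decay from the first paragraph applies at the limiting configuration.

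Since $\delta\mapsto f_\delta(x)$ is monotone increasing for every $x$, $f_\delta$ is continuous on the compact set $\mathcal{K}$, and $f_\delta\to 0$ pointwise, Dini's theorem promotes the convergence to uniform convergence on $\mathcal{K}$. Setting $p(\delta):=\sup_{x\in\mathcal{K}}f_\delta(x)$ then gives a function depending only on $\kappa,\rho^L,\rho^{1,R},\rho^{2,R}$ with $p(\delta)\to 0$, which is exactly the conclusion. The hard part will be establishing continuity of $f_\delta$ at the singular boundary of $\mathcal{K}$, where the SDE systems degenerate through coalescence or escape of force points to infinity; this is handled by showing that the driving functions and the $\HH$-hulls converge in a topology strong enough that $\{\eta\text{ hits }B(1,\delta)\}$ remains a continuity event in the limit, which can be done via an SLE coupling or imaginary-geometry argument in the spirit of Miller--Sheffield.
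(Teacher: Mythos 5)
Your argument follows the same route as the paper's proof: pointwise vanishing of the hitting probability under (\ref{eqn::assumptions_rho}), continuity of $(x^L,x^{1,R},x^{2,R})\mapsto\PP[\eta\text{ hits }B(1,\delta)]$ (the paper cites \cite[Section 4.7]{LawlerConformallyInvariantProcesses}), and a compactification of the parameter space in which the degenerate limits $|x^L|, x^{2,R}\to\infty$ reduce to an $\SLE_{\kappa}(\rho^{1,R})$, followed by taking the supremum. Your invocation of Dini's theorem is simply an explicit packaging of the same compactness step, so the proposal is correct and essentially identical to the paper's argument.
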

\begin{proof}
Define $f(x^L, x^{1,R}, x^{2,R}, \delta)=\PP[\eta \text{ hits }B(1,\delta)]$. From \cite[Section 4.7]{LawlerConformallyInvariantProcesses}, we know that the function $f$ is continuous. Since (\ref{eqn::assumptions_rho}) holds, we know that $f(x^L, x^{1,R}, x^{2,R}, \delta)\to 0$ as $\delta\to 0$. When $|x^L|, x^{2,R}\to \infty$, the law of $\eta$ becomes the law of $\SLE_{\kappa}(\rho^{1,R})$, thus the function 
\[\lim_{|x^L|, x^{2,R}\to \infty}f(x^L, x^{1,R}, x^{2,R}, \delta)\]
goes to zero as $\delta\to 0$. This implies 
\[p(\delta):=\sup f(x^L, x^{1,R}, x^{2,R}, \delta)\to 0,\quad \text{as }\delta\to 0,\] 
where the supreme is taken over $x^L\le 0, x^{1,R}\in [0,1], x^{2,R}\ge x^{1,R}$.
\end{proof}
\begin{proof}[Proof of Theorem \ref{thm::onepoint_sle_multiple}]
Lemma \ref{lem::onepoint_sle_multiple} gives the lower bound, and we only need to show the upper bound. Pick an integer $n$ such that $ 2^n\eps\le 1/4$. For $1\le k\le n$, let $T_k$ be the first time that $\eta$ hits the ball centered at $1$ with radius $2^{n+1-k}\eps(1-x^R)$. Define 
\[\LF_k=\{\Im{\eta(T_k)}\ge \delta 2^{n+1-k}\eps(1-x^R)\}.\]
By Lemma \ref{lem::onepoint_sle_multiple}, we know that 
\begin{align*}
\PP[\tau_{\eps}\le S_r]&\le \sum_1^n \PP[\{T_k<\infty\}\cap \LF_k]+\PP[\{\tau_{\eps}\le S_r\}\cap_1^n\LF_k^c]\\
&\lesssim 2^{n\alpha}\eps^{\alpha}(1-x^R)^{\beta}\delta^{-\beta}r^B+\PP[\{\tau_{\eps}\le S_r\}\cap_1^n\LF_k^c].
\end{align*}
Consider the event $\{\tau_{\eps}\le S_r\}\cap_1^n\LF_k^c$. For $0\le k\le n-4$, given $\eta[0,T_k]$, we will estimate the conditional probability that $\{T_{k+4}<\infty\}\cap\LF_{k+4}^c$. Denote $2^{n-k-3}\eps(1-x^R)$ by $u$. The event $\LF_{k+1}^c$ implies that $\eta$ hits $B(1,u)$ through the union of the following two balls $B(1-u, \delta u)\cup B(1+u, \delta u)$. Denote $g_{T_k}-W_{T_k}$ by $f$. The image of $\eta(t+T_k)$ under $f$ is still an $\SLE_{\kappa}(\rho^L; \rho^{1,R}, \rho^{2,R})$ process. Since the distance between $\eta[0,T_k]$ and $1$ is $16u$, by Lemma \ref{lem::image_insideball} we know that $f(B(1-u,\delta u)\cup B(1+u, \delta u))$ is contained in 
\[B(f(1-u), 4\delta u f'(1-u))\cup B(f(1+u, 4\delta u f'(1+u))).\]
By Koebe 1/4 theorem, we know that 
\[f(1-u)\gtrsim u f'(1-u),\quad f(1+u)\gtrsim uf'(1+u).\]
Thus, by Lemma \ref{lem::rareevent_sle_multiple}, we know that 
\[\PP[\{T_{k+4}<\infty\}\cap \LF_{k+4}^c\cond \eta[0,T_k]]\le p(C\delta),\]
for some universal constant $C$. Therefore, there is a function $q(\delta)\to 0$ as $\delta\to 0$ which depends only on $\kappa, \rho^L, \rho^{1,R}$ and $\rho^{2,R}$ such that 
\[\PP[\{\tau_{\eps}\le S_r\}\cap_1^n\LF_k^c]\le q(\delta)^n.\]
Thus we have 
\[\PP[\tau_{\eps}\le S_r]\lesssim 2^{n\alpha}\eps^{\alpha}(1-x^R)^{\beta}\delta^{-\beta}r^B+q(\delta)^n.\]
This implies the conclusion.
\end{proof}
\begin{proof}[Proof of Corollary \ref{cor::onepoint_sle_multiple}]
Assume the same notations as in the proof of Theorem \ref{thm::onepoint_sle_multiple}. When $\rho^L\le 0$, we have $B=0$, thus 
\[\PP[\tau_{\eps}<\infty]\lesssim 2^{n\alpha}\eps^{\alpha}(1-x^R)^{\beta}\delta^{-\beta}+q(\delta)^n.\]
This implies the conclusion.
\end{proof}

\bibliographystyle{alpha}
\bibliography{bibliography}
\smallbreak
\noindent Hao Wu\\
\noindent NCCR/SwissMAP, Section de Math\'{e}matiques, Universit\'{e} de Gen\`{e}ve, Switzerland\\
\noindent\textit{and} Yau Mathematical Sciences Center, Tsinghua University, China\\
\noindent hao.wu.proba@gmail.com

\end{document}